\DeclareMathSymbol{\mlq}{\mathord}{operators}{``}
\DeclareMathSymbol{\mrq}{\mathord}{operators}{`'}
\title{Fibered Multiderivators and (co)homological descent}
\date{October 20, 2015}
\author{Fritz H\"ormann\\ Mathematisches Institut, Albert-Ludwigs-Universit\"at Freiburg}
\newcommand*\numcirc[1]{\tikz[baseline=(char.base)]{
            \node[shape=circle,draw,inner sep=2pt] (char) {#1};}}
\newtheorem{SATZ}{Theorem}[subsection]
\newtheorem{HAUPTSATZ}[SATZ]{Main Theorem}
\newtheorem{LEMMA}[SATZ]{Lemma}
\newtheorem{DEF}[SATZ]{Definition}
\newtheorem{PROP}[SATZ]{Proposition}
\newtheorem{BEISPIEL}[SATZ]{Example}
\newtheorem{FRAGE}[SATZ]{Question}
\newtheorem{KOR}[SATZ]{Corollary}
\newtheorem{BEM}[SATZ]{Remark}
\newtheoremstyle{bare}        % name
  {}            % Space above, empty = `usual value'
  {}            % Space below
  {\normalfont}                 % Body font (\normalfont)
  {}                            % Indent amount (empty = no indent, \parindent = para indent)
  {\bfseries}                   % Thm head font
  {}                            % Punctuation after thm head
  {.0em}                           % Space after thm head: " " = normal interword space;
\theoremstyle{bare}
\newtheorem{PAR}[SATZ]{}
\newcommand{\iso}{\stackrel{\sim}{\longrightarrow}}
\newcommand{\Z}{ \mathbb{Z} }
\newcommand{\N}{ \mathbb{N} }
\newcommand{\DD}{ \mathbb{D} }
\newcommand{\EE}{ \mathbb{E} }
\newcommand{\SSS}{ \mathbb{S} }
\DeclareMathOperator{\colim}{colim}
\DeclareMathOperator{\Cone}{Cone}
\DeclareMathOperator{\id}{id}
\DeclareMathOperator{\op}{op}
\DeclareMathOperator{\cart}{cart}
\DeclareMathOperator{\cocart}{cocart}
\DeclareMathOperator{\Iso}{Iso}
\DeclareMathOperator{\Hom}{Hom}
\DeclareMathOperator{\Ho}{Ho}
\DeclareMathOperator{\Ob}{Ob}
\DeclareMathOperator{\Mor}{Mor}
\DeclareMathOperator{\can}{can.}
\DeclareMathOperator{\pr}{pr}
\DeclareMathOperator{\Cat}{Cat}
\DeclareMathOperator{\Dia}{Dia}
\DeclareMathOperator{\Dir}{Dir}
\DeclareMathOperator{\Inv}{Inv}
\DeclareMathOperator{\Catf}{Catf}
\DeclareMathOperator{\Catlf}{Catlf}
\DeclareMathOperator{\Dirf}{Dirf}
\DeclareMathOperator{\Invf}{Invf}
\DeclareMathOperator{\Pos}{Pos}
\DeclareMathOperator{\Posf}{Posf}
\DeclareMathOperator{\Dirpos}{Dirpos}
\DeclareMathOperator{\Invpos}{Invpos}
\DeclareMathOperator{\Dirlf}{Dirlf}
\DeclareMathOperator{\Invlf}{Invlf}
\DeclareMathOperator{\Cof}{Cof}
\DeclareMathOperator{\Fib}{Fib}
\DeclareMathOperator{\dia}{dia}
\DeclareMathOperator{\cosk}{cosk}
\begin{document}

\maketitle

{\footnotesize  {\em 2010 Mathematics Subject Classification:} 55U35, 14F05, 18D10, 18D30, 18E30, 18G99  }

{\footnotesize  {\em Keywords:} Derivators, fibered derivators, multiderivators, fibered multicategories, Grothendieck's six-functor-formalism, cohomological descent, homological descent, fundamental localizers, well-generated triangulated categories, equivariant derived categories }

\section*{Abstract}

The theory of derivators enhances and simplifies the theory of triangulated categories. In
this article a notion of fibered (multi-)derivator is developed, which similarly enhances fibrations of (monoidal) triangulated categories. 
We present a theory of cohomological as well as homological descent in this language. The main motivation is
a descent theory for Grothendieck's six operations. 

\section*{Introduction}

\subsection*{Grothendieck's six functors and descent}

Let $\mathcal{S}$ be a category, for instance a suitable category of schemes, topological spaces, analytic manifolds, etc.
A Grothendieck six functor formalism on $\mathcal{S}$ consists of a collection of (derived) categories $\mathcal{D}_S$, one for each ``base space'' $S$ in $\mathcal{S}$
with the following six types of operations:
\vspace{0.5cm}
\[
\begin{array}{lcrp{1cm}l}
f^* & &f_* &  & \text{\em for each $f$ in $\Mor(\mathcal{S})$}  \\
\\
f_! & &f^! & & \text{\em for each $f$ in $\Mor(\mathcal{S})$} \\
\\
\otimes & & \mathcal{HOM} & & \text{\em in each fiber $\mathcal{D}_S$}
\end{array}
\]
\vspace{0.2cm}

The fiber $\mathcal{D}_S$ is, in general, a {\em derived} category of ``sheaves'' over $S$, for example coherent sheaves, $l$-adic sheaves, abelian sheaves, $D$-modules, motives, etc. 
The functors on the left hand side are left adjoints of the functors on the right hand side. 
The functor $f_!$ and its right adjoint $f^!$ are called ``push-forward with proper support'', and ``exceptional pull-back'', respectively.
The six functors come along with a bunch of compatibility isomorphisms between them (cf.\@ \ref{LEMMA6FU}) and it is not easy to make their axioms really precise. 
In an appendix to this article, we explain that one quite simple precise definition is the following:

\vspace{0.2cm}

{\bf Definition \ref{DEF6FU}. }{\em Let $\mathcal{S}$ be a category with fiber products. 
A (symmetric) Grothendieck six-functor-formalism on $\mathcal{S}$ is a bifibration of (symmetric) 2-multicategories with 1-categorical fibers
\[ p: \mathcal{D} \rightarrow \mathcal{S}^{\mathrm{cor}} \]
where $\mathcal{S}^{\mathrm{cor}}$ is the symmetric 2-multicategory of correspondences in $\mathcal{S}$ (cf.\@ Definition \ref{DEFMULCOR}).}

\vspace{0.3cm}
 
From such a bifibration we obtain the operations $f_*$, $f^*$ (resp.\@ $f^!$, $f_!$) as pull-back and push-forward along the correspondences
\[ \vcenter{\xymatrix{ 
 & \ar[ld]_f X \ar@{=}[rd] &\\
 Y & ; & X   } }\quad \text{and} \quad
 \vcenter{\xymatrix{ 
 & \ar@{=}[ld] X \ar[rd]^f &\\
 X & ; & Y ,  }}
 \]
respectively. We get $\mathcal{E} \otimes \mathcal{F}$ for objects $\mathcal{E}, \mathcal{F}$ above $X$ as the target of a Cartesian $2$-ary multimorphism 
from the pair $\mathcal{E}, \mathcal{F}$ over the correspondence
\[ \vcenter{\xymatrix{
& & X \ar@{=}[lld] \ar@{=}[ld] \ar@{=}[rd] &  \\
X & X & ; & X. }}
\]
Given such a six-functor-formalism and a simplicial resolution $\pi: U_\bullet \rightarrow S$ of a space $S \in \mathcal{S}$ (for example arising from a \v{C}ech cover w.r.t.\@ a suitable Grothendieck topology)
\[ \xymatrix{ \cdots \ar@<-1.5ex>[r]\ar@<-0.5ex>[r]\ar@<0.5ex>[r]\ar@<1.5ex>[r]  &  U_2 \ar@<1ex>[r]\ar@<0ex>[r]\ar@<-1ex>[r] & U_1  \ar@<0.5ex>[r] \ar@<-0.5ex>[r]  & U_0,  } \]
and given an object $\mathcal{E}$ in $\mathcal{D}_S$, one can construct complexes in the category $\mathcal{D}_S$:
\[ \xymatrix{ \cdots \ar[r] & \pi_{2,!} \pi_2^{!} \mathcal{E} \ar[r] &\pi_{1,!} \pi_1^{!} \mathcal{E} \ar[r] &\pi_{0,!} \pi_0^{!} \mathcal{E} \ar[r] & 0   } \]
\[ \xymatrix{ \cdots & \ar[l] \pi_{2,*} \pi_2^{*} \mathcal{E} & \ar[l] \pi_{1,*} \pi_1^{*} \mathcal{E} & \ar[l] \pi_{0,*} \pi_0^{*} \mathcal{E} & \ar[l]  0  } \]
The first question of homological (resp. cohomological) descent is whether the hyper(co)homology of these complexes recovers the homology (resp.\@ cohomology) of
$\mathcal{E}$.
Without a suitable enhancement of the situation, this question, however, does not make sense because a double complex, once considered as a complex in the derived category, looses the information of the homology of its total complex.  There are several remedies for this problem. Classically, if at least the $\pi_i^*$ are derived functors and $\mathcal{E}$ is acyclic w.r.t.\@ them, one can derive the whole construction to get a coherent double complex. This does not work, however, for the functors $f_!$, $f^!$ which are often only constructed on the derived category.
One possibility is to consider enhancements of the triangulated categories in question as DG-categories or $\infty$-categories. 
In this article, we have worked out a different approach based on Grothendieck's idea of derivators which is, perhaps, conceptually even simpler. 
It is sufficiently powerful to glue the six functors and define them for morphisms between stacks, or even higher stacks. 
Like the $\infty$-categorical approach, it is also very general, not being restricted to the stable case.
\subsection*{Fibered multiderivators}

The notion of triangulated category developed by Grothendieck and Verdier in the 1960's, as successful as it has been, is not sufficient for many purposes, 
 for both practical reasons (certain natural constructions cannot be performed) as well as
for theoretical reasons (the axioms are rather involved and lack conceptual clarity). Grothendieck much later \cite{Gro91} and Heller independently, with the notion of {\bf derivator}, proposed a marvelously simple remedy
to both deficiencies. The basic observation is that all problems mentioned above are based on the following fact: Consider
a category $\mathcal{C}$ and a class of morphisms $\mathcal{W}$ (quasi-isomorphisms, weak equivalences, etc.) which one would like to become isomorphisms.
Then {\em homotopy limits and colimits} w.r.t.\@ $(\mathcal{C}, \mathcal{W})$ cannot be reconstructed once passed to the homotopy category $\mathcal{C}[\mathcal{W}^{-1}]$ (for example a derived category, or the homotopy category of a model category). Examples of homotopy (co)limits are the cone (required to exist in a triangulated category in a brute-force way, but not functorially!) and
the total complex of a complex of complexes (totally lost in the derived category). Furthermore, very basic
and intuitive properties of homotopy limits and colimits, and more general Kan extensions, not only determine the additional structure (triangles, shift functors) on a triangulated category but also {\em imply} all of its rather involved axioms. This idea has been successfully worked out by Cisinski, Groth, Grothendieck, Heller, Maltsiniotis, and others. We refer to the introductory article \cite{Gro13} for an overview.

The purpose of this article is to propose a notion of fibered (multi-)derivator which enhances the notion of a fibration of (monoidal) triangulated categories 
in the same way as the notion of usual derivator enhances the notion of triangulated category. We emphasize that this new context 
is very well suited to reformulate (and reprove the theorems of) the classical theory of cohomological descent
and to establish a completely dual 
theory of homological descent which should be satisfied by the $f_!, f^!$-functors. 

\subsection*{(Co)homological descent with fibered derivators}

Pursuing the idea of derivators, there is a neat conceptual solution to the problem of (co)homological descent: Analogously to a derivator which associates a (derived) category with each diagram
shape $I$, we should consider a (derived) category $\DD(I, F)$ for each diagram of correspondences $F: I \rightarrow \mathcal{S}^{\mathrm{cor}}$. Then, given a simplicial resolution $\pi: U_\bullet \rightarrow S$ as before, considered as a morphism 
$p:  (\Delta^{\op}, U_\bullet) \rightarrow (\cdot, S)$ of diagrams in $\mathcal{S}^{cor}$, resp.\@ $i:  (\Delta, (U_\bullet)^{\op}) \rightarrow (\cdot, S)$ in a dual diagram category (cf.\@~\ref{DEFDIAOP}), the question becomes:

\begin{enumerate}
\item[Q1:] Does the corresponding pull-back $i^*$ have a right adjoint $i_*$, 
respectively does $p^*$ have a left adjoint $p_!$ (a straightforward generalization of the question of existence of homotopy (co)limits in usual derivators!)
and is the corresponding unit $\id \rightarrow i_*i^*$ (resp.\@ counit $p_! p^* \rightarrow \id$) an isomorphism? 
\end{enumerate}

Instead of, however, taking an association $(I, F) \mapsto \DD(I, F)$ as the fundamental datum, we propose to take
a morphism of pre-derivators $p: \DD \rightarrow \SSS$ (or even pre-multiderivators) as the fundamental datum, the $\DD(I, F)$ being reconstructed as its fibers $\DD(I)_F$, if $\SSS$ is
the pre-derivator associated with a category. This allows more general situations, where $\SSS$ is not associated with an ordinary category. 
In many situations, in particular for a six-functor-formalism, it will be necessary to consider $\SSS$ which are pre-2-multiderivators instead, a notion which will be introduced and investigated in a forthcoming article~\cite{Hor15}. There we will define (and give examples of) a derivator version of a (symmetric) Grothendieck six-functor formalism, that is, a (symmetric) fibered multiderivator 
\[ p: \DD \rightarrow \SSS^{\mathrm{cor}}, \]
where $\SSS^{\mathrm{cor}}$ is the symmetric pre-2-multiderivator of correspondences in $\mathcal{S}$. 

\begin{enumerate}
\item[Q2:] More generally, we may consider Cartesian (resp.\@ coCartesian) objects in the fiber over a diagram $(\Delta^{\op}, U_\bullet)$ (resp. $(\Delta, (U_\bullet)^{\op})$), and
ask whether these categories depend only on $U_\bullet$ up to taking (finite) hypercovers w.r.t.\@ a fixed Grothendieck topology on $\mathcal{S}$.  
\end{enumerate}

This will allow to define the six operations, for example, if
the simplicial objects $U_\bullet$ are presentations of stacks.
The categories of coCartesian objects are a generalization of the equivariant derived categories of Bernstein and Lunts (cf.\@ \ref{BL}).

\subsection*{Overview}

In section \ref{SECTIONFIBDER} we give the general definition of a left (resp.\@ right) fibered multiderivator $p: \DD \rightarrow \SSS$. The axioms are basically a
straight-forward generalization of those of a left, resp. right derivator. To give a priori some conceptual evidence that these
axioms are indeed reasonable, we prove that the notion of fibered multiderivator is transitive (\ref{SECTTRANS}), and that it gives rise to a pseudo-functor from `diagrams in 
$\SSS$' to categories, for which a neat base-change formula holds (\ref{SECTPSEUDOFUNCT}).  

In section \ref{SECTCOHOMDESCENT}, a theory of (co)homological descent for fibered derivators is developed (the monoidal, i.e.\@ multi-, aspect does not play any role here).
We propose a definition of localizer (resp.\@ of system of relative localizers) in the category of diagrams in $\SSS$ which is a generalization of Grothendieck's notion
of fundamental localizer in categories. The latter gives a nice combinatorial description of weak equivalences of categories in terms of the condition of Quillen's theorem A. 
In our more general setting the notion of fundamental localizer depends on the choice of a Grothendieck (pre-)topology on $\SSS$.
In section \ref{SECTSIMP} we show purely abstractly that a finite hypercover, considered as a morphism of simplicial diagrams,  lies in
any localizer or system of relative localizers. Thus this more general notion of localizer has a similar relation to weak equivalences of simplicial pre-sheaves, although we
will not yet give any precise statement in this direction. 

In sections \ref{SECTCART} and \ref{SECTEQUIV} these notions of localizers are tied to the theory of fibered derivators. We introduce two notions of (co)homological descent for a fibered derivator $p: \DD \rightarrow \SSS$, namely
 weak and strong $\DD$-equivalences. The notion of weak $\DD$-equivalences (related to Q1 above) is a straight-forward generalization of Cisinski's  notion of $\DD$-equivalence for
usual derivators. In our relative context, both notions of $\DD$-equivalence come in a cohomological as well as in a homological flavour (a phenomenon which does not occur for usual derivators). 

Whenever the fibered derivator is (co)local w.r.t.\@ to the Grothendieck pre-(co)topology --- a rather weak and obviously necessary condition (see section \ref{SECTLOCAL}) --- then 
the Main Theorem \ref{MAINTHEOREMCOHOMDESCENT} (resp.\@ \ref{MAINTHEOREMHOMDESCENT}) of this article 
states that weak $\DD$-equivalences form a system of relative localizers under very general conditions (the easier case) 
and that strong $\DD$-equivalences (related to Q2 above) form an absolute localizer, at least in the case of fibered derivators with stable, compactly generated fibers.  
The proof uses results from the theory of triangulated categories due to Neeman and Krause (centering around Brown representability type theorems). The link to our theory of fibered (multi\nobreakdash-)derivators is explained in section \ref{SECTBROWN}.

In section \ref{SECTCONSTR} we introduce the notion of fibration of multi-model-categories. This
is the most favorable standard context in which a fibered multi-derivator (whose base is associated with a usual multicategory) can be constructed. 
We will present more general methods of constructing fibered multiderivators in a forthcoming article, in particular those encoding a full six-functor-formalism.

\section*{Notation}

We denote by $\mathcal{CAT}$ the 2-``category''\footnote{where ``category'' has classes replaced by 2-classes (or, if the reader prefers, is constructed w.r.t.\@ a larger universe). } of categories, by $(\mathcal{S})\mathcal{MCAT}$ the 2-``category'' of (symmetric) multicategories, and by $\Cat$ the 2-category of small categories.
We consider a partially ordered set (poset) $X$ as a small category by considering the relation $x \le y$ to be equivalent to the existence of a unique
morphism $x \rightarrow y$.
We denote the positive integers (resp.\@ non-negative integers) by $\N$ (resp.\@ $\N_0$). The ordered sets $\{0, \dots, n\} \subset \N_0$ considered as a small category are denoted by $\Delta_n$. 
We denote by $\Mor(\mathcal{D})$ (resp.\@ $\Iso(\mathcal{D})$) the class of morphisms (resp.\@ isomorphisms) in a category $\mathcal{D}$.
The final category (which consists of only one object and its identity) is denoted by $\cdot$ or $\Delta_0$. The same notation is also used for the final multi-category, i.e.
that with one object and precisely one $n$-ary morphism for any $n$. 
Our conventions about multicategories and fibered (multi-)categories are summarized in appendix \ref{APP}.

\tableofcontents

\section{Fibered derivators}\label{SECTIONFIBDER}

\subsection{Categories of diagrams}

\begin{DEF}\label{DEFDIAGRAMCAT}
A {\bf diagram category} is a full sub-2-category $\Dia \subset \Cat$,
satisfying the following axioms:
\begin{itemize}
\item[(Dia1)] The empty category $\emptyset$, the final category $\cdot$ (or $\Delta_0$), and $\Delta_1$ are objects of $\Dia$.
\item[(Dia2)] $\Dia$ is stable under taking finite coproducts and fibered products.
\item[(Dia3)] For each functor $\alpha: I \rightarrow J$ in $\Dia$ and object $j \in J$ the slice categories
$I \times_{/J} j$ and $j \times_{/J} I$ are in $\Dia$.
\end{itemize}
A diagram category $\Dia$ is called {\bf self-dual}, if it satisfies in addition:
\begin{itemize}
\item[(Dia4)] If $I \in \Dia$ then $I^{\op} \in \Dia$.
\end{itemize}
A diagram category $\Dia$ is called {\bf infinite}, if it satisfies in addition:
\begin{itemize}
\item[(Dia5)] $\Dia$ is stable under taking arbitrary coproducts.
\end{itemize}
\end{DEF}

In the following we mean by a {\bf diagram} a small category.

\begin{BEISPIEL}\label{EXDIAGRAMCAT}
We have the following diagram categories:
\begin{itemize}
\item[$\Cat$] the category of all {\bf diagrams}. It is self-dual.
\item[$\Inv$] the category of {\bf inverse} diagrams $C$, i.e.\@ small categories $C$ such that there exists a functor $C \rightarrow \N_0$ with the property that
the preimage of an identity consists of identities\footnote{In many sources $\N_0$ is replaced by any ordinal.}. An example is the injective simplex category $\Delta^\circ$:
\[ \xymatrix{ \cdots & \ar@<-1.5ex>[l]\ar@<-0.5ex>[l]\ar@<0.5ex>[l]\ar@<1.5ex>[l]    \cdot & \ar@<1ex>[l]\ar@<0ex>[l]\ar@<-1ex>[l] \cdot & \ar@<0.5ex>[l] \ar@<-0.5ex>[l]  \cdot  } \]
\item[$\Dir$] the category of {\bf directed} diagrams $D$, i.e.\@ small categories such that $D^{\op}$ is inverse. An example is the opposite of the injective simplex category $(\Delta^\circ)^{\op}$:
\[ \xymatrix{ \cdots \ar@<-1.5ex>[r]\ar@<-0.5ex>[r]\ar@<0.5ex>[r]\ar@<1.5ex>[r]  &   \cdot \ar@<1ex>[r]\ar@<0ex>[r]\ar@<-1ex>[r] & \cdot \ar@<0.5ex>[r] \ar@<-0.5ex>[r] & \cdot  } \]
\item[$\Catf$,] $\Dirf$, and $\Invf$ are defined as before but consisting of {\bf finite diagrams}.  Those are self-dual and $\Dirf = \Invf$.
\item[$\Catlf$,] $\Dirlf$, and $\Invlf$ are defined as before but consisting of {\bf locally finite diagrams}, i.e.\@ those which have the property that a morphism $\gamma$ factors as $\gamma = \alpha \circ \beta$ only in a finite number of ways.
\item[$\Pos$,] $\Posf$, $\Dirpos$, and $\Invpos$: the categories of {\bf posets, finite posets, directed posets,} and {\bf inverse posets}.
\end{itemize}
\end{BEISPIEL}

\subsection{Pre-(multi-)derivators}

\begin{DEF}\label{DEFPREDER}
A {\bf pre-derivator} of domain $\Dia$ is a contravariant (strict) 2-functor 
\[ \DD: \Dia^{1-\op} \rightarrow \mathcal{CAT}. \]
A {\bf pre-multiderivator} of domain $\Dia$ is a contravariant (strict) 2-functor
\[ \DD: \Dia^{1-\op} \rightarrow \mathcal{MCAT} \]
into the 2-``category'' of multicategories.
A morphism of pre-derivators is a natural transformation.

For a morphism $\alpha: I \rightarrow J$ in $\Dia$ the corresponding functor
\[ \DD(\alpha): \DD(J) \rightarrow \DD(I)  \]
will be denoted by $\alpha^*$.

We call a pre-multiderivator {\bf  symmetric} (resp.\@ {\bf  braided}), if its images are symmetric (resp.\@ braided), and the morphisms $\alpha^*$ are compatible with the actions of the symmetric (resp.\@ braid) groups.
\end{DEF}

\begin{PAR}\label{PARPRECATEGORY}
The pre-derivator associated with a category:
Let $\mathcal{S}$ be a category.
We associate with it the pre-derivator
\[ \SSS: I \mapsto \Hom(I, \mathcal{S}). \]
The pull-back $\alpha^*$ is defined as composition with $\alpha$.
A 2-morphism $\kappa: \alpha \rightarrow \beta$ induces a natural 2-morphism $\SSS(\kappa): \alpha^* \rightarrow \beta^*$.
\end{PAR}

\begin{PAR}\label{PARPRESIMPL}
The pre-derivator associated with a simplicial class (in particular with an $\infty$-category):
Let $\mathcal{S}$ be a simplicial class, i.e.\@ a functor
\[ \mathcal{S}: \Delta \rightarrow \mathcal{CLASS} \]
into the ``category'' of classes.
We associate with it the pre-derivator
\[ \SSS: I \mapsto \Ho(\Hom(N(I), \mathcal{S})), \]
where $N(I)$ is the nerve of $I$ and $\Ho$ is the left adjoint of $N$. 
In detail this means the objects of the category $\SSS(I)$ are 
morphisms $\alpha: N(I) \rightarrow \mathcal{S}$, the class of morphisms in $\SSS(I)$ is
freely generated by morphisms $\mu: N(I \times \Delta_1) \rightarrow \mathcal{S}$ considered to be a morphism
from its restriction to $N(I \times \{0\})$ to its restriction to $N(I \times \{1\})$ modulo the relations given by morphisms
$\nu: N(I \times \Delta_2) \rightarrow \mathcal{S}$, i.e.\@ if $\nu_1, \nu_2$ and $\nu_3$ are the restrictions of $\nu$ to the 3 faces of $\Delta_2$
then we have $\mu_3 = \mu_2 \circ \mu_1$.
The pull-back $\alpha^*$ is defined as composition with the morphism $N(\alpha): N(I) \rightarrow N(J)$.
A 2-morphism $\kappa: \alpha \rightarrow \beta$ can be given as a functor $I \times \Delta_1 \rightarrow J$ which yields (applying $N$ and composing)
a natural transformation which we call $\SSS(\kappa)$.
\end{PAR}
\begin{PAR}\label{PARPREDEND}
More generally, consider the full subcategory 
${}^M\Delta \subset \mathcal{MCAT}$ of all {\em finite connected} multicategories $M$ that are freely generated by a finite set of multimorphisms $f_1, \dots, f_n$ 
such that each object of $M$ occurs at most once as a source and at most once as the target of one of the $f_i$. Similarly
consider the full subcategory $T \subset \mathcal{SMCAT}$ which is obtained from ${}^M\Delta$ adding images under the operations of the symmetric groups. 
This category is usually called the symmetric tree category.
With a functor 
\[ \mathcal{S}: {}^M\Delta \rightarrow \mathcal{CLASS} \quad \text{resp. } \quad \mathcal{S}: T \rightarrow  \mathcal{CLASS} \]
we associate the pre-multiderivator (resp.\@ symmetric pre-multiderivator):
\[ \SSS: I \mapsto \Ho(\Hom(N(I), \mathcal{S})), \]
where $N: \mathcal{MCAT} \rightarrow \mathcal{CLASS}^{{}^M\Delta}$ (resp. $N: \mathcal{SMCAT} \rightarrow \mathcal{CLASS}^{T}$ ) is the nerve, $I$ is considered to be a multicategory without any $n$-ary morphisms for $n\ge 2$, 
and $\Ho$ is the left adjoint of $N$. 
Objects in $\mathcal{SET}^{T}$ are called dendroidal sets in \cite{MW07}. 
\end{PAR}

\subsection{Fibered (multi-)derivators}

\begin{PAR}\label{PARKAN}
Let $p: \DD \rightarrow \SSS$ be a strict morphism of pre-derivators with domain $\Dia$, and let $\alpha: I \rightarrow J$ be a functor in $\Dia$.
Consider an object $S \in \SSS(J)$.
The functor $\alpha^*$ induces a morphism between fibers (denoted the same way)
\[ \alpha^*: \DD(J)_S \rightarrow \DD(I)_{\alpha^*S}.  \]
We are interested in the case that the latter has a left adjoint $\alpha_!^S$, resp.\@ a right adjoint $\alpha_*^S$. These will be
called {\bf relative left/right homotopy Kan extension} functors with {\bf base} $S$. For better readability we often omit the base from the notation. Though the
base is not determined by the argument of $\alpha_!$, it will often be understood from the context, cf.\@ also (\ref{PAREXTENSIONKAN}).

\end{PAR}

\begin{PAR}
We are interested in the case in which all morphisms
\[ p(I): \DD(I) \rightarrow \SSS(I) \]
are Grothendieck fibrations, resp.\@ opfibrations (\ref{APPGROTH}) or, more generally, (op)fibrations of multicategories (\ref{APPMULTI}).
We always assume that the functors $\alpha^*:=\DD(\alpha)$ map coCartesian morphisms to coCartesian morphisms
but map Cartesian morphisms to Cartesian morphisms (for arity $n\ge 2$) only if $\alpha$ itself is a Grothendieck opfibration. 

Then we will choose an associated pseudo-functor, i.e.\@  for each 
$f: S \rightarrow T$ in $\SSS(I)$ a pair of adjoints functors
\[ f_\bullet: \DD(I)_S \rightarrow \DD(I)_T, \]
resp.\@ 
\[ f^\bullet: \DD(I)_T \rightarrow \DD(I)_S, \]
characterized by functorial isomorphisms: 
\[ \Hom_f(\mathcal{E}, \mathcal{F}) \cong \Hom_{\id_T}(\mathcal{E}, f^{\bullet} \mathcal{F})\cong \Hom_{\id_S}(f_{\bullet} \mathcal{E},  \mathcal{F}).  \]

More generally, in the multicategorical setting, if $f$ is a multimorphism $f \in \Hom(S_1, \dots, S_n; T)$ for some $n \ge 1$, we get an {\em adjunction of $n$ variables}
\[ f_\bullet: \DD(I)_{S_1} \times \cdots \times  \DD(I)_{S_n} \rightarrow \DD(I)_T, \]
and
\[ f^{i,\bullet}:  \DD(I)_{S_1}^{\op} \times \mathop{\cdots}\limits^{\widehat{i}} \times  \DD(I)_{S_n}^{\op} \times  \DD(I)_T \rightarrow \DD(I)_{S_i}. \]
\end{PAR}

\begin{PAR}
For a diagram of categories
\[ \xymatrix{  & I \ar[d]^\alpha \\
K \ar[r]_\beta & J \\
} \]
the {\bf slice category} $K \times_{/J} I$ is the category of tripels $(k, i, \mu)$, where $k \in K$, $i \in I$ and $\mu: \alpha(i) \rightarrow \beta(k)$.
It sits in a corresponding 2-commutative square:
\[ \xymatrix{ K \times_{/J} I \ar[r]^-B \ar[d]_{A} \ar@{}[dr]|{\Nearrow^\mu} & I \ar[d]^\alpha \\
K \ar[r]_\beta & J \\
} \]
which is universal w.r.t.\@ such squares. This construction is associative, but of course not commutative unless $J$ is a groupoid. 
The projection $K \times_{/J} I \rightarrow K$ is a Grothendieck fibration and the projection $K \times_{/J} I \rightarrow I$ is a Grothendieck opfibration (see \ref{APPGROTH}). There is an adjunction
\[ \xymatrix{ I \times_{/J} J \ar@<4pt>[rr]^-{} && \ar@<4pt>[ll]^-{} I.  } \]
\end{PAR}

\begin{PAR}\label{PARDERBASECHANGE}
Consider an arbitrary 2-commutative square
\begin{equation}\label{eqcommsquare}
\xymatrix{ L \ar[r]^-B \ar[d]_{A} \ar@{}[dr]|{\Nearrow^\mu} & I \ar[d]^\alpha \\
K \ar[r]_\beta & J \\
} 
\end{equation}

and let $S \in \SSS(J)$ be an object and $\mathcal{E}$ a preimage in $\DD(J)$ w.r.t.\@ $p$.
The 2-morphism (natural transformation) $\mu$  induces a functorial morphism 
\[ \SSS(\mu): A^*\beta^*S \rightarrow B^*\alpha^*S \]
 and therefore a functorial morphism
\[ \DD(\mu): A^*\beta^* \mathcal{E} \rightarrow B^*\alpha^* \mathcal{E} \]
over $\SSS(\mu)$, or ---
if we are in the (op)fibered situation --- equivalently
\[ A^*\beta^* \mathcal{E} \rightarrow (\SSS(\mu))^\bullet B^*\alpha^* \mathcal{E} \]
respectively
\[ (\SSS(\mu))_\bullet A^*\beta^* \mathcal{E} \rightarrow  B^*\alpha^* \mathcal{E} \]
in the fiber above $A^*\beta^* S$, resp.\@ $B^*\alpha^*S$,

Let now $\mathcal{F}$ be an object over $\alpha^*S$.
If relative right homotopy Kan extensions exist, we may form the following composition which will be called the (right) {\bf base-change morphism}:
\begin{equation}\label{eqbasechangeleft}
 \beta^* \alpha_* \mathcal{F} \rightarrow A_* A^* \beta^* \alpha_* \mathcal{F} \rightarrow A_* (\SSS(\mu))^\bullet B^* \alpha^* \alpha_* \mathcal{F} \rightarrow A_* (\SSS(\mu))^\bullet B^* \mathcal{F}.
\end{equation}
(We again omit the base $S$ from the notation for better readability --- it is always determined by the argument.)

Let now $\mathcal{F}$ be an object over $\beta^*S$.
If relative left homotopy Kan extensions exist, we may form the composition, the (left) {\bf base-change morphism}:
\begin{equation}\label{eqbasechangeright} 
 B_! (\SSS(\mu))_\bullet A^* \mathcal{F}  \rightarrow B_! (\SSS(\mu))_\bullet A^* \beta^* \beta_! \mathcal{F}  \rightarrow  B_! B^* \alpha^* \beta_! \mathcal{F}  \rightarrow \alpha^* \beta_!  \mathcal{F}. 
\end{equation}

We will later say that the square (\ref{eqcommsquare}) is {\bf homotopy exact} if (\ref{eqbasechangeleft}) is an isomorphism for all right fibered derivators (see Definition~\ref{DEFFIBDER} below) and (\ref{eqbasechangeright}) is an isomorphism for all left fibered derivators.
It is obvious a priori that  for a left {\em and} right fibered derivator 
 (\ref{eqbasechangeleft}) is an isomorphism if and only if  (\ref{eqbasechangeright}) is, one being the adjoint of the other (see \cite[\S 1.2]{Gro13} for analogous reasoning in the case of usual derivators).
\end{PAR}

\begin{DEF}\label{DEFDER}
We consider the following axioms\footnote{The numbering is compatible with that of \cite{Gro13} in the case of non-fibered derivators.} on a pre-(multi-)derivator $\DD$:
\begin{itemize}
\item[(Der1)] For $I, J$ in $\Dia$, the natural functor $\DD(I \coprod J) \rightarrow \DD(I) \times \DD(J)$ is an equivalence of (multi\mbox{-})categories. Moreover $\DD(\emptyset)$ is not empty.
\item[(Der2)]
For $I$ in $\Dia$ the `underlying diagram' functor
\[ \dia: \DD(I) \rightarrow \Hom(I, \DD(\cdot)) \]
is conservative.
\end{itemize}

In addition, we consider the following axioms for a strict morphism of pre-(multi-)derivators \[ p: \DD \rightarrow \SSS:\]

\begin{enumerate}
\item[(FDer0 left)]
For each $I$ in $\Dia$ the morphism $p$ specializes to an opfibered (multi-)category and
any functor $\alpha: I \rightarrow J$ in $\Dia$ induces a  diagram
\[ \xymatrix{
\DD(J) \ar[r]^{\alpha^*} \ar[d] & \DD(I) \ar[d]\\
\SSS(J) \ar[r]^{\alpha^*} & \SSS(I) 
}\]
of opfibered (multi-)categories, i.e.\@ the top horizontal functor maps coCartesian arrows to coCartesian arrows.

\item[(FDer3 left)]
For each functor $\alpha: I \rightarrow J$ in $\Dia$ and $S \in \SSS(J)$ the functor
$\alpha^*$ between fibers
\[ \DD(J)_{S} \rightarrow \DD(I)_{\alpha^*S} \]
has a left-adjoint $\alpha_!^S$.

\item[(FDer4 left)]
For each functor $\alpha: I \rightarrow J$ in $\Dia$, and for any object $j \in J$, and the 2-cell
\[ \xymatrix{  I \times_{/J} j \ar[r]^-\iota \ar[d]_{\alpha_j} \ar@{}[dr]|{\Swarrow^\mu} & I \ar[d]^\alpha \\
\{j\} \ar@{^{(}->}[r]^j & J \\
} \]
we get that the induced natural transformation of functors ${\alpha_j}_! (\SSS(\mu))_\bullet \iota^* \rightarrow j^* {\alpha}_! $ is an isomorphism\footnote{This is meant to hold w.r.t.\@ all bases $S \in \SSS(J)$.}.
\item[(FDer5 left)] (only needed for the multiderivator case). For any Grothendieck opfibration $\alpha: I \rightarrow J $ in $\Dia$, and for any morphism $\xi \in \Hom(S_1, \dots, S_n; T)$ in $\SSS(\cdot)$ for some $n\ge 1$, the natural transformations of functors
\[ \alpha_! (\alpha^*\xi)_\bullet (\alpha^*-, \cdots, \alpha^*-,\ -\ , \alpha^*-, \cdots, \alpha^*-) \cong  \xi_\bullet (-, \cdots, -,\ \alpha_!-\ , -, \cdots, -) \]
are isomorphisms.
\end{enumerate}

and their dual variants:

\begin{enumerate}
\item[(FDer0 right)]
For each $I$ in $\Dia$ the morphism $p$ specializes to a fibered (multi-)category and any {\em Grothendieck opfibration} $\alpha: I \rightarrow J$
in $\Dia$  induces a  diagram
\[ \xymatrix{
\DD(J) \ar[r]^{\alpha^*} \ar[d] & \DD(I) \ar[d]\\
\SSS(J) \ar[r]^{\alpha^*} & \SSS(I) 
}\]
of fibered (multi-)categories, i.e.\@ the top horizontal functor maps Cartesian arrows w.r.t.\@ the $i$-th slot to Cartesian arrows w.r.t.\@ the $i$-th slot.
\item[(FDer3 right)]
For each functor $\alpha: I \rightarrow J$ in $\Dia$ and $S \in \SSS(J)$ the functor
$\alpha^*$ between fibers
\[ \DD(J)_{S} \rightarrow \DD(I)_{\alpha^*S} \]
has a right-adjoint $\alpha_*^S$.
\item[(FDer4 right)]
For each morphism $\alpha: I \rightarrow J$ in $\Dia$, and for any object $j \in J$, and the 2-cell
\[ \xymatrix{  j \times_{/J} I \ar[r]^-\iota \ar[d]_{\alpha_j} \ar@{}[dr]|{\Nearrow^\mu} & I \ar[d]^\alpha \\
\{j\} \ar@{^{(}->}[r]^j & J \\
} \]
we get that the induced natural transformation of functors $  j^* {\alpha}_* \rightarrow {\alpha_j}_* (\SSS(\mu))^\bullet \iota^*$ is an isomorphism\footnote{This is meant to hold w.r.t.\@ all bases $S \in \SSS(J)$.}.

\item[(FDer5 right)] (only needed for the multiderivator case). For any functor $\alpha: I \rightarrow J$ in $\Dia$, and for any a morphism $\xi \in \Hom(S_1, \dots, S_n; T)$ in $\SSS(\cdot)$ for some $n\ge 1$, the natural transformations of functors
\[ \alpha_* (\alpha^*\xi)^{\bullet,i} (\alpha^*-, \cdots, \alpha^*-\ ;\ -) \cong  \xi^{\bullet,i} (-, \cdots, -\ ;\ \alpha_*-) \]
are isomorphisms for all $1 \le i \le n$.
\end{enumerate}
\end{DEF}

\begin{DEF}\label{DEFFIBDER}
A strict morphism of pre-(multi-)derivators $p: \DD \rightarrow \SSS$ with domain $\Dia$
is called a {\bf left fibered (multi-)derivator} with domain $\Dia$, if axioms (Der1--2) hold for $\DD$ and $\SSS$ and (FDer0--5 left) hold for $p$.
Similarly it is called a {\bf right fibered (multi)-derivator} with domain $\Dia$, if instead the corresponding dual axioms (FDer0--5 right)  hold.
It is called just {\bf fibered} if it is both left and right fibered.  
\end{DEF}

The squares in axioms (FDer4 left/right) are in fact homotopy exact and it follows from the axioms (FDer4 left/right) that many more are (see \ref{PROPHOMCART}).

There is some reduncancy in the axioms, cf.\@ \ref{LEMMALEFTRIGHT} and \ref{REDUNDANCY}.

\begin{FRAGE}
It seems natural to allow also (symmetric) multicategories, in particular operads, as {\em domain} for a fibered (symmetric) multiderivator. 
The author however did not succeed in writing down a neat generalization of (FDer3--4) which would encompass (FDer5).
\end{FRAGE}

\begin{LEMMA}\label{LEMMALEFTRIGHT}
For a strict morphism of pre-derivators $\DD \rightarrow \SSS$ such that both satisfy (Der1) and (Der2) and such that it induces a bifibration of multi-categories $\DD(I) \rightarrow \SSS(I)$ for all $I \in \Dia$ we have the following implications:
\begin{eqnarray}
\text{(FDer0 left) for $n$-ary morphisms, $n\ge 1$} &\Leftrightarrow& \text{(FDer5 right)} \label{impl1} \\
\text{(FDer0 right)} &\Leftrightarrow& \text{(FDer5 left)}  \label{impl2} 
\end{eqnarray} 
\end{LEMMA}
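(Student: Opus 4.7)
Both equivalences arise from the mate (conjugate) correspondence for adjoint functors; I will describe it for \eqref{impl1} and indicate the parallel changes for \eqref{impl2}.

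First, I rephrase each axiom as the invertibility of a canonical comparison. Fix $\alpha\colon I\to J$, a morphism $\xi\in\Hom_{\SSS(J)}(S_1,\dots,S_n;T)$, objects $\mathcal{E}_j\in\DD(J)_{S_j}$, and $\mathcal{H}\in\DD(I)_{\alpha^*T}$. The opfibered structure yields $\xi_\bullet(\mathcal{E}_1,\dots,\mathcal{E}_n)$ and $(\alpha^*\xi)_\bullet(\alpha^*\mathcal{E}_1,\dots,\alpha^*\mathcal{E}_n)$; applying $\alpha^*$ to a coCartesian arrow over $\xi$ and factoring it through a coCartesian lift over $\alpha^*\xi$ produces a canonical comparison
\[
c\colon (\alpha^*\xi)_\bullet(\alpha^*\mathcal{E}_1,\dots,\alpha^*\mathcal{E}_n)\longrightarrow\alpha^*\xi_\bullet(\mathcal{E}_1,\dots,\mathcal{E}_n),
\]
whose invertibility for all such data is precisely (FDer0 left). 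Analogously, the Cartesian lifts in the $i$-th slot together with the unit/counit of $\alpha^*\dashv\alpha_*$ yield a canonical comparison $d\colon\alpha_*(\alpha^*\xi)^{\bullet,i}(\alpha^*\mathcal{E}_1,\dots;\mathcal{H})\to\xi^{\bullet,i}(\mathcal{E}_1,\dots;\alpha_*\mathcal{H})$, and (FDer5 right) is the invertibility of $d$.

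Second, I invoke the two adjunctions $\alpha^*\dashv\alpha_*$ (existing on fibres by the bifibration hypothesis, implicit in the formulation of (FDer5 right)) and the multivariable $\xi_\bullet\dashv\xi^{\bullet,i}$ in each of $\DD(J)$ and $\DD(I)$. A short computation gives natural bijections
\begin{align*}
\Hom(\mathcal{E}_i,\alpha_*(\alpha^*\xi)^{\bullet,i}(\alpha^*\mathcal{E}_1,\dots;\mathcal{H})) &\cong \Hom((\alpha^*\xi)_\bullet(\alpha^*\mathcal{E}_1,\dots,\alpha^*\mathcal{E}_n),\mathcal{H}), \\
\Hom(\mathcal{E}_i,\xi^{\bullet,i}(\mathcal{E}_1,\dots;\alpha_*\mathcal{H})) &\cong \Hom(\alpha^*\xi_\bullet(\mathcal{E}_1,\dots,\mathcal{E}_n),\mathcal{H}),
\end{align*}
and unwinding the universal properties identifies the map induced on Hom-sets by $c$ with the one induced by $d$. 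By the Yoneda lemma, $c$ is an isomorphism iff $d$ is, which establishes \eqref{impl1} for $\xi\in\SSS(J)$. Since (FDer5 right) is phrased for $\xi\in\SSS(\cdot)$, one uses (Der2) to reduce the coCartesian-preservation statement over $J$ to a pointwise one, at which point $\xi$ may be taken in $\SSS(\cdot)$.

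For \eqref{impl2} the same scheme applies: the adjunction $\alpha_!\dashv\alpha^*$ (existing by the opfibered hypothesis and (FDer3 left), implicit in (FDer5 left)) plays the role of $\alpha^*\dashv\alpha_*$, and Cartesian lifts (available for $n$-ary morphisms only when $\alpha$ is an opfibration, exactly as in (FDer0 right)) play the role of coCartesian ones; the mate of the analogue of $c$ is the comparison in (FDer5 left). The main obstacle is the verification that $c$ and $d$ are genuinely mates: one must unwind the universal properties of $\xi_\bullet$, $\xi^{\bullet,i}$, $\alpha^*$, and $\alpha_*$ and check that the zigzag defining $c$ matches, under these adjunctions, the adjoint zigzag defining $d$. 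Once this bookkeeping is done, both equivalences are formal consequences of the mate correspondence.
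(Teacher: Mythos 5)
Your argument is essentially the paper's: both recast (FDer0 left) and (FDer5 right) as the invertibility of exchange 2-cells that are mates under the adjunctions $\xi_\bullet\dashv\xi^{\bullet,i}$ and $\alpha^*\dashv\alpha_*$, and conclude by the conjugate correspondence; your detour through Hom-sets and Yoneda is just a spelled-out form of the paper's one-step "take the adjoint of this diagram." The one step you should strike is the appeal to (Der2) at the end. As written it is circular: identifying $i^*c$ with the exchange morphism for some $\xi\in\SSS(\cdot)$ already requires $i^*$ to commute with push-forwards along $n$-ary morphisms in $\SSS(I)$, i.e.\ an instance of (FDer0 left) for $i\colon\cdot\to I$, which is what you are trying to prove. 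It is also unnecessary: the paper simply runs the mate calculation pointwise in $f\in\SSS(J)$ (tacitly reading (FDer5 right) as a statement about the exchange morphism over any base $f$, not only pulled back from $\SSS(\cdot)$), and never invokes (Der2) here. If you keep your formulation, just drop that sentence and state the conclusion for each fixed $f$; the equivalence of the two axioms then holds morphism by morphism, exactly as in the paper.
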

\begin{proof}
We will only show the implication (\ref{impl1}), the other being similar.
Choosing pull-back functors $f^\bullet$, the remaining part of (FDer0) says that the natural 2-morphism 
\[ \xymatrix{
\DD(J)_{S_1} \times \cdots \times  \DD(J)_{S_n} \ar[rr]^-{f_\bullet} \ar[d]^{\alpha^*} \ar@{}[rrd]|{\Swarrow} & & \DD(J)_{T} \ar[d]^{\alpha^*} \\
\DD(I)_{\alpha^* S_1} \times \cdots \times  \DD(J)_{\alpha^* S_n} \ar[rr]^-{(\alpha^*f)_\bullet} & &  \DD(I)_{\alpha^* T}
} \]
is an isomorphism. Taking the adjoint of this diagram (of $f_\bullet$ w.r.t.\@ the $i$-th slot) we get the diagram
\[ \xymatrix{
\DD(I)_{\alpha^* S_1}^{\op} \times \mathop{\cdots}\limits^{\widehat{i}} \times \DD(I)_{\alpha^* S_n}^{\op} \ar@{}[r]|-{\times} &  \DD(I)_{\alpha^* T} \ar[d]^{ \alpha_*}  
 \ar@{}[rrd]|{\Swarrow} \ar[rr]^-{(\alpha^*f)^{\bullet,i}} & & \DD(J)_{\alpha^* S_i}  \ar[d]^{\alpha_*} \\
\DD(I)_{S_1}^{\op} \times \mathop{\cdots}\limits^{\widehat{i}} \times  \DD(J)_{S_n}^{\op} \ar[u]^{(\alpha^*)^{\op}}  \ar@{}[r]|-{\times} &  \DD(J)_{T}  \ar[rr]^-{f^{\bullet, i}}  & & \DD(J)_{S_i}
} \]
That its 2-morphism is an isomorphism is the content of (FDer5 left). Hence (FDer0 left) and (FDer5 right) are equivalent in this situation. 

For (\ref{impl2}) note that for both (FDer0 right) and (FDer5 left), the functor $\alpha$ in question is restricted to the class of Grothendieck opfibrations. 
\end{proof}

\begin{BEM}
The axioms (FDer0) and (FDer3--5) are similar to the axioms of a six-functor-formalism (cf.\@ the introduction or the appendix \ref{APPMULTI}). 
It is actually possible to make this analogy precise and {\em define} a fibered multiderivator as a bifibration of 2-multicategories $[\DD] \rightarrow \Dia^{\mathrm{cor}}(\SSS)$ where  $\Dia^{\mathrm{cor}}(\SSS)$ is a certain
category of multicorrespondences of diagrams in $\SSS$, similar to our definition of a usual six-functor-formalism (cf.\@ Definition~\ref{DEF6FU}).
This also clarifies the existence and comparison of the internal and external monoidal structure, resp.\@ duality, 
in a closed monoidal derivator (i.e.\@ fibered multiderivator over $\{\cdot\}$) or more generally for
any fibered multiderivator.  We will explain this in detail in a subsequent article \cite{Hor15}. 
\end{BEM}

\begin{PAR}\label{PARDERSIMPL}
The pre-derivator associated with an $\infty$-category $\mathcal{S}$ is actually a left and right derivator (in the usual sense, i.e.\@ fibered over $\{\cdot\}$)
if $\mathcal{S}$ is complete and co-complete \cite{GPS13}. This includes the case of pre-derivators associated with categories, which is, of course, classical --- axiom (FDer4) expressing nothing else than Kan's formulas. 
\end{PAR}

\begin{PAR}\label{DEFFDERFIBERS}
Let $S \in \SSS(\cdot)$ be an object and $p: \DD \rightarrow \SSS$ be a (left, resp.\@ right) fibered multiderivator. The association
\[ I \mapsto \DD(I)_{p^*S}, \]
where $p: I \rightarrow \cdot$ is the projection, defines a (left, resp.\@ right) derivator in the usual sense which we call its fiber $\DD_S$
over $S$. The axioms (FDer6--7) stated below involve only these fibers.
\end{PAR}

\begin{DEF}
More generally, if $S \in \SSS(J)$ we may consider the association
\[ I \mapsto \DD(I \times J)_{\pr_2^*S}, \]
where $\pr_2: I \times J \rightarrow J$ is the second projection. This defines again a (left, resp.\@ right) derivator in the usual sense which we call its {\bf fiber} $\DD_S$ 
over $S$.
\end{DEF}

\begin{LEMMA}[left] \label{LEMMAMORPHDERLEFT}
Let $\DD \rightarrow \SSS$ be a left fibered multiderivator and let $I \in \Dia$ be a diagram and $f \in \Hom_{\SSS(J)}(S_1, \dots, S_n; T)$ for some $n \ge 1$ be a morphism. Then
the collection of functors for each $J \in \Dia$
\begin{eqnarray*}
f_\bullet: \DD(J \times I)_{\pr_2^*S_1} \times \cdots \times \DD(J \times I)_{\pr_2^*S_n} & \rightarrow & \DD(J \times I)_{\pr_2^*T}  \\
\mathcal{E}_1, \dots, \mathcal{E}_n &\mapsto& (\pr_2^* f)_\bullet(\mathcal{E}_1, \dots, \mathcal{E}_n)
\end{eqnarray*}
defines a morphism of derivators $\DD_{S_1} \times \dots \times \DD_{S_n} \rightarrow \DD_T$. Furthermore, for a collection $\mathcal{E}_k \in \DD(I), k\not=i$ the morphism of derivators:
\begin{eqnarray*}
\DD(J \times I)_{\pr_2^*S_i} & \rightarrow & \DD(J \times I)_{\pr_2^*T}  \\
 \mathcal{E}_i &\mapsto& (\pr_2^* f)_\bullet(\pr_2^* \mathcal{E}_1, \dots, \mathcal{E}_i, \dots, \pr_2^* \mathcal{E}_n)
\end{eqnarray*}
is left continuous (i.e.\@ commutes with left Kan extensions). 
\end{LEMMA}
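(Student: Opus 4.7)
We verify the two assertions of the lemma separately.

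\emph{Morphism of derivators.} For $\alpha\colon J \to J'$ in $\Dia$ set $\beta := \alpha \times \id_I\colon J \times I \to J' \times I$. Since $\pr_2 \circ \beta = \pr_2$, we have $\beta^*(\pr_2^* f) = \pr_2^* f$. Axiom (FDer0 left) applied to $\beta$ asserts that $\beta^*$ sends coCartesian multimorphisms to coCartesian multimorphisms; applied to a coCartesian multimorphism over $\pr_2^* f$ with source $(\mathcal{E}_1,\dots,\mathcal{E}_n)$ this immediately yields the structural natural isomorphism
\[
\beta^*\, (\pr_2^* f)_\bullet(\mathcal{E}_1,\dots,\mathcal{E}_n) \;\cong\; (\pr_2^* f)_\bullet(\beta^*\mathcal{E}_1,\dots,\beta^*\mathcal{E}_n),
\]
as required for a morphism of (multi-)derivators; the coherence (pseudo-functoriality) is inherited from that of $\DD$.

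\emph{Left continuity in slot $i$.} With $\mathcal{E}_k\in\DD(I)_{S_k}$ ($k\ne i$) fixed, we must show that for every $\alpha\colon J \to J'$ in $\Dia$ the canonical base-change morphism
\[
\beta_!^{\pr_2^* T}\,(\pr_2^* f)_\bullet(\pr_2^*\mathcal{E}_1,\dots,\mathcal{E}_i,\dots,\pr_2^*\mathcal{E}_n) \;\longrightarrow\; (\pr_2^* f)_\bullet(\pr_2^*\mathcal{E}_1,\dots,\beta_!^{\pr_2^* S_i}\mathcal{E}_i,\dots,\pr_2^*\mathcal{E}_n)
\]
is an isomorphism. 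By (Der2) it suffices to check this after applying $(j',i_0)^*$ for every $(j',i_0)\in J'\times I$. Axiom (FDer4 left) computes both sides in terms of the slice projection of $\beta$ at $(j',i_0)$; using the canonical splitting
\[
(J\times I)\times_{/(J'\times I)} (j',i_0) \;=\; (J\times_{/J'} j')\times (I/i_0),
\]
the non-$i$ arguments $\pr_2^*\mathcal{E}_k$ are pulled back from the second factor and hence are ``constant'' along the first. The projection $J\times_{/J'} j'\to\{j'\}$ is a Grothendieck opfibration (and so is its product with $\id_{I/i_0}$), so (FDer5 left) commutes $(\pr_2^* f)_\bullet$ past the Kan extension in the $i$-th slot. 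Combined with the parallel slice formula applied to $\beta_!^{\pr_2^* S_i}$ on the right-hand side, this yields the desired isomorphism.

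\emph{Main obstacle.} The technical difficulty is that (FDer5 left) is formulated for $\xi\in\SSS(\cdot)$, whereas $\pr_2^* f$ lives over $J'\times I$. This is precisely what the pointwise reduction via (Der2) and (FDer4 left) circumvents, localizing the statement to the opfibration $J\times_{/J'}j'\to\{j'\}$ over a point; alternatively one appeals to the expected relative extension of (FDer5 left) obtained by combining the axioms, in the spirit of Remark \ref{REDUNDANCY}. The remaining work is the compatibility of the 2-cells attached to the slice decompositions, which is routine but notationally tedious.
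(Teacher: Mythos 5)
Your treatment of the first assertion (morphism of derivators) is essentially what the paper implicitly takes for granted: (FDer0 left) applied to $\beta = \alpha\times\id_I$ gives the structural isomorphism, and coherence is inherited. That part is fine.

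For left continuity, your route differs from the paper's in a way worth pausing on. You reduce all the way to points $(j',i_0)\in J'\times I$ and exploit the splitting of the slice $(J\times I)\times_{/(J'\times I)}(j',i_0)\cong(J\times_{/J'}j')\times(I\times_{/I}i_0)$, then try to apply (FDer5 left) to the projection onto the second factor. The paper instead pulls back only along $(\id,j')^*\colon \DD(I\times J')\to\DD(I)$ and observes that the resulting square with top-left corner $I\times(j'\times_{/J'}J)$ is the product with $I$ of the (FDer4 left) slice square, hence homotopy exact by Proposition~\ref{PROPHOMCART}, part 4. The payoff of carrying $I$ along is concrete: the 2-cell $\mu$ of that slice square is trivial in the $I$-direction, so the non-$i$ arguments $\pr_1^*\mathcal{E}_k$ satisfy $(\id,\iota)^*\pr_1^*\mathcal{E}_k=(\id,p)^*\mathcal{E}_k$ on the nose, and (FDer5 left) applies directly to the opfibration $(\id,p)\colon I\times(j'\times_{/J'}J)\to I$. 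In your finer reduction the 2-cell has a nontrivial $I$-component as well, so $\SSS(\mu)_\bullet$ mixes both directions; disentangling the two factors of the slice (and then also disposing of the $(I/i_0)$-factor, which at least has a terminal object) is genuine extra work that you have compressed into ``routine but notationally tedious.'' Your outline is not wrong, but the paper's choice is what makes the chain of six isomorphisms go through cleanly.

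One further remark: your ``main obstacle'' paragraph correctly flags that (FDer5 left) is stated with $\xi\in\SSS(\cdot)$, but your reduction does not actually escape this — you still apply it with base a morphism over $I/i_0$, not over a point, exactly as the paper applies it with base over $I$. The intended reading of the axiom (compare the proof of Lemma~\ref{LEMMALEFTRIGHT}, which treats $f\in\SSS(J)$) is that the base lives over the target of the opfibration, so this is loose phrasing in the axiom rather than a genuine gap in either argument.
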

\begin{proof} The only point which might not be clear is the left continuity of the bottom morphism of pre-derivators.
Consider the following 2-commutative square, where $I, J, J' \in \Dia$, $\alpha: J \rightarrow J'$ is a functor, and $j' \in J'$ 
\[ \xymatrix{
I \times (j' \times_{/J'} J) \ar[rr]^-{(\id , \iota) } \ar[d]_{(\id , p)} \ar@{}[drr]|-{\Nearrow} && I \times J \ar[d]^{(\id, \alpha)} \\
I \times j' \ar[rr] && I \times J'
} \]

It is homotopy exact by \ref{PROPHOMCART}, 4. 
Therefore we have (using FDer3--5 left):
\begin{eqnarray*}
 && (\id, j')^* (\id, \alpha)_!  (\pr_1^*f)_\bullet(\pr_1^* \mathcal{E}_1, \dots, \mathcal{E}_i, \dots, \pr_1^* \mathcal{E}_n)  \\
 &\cong&  (\id, p)_! (\id, \iota)^* (\pr_1^*f)_\bullet(\pr_1^* \mathcal{E}_1, \dots, \mathcal{E}_i, \dots, \pr_1^* \mathcal{E}_n) \\
 &\cong& (\id, p)_! (\pr_1^*f)_\bullet( (\id, \iota)^* \pr_1^* \mathcal{E}_1,  \dots, (\id, \iota)^*\mathcal{E}_i, \dots, (\id, \iota)^* \pr_1^* \mathcal{E}_n) \\
 &\cong&  (\id, p)_! (\pr_1^*f)_\bullet( (\id, p)^* \mathcal{E}_1,  \dots, (\id, \iota)^* \mathcal{E}_i, \dots, (\id, p)^* \mathcal{E}_n) \\
 &\cong&   f_\bullet(  \mathcal{E}_1,  \dots, (\id, p)_! (\id, \iota)^* \mathcal{E}_i, \dots,  \mathcal{E}_n) \\
 &\cong&   f_\bullet(  \mathcal{E}_1,  \dots, (\id, j')^* (\id, \alpha)_!  \mathcal{E}_i, \dots,  \mathcal{E}_n) \\
 &\cong&   (\id, j')^* (\pr_1^*f)_\bullet( \pr_1^* \mathcal{E}_1,  \dots,  (\id, \alpha)_!  \mathcal{E}_i, \dots,  \pr_1^* \mathcal{E}_n)
\end{eqnarray*}
(Note that $(\id, p)$ is trivially a Grothendieck op-fibration). 
A tedious check shows that the composition of these isomorphisms is $(\id, j')^*$ applied to the exchange morphism 
\[  (\id, \alpha)_!  (\pr_1^*f)_\bullet(\pr_1^* \mathcal{E}_1, \dots, \mathcal{E}_i, \dots, \pr_1^* \mathcal{E}_n)  \rightarrow  (\pr_1^*f)_\bullet( \pr_1^* \mathcal{E}_1,  \dots,  (\id, \alpha)_!  \mathcal{E}_i, \dots, \pr_1^*  \mathcal{E}_n)  \]
Since the above holds for any $j' \in J'$ the exchange morphism is therefore an isomorphism by (Der2). 
\end{proof}

In the right fibered situation the analogously defined morphisms $f^{i, \bullet}$ are not expected to be made into a morphism of fibers this way. For a discussion of how this is solved, we 
refer the reader to the article~\cite{Hor15} in preparation, where a fibered multiderivator is redefined as a certain type of six-functor-formalism. This will let appear the discussion and results of this section in a much more
clear fashion. However, we have: 

\begin{LEMMA}[right]  \label{LEMMAMORPHDERRIGHT}
Let $\DD \rightarrow \SSS$ be a right fibered multiderivator and let $I \in \Dia$ be a diagram and $f \in \Hom_{\SSS(J)}(S_1, \dots, S_n; T)$, for some $ n\ge 1$, be a morphism. For each $J \in \Dia$ and
for each collection $\mathcal{E}_k \in \DD(I)$, $k \not=i$, the association
\begin{eqnarray*}
\DD(J \times I)_{\pr_2^*T} & \rightarrow & \DD(J \times I)_{\pr_2^*S_i}  \\
 \mathcal{F} &\mapsto& (\pr_2^* f)^{\bullet, i}(\pr_2^* \mathcal{E}_1, \dots, \pr_2^* \mathcal{E}_n; \mathcal{F})
\end{eqnarray*}
defines a morphism of derivators which is right continuous (i.e.\@ commutes with right Kan extensions). This is the right adjoint in the pre-derivator sense to the morphism of pre-derivators in the previous lemma, as soon as $\DD \rightarrow \SSS$ is left and right fibered. 
\end{LEMMA}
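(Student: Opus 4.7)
My plan is to dualize the proof of Lemma~\ref{LEMMAMORPHDERLEFT} to the right-fibered setting. There are three things to check: that the formula defines a morphism of pre-derivators, that it is right continuous, and that when $\DD \to \SSS$ is both left and right fibered the resulting morphism is the right adjoint (in the pre-derivator sense) of that of Lemma~\ref{LEMMAMORPHDERLEFT}.

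That the formula defines a morphism of pre-derivators is, as in the left case, the routine part: for a functor $\alpha: J \to J'$ in $\Dia$ the required natural isomorphism
\[ (\id \times \alpha)^{*}\,(\pr_1^{*}f)^{\bullet,i}(\pr_1^{*}\mathcal{E}_1,\dots;-) \;\cong\; (\pr_1^{*}f)^{\bullet,i}(\pr_1^{*}\mathcal{E}_1,\dots;(\id \times \alpha)^{*}-) \]
follows from pseudo-functoriality of $\DD$ together with the compatibility of $\alpha^{*}$ with the fibered structure; when $\alpha$ is a Grothendieck opfibration this is literally (FDer0 right), and the general case is then obtained by the standard mate construction applied to the corresponding isomorphism on the left-adjoint side of Lemma~\ref{LEMMAMORPHDERLEFT}.

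The main step is right continuity. Fix $\alpha: J \to J'$ in $\Dia$ and $j' \in J'$, and consider the 2-commutative square
\[ \xymatrix{ I \times (j' \times_{/J'} J) \ar[rr]^-{(\id,\iota)} \ar[d]_{(\id,p)} \ar@{}[drr]|-{\Nearrow^{\mu}} && I \times J \ar[d]^{(\id,\alpha)} \\ I \times \{j'\} \ar[rr] && I \times J' } \]
which is homotopy exact by Proposition~\ref{PROPHOMCART}; note that $(\id,p)$ is trivially a Grothendieck opfibration and $(\id,\iota)$ is one as well, since $j' \times_{/J'} J \to J$ is. Since $\pr_1^{*}f$ and the $\pr_1^{*}\mathcal{E}_k$ are pulled back from $I$ while $\mu$ acts only in the second factor, $(\SSS(\mu))^{\bullet}$ reduces to the identity in the applications below. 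For $\mathcal{F} \in \DD(I \times J')_{\pr_1^{*}T}$ one then obtains the chain of isomorphisms
\begin{align*}
(\id,j')^{*} (\id,\alpha)_{*} (\pr_1^{*}f)^{\bullet,i}(\pr_1^{*}\mathcal{E}_1,\dots;\mathcal{F})
 &\cong (\id,p)_{*} (\id,\iota)^{*} (\pr_1^{*}f)^{\bullet,i}(\pr_1^{*}\mathcal{E}_1,\dots;\mathcal{F}) \\
 &\cong (\id,p)_{*} (\pr_1^{*}f)^{\bullet,i}(\pr_1^{*}\mathcal{E}_1,\dots;(\id,\iota)^{*}\mathcal{F}) \\
 &\cong (\pr_1^{*}f)^{\bullet,i}(\pr_1^{*}\mathcal{E}_1,\dots;(\id,p)_{*}(\id,\iota)^{*}\mathcal{F}) \\
 &\cong (\pr_1^{*}f)^{\bullet,i}(\pr_1^{*}\mathcal{E}_1,\dots;(\id,j')^{*}(\id,\alpha)_{*}\mathcal{F}) \\
 &\cong (\id,j')^{*} (\pr_1^{*}f)^{\bullet,i}(\pr_1^{*}\mathcal{E}_1,\dots;(\id,\alpha)_{*}\mathcal{F}),
\end{align*}
using homotopy exactness of the square in steps 1 and 4, the morphism-of-pre-derivators property from the first step in steps 2 and 5, and (FDer5 right) applied to the opfibration $(\id,p)$ in step 3. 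A tedious mate calculation identifies the composite with $(\id,j')^{*}$ applied to the canonical exchange morphism, and since this holds for every $j' \in J'$, axiom (Der2) yields that the exchange morphism itself is an isomorphism, which is right continuity.

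The main obstacle is precisely the identification of the chain composite with the canonical exchange morphism; as in the left case this involves careful mate bookkeeping, here complicated further by the fact that we are dealing with the $i$-th slot of a multi-variable adjunction. For the final assertion, when $\DD \to \SSS$ is both left and right fibered the fiberwise adjunction $(\pr_1^{*}f)_{\bullet} \dashv (\pr_1^{*}f)^{\bullet,i}$ holds in $\DD(K \times I)$ for every $K \in \Dia$, and its units and counits are compatible with pull-back along $\alpha^{*}$ by (FDer0 left) and (FDer0 right); they therefore assemble into a 2-categorical adjunction between the morphism of pre-derivators of Lemma~\ref{LEMMAMORPHDERLEFT} and the one defined here.
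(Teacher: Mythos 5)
There is a genuine gap, and it is precisely the one the paper flags explicitly. In your chain of isomorphisms the last step moves $(\id,j')^{*}$ past $(\pr_1^{*}f)^{\bullet,i}$, and you justify this by appealing to "the morphism-of-pre-derivators property." But the axiom (FDer0 right) only guarantees that $\alpha^{*}$ preserves Cartesian arrows --- equivalently, that $\alpha^{*}$ commutes with $f^{\bullet,i}$ --- when $\alpha$ is a Grothendieck \emph{opfibration}. While $(\id,\iota)$ is one, $(\id,j')$ is not, so this step is not an application of a previously established general fact. Your remark that the general case of the commutativity $\alpha^{*}f^{\bullet,i}\cong(\alpha^{*}f)^{\bullet,i}\alpha^{*}$ "follows by the standard mate construction applied to the left-adjoint side" is not correct: the mate of a natural isomorphism is a 2-morphism, but it need not be invertible --- that invertibility is exactly what has to be proven, and it fails in general for non-opfibrations.

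The paper's proof does exactly your first chain, but then stops and says that the last step "has to be justified further," and supplies a second chain of isomorphisms using a second homotopy exact square
\[ \xymatrix{ I \times (J \times_{/J'} j') \ar[rr]^-{(\id , \iota') } \ar[d]_{(\id, p)} \ar@{}[drr]|-{\Swarrow} && I \times J' \ar@{=}[d] \\ I \times j' \ar[rr] && I \times J' } \]
whose vertical legs are $(\id,p)$ and $\id$. This rewrites $(\id,j')^{*}(\id,\alpha)_{*}$ as $(\id,p)_{*}(\id,\iota')^{*}(\id,\alpha)_{*}$, and now both $(\id,p)$ and $(\id,\iota')$ are Grothendieck opfibrations, so (FDer5 right) and (FDer0 right) can be used to pass $f^{\bullet,i}$ through them. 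Crucially this manoeuvre works only because the other arguments $\mathcal{E}_{k}$ are constant in the $J$-direction: after pulling back by $(\id,p)$ or $(\id,\iota')$ they remain the same, so all the applications of (FDer0 right) in the second chain involve opfibrations acting on constant data. This is precisely the point the paper summarises with "the reason why $f^{\bullet,i}$ also commutes with $(\id,j')^{*}$ in this particular case is that the other arguments are constant in the $J$ direction." Without this extra argument, your step 5 is unjustified, and your "routine" preliminary step is also overstated. You should split off and prove, by the second-square argument, the special commutativity $(\id,j')^{*}(\pr_1^{*}f)^{\bullet,i}(\pr_1^{*}\mathcal{E}_{1},\dots;-)\cong(\pr_1^{*}f)^{\bullet,i}(\pr_1^{*}\mathcal{E}_{1},\dots;(\id,j')^{*}-)$ before using it.
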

\begin{proof}
Consider the following 2-commutative square where $I, J, J' \in \Dia$, $\alpha: J \rightarrow J'$ is a functor, and $j' \in J'$
\[ \xymatrix{
I \times (J \times_{/J'} j') \ar[rr]^-{(\id , \iota) } \ar[d]_{(\id , p)} \ar@{}[drr]|-{\Swarrow} && I \times J \ar[d]^{(\id, \alpha)} \\
I \times j' \ar[rr] && I \times J'
} \]
It is homotopy exact by \ref{PROPHOMCART}, 4. 

Therefore we have (using FDer3--5 right):
\begin{eqnarray*}
 && (\id, j')^* (\id, \alpha)_!  (\pr_1^*f)^{i,\bullet}(\pr_1^* \mathcal{E}_1, \mathop{\dots}\limits^{\widehat{i}}, \pr_1^* \mathcal{E}_n; \mathcal{F})  \\
 &\cong&  (\id, p)_* (\id, \iota)^* (\pr_1^*f)^{i,\bullet}(\pr_1^* \mathcal{E}_1, \mathop{\dots}\limits^{\widehat{i}},  \pr_1^* \mathcal{E}_n, \mathcal{F}) \\
 &\cong& (\id, p)_* (\pr_1^*f)^{i,\bullet} ( (\id, \iota)^* \pr_1^* \mathcal{E}_1,  \mathop{\dots}\limits^{\widehat{i}}, (\id, \iota)^* \pr_1^* \mathcal{E}_n;  (\id, \iota)^*\mathcal{F}) \\
 &\cong&  (\id, p)_* (\pr_1^*f)^{i,\bullet} ( (\id, p)^* \mathcal{E}_1,  \mathop{\dots}\limits^{\widehat{i}},  (\id, p)^* \mathcal{E}_n; (\id, \iota)^* \mathcal{F}) \\
 &\cong&   f^{i,\bullet} (  \mathcal{E}_1,  \mathop{\dots}\limits^{\widehat{i}},  \mathcal{E}_n ;  (\id, p)_* (\id, \iota)^* \mathcal{F}) \\
 &\cong&   f^{i,\bullet}(  \mathcal{E}_1, \mathop{\dots}\limits^{\widehat{i}},  \mathcal{E}_n ; (\id, j')^* (\id, \alpha)_*  \mathcal{F}) \\
 &\cong&   (\id, j')^* (\pr_1^*f)^{i,\bullet}(  \pr_1^* \mathcal{E}_1,  \mathop{\dots}\limits^{\widehat{i}},  \pr_1^* \mathcal{E}_n; (\id, \alpha)_!  \mathcal{F} )
\end{eqnarray*}
Note that $(\id , \iota)$ is a Grothendieck op-fibration, but $(id, j')$ is not. Hence the last step has to be justified further. 
Consider the 2-commutative diagram: 
\[ \xymatrix{
I \times (J \times_{/J'} j') \ar[rr]^-{(\id , \iota') } \ar[d]_{(\id, p)} \ar@{}[drr]|-{\Swarrow} && I \times J' \ar@{=}[d] \\
I \times j' \ar[rr] && I \times J'
} \]
It is again homotopy exact by \ref{PROPHOMCART}, 4. 
Therefore we have
\begin{eqnarray*}
 &\cong&   f^{i,\bullet}(  \mathcal{E}_1,  \mathop{\dots}\limits^{\widehat{i}},  \mathcal{E}_n ;  (\id, j')^* (\id, \alpha)_*  \mathcal{F}) \\
 &\cong&   f^{i,\bullet}(  \mathcal{E}_1,  \mathop{\dots}\limits^{\widehat{i}},  \mathcal{E}_n ; (\id, p)_* (\id, \iota')^* (\id, \alpha)_*  \mathcal{F}) \\
 &\cong&   (\id, p)_* (\pr_1^*f)^{i,\bullet}( (\id, p)^*  \mathcal{E}_1,  \mathop{\dots}\limits^{\widehat{i}}, (\id, p)^* \mathcal{E}_n ; (\id, \iota')^* (\id, \alpha)_*  \mathcal{F}) \\
 &\cong&   (\id, p)_* (\pr_1^*f)^{i,\bullet}( (\id, \iota')^* \pr_1^*  \mathcal{E}_1,  \mathop{\dots}\limits^{\widehat{i}},  (\id, \iota')^* \pr_1^*  \mathcal{E}_n ; (\id, \iota')^* (\id, \alpha)_*  \mathcal{F}) \\
 &\cong&   (\id, p)_* (\id, \iota')^* (\pr_1^*f)^{i,\bullet}(  \pr_1^*  \mathcal{E}_1,  \mathop{\dots}\limits^{\widehat{i}},  \mathcal{E}_n ; (\id, \iota')^* (\id, \alpha)_*  \mathcal{F}) \\
 &\cong&   (\id, j')^* (\pr_1^*f)^{i,\bullet}(  \pr_1^* \mathcal{E}_1,  \mathop{\dots}\limits^{\widehat{i}},  \pr_1^* \mathcal{E}_n; (\id, \alpha)_!  \mathcal{F} )
\end{eqnarray*}
 Note that $(\id , \iota')$ is a Grothendieck opfibration, too. In other words: the reason why $f^{\bullet, i}$ also commutes with $(\id, j')^*$ in this particular case is that the other argument are
 constant in the $J$ direction.
 
A tedious check shows the composition of the isomorphisms of the previous computations yield $(\id, j')^*$ applied to  the exchange morphism 
\[  (\id, \alpha)_*  (\pr_1^*f)^{i, \bullet}(\pr_1^* \mathcal{E}_1, \mathop{\dots}\limits^{\widehat{i}}, \pr_1^* \mathcal{E}_n;  \mathcal{F})  \rightarrow  (\pr_1^*f)^{i,\bullet}(  \mathcal{E}_1,  \mathop{\dots}\limits^{\widehat{i}},  \mathcal{E}_n; (\id, \alpha)_*  \mathcal{F}).  \]
Since the above holds for any $j' \in J'$ it is therefore an isomorphism by (Der2). 
\end{proof}

\begin{PAR}
Let $p: \DD \rightarrow \SSS$ be a (left, resp.\@ right) fibered {\em multi}-derivator and $S: \{\cdot\} \rightarrow \SSS(\cdot)$ a functor of multicategories. 
This is equivalent to the choice of an object $S \in \SSS(\cdot)$ and a collection of morphisms $\alpha_n \in \Hom_{ \SSS(\cdot)}(\underbrace{S, \dots, S}_{n \text{ times}}; S)$ for all $n\ge 2$, compatible with composition. 
Then the fiber
\[ I \mapsto \DD(I)_{p^*S} \]
defines even a (left, resp.\@ right) multiderivator (i.e.\@ a fibered multiderivator over $\{\cdot\}$). The same holds analogously for a functor of multicategories $S:  \{\cdot\} \rightarrow \SSS(I)$.
\end{PAR}

Axiom (FDer5 left) and Lemma~\ref{LEMMAPROPMULTI} imply the following: 
\begin{PROP}
The definition of a left fibered multiderivator $\DD \rightarrow \{\cdot\}$ is equivalent to the definition of a monoidal left derivator in the sense of Groth \cite{Gro12}. 
It is also, in addition, right fibered if and only if it is a right derivator and closed monoidal in the sense of  [loc.\@ cit.].
\end{PROP}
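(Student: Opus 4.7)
The plan is to unpack both definitions over the base $\SSS = \{\cdot\}$ and verify a term-by-term equivalence, with Lemma~\ref{LEMMAMORPHDERLEFT} and Lemma~\ref{LEMMAPROPMULTI} serving as the two main bridges.

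First I would translate the data. Since $\{\cdot\}(I)$ has a single object $\ast$ and a unique $n$-ary morphism $\mu_n^I$ for every $n\ge 0$, every object of $\DD(I)$ lies in the single fiber $\DD(I)_{p^*\ast}$, so (Der1--2) for $\DD$ coincide with the same axioms for its underlying pre-derivator. An opfibered multicategory structure $\DD(I)\to\{\cdot\}(I)$ then amounts to a collection of $n$-ary tensor products $\otimes^n_I := (\mu_n^I)_\bullet : \DD(I)^n \to \DD(I)$, with the associativity/symmetry/unit coherences dictated by the composition of the $\mu_n^I$ in $\{\cdot\}(I)$; the requirement in (FDer0 left) that $\alpha^*$ preserves coCartesian morphisms translates into $\alpha^*$ being strictly compatible with $\otimes^n$ in each $I$.

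Next I would check the left case. Axioms (FDer3 left) and (FDer4 left), restricted to the unique base $p^*\ast$, become exactly (Der3 left) and (Der4 left) for the underlying pre-derivator, so $\DD$ is a left derivator. For the compatibility of $\otimes^n$ with Kan extensions: (FDer5 left) applied to $\xi=\mu_n^\cdot$ and a Grothendieck opfibration $\alpha:I\to J$ yields the projection formula
\[ \alpha_!\bigl(\alpha^*\mathcal{E}_1\otimes\cdots\otimes\mathcal{F}\otimes\cdots\otimes\alpha^*\mathcal{E}_n\bigr) \;\cong\; \mathcal{E}_1\otimes\cdots\otimes\alpha_!\mathcal{F}\otimes\cdots\otimes\mathcal{E}_n, \]
and Lemma~\ref{LEMMAMORPHDERLEFT} bootstraps this to an \emph{arbitrary} functor $\alpha$, which is precisely Groth's condition that each partial tensor product $\mathcal{E}_1\otimes\cdots\otimes(-)\otimes\cdots\otimes\mathcal{E}_n$ is a left continuous morphism of derivators. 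The converse is symmetric: from a monoidal left derivator one defines $(\mu_n^I)_\bullet$ by the tensor product (and its coherent iterates), and verifies (FDer0, 3, 4, 5 left) using left continuity in each variable together with Kan's formula.

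Finally, for the closed-monoidal / right-fibered part: adding (FDer3 right) and (FDer4 right) to the mix is exactly the assertion that the underlying derivator is also a right derivator, hence a bi-derivator. Lemma~\ref{LEMMAPROPMULTI} (characterising when an opfibration of multicategories is also a fibration) combined with Lemma~\ref{LEMMALEFTRIGHT} (identifying (FDer0 right) with (FDer5 left)) turns (FDer0 right) plus (FDer5 right) into the existence of right adjoints $\mathcal{HOM}^i(-,-)$ to each slot of $\otimes^n$ inside every $\DD(I)$ --- that is, closedness in Groth's sense. The main obstacle I expect is the bootstrap hidden in Lemma~\ref{LEMMAMORPHDERLEFT}: (FDer5 left) directly supplies base change only along opfibrations, whereas a monoidal derivator in Groth's sense requires left continuity along \emph{all} functors in $\Dia$, so one has to check carefully that the upgrade from opfibrations to arbitrary functors (via pointwise testing on slices and (Der2)) goes through cleanly in each variable while keeping the other variables ``constant'' in the right direction.
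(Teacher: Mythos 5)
Your proposal is correct and matches the approach the paper itself intends: the proposition in the text is stated immediately after the sentence ``Axiom (FDer5 left) and Lemma~\ref{LEMMAPROPMULTI} imply the following,'' and your argument fleshes out precisely those two ingredients together with Lemma~\ref{LEMMAMORPHDERLEFT} and Lemma~\ref{LEMMALEFTRIGHT}, which are the bridges you identify. In particular, the ``bootstrap obstacle'' you flag at the end (upgrading (FDer5 left) from opfibrations to arbitrary functors) is exactly the content of Lemma~\ref{LEMMAMORPHDERLEFT}, which the paper has already proven, so it is not an open issue in the proof. One small terminological quibble: (FDer0 left) in the case $\SSS=\{\cdot\}$ says $\alpha^*$ preserves coCartesian arrows, which makes $\alpha^*$ \emph{strong} monoidal (compatible with $\otimes$ up to the canonical coherent isomorphism), not \emph{strictly} compatible as you write; the structure is a pseudo-functor, not a strict one.
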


\begin{DEF}
We call a pre-derivator $\DD$ {\bf strong}, if the following axiom holds:
\begin{itemize}
\item[(Der8)]  For any diagram $K$ in $\Dia$ the `partial underlying diagram' functor
\[ \dia: \DD(K \times \Delta_1) \rightarrow \Hom(\Delta_1, \DD(K)) \]
is full and essentially surjective. 
\end{itemize}
\end{DEF}

\begin{DEF}
Let  $p: \DD \rightarrow \SSS$ be a fibered (left and right) derivator.
We call $\DD$ {\bf pointed} (relative to $p$) if the following axiom holds:
\begin{itemize}
\item[(FDer6)] For any $S \in \SSS(\cdot)$, the category $\DD(\cdot)_S$ has a zero object.
\end{itemize}
\end{DEF}

\begin{DEF}
Let  $p: \DD \rightarrow \SSS$ be a fibered (left and right) derivator.
We call $\DD$ {\bf stable} (relative to $p$) if its fibers are {\em strong} and the following axiom holds:
\begin{itemize}
\item[(FDer7)] For any $S \in \SSS(\cdot)$, in the category $\DD(\Box)_{p^*S}$ an object is homotopy Cartesian if and only if it is homotopy coCartesian.
\end{itemize}
\end{DEF}

This condition can be weakened (cf.\@ \cite[Corollary 8.13]{GS14}).

\begin{PAR}
Recall from \cite{Gro13} that axiom (FDer7) implies that the fibers of a stable fibered derivator are triangulated categories in a natural way. Actually the proof shows that it suffices to have a derivator of domain $\Posf$ (finite posets). 

Since, by Lemma~\ref{LEMMAMORPHDERLEFT} and Lemma~\ref{LEMMAMORPHDERRIGHT} push-forward, resp.\@ pull-back w.r.t.\@ any slot commute with homotopy colimits, resp.\@ homotopy limits, they induce triangulated functors between the fibers.
\end{PAR}

\begin{PAR}[left]
The following is a consequence of (FDer0):
For a functor $\alpha: I \rightarrow J$ and a morphism in $f: S \rightarrow T \in \SSS(J)$, we get a natural isomorphism
\[ \SSS(\alpha^* f)_\bullet \alpha^* \rightarrow \alpha^* \SSS(f)_\bullet. \]
W.r.t.\@ this natural isomorphism we have the following:
\end{PAR}

\begin{LEMMA}[left]\label{LEMMAPASTING}
Given a ``pasting'' diagram
\[ \xymatrix{
N \ar@{}[dr]|{\Swarrow^\nu} \ar[r]^G \ar[d]^{A} & L \ar@{}[dr]|{\Swarrow^\mu} \ar[r]^{B} \ar[d]^a & I \ar[d]^\alpha \\
M \ar[r]^\gamma & K \ar[r]^\beta & J 
}\]
we get for the pasted natural transformation $\nu \odot \mu := (\beta \ast \nu) \circ (\mu \ast G)$ that the following diagram is commutative:
\[ \xymatrix{
A_! \SSS(\beta \ast \nu)_\bullet G^* \SSS(\mu)_\bullet B^* \ar[r] \ar[d]^\sim  & \gamma^* a_! \SSS(\mu)_\bullet B^* \ar[r] & \gamma^* \beta^* \alpha_!  \\
A_! \SSS(\beta \ast \nu)_\bullet \SSS(G \ast \mu)_\bullet G^* B^* \ar[d]^\sim \\ 
A_! \SSS(\nu \odot \mu)_\bullet G^* B^* \ar[rruu] 
} \]
Here the morphisms going to the right are (induced by) the various base-change morphisms.
In particular, the pasted square is homotopy exact if the individual two squares are.
\end{LEMMA}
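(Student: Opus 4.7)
The plan is to unravel both sides of the claimed commutative diagram using the definition of the base-change morphism in \eqref{eqbasechangeright}, and then perform a diagram chase based on the triangle identities of the adjunctions $(A_!, A^*)$, $(a_!, a^*)$, $(\alpha_!, \alpha^*)$, $(\beta_!, \beta^*)$, $(\gamma_!, \gamma^*)$ together with the pseudofunctoriality isomorphism $G^* \SSS(\mu)_\bullet \cong \SSS(\mu \ast G)_\bullet G^*$ guaranteed by (FDer0 left). Conceptually, the base-change morphism attached to each square is the \emph{mate} of the canonical 2-cell $\SSS(\mu)_\bullet$ (combined with the pseudofunctor identifications of pullbacks) with respect to the relevant adjunctions, and the statement is the classical fact that the mate of a pasted 2-cell equals the paste of the two individual mates.

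First, I would rewrite both the right base-change $a_! \SSS(\mu)_\bullet B^* \to \beta^* \alpha_!$ and the left base-change $A_! \SSS(\nu)_\bullet G^* \to \gamma^* a_!$ explicitly as the composition \emph{unit} $\to$ pseudofunctor-iso $\to$ \emph{counit}, following the pattern of \eqref{eqbasechangeright}. Composing, the outer morphism $A_! \SSS(\beta\ast\nu)_\bullet G^* \SSS(\mu)_\bullet B^* \to \gamma^* \beta^* \alpha_!$ then appears as a long string involving the unit of $(\beta_!, \beta^*)$, the unit of $(a_!, a^*)$, and the counits of $(a_!, a^*)$ and $(A_!, A^*)$, together with instances of $\SSS(\mu)_\bullet$ and $\SSS(\nu)_\bullet$.

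Next, I would expand the base-change for the pasted square by invoking the two vertical isomorphisms in the lemma's diagram: the first,
\[ G^* \SSS(\mu)_\bullet \;\cong\; \SSS(\mu \ast G)_\bullet G^*, \]
is an instance of (FDer0 left) applied to the functor $G$ (preservation of coCartesian arrows), and the second,
\[ \SSS(\beta \ast \nu)_\bullet \SSS(\mu \ast G)_\bullet \;\cong\; \SSS((\beta \ast \nu) \circ (\mu \ast G))_\bullet \;=\; \SSS(\nu \odot \mu)_\bullet, \]
comes from the pseudofunctoriality of push-forward along morphisms in $\SSS(N)$. Using these, $A_!\SSS(\beta\ast\nu)_\bullet G^*\SSS(\mu)_\bullet B^*$ is identified with $A_!\SSS(\nu\odot\mu)_\bullet (BG)^*$, and the base change for the outer square is by definition the composite built from the unit of $(\beta\gamma)_! \dashv (\beta\gamma)^* = \gamma^*\beta^*$ and the counit of $(BG)_! \dashv (BG)^* = G^*B^*$, routed through $\SSS(\nu\odot\mu)_\bullet$.

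The diagram chase then reduces to showing that the intermediate composite (outer square base-change) equals the composite-of-mates, which is a routine manipulation: the unit of $(\beta\gamma)_!$ factors as unit of $\beta_!$ followed by $\beta^*$ applied to the unit of $\gamma_!$ (up to the pseudofunctor iso $\gamma^*\beta^* = (\beta\gamma)^*$ — strict here by strictness of the pre-derivator), and similarly the counit of $(BG)_!$ splits; after inserting a triangle identity for $(a_!, a^*)$ in the middle, the two paths coincide. The main obstacle is simply bookkeeping: one has to track how $\SSS(\mu)_\bullet$ and $\SSS(\nu)_\bullet$ interact with the various (co)units and the coherence iso from (FDer0 left), verifying naturality on each stage. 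The last sentence of the lemma, that the pasted square is homotopy exact whenever the individual squares are, is then immediate from the explicit description of the pasted base-change as a composition of the two isomorphisms supplied by the hypotheses.
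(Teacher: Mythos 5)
Your proposal is correct and matches the paper's approach: the paper simply cites the analogue for ordinary derivators, \cite[Lemma 1.17]{Gro13}, and says the proof is similar, and Groth's proof there is exactly the mates-calculus argument you outline (unwinding base-change as unit/pseudofunctor-iso/counit and inserting a triangle identity for the shared adjunction $(a_!,a^*)$), with your step invoking (FDer0 left) for $G^*\SSS(\mu)_\bullet\cong\SSS(\mu\ast G)_\bullet G^*$ and pseudofunctoriality of $\SSS(-)_\bullet$ being precisely the extra bookkeeping the fibered setting requires. One small bookkeeping slip: in the factorization of the unit for the composite adjunction $\beta_!\gamma_!\dashv\gamma^*\beta^*$, the unit of $\gamma_!$ comes first and $\gamma^*$ is applied to the unit of $\beta_!$ (you stated it the other way around), but this does not affect the structure of the argument.
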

\begin{proof}
This is an analogue of \cite[Lemma 1.17]{Gro13} and proven similarly.
\end{proof}

\begin{PROP}\label{PROPHOMCART}
\begin{enumerate}
\item Any square of the form
\[ \xymatrix{ I \times_{/J} K \ar[r]^B \ar[d]_{A} \ar@{}[dr]|{\Swarrow^\mu} & I \ar[d]^\alpha \\
K \ar[r]^\beta & J \\
} \]
(where $I \times_{/J} K$ is the slice category) is homotopy exact (in particular the ones from axiom FDer4 left and FDer4 right are).
\item A Cartesian square
\[ \xymatrix{ I \times_{J} K \ar[r]^-B \ar[d]_{A}  & I \ar[d]^\alpha \\
K \ar[r]^\beta & J \\
} \]
(where $I \times_{J} K$ is the fiber product) is homotopy exact, if $\alpha$ is a Grothendieck opfibration or if $\beta$ is a Grothendieck fibration.
\item If $\alpha: I \rightarrow J$ is a morphism of Grothendieck opfibrations over a diagram $E$, then 
\[ \xymatrix{ I_{e}  \ar[r]^{w_{I}} \ar[d]_{\alpha_e}  & I \ar[d]^{\alpha} \\
J_{e}  \ar[r]^{w_{J}} & J \\
} \]
is homotopy exact for all objects $e \in E$.
\item If a square
\[ \xymatrix{ L \ar[r]^B \ar[d]_{A} \ar@{}[dr]|{\Swarrow^\mu} & I \ar[d]^\alpha \\
K \ar[r]^\beta & J \\
} \]
is homotopy exact then so is the square
\[ \xymatrix{ L\times X \ar[r]^B \ar[d]_{A} \ar@{}[dr]|{\Swarrow^\mu} & I \times X \ar[d]^\alpha \\
K \times X \ar[r]^\beta & J \times X \\
} \]
for any diagram $X$. 
\end{enumerate}
\end{PROP}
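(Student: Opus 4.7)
My plan is to treat the four assertions of Proposition \ref{PROPHOMCART} in succession, working throughout in the right fibered case (the left case being formally dual). The common thread is to use axiom (Der2) to reduce each statement to a pointwise check on slice categories, and then to combine axiom (FDer4 right) with a cofinality argument identifying the relevant homotopy Kan extensions.

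For Part 1, I fix $S \in \SSS(J)$ and $\mathcal{F} \in \DD(I)_{\alpha^* S}$ and check, via (Der2), that the base-change morphism becomes an isomorphism after applying $k^*$ for each $k \in K$. One application of axiom (FDer4 right), to $\alpha$ at the point $\beta(k) \in J$, expresses $k^* \beta^* \alpha_* \mathcal{F}$ as a relative homotopy Kan extension over the slice $\beta(k) \times_{/J} I$, and a second application, to $A: I \times_{/J} K \to K$ at $k$, expresses $k^* A_* (\SSS(\mu))^\bullet B^* \mathcal{F}$ as a Kan extension over the slice $k \times_{/K} (I \times_{/J} K)$. A canonical comparison functor relates these two slice categories; its adjointness is a formal consequence of the universal adjunction $I \times_{/J} J \leftrightarrows I$ of slices noted in the excerpt, which makes it cofinal for the relevant Kan extensions. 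The pasting Lemma \ref{LEMMAPASTING} then identifies the two descriptions compatibly with the base-change morphism.

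For Parts 2 and 3, the idea is to relate the Cartesian (fiber product) square to the slice square of Part 1 through a cofinality argument whose ingredients are supplied by the opfibration/fibration hypothesis. In Part 2, when $\alpha$ is a Grothendieck opfibration, the canonical comparison $\iota: I \times_J K \to K \times_{/J} I$, $(i, k) \mapsto (k, i, \id_{\alpha(i)})$, has a left adjoint sending $(k, i, \mu: \alpha(i) \to \beta(k))$ to $(\mu_! i, k)$, where $\mu_! i$ is a chosen coCartesian lift of $\mu$. Hence $\iota$ is cofinal for right Kan extensions, and combined with Part 1 this yields the claim; the case of $\beta$ a Grothendieck fibration is dual. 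For Part 3, the square arises by pulling back the opfibration $\alpha$ along $J_e \hookrightarrow J$; using that $\alpha$ is a morphism of opfibrations over $E$, the canonical comparison between $I_e$ and the slice $J_e \times_{/J} I$ admits an adjoint constructed by coCartesian transport along $I \to E$, and is thus cofinal for right Kan extensions, reducing the claim to Part 1.

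For Part 4, I reduce again via (Der2) to a pointwise check at each $(k, x) \in K \times X$. Applying (FDer4 right) to the product square gives a Kan extension over a slice category that decomposes as the product of the slice for the original square at $k$ with $x \times_{/X} X$; since the latter has $(x, \id_x)$ as an initial object, the inclusion of the first factor at this initial object into the product is cofinal for right Kan extensions. This reduces the base-change morphism for the product square at $(k, x)$ to the base-change morphism for the original square at $k$, which is an isomorphism by hypothesis. I expect Part 1 to be the main technical obstacle, requiring careful bookkeeping through two applications of (FDer4 right), the pasting lemma, and a cofinality identification via an adjoint functor between the two slice categories involved; Parts 2, 3, and 4 then follow largely formally from Part 1 via standard cofinality arguments.
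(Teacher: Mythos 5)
You take a genuinely different route from the paper. The paper proves Part~3 first, via the cube (\ref{eqcube1}): it reduces the left face of the cube to a constant-diagram square (where the $\SSS$-diagram is trivial), and invokes \cite[Proposition 1.23]{Gro13} to show that the comparison functor $w: I_e \times_{/J_e} j \to I \times_{/J} j$ has a left adjoint $c$ built by coCartesian transport. Parts~2 and~1 are then derived from Part~3 by pasting, and Part~4 from Part~3 by another cube. You invert the dependency: you prove Part~1 directly, by a double application of (FDer4 right), identifying two relative Kan extensions over the slices $\beta(k)\times_{/J} I$ and $k\times_{/K}(I\times_{/J}K)$ and arguing that a comparison functor between them is cofinal; you then deduce Parts~2 and~3 from Part~1 via adjoint-induced cofinality, and you handle Part~4 by a pointwise cofinality argument over $x\times_{/X}X$ instead of a cube. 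Both routes rest on the same underlying ingredients — (Der2) to reduce to a pointwise statement, (FDer4) to pass to slices, and the fact that a functor with a one-sided adjoint is homotopy (co)final in the sense of \cite[Proposition 1.23]{Gro13} — so the approaches are comparable in spirit. What the paper's order buys is that the single cofinality computation sits in Part~3 in a particularly clean position: the left face of the cube is over a \emph{constant} $\SSS$-diagram, so the constant-diagram version of cofinality applies verbatim, after which Parts~1, 2, 4 are pure pasting. Your order front-loads that work into Part~1, where the slice comparison is more intricate, and shifts it back again in each of Parts~2, 3, 4.

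Two points in your write-up would need to be tightened. First, in Parts~2 and~3 you invoke ``$\iota$ is cofinal for right Kan extensions'' for squares such as
\[ \xymatrix{
I \times_J K \ar[r]^-{\iota} \ar[d]_{A} & K\times_{/J}I \ar[d]^{A'} \\
K \ar@{=}[r] & K
} \]
where the base is $K$, not a point, so the $\SSS$-diagrams on the top row are not constant. The cofinality argument cannot be applied globally here; one must first reduce pointwise by (Der2) and (FDer4) to the fibers over each $k\in K$, where the $\SSS$-diagrams do become constant, and only then apply the constant-diagram criterion of \cite[Proposition 1.23]{Gro13}. You do this explicitly for Parts~1 and~4 but elide it for Parts~2 and~3, where it is equally needed. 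Second, in Part~1 you describe the comparison between the two slice categories somewhat loosely; writing it out, the relevant functor $\lambda: I\times_{/J}\beta(k)\to (I\times_{/J}K)\times_{/K} k$ is fully faithful with a \emph{right} adjoint $\rho$ (so $\rho$, having a left adjoint, is the one that is homotopy final), and the passage to Kan extensions goes through $\rho$ rather than $\lambda$. The pasting bookkeeping needed to see that the two (FDer4)-expressions differ by pullback along $\rho$ is nontrivial, and is exactly where Lemma~\ref{LEMMAPASTING} earns its keep; a fully written-out argument would need to carry the $\SSS(\mu)^\bullet$-terms through both applications of (FDer4) and through $\rho^*$, which is more bookkeeping than the phrase ``a formal consequence of the universal adjunction'' suggests. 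Neither of these is a fatal gap — the ideas are correct and can be made rigorous — but both would require real additional detail before the proposal would constitute a complete proof.
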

\begin{proof} This proof is completely analogous to the non-fibered case. We sketch the arguments here (for the left-case only, the other case follows by logical duality):

3.
Let $j$ be an object in $J_e$ and consider the cube:
\begin{equation}\label{eqcube1}
 \xymatrix{
& I_e \times_{/J_e}  j  \ar@{}[ddrr]|(.7){\Swarrow^\mu} \ar[rr]^{\iota_e} \ar[dl]_{w} \ar[dd]^(.7){p_e} && I_e \ar[dd]^{\alpha_e} \ar[ld]_{w_I} \\ 
I \times_{/J} j \ar@{}[ddrr]|(.7){\Swarrow^\mu} \ar[rr]^(.4){\iota} \ar[dd]^{p} && I \ar[dd]_(.7){\alpha} & \\
& \cdot \ar@{=}[dl]_{} \ar[rr]^{j} && J_e \ar[dl]^{w_J} \\
\cdot \ar[rr]^{j} && J }
\end{equation}
where $w$ is given by the inclusions $\iota_{I,e}$ resp.\@ $\iota_{J,e}$.
By standard arguments on homotopy exact squares it suffices to show that the left square is homotopy exact on constant diagrams, i.e.\@ that
\[ p_{e,!} w^* \cong p_!\]
holds true for all usual derivators. By \cite[Proposition 1.23]{Gro13} it suffices to show that $w$ has a left adjoint. 

Denote $\pi_I: I \rightarrow E$ and $\pi_J: J \rightarrow E$ the given opfibrations. Consider the two functors
\[ \xymatrix{   I_e \times_{/J,e} j \ar@<2pt>[rr]^{w}  && \ar@<2pt>[ll]^c I \times_{/J} j   }\]
where $c$ is given by mapping $(i, \mu: \alpha(i) \rightarrow j)$ to
$(i', \mu': \alpha(i')  \rightarrow j)$ where
we chose, for any $i$, a coCartesian morphism $\xi_{i, \mu}: i \rightarrow i'$ over $\pi_I(\mu): \pi_I(i) \rightarrow e$.
Since $\alpha$ maps coCartesian morphisms to coCartesian morphisms by assumption, 
$\alpha(\xi_{i, \mu}): \alpha(i) \rightarrow \alpha(i')$ is coCartesian, and therefore there is a unique factorization
\[ \xymatrix{ \alpha(i) \ar[r]^-{\alpha(\xi_i)} &  \alpha(i') \ar[r]^-{\mu'} & j } \]
of $\mu$. A morphism $\alpha: (i_1, \mu_1: \alpha(i_1) \rightarrow j) \rightarrow (i_2, \mu_2: \alpha(i_2) \rightarrow j)$, by definition of coCartesian, gives rise to a unique morphism $\alpha': i'_1 \rightarrow i'_2$ over $\pi_I(i_1) \rightarrow \pi_I(i_2)$ such that $\alpha' \xi_{i_1,\mu_1} = \xi_{i_2, \mu_2} \alpha'$ holds, and we set $c(\alpha):=\alpha'$.
We have $c \circ w = \id$, and a morphism $\id_{I \times_{/J} j} \rightarrow w \circ c$ given by
$(i, \mu) \mapsto \xi_{i, \mu}$. This makes $w$ right adjoint to $c$.

2.  By axiom (Der2) it suffices to show that for any object $k$ of $K$, the induced morphism
\[ k^* A_!  B^* \rightarrow k^* \beta^* \alpha_! \]
is an isomorphism. Consider the following pasting diagram
\[ \xymatrix{
I \times_{J} k \ar[d]^\pi \ar[r]^-j & I \times_{J} K \times_{/K} k \ar@{}[dr]|{\Swarrow^\mu} \ar[r]^-\iota \ar[d]^p &I \times_{J} K \ar[r]^-{B} \ar[d]^A & I \ar[d]^\alpha \\
\cdot \ar@{=}[r] & \cdot \ar[r]^-k & K \ar[r]^-\beta & J
} \]

Lemma \ref{LEMMAPASTING} shows that the following composition
\[  \pi_! \SSS(\beta \ast \mu \ast j)_\bullet j^* \iota^* B^*  \rightarrow \pi_! j^* \SSS(\beta \ast \mu)_\bullet \iota^* B^*  \rightarrow p_! \SSS(\beta \ast  \mu)_\bullet \iota^* B^* \rightarrow k^* A_!  B^* \rightarrow k^* \beta^* \alpha_! \]
is the base-change associated with the pasting of the 3 squares in the diagram. All morphisms in this sequence are isomorphisms except possibly for the rightmost one. 
The second from the left is an isomorphism because $j$ is a right adjoint \cite[Proposition 1.23]{Gro13}. The base-change morphism of the pasting is an isomorphism because of 3.

1.  By axiom (Der2) it suffices to show that for any object $k$ of $K$ the induced morphism
\[ k^* A_! \SSS(\mu)_\bullet B^* \rightarrow k^*\beta^* \alpha_! \]
is an isomorphism. Consider the following pasting diagram
\[ \xymatrix{
I \times_{/J} k \ar[r]^\iota \ar[d]^p &I \times_{/J} K \ar@{}[dr]|{\Swarrow^\mu} \ar[r]^{B} \ar[d]^A & I \ar[d]^\alpha \\
\cdot \ar[r]^k & K \ar[r]^\beta & J
} \]
Lemma \ref{LEMMAPASTING} shows that the following diagram is commutative
\[ \xymatrix{
p_! \SSS(\mu \iota)_\bullet \iota^* B^* \ar[d]^\sim_{\can} \ar[r]^\sim & k^* \beta^* \alpha_! \\
p_! \iota^* \SSS(\mu)_\bullet B^* \ar[r]^\sim & k^* A_! \SSS(\mu)_\bullet B^* \ar[u] 
}\]
where the bottom horizontal morphism is an isomorphism by 2., and the top horizontal morphism
is an isomorphism by (FDer4 left). Therefore the right vertical morphism is also an isomorphism.

4. (cf.\@ also \cite[Theorem 1.30]{Gro13}). For any $x \in X$ consider the cube

\begin{equation}\label{eqcube1}
 \xymatrix{
& L \ar[rr]^{B} \ar@{}[ddrr]|(.65){\Swarrow^\mu} \ar[dl]_{(\id,x)} \ar[dd]^(.25){A} && I \ar[dd]^{\alpha} \ar[ld]_{(\id,x)} \\ 
L \times X \ar@{}[ddrr]|(.65){\Swarrow^\mu} \ar[rr]_(.35){B} \ar[dd]_{A} && I \times X \ar[dd]^(.65){\alpha} & \\
& K \ar[rr]^(.35){\beta} \ar[dl]_(.4){(\id,x)} && J \ar[dl]^{(\id,x)} \\
K \times X \ar[rr]^{\beta} && J \times X }
\end{equation}

The left and right hand side squares are homotopy exact because of 3., whereas the rear one is homotopy exact by assumption.
Therefore the pasting
\[ \xymatrix{ L  \ar[r] \ar[d]_A   & I \times X \ar[d]^\alpha \\
K \ar[r] & J \times X \\
} \]
 is homotopy exact. Therefore we have an isomorphism
\[ (\id,x)^* A_! \SSS(\mu)_\bullet  B^* \rightarrow (\id,x)^* \beta^* \alpha_! \]
where the morphism is induced by the base change of the given 2-commutative square. We may then conclude by axiom (Der2).
\end{proof}

\begin{PAR}[left]\label{PARPUSHFWDBASECHANGELEFT} If $\SSS$ is strong the pull-backs and push-forwards along a morphism in $\SSS(\cdot)$, or more generally along a morphism in $\SSS(I)$, can be expressed using only the relative Kan-extension functors:
 
Let $p: \DD \rightarrow \SSS$ be a left fibered derivator such that $\SSS$ is strong.
Consider the 2-commutative square
\[ \xymatrix{ I \ar@{=}[r] \ar@{=}[d] \ar@{}[dr]|{\Swarrow^\mu} & I \ar[d]^p \\
I \ar[r]_-\iota & I \times \Delta_1 \\
} \]
and consider a morphism $f: S \rightarrow T$ in $\SSS(I)$. By the strongness of $\SSS$, the morphism $f$ may be lifted to an
object $F \in \SSS(I \times \Delta_1)$, and this means that the morphism 
\[ \SSS(\mu)_\bullet: p^* F \rightarrow \iota^* F   \]
is isomorphic to $f$. Since the square is homotopy exact by Proposition~\ref{PROPHOMCART} 1., we get that the natural transformation
\[ f_\bullet \rightarrow \iota^* p_!   \]
is an isomorphism.
\end{PAR}

\begin{PAR}[left]
 Let $\alpha: I \rightarrow J$ a functor in $\Dia$ and let $f: S \rightarrow T$ be a morphism in $\SSS(J)$.
Axiom (FDer0) of a left fibered derivator implies that we have a canonical isomorphism
\[  (\alpha^*(f))_\bullet \alpha^* = \alpha^* f_\bullet   \]
which is determined by the choice of the push-forward functors. We get an associated exchange morphism
\begin{equation}\label{eqpushfwdlimit} 
\alpha_!  (\alpha^*(f))_\bullet  \rightarrow f_\bullet \alpha_!.  
\end{equation}
\end{PAR}

\begin{PROP}\label{PROPPUSHFWDLIMIT}
If $p: \DD \rightarrow \SSS$ is a left fibered derivator then the natural transformation (\ref{eqpushfwdlimit}) is an isomorphism.
The corresponding dual statement holds for a right fibered derivator.
\end{PROP}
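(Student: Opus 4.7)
The plan is to verify the isomorphism pointwise on $J$ and reduce, via the slice-square axiom (FDer4 left) together with the canonical (FDer0 left) commutations, to the case of a functor into a terminal category, where the statement becomes an instance of axiom (FDer5 left).

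First, by (Der2) applied to the diagram $J$, it suffices to check that the exchange map $\alpha_!(\alpha^*f)_\bullet \mathcal{F} \to f_\bullet \alpha_!\mathcal{F}$ becomes an isomorphism after applying $j^*$ for each object $j \in J$. For each such $j$, I would apply (FDer4 left) to the slice 2-cell
\[ \xymatrix{  I \times_{/J} j \ar[r]^-\iota \ar[d]_{\alpha_j} \ar@{}[dr]|{\Swarrow^\mu} & I \ar[d]^\alpha \\ \{j\} \ar@{^{(}->}[r]^j & J } \]
twice --- once with base $S$ and once with base $T$ --- rewriting $j^*\alpha_!$ on both sides as $\alpha_{j,!}(\SSS(\mu))_\bullet \iota^*$. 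The canonical isomorphisms provided by (FDer0 left) then let one intertwine $\iota^*$ with $(\alpha^*f)_\bullet$ and $j^*$ with $f_\bullet$, while naturality of the 2-transformation $\SSS(\mu)\colon \iota^*\alpha^* \Rightarrow \alpha_j^*j^*$ in the argument $f\colon S\to T$ identifies the composite $(\SSS(\mu_T))_\bullet (\iota^*\alpha^*f)_\bullet$ with $(\alpha_j^*j^*f)_\bullet (\SSS(\mu_S))_\bullet$.

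After these manipulations, the problem reduces to the single assertion $\alpha_{j,!}(\alpha_j^*\tilde f)_\bullet \cong \tilde f_\bullet \alpha_{j,!}$, where $\tilde f := j^*f \in \SSS(\cdot)$. Since $\alpha_j\colon I\times_{/J} j \to \{j\}$ maps into a terminal category it is (trivially) a Grothendieck opfibration, so this is exactly the $n=1$ instance of (FDer5 left). The dual statement for a right fibered derivator is obtained by running the same argument with the right-fibered axioms (or, when $p$ is both left and right fibered, by passing to right-adjoint mates, as already observed in \ref{PARDERBASECHANGE}).

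The step I expect to be the main obstacle is not conceptual but bookkeeping: one must verify that the chain of canonical natural isomorphisms assembled above really coincides with $j^*$ applied to the exchange morphism (\ref{eqpushfwdlimit}). This is a long but purely formal diagram chase, patterned on the analogous verifications appearing in the proofs of Lemmas~\ref{LEMMAMORPHDERLEFT} and~\ref{LEMMAPASTING}.
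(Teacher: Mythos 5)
Your proof is technically correct and yields the asserted isomorphism, but it takes a genuinely different route from the paper, and the difference matters for the role the proposition plays.

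The paper's proof avoids (FDer5 left) entirely. It instead uses the strongness of $\SSS$ (cf.\@ \ref{PARPUSHFWDBASECHANGELEFT}) to lift $f$ to an object of $\SSS(J\times\Delta_1)$ and $\alpha^*f$ to $\SSS(I\times\Delta_1)$, rewrites $f_\bullet$ and $(\alpha^*f)_\bullet$ as $\iota^* p_!$ via the homotopy exactness of the slice square from Proposition~\ref{PROPHOMCART},~1., and then chases a diagram assembled from the base-change morphisms of the four 2-commutative squares in the proof, using Lemma~\ref{LEMMAPASTING} repeatedly. Your argument instead reduces, via (Der2) and two applications of (FDer4 left) with bases $S$ and $T$, to the statement for $\alpha_j: I\times_{/J}j \to \{j\}$ and $j^*f \in \SSS(\cdot)$, and then invokes the $n=1$ case of (FDer5 left). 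Each approach has an advantage: yours does not require strongness of $\SSS$, a hypothesis the paper's proof silently uses via \ref{PARPUSHFWDBASECHANGELEFT}; the paper's proof, in turn, does not consume (FDer5 left). The latter is precisely what the remark \ref{REDUNDANCY} immediately afterward needs: it cites this proposition to conclude that (FDer5 left/right) is implied by the remaining axioms of a left fibered (non-multi) derivator. That inference only works if the proposition has been established without (FDer5 left). Your proof, relying on (FDer5 left) as one of its inputs, cannot support the redundancy claim and would render it circular; this is why the paper invests in the $\Delta_1$-lifting argument rather than the more direct reduction you propose.
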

\begin{proof}
Consider the following 2-commutative squares (the third and fourth are even commutative on the nose):
\begin{equation} \label{eqsq} \xymatrix{ I \ar@{=}[r] \ar@{=}[d] \ar@{}[dr]|{\Swarrow^{\mu_I}} & I \ar[d]^{p_I} \\
I \ar[r]_-{\iota_I} & I \times \Delta_1 \\
}\quad \xymatrix{ J \ar@{=}[r] \ar@{=}[d] \ar@{}[dr]|{\Swarrow^{\mu_J}} & J \ar[d]^{p_J} \\
J \ar[r]_-{\iota_J} & J \times \Delta_1 \\
}\quad \xymatrix{ I \ar[r]^-\alpha \ar[d]^{\iota_I} & J \ar[d]^{\iota_J} \\
I \times \Delta_1 \ar[r]_-{\alpha} & J \times \Delta_1 \\
}\quad \xymatrix{ I \ar[r]^-{p_I} \ar[d]^{\alpha} & I \times \Delta_1 \ar[d]^{\alpha} \\
J \ar[r]_-{p_J} & J \times \Delta_1 \\
} \end{equation}
They are all homotopy exact.
Consider the diagram
\[ \xymatrix{
\alpha_!  (\alpha^*(f))_\bullet  \ar[d] \ar[rr] && f_\bullet \alpha_!  \ar[d] \\
\alpha_!  \iota_I^* p_{I,!}  \ar[r] &  \iota_J^* \alpha_! p_{I,!}   \ar[r] & \iota_J^* p_{J,!} \alpha_! 
} \]
where the vertical morphisms come from (\ref{PARPUSHFWDBASECHANGELEFT}) --- these are the base change morphism for the first and second square in (\ref{eqsq}) --- and
the lower horizontal morphisms are respectively the base change for the third diagram in (\ref{eqsq}), and the  natural morphism associated with the commutativity of the fourth diagram in (\ref{eqsq}).
Repeatedly appying Lemma~\ref{LEMMAPASTING} shows that this diagram is commutative. Therefore the upper
horizontal morphism is an isomorphism because all the others in the diagram are.
\end{proof}

\begin{PAR}\label{REDUNDANCY}
The last proposition states that push-forward commutes with homotopy colimits (left case) and pull-back commutes with homotopy limits (right case). 
This is also the content of
(FDer5 left/right) for fibered derivators (not multiderivators), and hence this axiom is implied by the other axioms of left fibered derivators.
Even in the multi-case, by Lemma~\ref{LEMMALEFTRIGHT}, axiom (FDer5 left/right) also follow from both (FDer0 left) {\em and} (FDer0 right). 
\end{PAR}

\begin{PAR}[left]\label{PAREXTENSIONKAN}
Let $\alpha: I \rightarrow J$ be a functor in $\Dia$.
Proposition \ref{PROPPUSHFWDLIMIT} allows us to extend the functor $\alpha_!$ to a functor 
\[ \alpha_!: \DD(I) \times_{\SSS(I)} \SSS(J) \rightarrow \DD(J) \]
which is still left adjoint to $\alpha^*$, more precisely: to $(\alpha^*, p(J))$. Here the fiber product is formed w.r.t.\@ $p(I)$ and $\alpha^*$ respectively.
We sketch its construction:
$\alpha_!(\mathcal{E}, S)$ is given by $\alpha_!^S \mathcal{E}$, where $\alpha_!^S$ is the functor from axiom (FDer3 left) with base $S$.
Let a pair of a morphism $f: S \rightarrow T$ in $\SSS(J)$ and $F: \mathcal{E} \rightarrow \mathcal{F}$ in $\DD(I)$ over $\alpha^*(f)$ be given. 
We define $\alpha_!(F, f)$ as follows:
$F$ corresponds to a morphism
\[ (\alpha^*f)_\bullet \mathcal{E} \rightarrow \mathcal{F}. \]
Applying $\alpha_!^T$ we get a morphism
\[  \alpha_!^T (\alpha^*f)_\bullet \mathcal{E} \rightarrow \alpha_!^T \mathcal{F} \]
and composition with the inverse of the morphism (\ref{eqpushfwdlimit}) yields
\[ f_\bullet  \alpha_!^S \mathcal{E} \rightarrow \alpha_!^T \mathcal{F} \]
or, equivalently, a morphism which we define to be $\alpha_!(F, f)$
\[ \alpha_!^S \mathcal{E} \rightarrow \alpha_!^T \mathcal{F} \]
over $f$.

For the adjunction, we have to give a functorial isomorphism
\[ \Hom_{\alpha^* f} (\mathcal{E}, \alpha^*\mathcal{F}) \cong \Hom_f( \alpha_! (\mathcal{E}, S), \mathcal{F}),   \]
where $\mathcal{E} \in \DD(I)_{\alpha^* S}$ and $\mathcal{F} \in \DD(J)_T$.
We define it to be the following composition of isomorphisms:
\begin{eqnarray*}
&&\Hom_{\alpha^* f} (\mathcal{E}, \alpha^*\mathcal{F})  \\
&\cong&\Hom_{\id_{\alpha^*T}} ((\alpha^* f)_\bullet \mathcal{E}, \alpha^*\mathcal{F}) \\
&\cong&\Hom_{\id_T} (\alpha_! (\alpha^* f)_\bullet \mathcal{E}, \mathcal{F}) \\
&\cong&\Hom_{\id_T} (f_\bullet \alpha_!  \mathcal{E}, \mathcal{F}) \\
&\cong& \Hom_f( \alpha_! \mathcal{E}, \mathcal{F}).
\end{eqnarray*}
A dual statement holds for a right fibered derivator and the functor $\alpha_*$.
\end{PAR}

From Proposition \ref{PROPPUSHFWDLIMIT} we also get a vertical version of Lemma~\ref{LEMMAPASTING}:

\begin{LEMMA}[left]\label{LEMMAPASTINGV}
Given a ``pasting'' diagram
\[ \xymatrix{
N \ar@{}[dr]|{\Swarrow^\nu} \ar[r]^B \ar[d]_{\Gamma} & M \ar[d]^\gamma \\
 L \ar@{}[dr]|{\Swarrow^\mu} \ar[r]^{b} \ar[d]_a & I \ar[d]^\alpha \\
K \ar[r]^\beta & J 
}\]
we get for the pasted natural transformation $\mu \odot \nu := (\mu \ast \Gamma) \circ (\alpha \ast \nu)$ that the following diagram is commutative:
\[ \xymatrix{
a_! \SSS(\mu)_\bullet \Gamma_! \SSS(\alpha \ast \nu)_\bullet B^* \ar[r] \ar[d]^\sim  & a_! \SSS(\mu)_\bullet b^* \gamma_! \ar[r] &  \beta^* \alpha_! \gamma_!  \\
a_! \Gamma_! \SSS(\mu \ast \Gamma)_\bullet \SSS(\alpha \ast  \nu)_\bullet B^* \ar[d]^\sim \\ 
a_! \Gamma_! \SSS(\mu \odot \nu)_\bullet B^* \ar[rruu] 
} \]
Here the morphisms going to the right are (induced by) the various base-change morphisms and the upper horizontal morphism is the isomorphism from Proposition \ref{PROPPUSHFWDLIMIT}.
In particular, the pasted square is homotopy exact if the two individual squares are.
\end{LEMMA}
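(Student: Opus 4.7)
The plan is to treat this as a vertical analogue of Lemma~\ref{LEMMAPASTING}, with Proposition~\ref{PROPPUSHFWDLIMIT} providing the crucial new ingredient: the isomorphism $\Gamma_!\,\SSS(\mu \ast \Gamma)_\bullet \cong \SSS(\mu)_\bullet\,\Gamma_!$ (coming from the identification $\SSS(\mu \ast \Gamma) = \Gamma^* \SSS(\mu)$) allows one to slide a relative Kan extension past a pushforward along a morphism in $\SSS$. This is precisely what is needed to make sense of the two upper vertical isomorphisms in the diagram, and to formulate ``vertical'' base-change for the outer pasted square. Both routes from $a_!\SSS(\mu)_\bullet \Gamma_! \SSS(\alpha\ast\nu)_\bullet B^*$ to $\beta^*\alpha_!\gamma_!$ should, once simplified, be precisely the base-change morphism of the pasted square applied to the composite natural transformation $\mu \odot \nu$.

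First, I would unfold both paths explicitly in terms of the units/counits of the adjunctions $(\Gamma_!, \Gamma^*)$, $(a_!,a^*)$, $(\gamma_!,\gamma^*)$, $(\alpha_!,\alpha^*)$, along with the definition of the base-change morphisms in (\ref{eqbasechangeright}). The top-right route first applies the base-change morphism for the upper square $\nu$ (which uses $\Gamma_! \dashv \Gamma^*$ and the $(\gamma_!,\gamma^*)$-adjunction together with $\SSS(\nu)_\bullet$, after conjugating by the $\SSS(\mu)$-pushforward) and then the base-change for the lower square $\mu$. The diagonal route first replaces $\SSS(\mu)_\bullet \Gamma_!$ by $\Gamma_! \SSS(\mu \ast \Gamma)_\bullet$ using Proposition~\ref{PROPPUSHFWDLIMIT}, then uses strict 2-functoriality of $\SSS$ combined with the pseudo-functoriality of the opfibration to identify $\SSS(\mu \ast \Gamma)_\bullet \SSS(\alpha \ast \nu)_\bullet \cong \SSS((\mu \ast \Gamma)\circ(\alpha \ast \nu))_\bullet = \SSS(\mu \odot \nu)_\bullet$, and finally applies base-change for the pasted square.

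Commutativity is then a matter of collapsing the resulting pasting of unit/counit diagrams via the triangle identities and the interchange law for horizontal and vertical composition of 2-morphisms. The main obstacle is bookkeeping: one must verify that the isomorphism from Proposition~\ref{PROPPUSHFWDLIMIT} is compatible, as a natural transformation, with the base-change construction appearing in the top edge of the diagram. Concretely, one has to check that inserting the Proposition's isomorphism inside the sequence of units and counits defining base-change for $\nu$ reproduces (after applying $\SSS$ 2-functoriality) the base-change sequence for the composite 2-cell $\mu\odot\nu$; this is analogous in spirit to the proof of Lemma~\ref{LEMMAPASTING}, but one additional ``prism'' of coherences must be traversed because the $\SSS$-pushforwards $\SSS(\mu)_\bullet$ intervene on both sides of the Kan extension $\Gamma_!$.

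Once the central diagram is shown to commute, the final assertion follows formally: if the two individual squares are homotopy exact, then the top horizontal composite and both left vertical morphisms are isomorphisms, so the diagonal morphism---which is the base-change morphism of the pasted square evaluated on the Kan extension $\gamma_!$, up to the isomorphism from Proposition~\ref{PROPPUSHFWDLIMIT}---is an isomorphism. Applying this to a Kan extension of an arbitrary object (which, by the extension of $\gamma_!$ explained in \ref{PAREXTENSIONKAN}, covers all objects) shows that the base-change of the pasted square is an isomorphism in general.
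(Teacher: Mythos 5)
Your proposal correctly identifies what the paper leaves implicit: the lemma is not given its own proof but is announced as following from Proposition~\ref{PROPPUSHFWDLIMIT} together with the same pattern of argument as Lemma~\ref{LEMMAPASTING} (itself "proven similarly" to Groth's horizontal pasting lemma). Your decomposition is the intended one: the upper vertical isomorphism is exactly $\Gamma_!\,\SSS(\mu\ast\Gamma)_\bullet \cong \SSS(\mu)_\bullet\,\Gamma_!$ from Proposition~\ref{PROPPUSHFWDLIMIT} (via $\SSS(\mu\ast\Gamma)=\Gamma^*\SSS(\mu)$), the lower vertical isomorphism is the pseudo-functoriality $\SSS(g)_\bullet\SSS(f)_\bullet\cong\SSS(g\circ f)_\bullet$, and commutativity is verified by expanding the two paths into unit/counit pastings and collapsing with the triangle identities and interchange, with the extra "prism" of coherence you describe being the real content of the check.

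One small inaccuracy: your last paragraph invokes \ref{PAREXTENSIONKAN} and speaks of "applying this to a Kan extension of an arbitrary object," which is not needed. The diagonal morphism is, already as stated, the base-change morphism of the pasted square as a natural transformation of functors on $\DD(M)$ — the only identifications needed are the pseudo-functoriality isomorphisms $a_!\Gamma_!\cong(a\Gamma)_!$ and $\alpha_!\gamma_!\cong(\alpha\gamma)_!$, which are standard and do not require the fibered extension of $\gamma_!$. Once the diagram commutes, homotopy exactness of the two individual squares makes the top-right path and both left vertical maps isomorphisms on all inputs, so the diagonal is an isomorphism and the pasted square is homotopy exact; nothing further is required.
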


\subsection{Transitivity}\label{SECTTRANS}

\begin{PROP}
Let 
\[ \xymatrix{ \EE \ar[r]^{p_1} & \DD \ar[r]^{p_2} & \SSS } \]
be two left (resp.\@ right) fibered multiderivators. Then also the composition $p_3=p_2 \circ p_1: \EE \rightarrow \SSS$ is a left (resp.\@ right) fibered multiderivator.
\end{PROP}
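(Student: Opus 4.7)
The plan is to verify each axiom for $p_3 = p_2 \circ p_1$ by decomposing it into the corresponding axioms for $p_1$ and $p_2$. Axioms (Der1) and (Der2) are properties of $\EE$ alone and hold by assumption. For (FDer0 left), the composition of opfibrations of multicategories is again an opfibration; concretely, a morphism in $\EE(I)$ is coCartesian over $p_3$ if and only if it is coCartesian over $p_1$ and its $p_1$-image is coCartesian over $p_2$. Since each $\alpha^*$ preserves coCartesian arrows for $p_1$ and for $p_2$ separately, it preserves them for $p_3$. The right-fibered case is dual, using the fact that for (FDer0 right) one only needs $\alpha$ to be a Grothendieck opfibration in the first place.

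The core is (FDer3 left). Given $\alpha: I \to J$, $S \in \SSS(J)$, and $\mathcal{F} \in \EE(I)_{\alpha^*S}$ (fiber for $p_3$), set $T := p_1(\mathcal{F}) \in \DD(I)_{\alpha^*S}$. First apply the relative left Kan extension for $p_2$ to obtain $T' := \alpha_!^{p_2, S} T \in \DD(J)_S$, with unit $\eta: T \to \alpha^* T'$. Since $p_1$ is an opfibration, push $\mathcal{F}$ forward along $\eta$ to land in the right fiber: $\eta_\bullet \mathcal{F} \in \EE(I)_{\alpha^* T'}$. Finally apply the relative left Kan extension for $p_1$ with base $T'$ to define
\[
\alpha_!^{p_3, S} \mathcal{F} \ := \ \alpha_!^{p_1, T'}(\eta_\bullet \mathcal{F}) \ \in \ \EE(J)_{T'} \subset \EE(J)_S.
\]
Equivalently, in the extended form of \ref{PAREXTENSIONKAN}, $\alpha_!^{p_3}(\mathcal{F}, S) = \alpha_!^{p_1}(\mathcal{F}, \alpha_!^{p_2}(p_1\mathcal{F}, S))$. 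The adjunction $\Hom_{\alpha^*S}(\mathcal{F}, \alpha^*\mathcal{G}) \cong \Hom_S(\alpha_!^{p_3}\mathcal{F}, \mathcal{G})$ for $\mathcal{G} \in \EE(J)_S$ is obtained by stratifying both sides over the underlying morphism in $\DD(J)_S$ and composing the extended adjunctions for $p_1$ and $p_2$.

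For (FDer4 left), the base-change isomorphism $\alpha_{j,!}^{p_3} \SSS(\mu)_\bullet \iota^* \iso j^* \alpha_!^{p_3}$ is obtained by pasting: (FDer4 left) for $p_2$ handles the underlying $\DD$-Kan extension of $p_1(\mathcal{F})$, while (FDer4 left) for $p_1$ relative to the resulting $\DD$-objects handles the fibers above that. For (FDer5 left), given a Grothendieck opfibration $\alpha: I \to J$ and $\xi \in \Hom(S_1, \dots, S_n; T)$, the compatibility $\alpha_!^{p_3} (\alpha^*\xi)_\bullet(\dots) \cong \xi_\bullet(\dots, \alpha_!^{p_3} -, \dots)$ is again obtained by applying (FDer5 left) for $p_2$ to the $\DD$-underlying data and then (FDer5 left) for $p_1$ to the fiber data.

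The principal bookkeeping obstacle is the interplay between the push-forward $\eta_\bullet$ along the unit $\eta$ (which is the ``vertical'' move inside $\EE \to \DD$) and the horizontal Kan extensions along $\alpha$. This is precisely the content of Proposition~\ref{PROPPUSHFWDLIMIT}: push-forward along a morphism commutes canonically with homotopy colimits, which is what makes the iterated Kan extension well-defined and functorial, and what allows the base-change and multicategorical axioms to decompose cleanly through $\DD$. Once Proposition~\ref{PROPPUSHFWDLIMIT} and the pasting Lemmas~\ref{LEMMAPASTING} and \ref{LEMMAPASTINGV} are invoked, each axiom for $p_3$ reduces by a two-step pasting argument to the corresponding axiom for $p_1$ and $p_2$; the right-fibered statement is logically dual.
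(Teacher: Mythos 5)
Your construction of $\alpha_!^{p_3}$ via $\alpha_!^{p_3}(\mathcal{F},S)=\alpha_!^{p_1}\bigl(\eta_\bullet\mathcal{F},\,\alpha_!^{p_2}(p_1\mathcal{F},S)\bigr)$, where $\eta$ is the $p_2$-unit, together with the stratified adjunction argument, is exactly the paper's proof of (FDer3 left), and your reduction of (FDer0), (FDer4) and (FDer5) is in the same spirit. The one place the sketch is thinner than the paper is (FDer4 left), where after unwinding the definitions one must verify a nontrivial compatibility of the form $(\alpha_j^*\mathrm{bc})\circ\nu_j\circ\mathrm{cart}=\DD(\mu)\circ\iota^*(\nu)$ between the $p_2$-base change, the $p_2$-units, and the structure 2-morphism $\mu$; this is a concrete diagram chase rather than a direct consequence of Proposition~\ref{PROPPUSHFWDLIMIT} or the pasting lemmas, but it is bookkeeping, not a missing idea.
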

\begin{proof}
We will show the statement for left fibered multiderivators. The other statement follows by logical duality. 

Axiom (FDer0): For any $I \in \Dia$, we have a sequence
\[ \EE(I) \rightarrow \DD(I) \rightarrow \SSS(I) \]
of fibered multicategories. It is well-known that then also the composition $\EE(I) \rightarrow \SSS(I)$ is a fibered multicategory (see \ref{APPMULTI}). The other statement of (FDer0) is immediate, too.
Let $\alpha: I \rightarrow J$ be a functor as in axioms (FDer3 left) and (FDer4 left).
We denote the relative homotopy Kan-extension functors w.r.t.\@ the 2 fibered derivators by $\alpha^1_!$, and $\alpha^2_!$, respectively. As always, the base will be understood
from the context or explicitly given as extra argument as in (\ref{PAREXTENSIONKAN}).

Axiom (FDer3 left): 
Let $S \in \SSS(J)$ be given.
We define a functor
\[ \alpha_!^3: \EE(I)_{\alpha^*S} \rightarrow \EE(J)_S \]
in the fiber (under $p_2$) of $\mathcal{E} \in \DD(I)_{\alpha^*S}$  as the composition
\[ \xymatrix{ \EE(I)_{\alpha^*S} \ar[rr]^-{(\nu_\bullet, \alpha_!^2 p_1)} && \EE(I)_{\alpha^*S} \times_{\DD(I)_{\alpha^*S}} \DD(J)_{S} \ar[r]^-{\alpha^1_!} &  \EE(J)_S } \]
where $\nu$ is the unit
\[ \nu:  \mathcal{E} \rightarrow \alpha^* \alpha_!^2 \mathcal{E} \]
and $\alpha_!^1$ with two arguments is the extension given in (\ref{PAREXTENSIONKAN}).

Let $\mathcal{F}_1 \in \EE(I)_{\alpha^*S}$ and  $\mathcal{F}_2 \in \EE(J)_{S}$ be given with images $\mathcal{E}_1$ and $\mathcal{E}_2$, respectively under $p_1$.
The adjunction is given by the following composition of isomorphisms:
\[ \begin{array}{rcll}
& & \Hom_S(\alpha^3_! \mathcal{F}_1, \mathcal{F}_2) \\
&=& \Hom_S(\alpha^1_!( \nu_\bullet \mathcal{F}_1, \alpha_!^2 \mathcal{E}_1), \mathcal{F}_2) & \text{Definition}\\
&=& \{ f \in \Hom_{S}( \alpha_!^2 \mathcal{E}_1,  \mathcal{E}_2) ; \xi \in \Hom_f(\alpha^1_!( \nu_\bullet \mathcal{F}_1, \alpha_!^2 \mathcal{E}_1), \mathcal{F}_2) \} & \text{Definition} \\
&\cong& \{ f  \in \Hom_{S}( \alpha_!^2 \mathcal{E}_1,  \mathcal{E}_2) ;  \xi \in \Hom_{\alpha^* f} ( \nu_\bullet \mathcal{F}_1, \alpha^* \mathcal{F}_2) \} & \text{Adjunction (\ref{PAREXTENSIONKAN})} \\
&\cong& \{ \widetilde{f} \in \Hom_{\alpha^*S}( \mathcal{E}_1,  \alpha^* \mathcal{E}_2) ;  \xi \in \Hom_{\widetilde{f}} ( \mathcal{F}_1, \alpha^* \mathcal{F}_2) \} & \text{Note below} \\
&=&  \Hom_{\alpha^*S}(\mathcal{F}_1, \alpha^* \mathcal{F}_2) \}  & \text{Definition}
\end{array} \]
Note that the composition 
\[ \widetilde{f}: \xymatrix{ \mathcal{E}_1 \ar[r]^-\nu & \alpha^* \alpha_!^2 \mathcal{E}_1 \ar[r]^-{\alpha^* f} & \alpha^* \mathcal{E}_2  }\]
is determined by $f$ via the adjunction of (FDer3 left) for base $S$ and $p_2: \DD \rightarrow \SSS$.

Axiom (FDer4 left):
Let $\mathcal{E}$ be in $\EE(I)_{\alpha^*S}$ and let $\mathcal{F}$ be its image under $p_1$.
We have to show that the natural morphism 
\[ \alpha_{j!}^3 \SSS(\mu)_\bullet^3 \iota^* \mathcal{E} \rightarrow j^* \alpha_{!}^3 \]
is an isomorphism. Inserting the definition of the push-forwards, resp.\@ of the Kan extensions for $p_3$, we get
\[ \alpha_{j!}^1 (\nu_j)_\bullet^1 \text{cart}_\bullet^1 \iota^* \mathcal{E} \rightarrow j^* \alpha_{!}^1 \nu_\bullet^1 \mathcal{E}. \]
Here $\nu_j: \SSS(\mu)^2_\bullet \iota^* \mathcal{F} \rightarrow \alpha_j^* \alpha_{j!}^2 \SSS(\mu)^2_\bullet \iota^* \mathcal{F}$ is the unit and
 $\nu: \mathcal{F} \rightarrow \alpha^* \alpha_!^2 \mathcal{F}$ is the unit.
 `$\text{cart}^1$' is the Cartesian morphism $\iota^* \mathcal{F} \rightarrow \SSS(\mu)_\bullet^2 \iota^* \mathcal{F}$.
Consider the base-change isomorphism (FDer4 for $p_2$) 
\[ \text{bc}:  \alpha_{j!}^2 \SSS(\mu)^2_\bullet \iota^* \mathcal{F} \rightarrow  j^* \alpha_!^2 \mathcal{F},  \]
and the morphism 
\[ \DD(\mu): \iota^* \alpha^* \alpha_!^2 \mathcal{F} \rightarrow \alpha_j^* j^* \alpha_!^2 \mathcal{F}. \]

{\em Claim:} We have the equality 
\[ (\alpha_j^*\text{bc}) \circ  \nu_j \circ \text{cart} = \DD(\mu) \circ \iota^*(\nu). \]

{\em Proof of the claim:} Consider the diagram (which affects only the fibered derivator $p_2: \DD \rightarrow \SSS$, hence we omit superscripts):
\[ \xymatrix{
\alpha_j^* \alpha_{j!} \SSS(\mu)_\bullet \iota^* \mathcal{F} \ar[rr]_-{\alpha_j^* \alpha_{j!} \SSS(\mu)_\bullet \iota^* \nu}  \ar@/^20pt/[rrrr]^{\alpha_j^* \text{bc}}  & &
\alpha_j^* \alpha_{j!} \SSS(\mu)_\bullet  \iota^* \alpha^* \alpha_! \mathcal{F} \ar[r]  & 
\alpha_j^* \alpha_{j!} \alpha_j^* j^* \alpha_! \mathcal{F} \ar[r] &
 \alpha_j^* j^* \alpha_! \mathcal{F} \\
\SSS(\mu)_\bullet \iota^* \mathcal{F} \ar[rr]_-{\SSS(\mu)_\bullet \iota^* \nu} \ar[u]^{\nu_j} & &
\SSS(\mu)_\bullet  \iota^* \alpha^* \alpha_! \mathcal{F} \ar[r]^{\text{induced}} \ar[u] & 
 \alpha_j^* j^* \alpha_! \mathcal{F} \ar@{=}[ur] \ar[u]  \\
 \iota^* \mathcal{F} \ar[rr]_-{\iota^* \nu} \ar[u]^{\text{cart}}  & &
 \iota^* \alpha^* \alpha_! \mathcal{F}  \ar[ur]_{\DD(\mu)} \ar[u]^{\text{cart}} & 
}\] 
Clearly all squares and triangles  in this diagram are commutative. The two given morphisms are the compositions of the extremal paths hence they are equal.

We have a natural isomorphism induced by $\text{bc}$:
\[ \alpha^1_{j!}(\cdots, \alpha_{j!}^2 \SSS(\mu)^2_\bullet \iota^* \mathcal{F}) \cong  \alpha^1_{j!}((\alpha_j^*\text{bc})_\bullet( \cdots), j^* \alpha_!^2 \mathcal{F})  \]
(this is true for any isomorphism).

We therefore have
\begin{eqnarray*}
&& \alpha_{j_!}^1 (\nu_j)_\bullet^1 \text{cart}_\bullet^1 \iota^* \mathcal{E}  \\ 
&\cong& \alpha_{j_!}^1 (\alpha_j^* \text{bc})_\bullet^1 (\nu_j)_\bullet^1 \text{cart}_\bullet^1 \iota ^* \mathcal{E} \\
&\cong& \alpha_{j_!}^1 \DD(\mu)_\bullet^1 (\iota^* \nu)_\bullet^1 \iota^* \mathcal{E}  \\ 
&\cong& \alpha_{j_!}^1 \DD(\mu)_\bullet^1 \iota^* \nu_\bullet^1 \mathcal{E}
\end{eqnarray*}

Thus we are left to show that
\[  \alpha_{j!}^1 \DD(\mu)_\bullet^1 \iota^* \nu_\bullet^1 \mathcal{E}  \rightarrow j^* \alpha_{!}^1 \nu_\bullet^1 \mathcal{E}  \]
is an isomorphism.
A tedious check shows that this {\em is} the base change morphism associated with $p_1$. It is an isomorphism by (FDer4 left) for $p_1$.
\end{proof}

\subsection{(Co)Local morphisms}\label{SECTLOCAL}

\begin{PAR}
Let $\Dia$ be a diagram category and let $\SSS$ be a {\em strong} right derivator with domain $\Dia$. Strongness implies that for each diagram
\[ \xymatrix{
  & U \ar[d] \\
S \ar[r] & T
} \]
in $\SSS(\cdot)$ there exists a homotopy pull-back $\mlq\mlq U \times_T S \mrq \mrq$ which is well-defined up to (non-unique!) isomorphism.
A Grothendieck pre-topology on $\SSS$ is basically a Grothendieck pre-topology in the usual sense on $\SSS(\cdot)$ except
that pull-backs are replaced by homotopy pull-backs. We state the precise definition: 
\end{PAR}

\begin{DEF}\label{DEFPRETOP}
A {\bf  Grothendieck pre-topology} on $\SSS$ is the datum consisting of, for any $S \in \SSS(\cdot)$, a collection of families $\{U_i \rightarrow S\}_{i \in S}$ of morphisms 
in $\SSS(\cdot)$ called {\bf covers}, such that
\begin{enumerate}
\item Every family consisting of isomorphisms is a cover, 
\item If $\{U_i \rightarrow S\}_{i \in S}$ is a cover and $T \rightarrow S$ is any morphism then
 the family $\{ \mlq\mlq U_i \times_{S} T \mrq\mrq \rightarrow T \}_{i \in I}$ is a cover for any
choice of particular members of the family $\{\mlq\mlq U_i \times_{S} T \mrq\mrq\}$. 
\item If $\{U_i \rightarrow S\}_{i \in I}$ is a cover and for each $i$, the family $\{U_{i,j} \rightarrow U_i \}_{j \in J_i}$ is a cover then the family of compositions
$\{U_{i,j} \rightarrow U_i \rightarrow S \}_{i \in I, j \in J_i}$ is a cover.
\end{enumerate}
\end{DEF}

\begin{DEF}[left] \label{DEFDLOCAL}
Let $p: \DD \rightarrow \SSS$ be a left fibered derivator satisfying also (FDer0 right). Assume that pull-backs exist in $\SSS$.
We call a morphism $f: U \rightarrow X$ in $\SSS(\cdot)$ {\bf $\DD$-local} if

\begin{itemize}
\item[(Dloc1 left)] 
The morphism $f$ satisfies {\bf base change}: for any diagram $Q \in \DD(\Box)$  with underlying diagram
\[ \xymatrix{A \ar[r]^-{\widetilde{F}} \ar[d]_-{\widetilde{G}} & B \ar[d]^{\widetilde{g}} \\ C \ar[r]_{\widetilde{f}} & D } \]
such that $p(Q)$ in $\SSS(\Box)$ is a pull-back-diagram, i.e.\@ is (homotopy)
Cartesian, the following holds true: If $\widetilde{F}$ and ${\widetilde{f}}$ are Cartesian, and $\widetilde{g}$ is coCartesian then also $\widetilde{G}$ is 
coCartesian.\footnote{In other words, if 
\[ \xymatrix{\mlq\mlq U \times_X Y \mrq\mrq  \ar[r]^-F \ar[d]_-G & Y \ar[d]^-g \\ U \ar[r]_-f & X } \]
is the underlying diagram of $p(Q)$ then the exchange morphism
\[ G_\bullet F^\bullet \rightarrow f^\bullet g_\bullet  \]
is an isomorphism.}

\item[(Dloc2 left)] The morphism of derivators (cf.\@ Lemma~\ref{LEMMAMORPHDERRIGHT}) 
\[ f^\bullet: \DD_X \rightarrow \DD_U \]
commutes with homotopy colimits.
\end{itemize}
A morphism $f: U \rightarrow X$ in $\SSS(\cdot)$ is called {\bf universally $\DD$-local} if any homotopy pull-back of $f$ is $\DD$-local.
\end{DEF}

\begin{DEF}[left]\label{DEFFIBEREDLOCAL} Assume that $\SSS$ is equipped with a Grothendieck pre-topology (cf.\@ \ref{DEFPRETOP}).
A left fibered derivator $p: \DD \rightarrow \SSS$ as in Definition \ref{DEFDLOCAL} is called {\bf local} w.r.t.\@ the pre-topology on $\SSS$, if
the following conditions hold: 
\begin{enumerate}
\item Every morphism $U_i \rightarrow S$ which is part of a cover is $\DD$-local.
\item For a cover $\{f_i: U_i \rightarrow S\}$ the family
\[ (f_i)^\bullet: \DD(S) \rightarrow \DD(U_i) \]
is jointly conservative.
\end{enumerate}
\end{DEF}

\begin{DEF}[right] \label{DEFDCOLOCAL}
Let $p: \DD \rightarrow \SSS$ be a right fibered derivator satisfying also (FDer0 left). Assume that push-outs exist in $\SSS$.
We call a morphism $f: X \rightarrow U$ in $\SSS(\cdot)$ {\bf $\DD$-colocal} if

\begin{itemize}
\item[(Dloc1 right)] 
The morphism $f$ satisfies {\bf base change}: for any diagram $Q \in \DD(\Box)$ with underlying diagram:
\[ \xymatrix{A   & \ar[l]_{\widetilde{F}} B  \\ C \ar[u]^{\widetilde{G}}  & D \ar[u]_{\widetilde{g}} \ar[l]^{\widetilde{f}} } \]
such that $p(Q)$ in $\SSS(\Box)$ is a pushout-diagram, i.e.\@ is (homotopy)
coCartesian, 
if $\widetilde{F}$ and ${\widetilde{f}}$ are coCartesian, and $\widetilde{g}$ is Cartesian  then also $\widetilde{G}$ is Cartesian.  

\item[(Dloc2 right)] The morphism of derivators (cf.\@ Lemma~\ref{LEMMAMORPHDERLEFT}) 
\[ f_\bullet: \DD_X \rightarrow \DD_U \]
commutes with homotopy limits.
\end{itemize}
A morphism $f: X \rightarrow U$ in $\SSS(\cdot)$ is called {\bf universally $\DD$-colocal} if any homotopy push-out of $f$ is $\DD$-colocal.
\end{DEF}

\begin{DEF}[right]\label{DEFFIBEREDCOLOCAL} Assume that $\SSS$ is equipped with a Grothendieck pre-cotopology, i.e.\@ that $\SSS^{\op}$ is equipped with a Grothendieck pre-topology (cf.\@ \ref{DEFPRETOP}).
A right fibered derivator $p: \DD \rightarrow \SSS$ as in Definition \ref{DEFDCOLOCAL} is called {\bf colocal} w.r.t.\@ the pre-cotopology on $\SSS$, if
\begin{enumerate}
\item Every morphism $S \rightarrow U_i$ which is part of a cocover is $\DD$-colocal.
\item For a cocover $\{f_i: S \rightarrow U_i\}$ the family
\[ (f_i)_\bullet: \DD(\cdot)_S \rightarrow \DD(\cdot)_{U_i} \]
is jointly conservative.
\end{enumerate}
\end{DEF}

\subsection{The associated pseudo-functor}\label{SECTPSEUDOFUNCT}

Let $p: \DD \rightarrow \SSS$ be a morphism of pre-derivators with domain $\Dia$.

\begin{PAR}[left] \label{DEFDIA}
Let $\Dia(\SSS)$ be the 2-category of diagrams over $\SSS$, where the objects are
pairs $(I, F)$ such that $I \in \Dia$ and $F \in \SSS(I)$, the morphisms $(I, F) \rightarrow (J,G)$ are pairs $(\alpha, f)$ such that
$\alpha: I \rightarrow J, f: F  \rightarrow \alpha^* G$ and the 2-morphisms $(\alpha, f) \rightarrow (\beta, g)$ 
are the natural transformations $\mu: \alpha \Rightarrow \beta$ satisfying $\SSS(\mu)(G) \circ f= g$.

We call a morphism $(\alpha, f)$ of {\bf fixed shape} if $\alpha=\id$, and of {\bf diagram type} if $f$ consists of identities. 
Every morphism is obviously a composition of one of diagram type by one of fixed shape. 
\end{PAR}

\begin{PAR}[right] \label{DEFDIAOP}
There is a dual notion of a 2-category $\Dia^{\op}(\SSS)$. Explicitly, the objects are
pairs $(I, F)$ such that $I \in \Dia$ and $F \in \SSS(I)$, the morphisms $(I, F) \rightarrow (J,G)$ are pairs $(\alpha, f)$ such that
$\alpha: I \rightarrow J, f: \alpha^* G \rightarrow F$ and the 2-morphisms $(\alpha, f) \rightarrow (\beta, g)$ 
are the natural transformations $\mu: \alpha \Rightarrow \beta$ satisfying $f \circ \SSS(\mu)(G) = g$ . 

The association 
$(I, F) \mapsto (I^{\op}, F^{\op})$ induces an isomorphism $\Dia^{\op}(\SSS) \rightarrow \Dia(\SSS^{\op})^{2-\op}$.
\end{PAR}

We are interested
in associating to a fibered derivator a pseudo-functor like for classical fibered categories.

\begin{PAR}[left]\label{PARPSEUDOFUNCTOR}
We associate to a morphism of pre-derivators $p: \DD \rightarrow \SSS$ which satisfies (FDer0 right)
 a (contravariant) 2-pseudo-functor
\[ \DD: \Dia(\SSS)^{1-\op} \rightarrow \mathcal{CAT} \]
mapping a pair $(I, F)$ to $\DD(I)_{F}$, and
a morphism $(\alpha, f): (I, F) \rightarrow (J,G)$ to $f^\bullet \circ \alpha^*: \DD(J)_{G} \rightarrow \DD(I)_{F}$.
A natural transformation $\mu: \alpha \Rightarrow \beta$ is mapped to the natural transformation pasted from the following two 2-commutative triangles:
\[ \xymatrix{ 
& \DD(I)_{G \circ \alpha} \ar[dr]^{f^\bullet} & \\
\DD(J)_G \ar[ur]^{\alpha^*} \ar[rd]^{\beta^*}  \ar@{}[r]|{\Downarrow^\mu}  & \ar@{}[r]|{\Downarrow} & \DD(I)_F  \\
& \DD(I)_{G \circ \beta}  \ar[uu]|{\SSS(\mu)(G)^\bullet }  \ar[ur]^{g^\bullet} &
} \]
\begin{proof}[Proof of the pseudo-functor property.]
For a composition $(\beta, g) \circ (\alpha, f) = (\beta \circ \alpha, \alpha^*(g) \circ f)$ we have:
$f^\bullet \circ \alpha^* \circ g^\bullet \circ \beta^*  \cong f^\bullet \circ (\alpha^*g)^\bullet \circ \alpha^* \circ \beta^*$. 
This follows from the isomorphism $\alpha^* \circ g^\bullet \cong  (\alpha^*g)^\bullet \circ \alpha^*$ (FDer0). One checks that
this indeed yields a pseudo-functor.
\end{proof}
\end{PAR}

\begin{PAR}[right]
We associate to a morphism of pre-derivators $p: \DD \rightarrow \SSS$ which satisfies (FDer0 left)
 a (contravariant) 2-pseudo-functor
\[ \DD: \Dia^{\op}(\SSS)^{1-\op} \rightarrow \mathcal{CAT} \]
mapping a pair $(I, F)$ to $\DD(I)_{F(I)}$, and 
a morphism $(\alpha, f): (I, F) \rightarrow (J,G)$ to $ f_\bullet  \circ \alpha^* $ from $\DD(J)_{G} \rightarrow \DD(I)_{F}$.
This defines a functor by the same reason as in~\ref{PARPSEUDOFUNCTOR}.
\end{PAR}

\begin{PAR}[left]\label{DEFCOMMADIA}
We assume that $\SSS$ is a strong right derivator. There is a notion of ``comma object'' in $\Dia(\SSS)$ which we describe here for the case that $\SSS$ is the pre-derivator associated with a category $\mathcal{S}$ and leave
it to the reader to formulate the derivator version. 
In that case the corresponding object will be determined up to 
(non-unique!) isomorphism only. 

Given diagrams $D_1=(I_1, F_1), D_2=(I_2, F_2), D_3=(I_3, F_3)$ in $\Dia(\SSS)$ and morphisms
$\beta_1: D_1 \rightarrow D_3$, $\beta_2: D_2 \rightarrow D_3$, we form the
comma diagram $D_1 \times_{/D_3} D_2$ as follows:
the underlying diagram $I_1 \times_{/I_3} I_2$ has objects being tripels $(i_1, i_2,\mu)$ such that $i_1 \in I_1$, $i_2 \in I_2$, and $\mu: \alpha_1(i_1) \rightarrow \alpha_2(i_2)$ in $I_3$.
A morphism is a pair $\beta_j: i_{j} \rightarrow i_{j}'$ for $j=1,2$ such that 
\[ \xymatrix{ 
\alpha_1(i_{1}) \ar[r]^{\alpha_1(\beta_1)} \ar[d]^{\mu} & \alpha_1(i_{1}') \ar[d]^{\mu'} \\
\alpha_2(i_{2}) \ar[r]^{\alpha_2(\beta_2)} &  \alpha_2(i_{2}')  
}\]
commutes in $I_3$.
The corresponding functor $\widetilde{F} \in \SSS(I_1 \times_{/I_3} I_2)$ maps a tripel $(i_1, i_2, \mu)$ to
\[ F_1(i_1) \times_{F_3(\alpha_2(i_2))} F_2(i_2).  \]

We define $P_j$ to be $(\iota_j, p_j)$ for $j=1, 2$, where
$\iota_j$ maps a tripel $(i_1, i_2,\mu)$ to $i_j$, and $p_j$ is the corresponding projection of the fiber product.
We then get a 2-commutative diagram

\[ \xymatrix{
D_1 \times_{/D_3} D_2 \ar[r]^-{P_1} \ar[d]^{P_2} \ar@{}[dr]|{\Swarrow^\mu} & D_1 \ar[d]^{\beta_1} \\
D_2 \ar[r]_{\beta_2} & D_3 
}\]

If we are given $I_2, I_3$ only and two maps $I_1 \rightarrow I_3$ and $I_2 \rightarrow I_3$ we also form
$D_1 \times_{/I_3} I_2$ by the same underlying category, with functor $F_1 \circ \iota_1$.
\end{PAR}

\begin{PAR}[right]
We assume that $\SSS$ is a strong left derivator. There is a dual notion of ``comma object'' in $\Dia^{\op}(\SSS)$ which we describe here again for the case that $\SSS$ is the pre-derivator associated with a category $\mathcal{S}$ and leave
it to the reader to formulate the derivator version. 
In that case the corresponding object will be determined up to 
(non-unique!) isomorphism only. 

Given three diagrams $D_1^o=(I_1, F_1), D_2^o=(I_2, F_2)$ in $\Dia^{\op}(\SSS)$ mapping to $D_3^o=(I_3, F_3)$, we form the
comma diagram $D_1^o \times_{/D_3^o} D_2^o$ as follows:
the underlying diagram is $I_1 \times_{/D_3} I_2$ which has object being tripels $(i_1, i_2, \mu)$ such that $i_1 \in I_1, i_2 \in I_2$ and $\mu: \alpha_1(i_1) \rightarrow \alpha_2(i_2)$ in $I_3$.
A morphism is a pair $\beta_j: i_{j} \rightarrow i_{j}'$ for $j=1,2$ such that 
\[ \xymatrix{ 
\alpha_1(i_{1}) \ar[r]^{\alpha_1(\beta_1)} \ar[d]^{\mu} & \alpha_1(i_{1}') \ar[d]^{\mu'} \\
\alpha_2(i_{2}) \ar[r]^{\alpha_2(\beta_2)} &  \alpha_2(i_{2}')  
}\]
commutes in $I_3$.
The corresponding functor $\widetilde{F}$ maps a tripel $(i_1, i_2, \mu)$ to
\[ F_1(i_1) \sqcup_{F_3(\alpha_1(i_1))} F_2(i_2).  \]
We then get a 2-commutative diagram
\[ \xymatrix{
D_2^o \times_{/D_3^o} D_1^o \ar[r] \ar[d] \ar@{}[dr]|{\Swarrow^\mu} & D_1^o \ar[d] \\
D_2^o \ar[r] & D_3^o 
}\]
\end{PAR}

This language allows us to restate Lemma~\ref{LEMMAPASTING} and Lemma~\ref{LEMMAPASTINGV} in a more convenient way:
\begin{LEMMA}[left]\label{LEMMAPASTINGDIA}
\begin{enumerate}
\item Given a ``pasting'' diagram in $\Dia(\SSS)$
\[ \xymatrix{
D_1 \ar@{}[dr]|{\Swarrow^\nu} \ar[r]^\Gamma \ar[d]_A & D_3 \ar@{}[dr]|{\Swarrow^\mu} \ar[r]^{B} \ar[d]^a & D_5 \ar[d]^\alpha \\
D_2 \ar[r]^\gamma & D_4 \ar[r]^\beta & D_6
}\]
the pasted natural transformation $\nu \odot \mu := \beta \nu \circ \mu \Gamma$ satsisfies 
\[ \nu_! \odot \mu_! = (\nu \odot \mu)_!. \]
\item Given a ``pasting'' diagram in $\Dia(\SSS)$
\[ \xymatrix{
D_1 \ar@{}[dr]|{\Swarrow^\nu} \ar[r]^B \ar[d]_\Gamma & D_2 \ar[d]^\gamma \\
 D_3 \ar@{}[dr]|{\Swarrow^\mu} \ar[r]^{b} \ar[d]_a & D_4 \ar[d]^\alpha \\
 D_5 \ar[r]^\beta & D_6
}\]
the pasted natural transformation $\nu \odot \mu := \alpha \nu \circ \mu \Gamma$ satisfies 
\[ \mu_! \odot \nu_! = (\mu \odot \nu)_!. \]
\end{enumerate}
\end{LEMMA}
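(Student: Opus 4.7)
The plan is to reduce both assertions to Lemma~\ref{LEMMAPASTING} and Lemma~\ref{LEMMAPASTINGV} by unfolding the definitions of morphisms, 2-cells, and the associated Kan extensions in the 2-category $\Dia(\SSS)$.

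First I would spell out what the Kan extension $(\alpha,f)_!$ means for a morphism $(\alpha,f):(I,F)\to(J,G)$ in $\Dia(\SSS)$. By the pseudo-functor construction of~\ref{PARPSEUDOFUNCTOR}, such a morphism acts as $f^\bullet\circ\alpha^*$, so its left adjoint, when it exists, is exactly the ``extended'' Kan extension of~\ref{PAREXTENSIONKAN}, built from the relative $\alpha_!$ together with the pushforward along the structure morphism $f$, the base on which $\alpha_!$ operates being allowed to vary. Similarly, a 2-cell $\mu:(\alpha,f)\Rightarrow(\beta,g)$ is an ordinary natural transformation between the underlying functors subject to the compatibility $\SSS(\mu)(G)\circ f=g$, and the associated base-change morphism $\mu_!$ is built precisely from the classical $\SSS(\mu)_\bullet$ and the exchange morphism of~\ref{PARDERBASECHANGE}.

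Next, for claim~1 (horizontal pasting), I would unfold the composite $\nu_!\odot\mu_!$ explicitly using~\ref{PARDERBASECHANGE}. After this unfolding the composite takes exactly the form of the chain of base-change morphisms appearing in Lemma~\ref{LEMMAPASTING}, decorated by pushforwards along morphisms in $\SSS$ that track how the base components combine. The compatibility condition on 2-cells in $\Dia(\SSS)$ guarantees that these pushforward decorations concatenate correctly into the pushforward along the pasted 2-cell $\nu\odot\mu$; Lemma~\ref{LEMMAPASTING} then identifies the result with the base-change morphism for $\nu\odot\mu$, i.e.\@ with $(\nu\odot\mu)_!$.

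Claim~2 is entirely analogous but reduces to Lemma~\ref{LEMMAPASTINGV} instead, using in addition Proposition~\ref{PROPPUSHFWDLIMIT} to commute the $\alpha_!$'s past the $\SSS(\mu)_\bullet$'s that appear in a vertical pasting (since here one stacks Kan extensions on top of each other, not base-change arrows side by side). The main technical obstacle is bookkeeping: one must verify that the pasting operation $\odot$ on 2-cells in $\Dia(\SSS)$ corresponds, after forgetting to the underlying functors between diagrams, exactly to the pasted natural transformations of Lemma~\ref{LEMMAPASTING} and Lemma~\ref{LEMMAPASTINGV}, and that the compatibility condition $\SSS(\mu)(G)\circ f=g$ forces the extra $\SSS$-pushforward factors to assemble in the expected way. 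Once this correspondence is made precise, both equalities are formal consequences of the already-proven lemmas together with the pseudo-functoriality established in~\ref{PARPSEUDOFUNCTOR}.
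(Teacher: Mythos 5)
Your approach matches the paper's: the paper presents Lemma~\ref{LEMMAPASTINGDIA} with no proof at all, introducing it by the phrase that the $\Dia(\SSS)$-language ``allows us to restate Lemma~\ref{LEMMAPASTING} and Lemma~\ref{LEMMAPASTINGV} in a more convenient way.'' Your plan --- unfolding $(\alpha,f)_!=\alpha_!\circ f_\bullet$ via~\ref{PAREXTENSIONKAN}, tracking how the $\SSS$-pushforward decorations concatenate using the compatibility condition on 2-cells, and then invoking Lemma~\ref{LEMMAPASTING} for the horizontal pasting and Lemma~\ref{LEMMAPASTINGV} together with Proposition~\ref{PROPPUSHFWDLIMIT} for the vertical one --- is exactly the translation that the paper leaves implicit.
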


\begin{DEF}
If $\SSS$ is equipped with a Grothendieck pre-topology (cf.\@ \ref{DEFPRETOP}) then we call $(\alpha, f): (I, F) \rightarrow (J,G)$ {\bf $\DD$-local} if
$f_i: F(i) \rightarrow G \circ \alpha(i)$ is $\DD$-local (cf.\@ \ref{DEFDLOCAL}) for all $i \in I$.
Likewise for the notions of universally $\DD$-local, $\DD$-colocal, and universally $\DD$-colocal.
\end{DEF}

\begin{PROP}[left]\label{THEOREMBASECHANGEDIALEFT}
Let $\DD \rightarrow \SSS$ be a left fibered derivator satisfying also (FDer0 right) and such that $\SSS$ is a strong right derivator.
Then the associated pseudo-functor satisfies the following properties:

\begin{enumerate}
\item For a morphism of diagrams $(\alpha, f): D_1 \rightarrow D_2$ the corresponding pull-back
\[ (\alpha, f)^*:  \DD(D_2) \rightarrow \DD(D_1) \]
has a left-adjoint $(\alpha, f)_!$.
\item 
For a diagram like in \ref{DEFCOMMADIA}
\[ \xymatrix{
D_1 \times_{/D_3} D_2 \ar[r]^-{P_1} \ar@{}[dr]|{\Swarrow^\alpha} \ar[d]_-{P_2} & D_1 \ar[d]^{\beta_1} \\
D_2 \ar[r]_{\beta_2} & D_3 
}\]
the corresponding exchange morphism
\[ P_{2!} P_1^* \rightarrow \beta_2^* \beta_{1!} \]
is an isomorphism in $\DD(D_2)$ provided that
$\beta_2$ is $\DD$-local. 
\end{enumerate}
\end{PROP}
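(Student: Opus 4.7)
The pull-back $(\alpha, f)^*$ factors as $f^\bullet \circ \alpha^*$, with $\alpha^*: \DD(I_2)_{F_2} \to \DD(I_1)_{\alpha^* F_2}$ and $f^\bullet: \DD(I_1)_{\alpha^* F_2} \to \DD(I_1)_{F_1}$. The first admits a left adjoint $\alpha_!^{F_2}$ by (FDer3 left), and the second admits a left adjoint $f_\bullet$ by the opfibration structure packaged in (FDer0 left). Their composite $(\alpha, f)_! := \alpha_!^{F_2} \circ f_\bullet$ is then left adjoint to $(\alpha, f)^*$ by a standard combination of the two adjunctions; compatibility with the extended Kan extension of~\ref{PAREXTENSIONKAN} follows by unwinding the definitions.

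For \textbf{Part 2} the plan is a three-stage reduction. First, by axiom (Der2) the base-change morphism is an isomorphism if and only if it becomes so after pull-back along every point $i_2: (\cdot, F_2(i_2)) \to D_2$; by associativity of the comma construction, this pulled-back square is again a comma square of the same shape with $D_2$ replaced by $(\cdot, F_2(i_2))$. One reduces thereby to the case $D_2 = (\cdot, T)$ with $\beta_2 = (i_3, f_2)$ for some $i_3 \in I_3$ and a $\DD$-local morphism $f_2 : T \to F_3(i_3)$. Second, factor $\beta_2 = \beta_2^{\mathrm{dt}} \circ \beta_2^{\mathrm{fs}}$ with $\beta_2^{\mathrm{fs}} = (\id, f_2)$ fixed-shape and $\beta_2^{\mathrm{dt}} = (i_3, \id)$ diagram-type. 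By Lemma~\ref{LEMMAPASTINGDIA} the base-change morphism for the original comma square is the pasting of the base-change morphisms for the two sub-comma-squares produced by this factorization, so it suffices to verify each of them.

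For the \emph{diagram-type factor} $\beta_2^{\mathrm{dt}}$, the fiber product in the definition of $\widetilde{F}$ collapses to $F_1(i_1)$ (since one arm is the identity), so the comma reduces to $(I_1 \times_{/I_3} \{i_3\}, p_1^* F_1)$. The exchange morphism unwinds, using (FDer0 left) to identify $g_\bullet p_1^*$ with $p_1^* f_{1\bullet}$ along the natural decomposition of the $\SSS$-part of the right projection, to the usual base-change morphism for the slice square in $\Dia$ applied to $f_{1\bullet} \mathcal{E}$, which is an isomorphism by (FDer4 left) (more generally Proposition~\ref{PROPHOMCART} 1). For the \emph{fixed-shape $\DD$-local factor} $\beta_2^{\mathrm{fs}} = (\id, f_2)$, both sides of the sub-comma-square share the same underlying diagram $I_1 \times_{/I_3} \{i_3\}$; the two $\SSS$-functors differ by a pointwise homotopy fiber product against $f_2$. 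Axiom (Dloc1 left) supplies the pointwise base-change isomorphism for this homotopy pull-back, while axiom (Dloc2 left) asserts that $f_2^\bullet$ commutes with the homotopy colimit $(p_2)_!$; by (Der2) these assemble into the required isomorphism.

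The main obstacle I anticipate lies in this last step, where (Dloc1 left) and (Dloc2 left) must be combined compatibly with the Kan extension over the slice, and in the bookkeeping required to see that the pasting of the two sub-square base-changes in Step~2 reproduces exactly the base-change of the original comma square. In particular, the projection $P_1$ of the original comma square is of mixed shape-and-diagram-type, so the pseudo-functoriality of the decomposition from Part~1 must be tracked carefully throughout.
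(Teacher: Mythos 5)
Your proposal follows the same overall strategy as the paper's proof: reduce via (Der2) and a pasting argument to the case where $D_2$ is a single point, then decompose the resulting comma square into sub-squares handled by (Dloc1 left), (Dloc2 left), (FDer0 left), and (FDer4 left), and finish with Lemma~\ref{LEMMAPASTINGDIA}. The main structural difference is in the chunking: the paper writes the reduced comma square as a $3\times 2$ grid of five small squares, each handled by exactly one axiom, whereas you factor $\beta_2$ into its fixed-shape and diagram-type parts and treat two larger composite squares. This is a legitimate reorganization, and your identification of which axiom handles which piece (Dloc1 for the pointwise pull-back square, Dloc2 for commuting $f_2^\bullet$ past $p_{2!}$, FDer0 for moving $(\iota_1^* f_1)_\bullet$ past $\iota_1^*$, FDer4 for the slice square) matches the paper.

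The one place where you pass too quickly is the initial reduction. You write that ``by associativity of the comma construction, this pulled-back square is again a comma square of the same shape with $D_2$ replaced by $(\cdot, F_2(i_2))$.'' This is not literally true. Pulling back $P_2$ along $(j_2,\id): (\cdot, F_2(j_2)) \to D_2$ produces a square whose top-left vertex is the double comma $D_1 \times_{/D_3} D_2 \times_{/D_2} (\cdot, F_2(j_2))$, while the reduced comma square has top-left vertex $D_1 \times_{/D_3} (\cdot, F_2(j_2))$. These are \emph{not} isomorphic in $\Dia(\SSS)$. There is a canonical comparison morphism between them which is of pure diagram type whose underlying functor is part of an adjunction, and one needs an extra application of \cite[Proposition 1.23]{Gro13} together with Lemma~\ref{LEMMAPASTINGDIA} (and a second invocation of the ``reduced case'' itself, now with $D_3$ replaced by $D_2$) to transfer the isomorphism statement. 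The paper spells out precisely this chain of three squares. Your proof will not close without that step, and it is a different obstacle from the Step~2 bookkeeping you flag at the end.
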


\begin{proof}
1. By (FDer0 left) and (FDer3 left) we can form $(\alpha, f)_!:=\alpha_! \circ f_\bullet$ which is clearly left adjoint to $(\alpha, f)^*$.

2. We first reduce to the case where $I_2$ is the trivial category. Indeed consider the diagram
\[ \xymatrix{
D_1 \times_{/D_3} (\{j_2\}, F_2(j_2)) \ar[r]^-{can.} \ar[d] \ar@{}[dr]|{\Swarrow}& D_1 \times_{/D_3} D_2 \times_{/D_2} (\{j_2\}, F_2(j_2)) \ar[r] \ar[d] \ar@{}[dr]|{\Swarrow} & D_1 \times_{/D_3} D_2 \ar[r]^-{P_1} \ar@{}[dr]|{\Swarrow} \ar[d]_-{P_2} & D_1 \ar[d]^{\beta_1} \\
(\{j_2\}, F_2(j_2)) \ar@{=}[r] & (\{j_2\}, F_2(j_2)) \ar[r] &  D_2 \ar[r]_{\beta_2} & D_3 
 } \]
The exchange morphism of the middle square and outmost rectangle are isomorphisms by the reduced case. The morphism $can.$ of the left hand square is of diagram type and its underlying diagram functor has an adjoint. 
The exchange morphism is therefore an isomorphism by~\cite[1.23]{Gro13}. Using Lemma~\ref{LEMMAPASTINGDIA} therefore, applying this for all $j_2 \in I_2$, also the exchange morphism of the right square has to be an isomorphism (this uses axiom Der2).

Now we may assume $D_2 =  (\{j_2\}, F_2(j_2))$. Consider the following diagram, in which we denote
$\beta_1 = (\alpha_1, f_1)$, $\beta_2 = (\alpha_2, f_2)$.

\[ \xymatrix{
(I_1 \times_{/I_3} \{i_2\},  \widetilde{F}) \ar[r]^{p_1} \ar[d]^{p_2}  \ar@{}[dr]|{\Swarrow \quad \numcirc{1} } & (I_1 \times_{/I_3} \{i_2\},  F_1 \circ \iota_1) \ar[r]^-{\iota_1} \ar[d]^{\iota_1^*f_1}  \ar@{}[dr]|{\Swarrow \quad \numcirc{4}} & (I_1, F_1 ) \ar[d]^-{f_1} \\
(I_1 \times_{/I_3} \{i_2\},  \widetilde{F}') \ar[r]^-{p_1'} \ar[d]^{p_2'}  \ar@{}[dr]|{\Swarrow \quad \numcirc{2}} & (I_1 \times_{/I_3} \{i_2\},  F_3 \circ \alpha_1 \circ \iota_1) \ar[r]^-{\iota_1} \ar[d]^-{F_3(\mu)}  \ar@{}[ddr]|{\substack{\Swarrow  \\ \\ \\ \\ \numcirc{5}}}  &  (I_1, F_3 \circ \alpha_1) \ar[dd]^{\alpha_1} \\
 (I_1 \times_{/I_3} \{i_2\}, F_2(i_2) ) \ar[r]^-{\iota_2^*f_2} \ar[d]^-{\iota_2}  \ar@{}[dr]|{\Swarrow  \quad \numcirc{3}} & (I_1 \times_{/I_3} \{i_2\},  F_3(\alpha_2(i_2))) \ar[d]^-{\iota_2} & \\
(\{i_2\}, F_2(i_2) ) \ar[r]^{f_2}  & (\{i_2\}, F_3(\alpha_2(i_2)) ) \ar[r]^{\alpha_2} & (I_3, F_3 )  
 } \]
 where 
 $\widetilde{F}$ is the functor defined in \ref{DEFCOMMADIA} mapping a tripel $(i_1, i_2, \mu: \alpha_1(i_1) \rightarrow \alpha_2(i_2))$ to
 \[ F_1(i_1) \times_{F_3(\alpha_2(i_2))} F_2(i_2)  \]
and
$\widetilde{F}'$ is the functor mapping a tripel $(i_1, i_2, \mu: \alpha_1(i_1) \rightarrow \alpha_2(i_2))$ to
 \[ F_3(\alpha_1(i_1)) \times_{F_3(\alpha_2(i_2))}  F_2(i_2).   \]

We have to show that the exchange morphism for the outer square is an isomorphism. Using Lemma~\ref{LEMMAPASTINGDIA} it suffices to show this for the squares 1--5.
That the exchange morphism for the squares 1 and 2, where the morphisms are of fixed shape, is an isomorphism can be checked point-wise by (Der2). Then it 
boils down to the base change condition (Dloc1 left).  
Note that the squares are pull-back squares in $\mathcal{S}$ by construction of $\widetilde{F}'$ resp $\widetilde{F}$.
The exchange morphism for 4 is an isomorphism by (FDer0). 
The exchange morphism for 3 is an isomorphism because of (Dloc2 left). 
The exchange morphism for 5 is an isomorphism because of (FDer4 left).
\end{proof}

Dualizing, there is a right-variant of the theorem, which uses $\Dia^{\op}(\SSS)$ instead. We leave its formulation to the reader.

\section{(Co)homological descent}\label{SECTCOHOMDESCENT}

\subsection{Categories of $\SSS$-diagrams}

\begin{DEF}\label{DEFSDIAGRAMCAT}
Let $\SSS$ be a strong right derivator with Grothendieck pre-topology.

A {\bf category of $\SSS$-diagrams in $\Cat(\SSS)$} is a full sub-2-category $\mathcal{DIA} \subset \Cat(\SSS)$,
satisfying the following axioms:
\begin{itemize}
\item[(SDia1)] The empty diagram $(\emptyset, -)$, the diagrams $(\cdot,S)$ for any $S \in \SSS(\cdot)$, and $(\Delta_1, f)$ for any $f \in \SSS(\Delta_1)$ are objects of $\mathcal{DIA}$.
\item[(SDia2)] $\mathcal{DIA}$ is stable under taking finite coproducts and such fibered products, where one of the morphisms is of pure diagram type.
\item[(SDia3)] For each morphism $\alpha: D_1 \rightarrow D_2$ with $D_i=(I_i, F_i)$ in $\mathcal{DIA}$ and for each object $i \in I_2$ and morphism $U \rightarrow F_2(i)$ being part of a cover in the chosen pre-topology, the slice diagram
$D_1 \times_{/D_2} (i, U)$ is in $\mathcal{DIA}$, and if $\alpha$ is of pure diagram type then also $(i, F_2(i)) \times_{/D_2} D_1$ is in $\mathcal{DIA}$. 
\end{itemize}
A {\bf category of $\SSS$-diagrams}  $\mathcal{DIA}$ is called {\bf infinite}, if it satisfies in addition:
\begin{itemize}
\item[(SDia5)] $\mathcal{DIA}$ is stable under taking arbitrary coproducts.
\end{itemize}
\end{DEF}

There is an obvious dual notion of a category of $\SSS$-diagrams in $\Cat^{\op}(\SSS)$. If $\SSS$ is the trivial derivator both definitions boil down to the previous definition of
a diagram category~\ref{DEFDIAGRAMCAT}.

\subsection{Fundamental (co)localizers}

\begin{DEF}
A class of morphisms $\mathcal{W}$ in a category is called {\bf weakly saturated}, if it satisfies the following properties:
\begin{enumerate}
\item[(WS1)] Identities are in $\mathcal{W}$.
\item[(WS2)] $\mathcal{W}$ has the 2-out-of-3 property.
\item[(WS3)] If $p: Y \rightarrow X$ and $s: X \rightarrow Y$ are morphisms such that $p \circ s = \id_X$ and $s \circ p \in \mathcal{W}$ then $p \in \mathcal{W}$ (and hence 
$s \in \mathcal{W}$ by (WS2)). 
\end{enumerate}
\end{DEF}

\begin{DEF}\label{DEFFUNLOCREL}
Let $\SSS$ be a strong right derivator with Grothendieck pre-topology (\ref{DEFPRETOP}).
Let $\mathcal{DIA} \subset \Cat(\SSS)$ be a category of $\SSS$-diagrams (cf.\@ \ref{DEFSDIAGRAMCAT}). 

Consider a family of subclasses $\mathcal{W}_S$ of morphisms in $\mathcal{DIA} \times_{/\mathcal{DIA}} (\cdot, S)$ parametrized by all objects $S \in \SSS(\cdot)$. 
Such a family $\{\mathcal{W}_S\}_S$ is called a {\bf system of relative localizers} if
 the following properties are satisfied:
\begin{enumerate}
\item[(L0)] For any morphism $S_1 \rightarrow S_2$ the induced functor $\mathcal{DIA} \times_{/\mathcal{DIA}} (\cdot, S_1) \rightarrow \mathcal{DIA} \times_{/\mathcal{DIA}} (\cdot, S_2)$ maps $\mathcal{W}_{S_1}$ to $\mathcal{W}_{S_2}$.
\item[(L1)] Each $\mathcal{W}_S$ is weakly saturated.
\item[(L2 left)] If $D=(I,F) \in \mathcal{DIA}$, and $I$ has a final object $e$, then the projection $D \rightarrow (e, F(e))$ is in $\mathcal{W}_{F(e)}$.
\item[(L3 left)] For any commutative diagram in $\mathcal{DIA}$ over $(\cdot, S)$
\[ \xymatrix{ D_1 \ar[rd] \ar[rr]^{w} & & D_2 \ar[dl]  \\
  & D_3=(E,F)     }\]
and for any chosen covers $\{U_{e,i} \rightarrow F(e)\}$ for all $e \in E$, the following implication holds true:
\[ \forall e\in E\ \forall i \quad w \times_{/D_3} (e, U_{e,i}) \in \mathcal{W}_{U_{e,i}} \quad \Rightarrow \quad w \in \mathcal{W}_{S}. \]

\item [(L4 left)] For any  morphism $w: D_1 \rightarrow D_2=(E,F)$ of pure diagram type over $(\cdot, S)$ the following implication holds true:
\[ \forall e\in E  \quad (e, F(e)) \times_{/D_2} D_1 \rightarrow (e, F(e))  \in \mathcal{W}_{(e, F(e))} \quad \Rightarrow \quad w \in \mathcal{W}_{S}. \]
\end{enumerate}
\end{DEF}

There is an obvious dual notion of a {\bf system of colocalizers} in $\mathcal{DIA} \subset \Cat^{\op}(\SSS)$ where $\SSS$ is supposed to be a strong left derivator with Grothendieck pre-cotopology. 

\begin{DEF}\label{DEFFUNLOC}
Let $\SSS$ be a strong right derivator. Assume we are given a Grothendieck pre-topology on $\SSS$ (cf.\@ \ref{DEFPRETOP}).
Let $\mathcal{DIA} \subset \Cat(\SSS)$ be a category of $\SSS$-diagrams (cf.\@ \ref{DEFSDIAGRAMCAT}). 

A subclass $\mathcal{W}$ of morphisms in $\mathcal{DIA}$ is called an {\bf absolute localizer} (or just {\bf localizer}) if
 the following properties are satisfied:
\begin{enumerate}
\item[(L1)]  $\mathcal{W}$ is weakly saturated.
\item[(L2 left)] If $D=(I,F) \in \mathcal{DIA}$, and $I$ has a final object $e$, then the projection $D \rightarrow (e, F(e))$ is in $\mathcal{W}$.
\item[(L3 left)] For any commutative diagram in $\mathcal{DIA}$
\[ \xymatrix{ D_1 \ar[rd] \ar[rr]^{\alpha} & & D_2 \ar[dl]  \\
  & D_3=(E,F)   }\]
and chosen covering $\{U_{i,e} \rightarrow F_3(e)\}$ for all $e \in E$, the following implication holds true:
\[ \forall e\in E\ \forall i \quad w \times_{/D_3} (e, U_i) \in \mathcal{W} \quad \Rightarrow \quad w \in \mathcal{W}. \]

\item [(L4 left)] For any morphism $w: D_1 \rightarrow D_2=(E,F)$  of pure diagram type, the following implication holds true:
\[ \forall e\in E  \quad (e, F(e)) \times_{/D_2} D_1 \rightarrow (e, F(e))  \in \mathcal{W} \quad \Rightarrow \quad w \in \mathcal{W}. \]
\end{enumerate}
\end{DEF}

There is an obvious dual notion of absolute {\bf colocalizer} in $\mathcal{DIA} \subset \Cat^{\op}(\SSS)$ where $\SSS$ is supposed to be a strong left derivator with Grothendieck pre-cotopology.

Recall the identification 
\begin{eqnarray*} \Cat(\SSS) &\rightarrow& \Cat^{\op}(\SSS^{\op})^{2-\op}  \\
 (I, F) &\mapsto& (I^{\op}, F^{\op}) 
 \end{eqnarray*}
By abuse of notation, we denote the image of $\mathcal{DIA}$ under this identification by $\mathcal{DIA}^{\op}$.
Note that if $\SSS$ is a strong right derivator with Grothendieck pre-topology, then $\SSS^{\op}$ is a strong left derivator with Grothendieck pre-cotopology. 

\begin{BEM}
\begin{enumerate}
\item If $\mathcal{W}$ is a localizer in $\mathcal{DIA}$, then $\mathcal{W}^{\op}$ is a colocalizer in $\mathcal{DIA}^{\op}$ and vice versa. The same holds true for
systems of relative localizers. 
\item If $\SSS$ is the trivial derivator, then a system of relative localizers or a localizer are the same notion, and (L1--L3 left) are precisely the definition of fundamental localizer of Grothendieck. 
\end{enumerate}
\end{BEM}

\begin{PROP}[Grothendieck]If $\SSS = \{\cdot\}$ is the trivial derivator, then $\Cat(\cdot) = \Cat^{\op}(\cdot)$ as 2-categories. If $\mathcal{DIA}$ is self-dual, i.e.\@ if $\mathcal{DIA}^{\op} = \mathcal{DIA}$ under this identification, then the notions of
localizer, localizer without (L4 left), colocalizer, and colocalizer without (L4 right) are all equivalent.
\end{PROP}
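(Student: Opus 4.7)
The first assertion $\Cat(\cdot) = \Cat^{\op}(\cdot)$ reduces to unpacking Definitions~\ref{DEFDIA} and~\ref{DEFDIAOP}. When $\SSS = \{\cdot\}$ is trivial, every $\SSS(I)$ has a unique object and only identity morphisms, so the extra data $F$ in an object $(I, F)$ is vacuous, the map $f$ in a morphism $(\alpha, f)$ is necessarily an identity, and both 2-morphism constraints $\SSS(\mu)(G) \circ f = g$ and $f \circ \SSS(\mu)(G) = g$ are automatic. Hence $\Cat(\cdot)$ and $\Cat^{\op}(\cdot)$ are canonically isomorphic (as 2-categories) to $\Cat$.

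Under this identification, and since any Grothendieck pre-topology on the trivial $\SSS$ has only isomorphisms as covers, axiom (L3 left) collapses to the classical statement: for a commutative triangle $D_1 \stackrel{w}{\to} D_2 \to E$, if $D_1/e \to D_2/e$ lies in $\mathcal{W}$ for every $e \in E$, then $w \in \mathcal{W}$; similarly (L4 left) specializes to the Quillen-Theorem-A statement for coslices $e\backslash D_1 \to \{e\}$. Thus (L1)+(L2 left)+(L3 left) is precisely Grothendieck's definition of a fundamental localizer, and the ``right'' variants are its formal dual. Self-duality of $\mathcal{DIA}$ is used throughout to ensure that the opposite diagrams, slices, and coslices appearing in these formulations all remain objects of $\mathcal{DIA}$.

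The plan is then to invoke two classical results from the theory of fundamental localizers. First, (L4 left) follows from (L1)+(L2 left)+(L3 left) via a Quillen-Theorem-A style argument, writing the morphism $w$ as a homotopy colimit indexed by its coslices and applying (L3 left) pointwise together with (L2 left); this is standard bookkeeping. Second, and more deeply, every fundamental localizer is stable under passage to opposite categories --- a theorem of Cisinski resolving a conjecture of Grothendieck --- which converts (L2 left)+(L3 left) into (L2 right)+(L3 right), and vice versa. This second step is the main obstacle: its proof uses the Thomason model structure on $\Cat$ together with the zigzag of weak equivalences between $I$ and $I^{\op}$ furnished by barycentric subdivision. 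I would cite Cisinski's argument rather than reprove it; once it is in hand, combining the two steps immediately collapses the four notions listed in the proposition to a single one.
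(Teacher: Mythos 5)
Your reading of the first assertion is correct: for the trivial pre-derivator the extra data vanishes, both $\Cat(\cdot)$ and $\Cat^{\op}(\cdot)$ reduce to the 2-category $\Cat$, and self-duality of $\mathcal{DIA}$ just means closure under $I \mapsto I^{\op}$. You also correctly observe that the axioms specialize to Grothendieck's classical fundamental localizer, and the paper indeed simply cites Cisinski \cite[Proposition 1.2.6]{Cis04} for the equivalence, so the overall plan (cite Cisinski's resolution of Grothendieck's conjecture) is the right one.

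However, your attempted split of the remaining work into two parts -- ``(L4 left) follows from (L1)+(L2 left)+(L3 left) by standard bookkeeping'' and, separately, ``op-stability is Cisinski's deep theorem'' -- misassigns the difficulty. In the classical case, (L3 left) is Quillen's Theorem A phrased with \emph{over}-slices $I\times_{/E}e$, while (L4 left) is the \emph{under}-slice version $e\times_{/E}I$. The homotopy-colimit-over-coslices argument you describe would need the fiber of a Grothendieck fibration to be comparable with the over-slice, but for a Grothendieck \emph{fibration} the adjunction relates the fiber to the \emph{under}-slice (this is exactly what the paper's Proposition~\ref{PROPL4} exploits, and it only establishes equivalence of (L4~left) with (L4'~left), not redundancy). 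A direct application of (L3~left) to $\alpha : I \to E$ over $E$ yields contractibility of over-slices, not under-slices, and there is no elementary passage between the two. In fact, ``(L4 left) is redundant given (L1)--(L3 left)'' \emph{is} Grothendieck's conjecture, equivalent to op-stability of the localizer -- it is not bookkeeping. Your step two, once correctly invoked, already yields step one as a corollary (apply op-stability to $\alpha^{\op}: I^{\op}\to E^{\op}$ and use the over-slice version there, since $(e\backslash I)^{\op} = I^{\op}/e$), so the right organization is: cite Cisinski once and derive everything from that, rather than claiming any piece of it is elementary.
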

\begin{proof}
\cite[Proposition 1.2.6]{Cis04}
\end{proof}

\begin{BEM}\label{REMWMIN}
The class of localizers is obviously closed under intersection, hence there is a smallest localizer $\mathcal{W}^{\mathrm{min}}_{\mathcal{DIA}}$. 
Furthermore the smallest localizer in  $\mathcal{DIA}$ and the smallest colocalizer in $\mathcal{DIA}^{\op}$ correspond.
If $\SSS$ is the trivial derivator and $\mathcal{DIA} = \Cat$, Cisinski \cite[Th\'eor\`eme 2.2.11]{Cis04} has shown that $\mathcal{W}^{\mathrm{min}}_{\Cat}$ is precisely the class  $\mathcal{W}_\infty$ of functors $\alpha: I \rightarrow J$ such that $N(\alpha)$ is a weak equivalence in the classical sense (of simplicial sets, resp.\@ topological spaces).
For a localizer in the sense of Definition \ref{DEFFUNLOC} this implies the following: 
\end{BEM}

\begin{SATZ}\label{SATZWE}
If $\mathcal{DIA}= \Cat(\mathcal{\SSS})$ and $\mathcal{W}$ is an absolute localizer in $\mathcal{DIA}$ and
$\alpha \in \mathcal{W}_\infty$, i.e.\@  $\alpha: I \rightarrow J$ is a functor such that $N(\alpha)$ is a weak equivalence of topological spaces, the morphism
$(\alpha, \id): (I, p_I^* S) \rightarrow (J, p_J^* S)$ is in $\mathcal{W}$ for all $S \in \SSS(\cdot)$. The same holds analogously for a system of relative localizers. 
\end{SATZ}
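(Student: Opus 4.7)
The plan is to reduce to Cisinski's theorem quoted in Remark~\ref{REMWMIN}, which identifies $\mathcal{W}_\infty$ as the smallest fundamental localizer on $\Cat$. Fix $S \in \SSS(\cdot)$ and define
\[ \mathcal{W}^{\mathrm{ord}}_S := \{ \alpha: I \to J \text{ in } \Cat \mid (\alpha, \id): (I, p_I^*S) \to (J, p_J^*S) \in \mathcal{W} \} \]
(read $\mathcal{W}_S$ in place of $\mathcal{W}$ in the relative case). I will check that $\mathcal{W}^{\mathrm{ord}}_S$ satisfies axioms (L1), (L2 left), (L3 left) of Definition~\ref{DEFFUNLOC} specialised to the trivial derivator $\{\cdot\}$ and $\mathcal{DIA} = \Cat$. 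Once this is done, Cisinski's characterisation forces $\mathcal{W}^{\mathrm{ord}}_S \supseteq \mathcal{W}_\infty$, which is precisely the assertion.

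Weak saturation (L1) for $\mathcal{W}^{\mathrm{ord}}_S$ transfers at once from $\mathcal{W}$, since the assignment $\alpha \mapsto (\alpha,\id)$ preserves identities, composition, and sections/retractions. For (L2 left), if $I \in \Cat$ has a final object $e$, then $(I, p_I^*S)$ has the final object $(e,S)$, and the structure functor $p_I^*S$ takes the value $S$ at $e$; hence (L2 left) of $\mathcal{W}$ directly gives $(I, p_I^*S) \to (e, S) \in \mathcal{W}$.

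The key step is (L3 left). Given a commutative triangle in $\Cat$ with $w: I_1 \to I_2$ over $I_3$ such that $w \times_{/I_3} e \in \mathcal{W}^{\mathrm{ord}}_S$ for every $e \in I_3$, I will lift everything to $\Cat(\SSS)$ via the constant functor $p^*S$ and apply (L3 left) of $\mathcal{W}$ using, at each $e \in I_3$, the trivial single-element cover $\{\id: S \to S\}$, which is admissible by Definition~\ref{DEFPRETOP}(1). The critical calculation is that the $\SSS$-slice from~\ref{DEFCOMMADIA} collapses under constant structure functors: the homotopy pullback $S \times_S S$ is (canonically isomorphic to) $S$, so
\[ (w,\id) \times_{/(I_3, p^*S)} (e, S) \;=\; (w \times_{/I_3} e,\ \id). \]
The hypothesis therefore places each $\Cat(\SSS)$-slice in $\mathcal{W}$, and (L3 left) of $\mathcal{W}$ yields $(w, \id) \in \mathcal{W}$, i.e.\ $w \in \mathcal{W}^{\mathrm{ord}}_S$.

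The only real obstacle I expect is the bookkeeping for this last identification of the $\SSS$-slice with the ordinary slice, together with sanity-checking that the singleton-identity cover is available. Everything else reduces mechanically to invoking the $\mathcal{DIA}$-level axioms and then quoting Cisinski. Observe that (L4 left) plays no role here: only the classical Grothendieck-style axioms are needed, which is exactly why Cisinski's identification of $\mathcal{W}_\infty$ suffices. The relative case is verbatim the same argument with $\mathcal{W}_S$ in place of $\mathcal{W}$; note that (L0) is not invoked since $S$ stays fixed throughout.
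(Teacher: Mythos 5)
Your proof is correct and takes the same approach as the paper: the paper simply asserts that the class of functors $\alpha$ with $(\alpha,\id)\in\mathcal{W}$ ``obviously form a fundamental localizer in the classical sense'' and then implicitly quotes Cisinski's identification of $\mathcal{W}_\infty$ as the minimal one (Remark~\ref{REMWMIN}). You have merely spelled out what ``obviously'' hides — in particular the identification of the $\SSS$-slice over constant structure functors with the ordinary slice (using that $S\times_S S\cong S$) and the use of the singleton identity cover from \ref{DEFPRETOP}(1) — and you correctly note that (L4 left) and (L0) are not needed.
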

\begin{proof} The class of functors $\alpha: I \rightarrow J$ in $\Cat$ such that $(\alpha, \id): (I, p_I^* S) \rightarrow (J, p_J^* S)$ is in $\mathcal{W}$ obviously form a fundamental localizer in the
classical sense. 
\end{proof}

\begin{PAR}\label{COPRODUCTSINDIA}
We will for (notational) simplicity
assume that the following properties hold:
\begin{enumerate}
\item $\SSS$ has all {\bf relative finite coproducts} (i.e.\@ for each Grothendieck opfibration with finite {\em discrete} fibers $p: O \rightarrow I$ the functor $p^*$ has a left adjoint $p_!$ and Kan's formula holds true w.r.t.\@ it). 
\item For all finite families $(S_i)_{i \in I}$ of objects in $\SSS(\cdot)$ the collection $\{ S_i \rightarrow \coprod_{j \in I} S_j \}_{i \in I}$ is a cover.
\end{enumerate}

Let $\emptyset$ be the initial object of $\mathcal{S}$ (which exists by 1.). Then the map
\[ \emptyset \rightarrow (\cdot, \emptyset), \] 
where $\emptyset$ on the left denotes the empty diagram, is in $\mathcal{W}$ (resp.\@ in $\mathcal{W}_{\emptyset}$, and hence in all $\mathcal{W}_S$) by (L3 left) applied to the empty cover.

From this and (L3 left) again it follows that for a finite collection $(S_i)_{i \in I}$ of objects of $\SSS(\cdot)$ the map
\[ (I, (S_i)_{i \in I}) \rightarrow (\cdot, \coprod_{i \in I} S_i ) \]
is in $\mathcal{W}$ (resp.\@ in $\mathcal{W}_{\coprod_{i \in I} S_i}$).  More generally, if we have a Grothendieck opfibration with finite {\em discrete} fibers $p: O \rightarrow I$ and a diagram $F \in \SSS(O)$ (over $S \in \SSS(\cdot)$),
then the morphism
\[ (O, F) \rightarrow (I, p_! F) \]
is in $\mathcal{W}$ (resp.\@ in $\mathcal{W}_{S}$). 
\end{PAR}

\begin{BEISPIEL}[Mayer-Vietoris]\label{MV1}
For the simplest non-trivial example of a non-constant map in $\mathcal{W}$ consider a cover $\{ U_1 \rightarrow S, U_2 \rightarrow S\}$ in $\SSS(\cdot)$ consisting of  two {\em mono}morphisms\footnote{For an arbitrary $\SSS$ this means that the projections $\mlq\mlq U_i \times_S U_i \mrq\mrq \rightarrow U_i$ are isomorphisms.}.
Then the projection
\[ p: \left( \vcenter{ \xymatrix{ \mlq\mlq U_1 \times_S U_2 \mrq\mrq \ar[r] \ar[d] & U_1 \\ U_2 & } } \right) \rightarrow S \]
is in $\mathcal{W}$ (resp.\@ in $\mathcal{W}_{S}$)  as is easily derived from the axioms (L1--L4). See \ref{MV2} for how the Mayer-Vietoris long exact sequence is related to this.
\end{BEISPIEL}

\begin{PAR}\label{ELEMENTARYHOMOTOPIC}
Let $\alpha, \beta: D_1 \rightarrow D_2$ be two morphisms in $\mathcal{DIA}$. 
Recall that it is the same to give a 2-morphism $\alpha \Rightarrow \beta$ or a morphism
$D_1 \times \Delta_1 \rightarrow D_2$ such that for $i=1,2$ the compositions $\xymatrix{D_1 \ar[r]^-{e_i} & D_1 \times \Delta_1 \ar[r]&  D_2 }$ are $\alpha$ and $\beta$ respectively. 
We call $\alpha$ and $\beta$ {\bf homotopic} if they are equivalent for the smallest equivalence relation containing by the following relation: $\alpha \sim \beta$, if there
exists a 2-morphism $\alpha \Rightarrow \beta$. In other words $\alpha$ and $\beta$ are homotopic if there is a finite set of 1-morphisms $\gamma_0,\gamma_1, \dots, \gamma_n: D_1 \rightarrow D_2$ such that $\gamma_0 = \alpha$ and $\gamma_n = \beta$ and a chain of 2-morphisms:
\[ \gamma_0 \Leftarrow \gamma_1 \Rightarrow \gamma_2 \Leftarrow \cdots \Rightarrow \gamma_n. \]
\end{PAR}

\begin{PROP}\label{PROPPROPERTIESLOCALIZER}
Let $\mathcal{DIA}$ be a category of $\SSS$-diagrams  (cf.\@ \ref{DEFSDIAGRAMCAT}) and let 
$\mathcal{W}$ be localizer in $\mathcal{DIA}$ (resp.\@ let $\{\mathcal{W}_S\}_S$ be a system of relative localizers).
Then $\mathcal{W}$ (resp.\@ $\{\mathcal{W}_S\}_S)$ satisfies the following properties:
\begin{enumerate}
\item The localizer $\mathcal{W}$ (resp.\@ each $\mathcal{W}_S$) is closed under coproducts. 

\item Let $\widetilde{s}=(s, \id): D_2=(I_2,s^*F) \rightarrow D_1=(I_1, F)$ be a morphism in $\mathcal{DIA}$ (resp.\@ over $(\cdot, S)$) of pure diagram type such that $s$ has a left adjoint $p: I_1 \rightarrow I_2$. 
Then the obvious morphisms
$\widetilde{p}: D_1 \rightarrow D_2$ and $\widetilde{s}$ are in $\mathcal{W}$ (resp.\@ in $\mathcal{W}_{S}$). 

\item Given a commutative diagram in $\mathcal{DIA}$ (resp.\@ one over $(\cdot, S)$)
\[ \xymatrix{ D_1 \ar[rd] \ar[rr]^{w} & & D_2 \ar[dl]  \\
  & D_3   }\]
  where the underlying functors of the morphisms to $D_3$ are Grothendieck opfibrations and the underlying functor of $w$ is a morphism of opfibrations, and
  coverings $\{U_{e,i} \rightarrow F_3(e)\}$ for all $e \in I_3$, then (in the relative case)
\[ \forall e \in I_3\ \forall i \quad w \times_{D_3} (e, U_{e,i}) \in \mathcal{W}_{U_{e,i}} \quad \Rightarrow \quad w \in \mathcal{W}_S \]
  or (in the absolute case)
\[ \forall e \in I_3\ \forall i \quad w \times_{D_3} (e, U_{e,i}) \in \mathcal{W} \quad \Rightarrow \quad w \in \mathcal{W} \]

\item If $f: D_1 \rightarrow D_2$ is in $\mathcal{W}$ (resp.\@ in $\mathcal{W}_{S}$) then also $f \times E: D_1 \times E \rightarrow D_2 \times E$ is in $\mathcal{W}$ (resp.\@ in $\mathcal{W}_{S}$) for any $E \in \Cat$ such that
the morphism $f \times E$ is a morphism in $\mathcal{DIA}$.
\item Any morphism which is homotopic (in the sense of \ref{ELEMENTARYHOMOTOPIC}) to a morphism in $\mathcal{W}$ (resp.\@ in $\mathcal{W}_{S}$) is in $\mathcal{W}$ (resp.\@ in $\mathcal{W}_{S}$).
\end{enumerate}
\end{PROP}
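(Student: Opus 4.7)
The plan is to prove the five items in the bootstrap order (2), then (5), then the remaining half of (2), then (3), and finally (4) and (1), paralleling the strategy Grothendieck and Cisinski use for fundamental localizers, adapted to the present relative setting.

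First I prove the half of (2) concerning $\widetilde{p}: D_1 \to D_2$. I apply (L3 left) to $\widetilde{p}$ over $D_2$ with the trivial self-covers $\{s^*F(j) \xrightarrow{\mathrm{id}} s^*F(j)\}$. The slice $\widetilde{p} \times_{/D_2} (j, s^*F(j))$ has underlying category $I_1 \times_{/I_2} j$, which by the adjunction $p \dashv s$ is equivalent to the over-category $I_1 \times_{/I_1} s(j)$ and hence has a final object; the induced functor takes the value $F(s(j))$ there, so (L2 left) gives the slice in $\mathcal{W}_{F(s(j))}$, whence $\widetilde{p} \in \mathcal{W}_S$ by (L3 left). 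Next, for (5), I show that the projection $\pi: D \times \Delta_1 \to D$ lies in $\mathcal{W}$. It is of pure diagram type, and (L4 left) reduces the claim to showing each slice $\pi \times_{/D} (i, F(i))$ is in $\mathcal{W}$. This slice has underlying category $(I \times_{/I} i) \times \Delta_1$ with final object $((i, \mathrm{id}_i), 1)$ and functor evaluating to $F(i)$ there, so (L2 left) applies. Since $\pi \circ e_i = \mathrm{id}_D$, (WS2) forces $e_0, e_1 \in \mathcal{W}$; for a 2-morphism realized by $\gamma: D_1 \times \Delta_1 \to D_2$ with $\gamma \circ e_i = \alpha, \beta$, repeated application of (WS2) then gives $\alpha \in \mathcal{W} \Leftrightarrow \gamma \in \mathcal{W} \Leftrightarrow \beta \in \mathcal{W}$, and arbitrary homotopies follow by transitive closure.

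With (5) in hand, the remaining half of (2) is immediate: the unit $\eta: \mathrm{id}_{I_1} \Rightarrow sp$ defines a 2-morphism $\mathrm{id}_{D_1} \Rightarrow \widetilde{s} \circ \widetilde{p}$, so (5) together with (WS1) give $\widetilde{s} \circ \widetilde{p} \in \mathcal{W}$, whence $\widetilde{s} \in \mathcal{W}$ by (WS2) using $\widetilde{p} \in \mathcal{W}$. For (3), the subtle point is that the hypothesis concerns strict fibers $w \times_{D_3} (e, U_{e, i})$ whereas (L3 left) is stated for slices $w \times_{/D_3} (e, U_{e, i})$. The inclusion of the strict fiber into the slice is of pure diagram type, and its underlying inclusion has a left adjoint given by cocartesian transport; by the now-established (2) this inclusion lies in $\mathcal{W}$, so the fiber and slice conditions are equivalent by (WS2), and (L3 left) concludes. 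Items (4) and (1) both follow from (3): for (4), the projections $D_i \times E \to E$ are Grothendieck opfibrations and $f \times E$ is a morphism of opfibrations with fiber $f \in \mathcal{W}$ at each $e \in E$; for (1), the projection from $\coprod D_{2, i}$ to the discrete category on the index set is a Grothendieck opfibration, and $\coprod w_i$ is a morphism of opfibrations with fiber $w_i \in \mathcal{W}$ at the $i$-th summand. The main obstacle I expect is the verification in (3) that the left adjoint to the fiber--slice inclusion really extends from $\Cat$ to $\mathcal{DIA}$: this requires checking that cocartesian transport respects the $\SSS$-valued functor data attached to the diagram, which is essentially automatic because the cocartesian lifts in $D \to D_3$ live above cocartesian lifts in the underlying categories, where the $\SSS$ direction contributes identities only. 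The relative case is handled throughout by tracking the base $S$ via (L0).
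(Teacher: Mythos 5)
The overall strategy is the same as the paper's, but there is a genuine slip in your proof of~(5), and a consequent ordering problem. Axiom (L4~left) asks you to test the slices $(e,F(e)) \times_{/D_2} D_1$, which by the paper's convention (see \ref{DEFCOMMADIA} and the way this axiom is used in the proof of Proposition~\ref{PROPL4}) is the \emph{under}-category $e\times_{/I_2} I_1$. For the projection $\pi\colon D\times\Delta_1\to D$ at $i\in I$ this is $(i\times_{/I} I)\times\Delta_1$, whose underlying category has an \emph{initial} object $((i,\mathrm{id}_i),0)$, not the final object you claimed; you appear to have computed the over-category $(I\times_{/I}i)\times\Delta_1$ instead, to which (L4~left) does not refer. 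Since (L2~left) only sees final objects, your appeal to it fails.

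The argument can be repaired, but not in the order you chose. You can either (a) pass from the under-slice to the strict fiber $(\Delta_1,p^*F(i))$ using the adjunction between the inclusion of the fiber and cartesian transport --- this uses the already-proven $\widetilde{p}$-half of~(2), after which (L2~left) applies to $\Delta_1$ and (L4~left) concludes (this is exactly the ``(L4~left) implies (L4'~left)'' direction of Proposition~\ref{PROPL4}); or (b), as the paper does, prove~(3) \emph{first} and then deduce~(5) by applying~(3) with $D_3 = D_1$ and trivial covers to the projection $\Delta_1\times D_1\to D_1$, whose strict fibers are $(\Delta_1,p^*F(e))$. The paper follows route~(b) precisely because it lets~(3) rest only on the $\widetilde{p}$-half of~(2); in your ordering, which places~(5) before~(3), you must use route~(a). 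A secondary remark: for item~(4) in the absolute case your choice of $D_3 = (E,?)$ requires picking a functor on $E$ and then checking that the resulting diagram lies in $\mathcal{DIA}$; this is straightforward when a terminal object of $\SSS(\cdot)$ is available and the paper glosses over it as well, but it is worth noting as a hypothesis you are implicitly using. The rest of your argument (the $(L3\text{ left})$-based proof of $\widetilde{p}\in\mathcal{W}$, completion of~(2) from~(5), the fiber-versus-slice reduction in~(3) using the cocartesian-transport adjunction, and~(1) from~(3)) matches the paper up to the observation that the paper proves the fiber-into-slice inclusion is in $\mathcal{W}$ from $s_e\iota_e=\mathrm{id}$ and the $\widetilde{p}$-half of~(2) alone, whereas you invoke the full~(2).
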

\begin{proof}
1.\@ This property follows immediately from (L3 left) applied to a diagram
\[ \xymatrix{
\coprod_{i \in I} D_{1,i} \ar[rr] \ar[rd] && \coprod_{i \in I} D_{2, i}  \ar[ld] \\
& (I, p_I^*S) 
} \]
where $I$ is considered to be a discrete category. (In the absolute case let $S$ be the final object of $\SSS(\cdot)$.) 

2.\@ We first show that $\widetilde{p} \in \mathcal{W}$.
Using (L3 left), it suffices to show that $\widetilde{p}_i: D_1 \times_{/I_2} i \rightarrow D_2 \times_{/I_2} i$ is in $\mathcal{W}$ (resp.\@ in $\mathcal{W}_S$) for all $i \in I_2$, however by the adjunction we have
$I_1 \times_{/I_2} i = I_1 \times_{/I_1} s(i)$ and therefore $I_1 \times_{/I_2} i$ has a final object. 
In the diagram
\[ \xymatrix{
D_1 \times_{/I_2} i \ar[r]^{\widetilde{p}_i} \ar[d] & D_2 \times_{/I_2} i \ar[d] \\
(\cdot, s(i)^*F) \ar@{=}[r] & (\cdot, s(i)^*F) 
} \]
the vertical morphisms are thus in $\mathcal{W}$ (resp.\@  $\mathcal{W}_S$) and so is the upper horizontal morphism.
That $\widetilde{s}$ is in $\mathcal{W}$ (resp.\@ in $\mathcal{W}_S$)  will follow from 4.\@ because this implies that $\widetilde{s} \circ \widetilde{p}$ and $\widetilde{p} \circ \widetilde{s}$ are in $\mathcal{W}$ (resp.\@ in $\mathcal{W}_S$)  therefore by (L1) also $\widetilde{s}$ is in $\mathcal{W}$ (resp.\@ in $\mathcal{W}_S$). For note that the unit and the counit extend to 2-morphisms of diagrams. 

3.\@ Using (L3 left), we have to show that $D_1 \times_{/D_3} (e, U_{e,i}) \rightarrow D_2 \times_{/D_3} (e, U_{e,i})$ is in $\mathcal{W}$ (resp.\@ in $\mathcal{W}_{U_{e,i}}$). 
Since the underlying functors of $D_1\rightarrow D_3$ and $D_2 \rightarrow D_3$ are Grothendieck opfibrations, we have a diagram
over $(e, U_{e,i})$:
\[ \xymatrix{
D_1 \times_{D_3} (e, U_{e,i}) \ar[r] \ar@<2pt>[d]^{\iota_e} & D_2 \times_{D_3} (e, U_{e,i})  \ar@<2pt>[d]^{\iota_e} \\
D_1 \times_{/D_3} (e, U_{e,i}) \ar[r] \ar@<2pt>[u]^{s_e} & D_2 \times_{/D_3} (e, U_{e,i})  \ar@<2pt>[u]^{s_e}
} \]
where the underlying functor of $\iota_e$ is of diagram type and is right adjoint to $s_e$. Therefore $s_e$ is in $\mathcal{W}$ (resp.\@ in $\mathcal{W}_{U_{e,i}}$) by 2.\@ and hence the same holds for $\iota_e$ because $s_e \iota_e = \id$ (using L1). Note: we are not using the not yet proven part of 2. Since the top arrow is in $\mathcal{W}$ (resp.\@ in $\mathcal{W}_{U_{e,i}}$) the same holds for the bottom arrow.

4.\@ This is a special case of 2.  

5.\@ A natural transformation $\mu: f \Rightarrow g$ for $f, g: D_1 \rightarrow D_2$ can be seen as a morphism of diagrams
$\mu: \Delta_1 \times D_1 \rightarrow D_2$ such that $\mu \circ e_0 = f$ and $\mu \circ e_1 = g$. Since the projection $p: \Delta_1 \times D_1 \rightarrow D_1$ is in $\mathcal{W}$ by 3.\@ also the morphisms
$e_{0,1}: D_1 \rightarrow \Delta_1 \times D_1$ are in $\mathcal{W}$. Since $\mu \circ e_0 = f$ and $\mu \circ e_1 = g$, the morphism $f$ is in $\mathcal{W}$ if and only if $g \in \mathcal{W}$.  
\end{proof}

\begin{PROP}\label{PROPL4}
Axiom (L4 left) is, in the presence of (L1--L3 left), equivalent to the following, apparently weaker axiom:
\begin{enumerate}
\item[(L4' left)] Let $w: D_1 \rightarrow D_2$ be a morphism (resp.\@ a morphism over $(\cdot,S)$) of pure diagram type such that the underlying functor is a Grothendieck fibration. Then (in the relative case)
\[ \forall e\in I_2  \quad (e, F_2(e)) \times_{D_2} D_1 \rightarrow (e, F_2(e))  \in \mathcal{W}_{F_2(e)} \quad \Rightarrow \quad w \in \mathcal{W}_S \]
or (in the absolute case)
\[ \forall e\in I_2  \quad (e, F_2(e)) \times_{D_2} D_1 \rightarrow (e, F_2(e))  \in \mathcal{W} \quad \Rightarrow \quad w \in \mathcal{W}. \]
\end{enumerate}
\end{PROP}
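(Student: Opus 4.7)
The plan is to prove both directions of the equivalence. For the easy direction $(L4~\text{left}) \Rightarrow (L4'~\text{left})$, suppose $w=(\alpha,\id)\colon D_1\to D_2$ is of pure diagram type with $\alpha\colon I_1\to I_2$ a Grothendieck fibration satisfying the strict-fiber hypothesis. For each $e\in I_2$ the inclusion
\[
\iota_e\colon (\alpha^{-1}(e), F_2(e)) \hookrightarrow (e \times_{/I_2} I_1, F_2\alpha), \qquad i\mapsto (i,\id_e),
\]
is a morphism of $\SSS$-diagrams over $(e,F_2(e))$ of pure diagram type (both sides assign $F_2(e)$ to points of $\alpha^{-1}(e)$), whose underlying functor admits the Cartesian pullback $\pi\colon (i,\mu)\mapsto \mu^*i$ as a left adjoint. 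Proposition~\ref{PROPPROPERTIESLOCALIZER}(2) then places $\iota_e$ in $\mathcal{W}_{F_2(e)}$. Since the composite ``strict fiber $\xrightarrow{\iota_e}$ slice $\to(e,F_2(e))$'' agrees with the tautological projection from the strict fiber (with identity diagram-part), two-out-of-three converts the strict-fiber hypothesis of (L4'~left) into the slice hypothesis of (L4~left), from which (L4~left) yields $w\in\mathcal{W}_S$.

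For the converse $(L4'~\text{left}) \Rightarrow (L4~\text{left})$, the plan is the standard Grothendieck fibration replacement. Set $\widetilde{I_1}:=I_2\times_{/I_2}I_1$ with objects the triples $(e,i,\mu\colon\alpha(i)\to e)$; then $\widetilde{\alpha}\colon(e,i,\mu)\mapsto e$ is a Grothendieck fibration whose strict fiber over $e$ is precisely the over-slice $e\times_{/I_2}I_1$, and the section $s\colon i\mapsto(\alpha(i),i,\id_{\alpha(i)})$ is left adjoint to $p\colon(e,i,\mu)\mapsto i$ with $p\circ s=\id_{I_1}$. Endowing $\widetilde{I_1}$ with the diagram $\widetilde{F}:=\widetilde{\alpha}^*F_2$ makes $\widetilde{w}\colon\widetilde{D_1}\to D_2$ of pure diagram type, and the induced section $\widetilde{s}\colon D_1\to\widetilde{D_1}$ is also of pure diagram type since $s^*\widetilde{F}=F_2\circ\alpha=F_1$. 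Proposition~\ref{PROPPROPERTIESLOCALIZER}(2) therefore places $\widetilde{s}$ in $\mathcal{W}_S$. As $w=\widetilde{w}\circ\widetilde{s}$, by (L1) it suffices to show $\widetilde{w}\in\mathcal{W}_S$, which I would do by applying (L4'~left) to $\widetilde{w}$.

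The main obstacle is verifying the strict-fiber hypothesis of (L4'~left) for $\widetilde{w}$. Its strict fiber over $e$ is $(e\times_{/I_2}I_1,F_2(e)\,\mathrm{const})$ with identity diagram-part, whereas the (L4~left) hypothesis on hand concerns the same underlying over-slice but equipped with the diagram $F_2\alpha$ and diagram-part $F_2(\mu)$ at $(i,\mu)$. These two morphisms differ by the pure-shape comparison induced by the tautological 2-cell $\alpha\circ p_{I_1}\Rightarrow \mathrm{const}_e$ on the over-slice, and the hard part will be to show that this comparison is itself in $\mathcal{W}_{F_2(e)}$; my plan for this is a further application of Proposition~\ref{PROPPROPERTIESLOCALIZER}(2) to the Cartesian-pullback adjunction of $\widetilde{\alpha}$ restricted above the slice, so that the constant-diagram and slice-diagram versions become $\mathcal{W}$-equivalent. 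Granting this bridge, two-out-of-three supplies the required strict-fiber hypothesis, (L4'~left) produces $\widetilde{w}\in\mathcal{W}_S$, and composition with $\widetilde{s}\in\mathcal{W}_S$ yields $w\in\mathcal{W}_S$.
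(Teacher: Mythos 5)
Your overall strategy coincides with the paper's: prove $(L4~\text{left})\Rightarrow(L4'~\text{left})$ by composing with the fiber-to-slice comparison, and prove the converse by replacing $D_1$ with the comma $\widetilde{D_1}$ whose projection to $D_2$ is a Grothendieck fibration. In the converse your use of the strict section $\widetilde{s}$ and the on-the-nose factorization $w=\widetilde{w}\circ\widetilde{s}$ is in fact a small tidying-up of the paper's argument (the paper works with the top projection $\widetilde{D_1}\to D_1$, which makes the outer square only $2$-commutative and forces an extra appeal to Proposition~\ref{PROPPROPERTIESLOCALIZER}.5).

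However, the final paragraph rests on a misreading of the comma construction~\ref{DEFCOMMADIA}, and as written leaves a genuine gap. The object $(e,F_2(e))\times_{/D_2}D_1$ is \emph{not} the shape $e\times_{/I_2}I_1$ equipped with the diagram $F_2\alpha$ and with structure map $F_2(\mu)$ toward $(e,F_2(e))$: by~\ref{DEFCOMMADIA} its diagram is the fiber product $\widetilde F(e,i,\mu)=F_2(e)\times_{F_2(\alpha i)}F_2(\alpha i)$, and because $w$ is of pure diagram type one leg of that fiber product is the identity, so $\widetilde F$ is canonically the \emph{constant} diagram $F_2(e)$ and the projection $P_1$ to $(e,F_2(e))$ has trivial diagram-part. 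In other words, the strict fiber of $\widetilde{w}$ over $e$ (which is $(e\times_{/I_2}I_1,\ \mathrm{const}\,F_2(e))$, as you correctly compute) \emph{is} the object named in the $(L4~\text{left})$-hypothesis. The ``hard part'' you single out --- a comparison in $\mathcal W_{F_2(e)}$ between a constant-diagram slice and an $F_2\alpha$-diagram slice --- does not arise, and your proposed ``plan'' for bridging it is not needed. As submitted, though, that paragraph replaces the actual verification of the $(L4'~\text{left})$-hypothesis for $\widetilde{w}$ with a sketch of a nonexistent reduction, so the converse direction is not complete. (Separately, be careful with the slice direction: the paper's two conventions for $\times_{/}$ disagree, and with the $\mu\colon\alpha(i)\to e$ orientation you use, the projection $\widetilde\alpha\colon\widetilde{I_1}\to I_2$ is an op-fibration rather than a fibration, and $s$ is a \emph{left} adjoint of $p$, so Proposition~\ref{PROPPROPERTIESLOCALIZER}.2 does not apply as you invoke it; under the orientation of~\ref{DEFCOMMADIA}, with $\mu\colon e\to\alpha(i)$, everything lines up.)
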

\begin{proof}(L4' left) implies (L4 left): 
Consider the following 2-commutative diagram
\[ \xymatrix{
(E, F) \times_{/(E, F)} (I, p^*F) \ar[rr] \ar[d]  \ar@{}[rrd]|{\Nearrow} && (I, p^*F) = D_1 \ar[d] \\
 (E, F) \ar@{=}[rr] && (E, F) = D_2
} \]
The underlying diagram functor of the top horizontal map (which is not purely of diagram type) is a Grothendieck opfibration and hence by Proposition~\ref{PROPPROPERTIESLOCALIZER},~3.\@ it is in $\mathcal{W}$ (resp.\@ $\mathcal{W}_S$), provided that the morphisms of the fibers $(E \times_{/E} e, \pr_1^*F) \rightarrow (\cdot, F(e))$ are in $\mathcal{W}$ (resp.\@ in $\mathcal{W}_{F(e)}$). However $E \times_{/E} e$ has the final object $\id_e$ whose value under $\pr_1^*F$ is $F(e)$. The morphisms of the fibers are therefore in $\mathcal{W}$ (resp.\@ in $\mathcal{W}_{F(e)}$) by (L2 left). The underlying diagram functor of the left vertical map is a Grothendieck fibration and  $\pr_1^*F$ is constant along the fibers. Therefore
the fact that all $(e, F(e)) \times_{/D_2} D_1 \rightarrow (e, F(e))$ are in $\mathcal{W}$ (resp.\@ in $\mathcal{W}_{F(e)}$) implies that the left vertical map is in $\mathcal{W}$ (resp.\@ in $\mathcal{W}_{S}$) by (L4' left). Thus also the right vertical map is in $\mathcal{W}$ (resp.\@ in $\mathcal{W}_{S}$).
(This uses Proposition~\ref{PROPPROPERTIESLOCALIZER},~5.\@ and the fact that the two compositions in the diagram are homotopic).

(L4 left) implies (L4' left): If $D_1 \rightarrow D_2=(E,F)$ is a morphism whose underlying functor is a Grothendieck fibration as in Axiom (L4' left), the morphism of {\em constant} diagrams $(e, F(e)) \times_{D_2} D_1 \rightarrow (e, F(e)) \times_{/D_2} D_1$ is in $\mathcal{W}$ (resp.\@ $\mathcal{W}_{F(e)}$) (their underlying functors being part of an adjunction), therefore (L4 left) applies. 
\end{proof}

\subsection{Simplicial objects in a localizer}\label{SECTSIMP}

\begin{PAR}\label{PARTENSORSTRUCTURE}
In this section, we fix a strong right derivator $\SSS$ equipped with a Grothendieck pre-topology and satisfying the assumptions of \ref{COPRODUCTSINDIA} and a category of $\SSS$-diagrams $\mathcal{DIA}$  (cf.\@ \ref{DEFSDIAGRAMCAT}).
Assume that for all $S_\bullet \in \SSS(\Delta^{\op})$ the diagrams $(\Delta^{\op}, S_\bullet)$  and also all truncations $((\Delta^{\le n})^{\op}, S_\bullet)$ are in $\mathcal{DIA}$.
Later we will assume that also $((\Delta^{\circ})^{\op}, S_\bullet)$ for all $S_\bullet \in \SSS((\Delta^{\circ})^{\op})$ (injective simplex diagram) and all truncations $((\Delta^{\circ,\le n})^{\op}, S_\bullet)$ are in $\mathcal{DIA}$.
The reasoning in this section uses little of the explicit definition of $\Delta^{\op}$. 
For comparison with classical texts on 
cohomological descent we stick to the particular diagram $\Delta^{\op}$. 

Consider the category $\SSS(\Delta^{\op})$. 
Since $\SSS$ has all (relative) finite coproducts, $\SSS(\cdot)$ is actually tensored over $\mathcal{SETF}$, hence $\SSS(\Delta^{\op})$ will be tensored over $\mathcal{SETF}^{\Delta^{\op}}$.
We sketch this construction. A finite simplicial set, i.e.\@ a functor $\xi: \Delta^{\op} \rightarrow \mathcal{SETF}$, can be seen as a functor with values in finite discrete categories. 
The corresponding Grothendieck construction yields a Grothendieck op-fibration $\pi_\xi: \int \xi \rightarrow \Delta^{\op}$. We define for $X_\bullet \in \SSS(\Delta^{\op})$:
\[ \xi \otimes X_\bullet := ( \pi_\xi)_! ( \pi_\xi)^* X_\bullet. \]
Recall that the notion `$\SSS$ has {\em relative finite coproducts}' means that all functors $( \pi_\xi)_!$ arising this way exist and can be computed fiber-wise.
\end{PAR}

\begin{PAR}
Consider the full subcategory $\Delta^{\le n}$ of $\Delta$ consisting of $\Delta_0, \dots, \Delta_{n}$. 
Since $\SSS$ is assumed to be a right derivator, the restriction functor 
\[ \iota^*: \SSS(\Delta^{\op}) \rightarrow \SSS((\Delta^{\le n})^{\op}) \]
has a right adjoint $\iota_*$, which is usually called the {\bf coskelet} and denoted $\mathrm{cosk}^n$. 

Let some simplicial object $Y_\bullet \in \SSS(\Delta^{\op})$ and
a morphism $\alpha: X_{\le n} \rightarrow \iota^* Y_\bullet$ be given. Consider the full subcategory $(\Delta^{\op} \times \Delta_1)^{0-\le n}$ 
of all objects $\Delta_i \times [1]$ for all $i \in \N_0$, and $\Delta_i \times [0]$ for $i \le n$. The restriction  
\[ \iota^*: \SSS(\Delta^{\op} \times \Delta_1 ) \rightarrow \SSS((\Delta^{\op} \times \Delta_1)^{0-\le n}) \]
has again an adjoint $\iota_*$. 
Since $\SSS$ is assumed to be strong we can consider $\alpha$ as an object over $(\Delta \times \Delta_1)^{0-\le n}$. 
The first row of $\iota_* \alpha$ is called the {\bf relative coskelet} $\mathrm{cosk}^n(X_{\le n} | Y_\bullet)$ of $X_{\le n}$. For $n=-1$ we understand
$\mathrm{cosk}^{-1}(- | Y_\bullet)=Y_\bullet$.

These constructions work the same way with $\Delta$ replaced by $\Delta^{\circ}$. The functor `coskelet' and `relative coskelet' is in both cases even
{\em the same functor}, i.e.\@ these functors commute with the restriction of a simplicial to a semi-simplicial object\footnote{To see this, e.g., for the case of the `coskelet', observe that there is an adjunction: 
\[ \xymatrix{ \Delta_m \times_{/(\Delta^{\circ})^{\op}} (\Delta^\circ_{\le n})^{\op}   \ar@<2pt>[r] & \ar@<2pt>[l] \Delta_m \times_{/\Delta^{\op}} \Delta^{\op}_{\le n}.} \]}. This would not at all be true for the corresponding left adjoint, the functor `skelet'.
\end{PAR}

We call a diagram $I$ in a diagram category $\Dia$ {\bf contractible}, if $I \rightarrow \cdot$ lies in every fundamental localizer on $\Dia$. 
\begin{LEMMA}\label{LEMMASIMPLDIA}
\begin{enumerate}
 \item
 Let $\Delta$ be the simplex category, and $I$ a category admitting a final object $i$. Let $N(I)$ be the nerve of $I$. Then the category 
 \[ \int_{(\Delta^{\circ})^{\op}} N(I)  \]
 is contractible.
 \item
 Let $\Delta$ be the simplex category, and $I$ a category admitting a final object $i$. Let $N(I)$ be the nerve of $I$. Then the category 
 \[ \int_{\Delta^{\op}} N(I)  \]
 is contractible.
\item
Let $\Delta^{\circ}$ be the injective simplex category and $I$ a {\em directed} category admitting a final object $i$. Let $N^\circ(I)$ be the semi-simplicial nerve of $I$, defined by
letting $N^\circ(I)_m$ be the set of functors $[n] \rightarrow I$ such that no non-identity morphism is mapped to an identity. Then the category 
 \[ \int_{(\Delta^{\circ})^{\op}} N^\circ(I)  \]
 is contractible. 
\end{enumerate}
\end{LEMMA}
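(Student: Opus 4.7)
The plan is to show, in all three cases, that the identity functor on the relevant Grothendieck construction $\Gamma$ is homotopic (in the sense of \ref{ELEMENTARYHOMOTOPIC}) to the constant functor at $([0], i)$. Once such a homotopy is in hand, let $p : \Gamma \to \cdot$ denote the projection and $c : \cdot \to \Gamma$ the inclusion picking out $([0], i)$. Then $p \circ c = \id_{\cdot}$ lies in every fundamental localizer $\mathcal{W}$, and $c \circ p = \mathrm{const}_{([0], i)}$ is homotopic to $\id_{\Gamma}$, hence in $\mathcal{W}$ by Proposition \ref{PROPPROPERTIESLOCALIZER}.5; axiom (WS3) then forces $p \in \mathcal{W}$, which is the desired contractibility.

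For parts (1) and (2) I would introduce the endofunctor $S$ sending $([n], f)$ to $([n+1], f \cdot i)$, where $f \cdot i : [n+1] \to I$ extends $f$ by the canonical morphism $f(n) \to i$ at the new last vertex. On a morphism corresponding to $\beta : [m] \to [n]$ with $g = f \circ \beta$, $S$ acts by the extension $\beta' : [m+1] \to [n+1]$ agreeing with $\beta$ on $[m]$ and sending $m+1$ to $n+1$. Two natural transformations arise: $S \Rightarrow \id$ with components induced by the last face $d_{n+1} : [n] \hookrightarrow [n+1]$, and $S \Rightarrow \mathrm{const}_{([0], i)}$ with components induced by the last vertex inclusion $\ell : [0] \to [n+1]$, $0 \mapsto n+1$. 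Since $d_{n+1}$ and $\ell$ are both injective, this construction lives already in $\Delta^{\circ}$ and settles (1) and (2) simultaneously, producing the chain $\id \Leftarrow S \Rightarrow \mathrm{const}$.

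For part (3) the obstacle is that $f \cdot i$ may fail to be non-degenerate when $f$ already contains $i$ in its image. The key preliminary observation is that in a directed category with final object $i$, a non-degenerate $f : [n] \to I$ can take the value $i$ only at position $n$: if $f(k) = i$ for some $k < n$, then the image of the face $k \to k+1$ would be a morphism out of $i$, which is necessarily $\id_i$ (the level function witnessing directedness forces $\lambda(i)$ to be minimal, so any morphism $i \to x$ has $\lambda(i) = \lambda(x)$ and must be an identity), contradicting non-degeneracy. Let $B$ be the full subcategory of $\int_{(\Delta^{\circ})^{\op}} N^\circ(I)$ on objects $([n], f)$ with $f(n) = i$. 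Then $([0], i)$ is final in $B$ (the unique morphism from $([n], f) \in B$ is the last vertex inclusion, forced unique by the observation), so $B \to \cdot$ belongs to every fundamental localizer by (L2 left).

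Finally I would define $T : \int_{(\Delta^{\circ})^{\op}} N^\circ(I) \to B$ piecewise: $T([n], f) = ([n], f)$ if $f(n) = i$, and $T([n], f) = ([n+1], f \cdot i)$ otherwise (the extension remains non-degenerate by the observation). The case analysis for functoriality is controlled by the same observation: if $f(n) \ne i$ then $i \notin \mathrm{im}(f)$, which rules out the one otherwise problematic configuration $g = f \circ \beta$ with $f(n) \ne i$ but $g(m) = i$. Letting $j : B \hookrightarrow \int_{(\Delta^{\circ})^{\op}} N^\circ(I)$ denote the inclusion, one verifies $T \circ j = \id_B$ and constructs a natural transformation $j \circ T \Rightarrow \id$ whose components are identities on $B$ and the last faces $d_{n+1}$ elsewhere. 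Proposition \ref{PROPPROPERTIESLOCALIZER}.5 then gives $j \circ T \in \mathcal{W}$, and axiom (WS3) applied to the pair $(j, T)$ places both morphisms in $\mathcal{W}$; composing with $B \to \cdot$ finishes part (3). The main obstacle will be the careful verification of the functoriality of $T$ and the naturality of $j \circ T \Rightarrow \id$, both of which depend essentially on the directedness hypothesis that distinguishes (3) from (1)--(2).
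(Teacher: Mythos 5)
Your argument is correct and follows the same route as the paper's: the paper (via Cisinski for part (1), and by direct modification for parts (2)–(3)) constructs essentially the same extension endofunctor and the same homotopy chain linking the identity to the constant functor at $([0],i)$; your $j\circ T$ is literally the paper's $\xi$. Your write-up is more careful in two respects that are worth noting — you make explicit the observation (needed for both $\xi$ being well-defined on objects and its functoriality) that in a directed category with final object $i$ a non-degenerate simplex can take the value $i$ only at its last vertex, and you implicitly correct the orientation of the paper's natural transformations, which are displayed as $\id\Rightarrow\xi$ and $i\Rightarrow\xi$ but must read $\xi\Rightarrow\id$ and $\xi\Rightarrow i$ given that a morphism $([n],x)\to([m],y)$ in $\int_{(\Delta^{\circ})^{\op}}N^{\circ}(I)$ is an injection $\beta:[m]\hookrightarrow[n]$ with $y=x\circ\beta$.
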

\begin{proof}
1.\@ is shown in \cite[Proposition 2.2.3]{Cis04}.
2.\@ is the same but considering $N(I)$ as a functor from $(\Delta^\circ)^{\op}$ to $\mathcal{SET}$. The same proof works when $(\Delta^\circ)^{\op}$ is replaced by $\Delta^{\op}$.
3.\@ is also just a small modification of [loc.\@ cit.]. Define a functor $\xi: \int N^\circ(I) \rightarrow \int N^\circ(I)$ as follows:
an object $(n, x)$, where $x \in N^\circ(I)_n$ is mapped to $(n, x)$ if $x(n)=i$ and to $(n+1,x')$ with
\[ x'(k) = \begin{cases} x(k) & k \le n \\ i & k= n+1.  \end{cases}\]
otherwise.
There are natural transformations 
\[ \id_{\int N^\circ(I)} \Rightarrow \xi \qquad i \Rightarrow \xi \]
where $i$ denotes here the constant functor with value $(0, i)$, showing that $\int N^\circ(I)$ is contractible. 
\end{proof}

\begin{KOR}
The diagrams $\Delta$, $\Delta^{\circ}$, $\int_{\Delta^{\op}} \Delta_n$, $\int_{(\Delta^{\circ})^{\op}} \Delta^\circ_n$, $\int_{\Delta^{\op}} \Delta_n \times \Delta_m$ and $\Delta_m \times_{/\Delta^{\op}} (\Delta^{\circ})^{\op} = \int_{(\Delta^{\circ})^{\op}} \Delta_m$ are contractible.  
\end{KOR}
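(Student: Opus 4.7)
The strategy is to handle each of the six categories by reducing it either to Lemma~\ref{LEMMASIMPLDIA} or to the standard observation (encoded in Proposition~\ref{PROPPROPERTIESLOCALIZER}(2) in the trivial-$\SSS$ case) that a category admitting an initial or final object has its projection to $\cdot$ in every fundamental localizer.

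For $\Delta$ and $\Delta^{\circ}$, the object $[0]$ is initial in both, so the projection $\pi\colon \Delta \to \cdot$ has a right adjoint $s\colon \cdot \to \Delta$ picking out $[0]$. Applying Proposition~\ref{PROPPROPERTIESLOCALIZER}(2) with $\SSS$ the trivial derivator places both $s$ and $\pi$ in every absolute fundamental localizer, so $\Delta$ is contractible; identically for $\Delta^{\circ}$.

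For the four Grothendieck-type diagrams, one simply reads off the correct part of Lemma~\ref{LEMMASIMPLDIA} after interpreting $\Delta_n$ as $N(\Delta_n)$ and $\Delta^{\circ}_n$ as $N^{\circ}(\Delta_n)$. The poset $\Delta_n = \{0,\dots,n\}$ is totally ordered, hence directed, with final object $n$; since $N$ preserves products, $\Delta_n \times \Delta_m = N(\Delta_n \times \Delta_m)$, and this poset has final object $(n,m)$. Thus Lemma~\ref{LEMMASIMPLDIA}(2) applies to $\int_{\Delta^{\op}} \Delta_n$ and to $\int_{\Delta^{\op}} \Delta_n \times \Delta_m$, while Lemma~\ref{LEMMASIMPLDIA}(3) applies to $\int_{(\Delta^{\circ})^{\op}} \Delta^{\circ}_n$, yielding contractibility in each of these three cases.

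It remains to treat $\Delta_m \times_{/\Delta^{\op}} (\Delta^{\circ})^{\op}$. The asserted equality with $\int_{(\Delta^{\circ})^{\op}} \Delta_m$ identifies the slice with a Grothendieck construction over $(\Delta^{\circ})^{\op}$ of the presheaf built from $\Delta_m$, at which point Lemma~\ref{LEMMASIMPLDIA} (in the appropriate part, taking $I = \Delta_m$, which is directed with final object $m$) concludes the argument. The only mildly non-routine step in the whole proof is the unwinding of this last identification between the slice and the Grothendieck construction, which is a matter of directly comparing objects and morphisms of the two categories as specified in paragraph~\ref{DEFCOMMADIA} and Lemma~\ref{LEMMASIMPLDIA}; after that, every case is an immediate invocation of the lemma.
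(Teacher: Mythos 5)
The treatment of the four Grothendieck constructions is fine and matches the paper's one-line proof: identify $\Delta_n = N([n])$, $\Delta^\circ_n = N^\circ([n])$, $\Delta_n\times\Delta_m = N([n]\times[m])$, note that $[n]$ and $[n]\times[m]$ are directed posets with final objects, and invoke the three parts of Lemma~\ref{LEMMASIMPLDIA}. The superfluous mention of directedness when applying part~1 of the lemma is harmless.

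Your treatment of $\Delta$ and $\Delta^\circ$, however, contains a real error. You claim $[0]$ is initial in both $\Delta$ and $\Delta^\circ$. In $\Delta$, $\Hom([0],[n])$ has $n+1$ elements, so $[0]$ is \emph{not} initial; it is \emph{final} (since $\Hom([m],[0])$ is a singleton), so the conclusion that $\pi\colon\Delta\to\cdot$ has a right adjoint happens to be correct and your argument for $\Delta$ survives after fixing the terminology. But in $\Delta^\circ$ the object $[0]$ is \emph{neither} initial nor final: $\Hom_{\Delta^\circ}([0],[n])$ has $n+1$ elements and $\Hom_{\Delta^\circ}([m],[0])=\emptyset$ for $m\ge 1$. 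In fact $\Delta^\circ$ admits no initial or final object at all, so Proposition~\ref{PROPPROPERTIESLOCALIZER}(2) does not apply; your argument for $\Delta^\circ$ collapses.

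The fix is actually already contained in your own argument for the Grothendieck constructions, taken at $n=0$: one has $\int_{\Delta^{\op}} N([0]) \cong \Delta^{\op}$ and $\int_{(\Delta^\circ)^{\op}} N([0]) \cong (\Delta^\circ)^{\op}$ (since $N([0])$ is the terminal simplicial set), so $\Delta^{\op}$ and $(\Delta^\circ)^{\op}$ are contractible by Lemma~\ref{LEMMASIMPLDIA}(2) and (1), respectively. Contractibility is invariant under passing to the opposite category in the trivial-$\SSS$ case (e.g.\@ because $\mathcal{W}^{\mathrm{min}}_{\Cat}=\mathcal{W}_\infty$ by Remark~\ref{REMWMIN} and $N(I^{\op})$ is the opposite simplicial set of $N(I)$, or directly from the fact that $\mathcal{W}\mapsto\mathcal{W}^{\op}$ permutes the class of localizers on a self-dual $\mathcal{DIA}$). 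Hence $\Delta$ and $\Delta^\circ$ are contractible. This recovers what the paper's terse proof intends, and it is the only route for $\Delta^\circ$.
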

\begin{proof}
The simplicial set $\Delta_n$ is just the nerve $N$ of $[n]$. Likewise the semi-simplicial set $\Delta^{\circ}_n$ is the semi-simplicial nerve $N^\circ$ of $[n]$.
\end{proof}

Note that the diagram $\int \Delta^\circ_n$ is even {\em finite}.

\begin{LEMMA}\label{LEMMADIAG}
Let $\mathcal{W}$  be a localizer (resp.\@ let $\{\mathcal{W}_S\}_S$ be a system of relative localizers) in $\mathcal{DIA}$.

Let $((\Delta^{\op})^2, F_{\bullet, \bullet}) \in \mathcal{DIA}$ be a bisimplicial diagram (resp.\@ a bisimplicial diagram over $(\cdot, S)$) and let $\delta: \Delta^{\op} \rightarrow (\Delta^{\op})^2$ be the diagonal.
Then the morphism
\[ (\Delta^{\op}, \delta^* F_{\bullet, \bullet}) \rightarrow ((\Delta^{\op})^2, F_{\bullet, \bullet})\]
is in $\mathcal{W}$ (resp.\@ $\mathcal{W}_S$). 
\end{LEMMA}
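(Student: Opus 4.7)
The plan is to apply axiom (L4 left) (equivalently, Proposition~\ref{PROPL4}) to the morphism
\[ (\delta, \id): (\Delta^{\op}, \delta^*F_{\bullet,\bullet}) \longrightarrow ((\Delta^{\op})^2, F_{\bullet,\bullet}), \]
which is of pure diagram type; by (SDia3) the relevant slice diagrams lie in $\mathcal{DIA}$, so the axiom applies. It therefore suffices to prove that for each $e = (n,m) \in (\Delta^{\op})^2$ the slice projection
\[ P: (e, F(e)) \times_{/((\Delta^{\op})^2, F)} (\Delta^{\op}, \delta^*F) \longrightarrow (e, F(e)) \]
lies in $\mathcal{W}$ (resp.\@ in $\mathcal{W}_{F(e)}$).

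Unravelling the comma construction of \ref{DEFCOMMADIA}, the source has objects the triples $(k, \mu_1, \mu_2)$ with $\mu_i : e_i \to k$ in $\Delta^{\op}$, and its $\SSS$-value is the pullback $F(e) \times_{F(k,k)} \delta^*F(k) = F(e) \times_{F(k,k)} F(k,k) \cong F(e)$, since the second leg of this pullback is an identity. Hence the $\SSS$-diagram is constant with value $F(e)$, so $P$ is itself of pure diagram type and its underlying functor is the projection of the category $\{e\} \times_{/(\Delta^{\op})^2} \Delta^{\op}$ to the one-point category.

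By Theorem~\ref{SATZWE}, it now suffices to show that $\{e\} \times_{/(\Delta^{\op})^2} \Delta^{\op}$ is contractible in the sense of fundamental localizers. A direct bookkeeping check, using that a pair of morphisms $[k] \to [n]$, $[k] \to [m]$ in $\Delta$ is the same datum as a single morphism $[k] \to [n] \times [m]$ in $\Cat$, identifies this comma category with the Grothendieck construction $\int_{\Delta^{\op}} N([n] \times [m])$. Since $[n] \times [m]$ admits the final object $(n,m)$, Lemma~\ref{LEMMASIMPLDIA}(2) yields the required contractibility.

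The main obstacle is the careful matching of conventions between the slice $\{e\} \times_{/(\Delta^{\op})^2} \Delta^{\op}$ and the Grothendieck construction $\int_{\Delta^{\op}} N([n]\times[m])$, together with verifying that the $\SSS$-value of the slice truly collapses to the constant functor $F(e)$ — a collapse crucially enabled by the fact that $(\delta,\id)$ is of pure diagram type, so that one arm of the defining pullback is the identity and the other arm (which is not an isomorphism in general) simply gets ignored.
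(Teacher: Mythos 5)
Your proof is correct and follows the same strategy as the paper: reduce via (L4~left) to contractibility of the slice over each $e=(n,m)$, then recognize that slice as the Grothendieck construction of the nerve of $[n]\times[m]$ and apply Lemma~\ref{LEMMASIMPLDIA},~2. You are actually slightly more careful than the paper's own proof, which passes to the opposite category and writes the slice as $\Delta/(\Delta_m\times\Delta_n)$ before invoking the same lemma, whereas your direct identification with $\int_{\Delta^{\op}} N([n]\times[m])$ matches the form of Lemma~\ref{LEMMASIMPLDIA} on the nose, and you also spell out explicitly why the $\SSS$-value of the slice collapses to the constant $F(e)$ and how Theorem~\ref{SATZWE} finishes the reduction.
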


\begin{BEM}The statement of the Lemma is false when $\Delta$ is replaced by $\Delta^\circ$.
\end{BEM}

\begin{proof}[Proof of Lemma~\ref{LEMMADIAG}.] 
We focus on the absolute case. For the relative case the proof is identical. 
Since the morphism in the statement is of pure diagram type, we may check the condition of (L4 left):
we have to show that the category
\[ (\Delta_m \times \Delta_n) \times_{/(\Delta^{\op})^2}  \Delta^{\op}    \]
is contractible, say, on the diagram category of diagrams $I$ such that $(I, F_{m,n}) \in \mathcal{DIA}$.
Equivalently we may prove this for the dual category. 
Objects of that category are diagrams of the form:
\[ \xymatrix{
\Delta_{m'} \ar[rd] \ar[d]  &  \\
\Delta_m& \Delta_n 
} \]
This is the category $\Delta / (\Delta_m \times \Delta_n)$ which is contractible by Lemma~\ref{LEMMASIMPLDIA},~2. 
Note that this
is the only feature of $\Delta^{\op}$ used in the proof of this Lemma. 
\end{proof}

\begin{BEM}
The previous lemma should be seen in the following context: the Grothendieck construction 
gives a way of embedding the category of simplicial sets into the category of small categories. This construction maps weak equivalences to weak
equivalences and induces an equivalence between the corresponding homotopy categories. A bisimplicial set can be seen as a simplicial object
in the category of simplicial sets. Its homotopy colimit is given by the diagonal simplicial set. On the other hand the homotopy colimit
in the category of small categories is just given by the Grothendieck construction. From this perspective, the lemma is clear if
$\SSS$ is the derivator associated with the category of sets (equipped with the discrete topology). 
\end{BEM}

\begin{LEMMA}\label{LEMMASIMPLEX}
Let $\mathcal{W}$ be a localizer (resp.\@ let $\{\mathcal{W}_S\}_S$ be a system of relative localizers) in $\mathcal{DIA}$.

Consider a simplicial diagram $(\Delta^{\op}, F_\bullet) \in \mathcal{DIA}$ (resp.\@ a simplicial diagram over $(\cdot, S)$). The morphism
\[ (\Delta^{\op}, F_\bullet \otimes \Delta_n) \rightarrow (\Delta^{\op}, F_\bullet) \]
is in $\mathcal{W}$ (resp.\@ $\mathcal{W}_S$). 
\end{LEMMA}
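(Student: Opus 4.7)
The plan is to factor the morphism through the total category of the Grothendieck opfibration associated with $\Delta_n$, treating the two legs separately and concluding by (L1). Explicitly, let $\pi \colon \int_{\Delta^{\op}} \Delta_n \rightarrow \Delta^{\op}$ be the Grothendieck opfibration of the simplicial set $\Delta_n$, which has finite discrete fibers. By the definition of the tensor in~\ref{PARTENSORSTRUCTURE}, one has $F_\bullet \otimes \Delta_n = \pi_! \pi^* F_\bullet$, and the counit of the adjunction $(\pi_!, \pi^*)$ fits into a commutative triangle
\[
\xymatrix{
(\int_{\Delta^{\op}} \Delta_n, \pi^* F_\bullet) \ar[rr]^-a \ar[drr]_-{(\pi, \id)} & & (\Delta^{\op}, F_\bullet \otimes \Delta_n) \ar[d]^-c \\
& & (\Delta^{\op}, F_\bullet)
}
\]
in which $c$ is the morphism to be analyzed. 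The horizontal arrow $a$ is precisely the comparison morphism discussed in~\ref{COPRODUCTSINDIA} for an opfibration with finite discrete fibers, so $a \in \mathcal{W}_S$. By the 2-out-of-3 axiom (L1), it will therefore suffice to show that the diagonal $(\pi, \id)$ lies in $\mathcal{W}_S$.

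Since $(\pi, \id)$ is of pure diagram type, the next step is to apply axiom (L4 left). For each $m \in \Delta^{\op}$, unwinding the comma construction~\ref{DEFCOMMADIA} together with the definition of the Grothendieck construction identifies the underlying category of the slice $(m, F_\bullet(m)) \times_{/(\Delta^{\op}, F_\bullet)} (\int \Delta_n, \pi^* F_\bullet)$ with $\int_{\Delta^{\op}}(\Delta_m \times \Delta_n)$; moreover, the associated $\SSS$-valued functor sends a triple $(k, \mu \colon m \to k, \sigma)$ to $F_\bullet(m) \times_{F_\bullet(k)} F_\bullet(k) \cong F_\bullet(m)$, and is thus canonically isomorphic to the constant functor at $F_\bullet(m)$.

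By the corollary following Lemma~\ref{LEMMASIMPLDIA}, the category $\int_{\Delta^{\op}}(\Delta_m \times \Delta_n)$ is contractible, hence its projection to the final category lies in $\mathcal{W}_\infty$; Theorem~\ref{SATZWE} (in its relative formulation) then places the slice morphism in $\mathcal{W}_{F_\bullet(m)}$. Axiom (L4 left) now gives $(\pi, \id) \in \mathcal{W}_S$, and (L1) closes the argument. The main obstacle is the bookkeeping for the slice: identifying $m \times_{/\Delta^{\op}} \int \Delta_n$ with $\int_{\Delta^{\op}}(\Delta_m \times \Delta_n)$ and verifying that the attached functor is canonically the constant one at $F_\bullet(m)$; everything else then reduces cleanly to~\ref{COPRODUCTSINDIA}, the corollary after Lemma~\ref{LEMMASIMPLDIA}, Theorem~\ref{SATZWE}, and the axioms (L1) and (L4 left).
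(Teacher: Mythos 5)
Your proof is correct and follows essentially the same route as the paper's: replace $(\Delta^{\op}, F_\bullet\otimes\Delta_n)$ by $(\int_{\Delta^{\op}}\Delta_n,\pi^*F_\bullet)$ via the comparison in~\ref{COPRODUCTSINDIA}, apply (L4 left) to the resulting pure-diagram-type map, and identify the slices with $\int_{\Delta^{\op}}(\Delta_m\times\Delta_n)$, which is contractible by the corollary to Lemma~\ref{LEMMASIMPLDIA}. The only difference is that you make the factorization and the appeal to (L1) explicit, whereas the paper compresses this into the phrase that the two diagrams are ``equivalent by definition.''
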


\begin{proof}
We focus on the absolute case. For the relative case the proof is identical. 
The diagram $(\Delta^{\op}, F_\bullet \otimes \Delta_n)$ is equivalent to $(\int \Delta_n, \pi^* F_\bullet)$ by definition~(see \ref{PARTENSORSTRUCTURE}). 
We apply the criterion of (L4 left) to the resulting map 
\[ (\int \Delta_n, \pi^* F_\bullet) \rightarrow (\Delta^{\op}, F_\bullet)  \]
and have to show that
\[ \Delta_m \times_{/\Delta^{\op}} \int \Delta_n  \]
is contractible. 
This category is again (dual to) the category of objects 
\[ \xymatrix{
\Delta_{k} \ar[rd] \ar[d]  &  \\
\Delta_m& \Delta_n 
} \]
and we have already seen in the proof of Lemma~\ref{LEMMADIAG} that it is contractible. 
\end{proof}

\begin{KOR}\label{LEMMACOSKHOMOTOPY}
Let $\mathcal{W}$ be a localizer (resp.\@ let $\{\mathcal{W}_S\}_S$ be a system of relative localizers) in $\mathcal{DIA}$.

Let $f,g: (\Delta^{\op}, F_\bullet) \rightarrow  (\Delta^{\op}, G_\bullet)$ be two homotopic morphisms of simplicial objects (resp.\@ morphisms over $(\cdot, S)$). Then $f \in \mathcal{W}$ (resp.\@ in $\mathcal{W}_S$) if and only if $g \in \mathcal{W}$ (resp.\@ in $\mathcal{W}_S$).
\end{KOR}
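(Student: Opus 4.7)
The plan is to reduce to the case of a single simplicial homotopy and combine Lemma~\ref{LEMMASIMPLEX} with weak saturation. Interpret ``homotopic'' as the equivalence relation generated by the relation of being joined by a simplicial homotopy, i.e.\@ a morphism $H: (\Delta^{\op}, F_\bullet \otimes \Delta_1) \to (\Delta^{\op}, G_\bullet)$ with $H \circ e_0 = f$ and $H \circ e_1 = g$, where $e_0, e_1: (\Delta^{\op}, F_\bullet) \to (\Delta^{\op}, F_\bullet \otimes \Delta_1)$ are the two endpoint inclusions induced by the two vertex maps $\Delta[0] \rightrightarrows \Delta[1]$ (viewing $\Delta_1$ as a simplicial set). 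In the relative case, $H$, $e_0$, $e_1$ all lie over $(\cdot, S)$.

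The crucial input is Lemma~\ref{LEMMASIMPLEX} applied with $n = 1$: the projection
\[ p: (\Delta^{\op}, F_\bullet \otimes \Delta_1) \to (\Delta^{\op}, F_\bullet), \]
induced by the unique map $\Delta[1] \to \Delta[0]$, lies in $\mathcal{W}$ (resp.\@ $\mathcal{W}_S$). Since $p \circ e_0 = \id = p \circ e_1$, the 2-out-of-3 property (WS2) forces $e_0, e_1 \in \mathcal{W}$ (resp.\@ $\mathcal{W}_S$).

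Now assume $f \in \mathcal{W}$ (the other direction is symmetric). From $f = H \circ e_0$ together with $e_0 \in \mathcal{W}$, a further application of (WS2) to the triple $\{e_0, H, H \circ e_0\}$ yields $H \in \mathcal{W}$. Then $g = H \circ e_1$ is a composition of two morphisms in $\mathcal{W}$, hence in $\mathcal{W}$ (composition closure follows formally from (WS1) and (WS2)). This settles the case of a single simplicial homotopy; for a general zigzag we iterate.

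I do not foresee any substantive obstacle: the key geometric content (that tensoring with a contractible simplicial set like $\Delta_1$ produces a morphism in the localizer) is already packaged in Lemma~\ref{LEMMASIMPLEX}, and the remainder is formal manipulation of the weak saturation axioms. The only points requiring minor care are correctly identifying the right notion of ``homotopic'' for the corollary (simplicial homotopy rather than the diagrammatic $D_1 \times \Delta_1$ notion of~\ref{ELEMENTARYHOMOTOPIC}, which is already treated by Proposition~\ref{PROPPROPERTIESLOCALIZER}.5), and checking in the relative case that $p$, $e_0$, $e_1$, and $H$ are all morphisms in $\mathcal{DIA}$ over $(\cdot, S)$, which is immediate from the hypotheses.
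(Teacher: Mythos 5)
Your proof is correct and is precisely what the paper means by ``the standard argument'' in its one-line proof: Lemma~\ref{LEMMASIMPLEX} gives $p \in \mathcal{W}$, then weak saturation (via $p \circ e_i = \id$) gives $e_0, e_1 \in \mathcal{W}$, then 2-out-of-3 on $f = H \circ e_0$ gives $H \in \mathcal{W}$, so $g = H \circ e_1 \in \mathcal{W}$. You have simply spelled out the details the paper leaves implicit; the route is the same.
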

\begin{proof}
The statement follows by the standard argument because the projection $(\Delta^{\op}, F_\bullet \otimes \Delta_1) \rightarrow (\Delta^{\op}, F_\bullet)$ is in $ \mathcal{W}$  (resp.\@ in $\mathcal{W}_S$) by Lemma~\ref{LEMMASIMPLEX}. 
\end{proof}

\begin{PROP}[\v{C}ech resolutions are in $\mathcal{W}$]\label{LEMMACECH}
Let $\mathcal{W}$ be a localizer (resp.\@ let $\{\mathcal{W}_S\}_S$ be a system of relative localizers) in $\mathcal{DIA}$.

Let $U \rightarrow S$ be a local epimorphism in $\SSS(\cdot)$. Then the morphism
\[  p: (\Delta^{\op}, \cosk^0(U|S)) \rightarrow (\cdot, S) \]
is in $\mathcal{W}$ (resp.\@ $\mathcal{W}_S$). 
\end{PROP}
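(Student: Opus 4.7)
The plan is to reduce, via axiom (L3 left), to the case where the morphism in question admits a section, and then to use the classical extra-degeneracy trick for augmented \v{C}ech nerves, concluding via Corollary \ref{LEMMACOSKHOMOTOPY} and the weak-saturation axiom (WS3). I would first apply (L3 left) to the triangle
\[ \xymatrix{ (\Delta^{\op}, \cosk^0(U|S)) \ar[rd]_-{p} \ar[rr]^{p} & & (\cdot, S) \ar[dl]^-{\id} \\ & (\cdot, S) & } \]
with the singleton cover $\{U \to S\}$. Unwinding the slice-construction of \ref{DEFCOMMADIA} (with $I_3 = \{\cdot\}$, so that $\Delta^{\op} \times_{/\{\cdot\}} \{\cdot\} = \Delta^{\op}$), the slice $(\Delta^{\op}, \cosk^0(U|S)) \times_{/(\cdot, S)} (\cdot, U)$ becomes canonically $(\Delta^{\op}, \cosk^0(V|U))$, where $V := U \times_S U$ is regarded as a $U$-object via the second projection. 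The reduction amounts to showing that
\[ p': (\Delta^{\op}, \cosk^0(V|U)) \to (\cdot, U) \]
lies in $\mathcal{W}_U$ (respectively $\mathcal{W}$ in the absolute case). The key new feature is that $V \to U$ now admits a canonical section $s: U \to V$, namely the diagonal.

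Next, I would exploit this section. It induces a morphism of $\SSS$-diagrams $\widetilde{s}: (\cdot, U) \to (\Delta^{\op}, \cosk^0(V|U))$, whose $n$-th component is the $(n+1)$-fold diagonal $U \to V^{n+1}$, and a direct check gives $p' \circ \widetilde{s} = \id_{(\cdot,U)}$. More importantly, $s$ equips the augmented simplicial object $\cosk^0(V|U)_\bullet \to U$ with an extra degeneracy $h_n: V^{n+1} \to V^{n+2}$, $(v_0,\dots,v_n) \mapsto (s\epsilon(v_0), v_0,\dots, v_n)$, satisfying the standard identities $d_0 h_n = \id$, $d_{i+1} h_n = h_{n-1} d_i$, and $\sigma_{i+1} h_n = h_{n+1} \sigma_i$. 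These identities are precisely what is needed to assemble the $h_n$'s, together with the face and degeneracy maps, into a morphism
\[ H: (\Delta^{\op}, \cosk^0(V|U) \otimes \Delta_1) \to (\Delta^{\op}, \cosk^0(V|U)) \]
whose restrictions to the ends $\{0\},\{1\} \subset \Delta_1$ are $\id$ and $\widetilde{s} \circ p'$ respectively, i.e.\ a simplicial homotopy between them.

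To conclude, Corollary \ref{LEMMACOSKHOMOTOPY} forces $\widetilde{s} \circ p'$ to lie in $\mathcal{W}_U$ if and only if $\id$ does, which is trivially the case. Together with $p' \circ \widetilde{s} = \id$, axiom (WS3) of weak saturation (with $p'$ playing the role of the retraction and $\widetilde{s}$ the section) then gives $p' \in \mathcal{W}_U$, as required. The main obstacle will be the explicit construction of the homotopy $H$: one must verify that the classical extra-degeneracy data, which strictly speaking lives on the set-valued simplicial object, gives a genuine morphism out of the tensored diagram $\cosk^0(V|U) \otimes \Delta_1$ of \ref{PARTENSORSTRUCTURE}, and that the various fiber products $V^{n+1}$ (formed over the appropriate base) are compatible as required. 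Once that simplicial homotopy has been produced, the rest is a matter of chasing the already-proven lemmas.
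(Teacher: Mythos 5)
Your overall strategy matches the paper's: reduce via (L3 left) to a situation where a section exists, then conclude by an extra-degeneracy/simplicial-homotopy argument plus weak saturation (and Corollary~\ref{LEMMACOSKHOMOTOPY}). However there are two genuine gaps in the execution.

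First, you apply (L3 left) with ``the singleton cover $\{U \to S\}$,'' but the hypothesis is only that $U \to S$ is a \emph{local epimorphism}, which is strictly weaker than being a cover in the chosen pre-topology. (L3 left) is formulated only for covers, so this reduction is not licensed. The paper instead unwinds the local-epimorphism condition: there exists a genuine cover $\{U_i \to S\}$ such that each base-changed map $U \times_S U_i \to U_i$ has a section $s_i$, and it is to this cover that (L3 left) is applied. In your setup the diagonal section $U \to U\times_S U$ is a red herring --- it exists unconditionally --- whereas the section one actually exploits is the one whose existence is the content of ``local epimorphism.''

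Second, the claimed section $\widetilde{s}\colon (\cdot, U) \to (\Delta^{\op}, \cosk^0(V|U))$ with ``$n$-th component the $(n+1)$-fold diagonal'' is not a morphism of $\Dia(\SSS)$. By Definition~\ref{DEFDIA}, a morphism $(\cdot, U) \to (\Delta^{\op}, X_\bullet)$ consists of a functor $\cdot \to \Delta^{\op}$ (selecting a single object $[n]$) together with a single map $U \to X_n$; there is no ``$n$-th component for each $n$.'' You are conflating morphisms in $\Dia(\SSS)$ with morphisms of simplicial objects in $\SSS$. To get the section and the homotopy to live in $\Dia(\SSS)$, one must first replace the target $(\cdot, U)$ by the constant simplicial diagram $(\Delta^{\op}, p^*U)$ --- these are $\mathcal{W}$-equivalent by Theorem~\ref{SATZWE} since $\Delta^{\op}$ is contractible --- and only then does $\widetilde{s}\colon (\Delta^{\op}, p^*U) \to (\Delta^{\op}, \cosk^0(\dots))$ make sense as a morphism of fixed shape with $\widetilde{p}\circ\widetilde{s}=\id$. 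This intermediate step is present in the paper and missing in your argument. (The remaining extra-degeneracy construction is fine; the paper even shortcuts it by noting that $\Delta_1$ is $1$-coskeletal and the $\cosk^0$'s are $0$-coskeletal, so the homotopy only needs to be specified in degrees $0$ and $1$.)
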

\begin{proof}
To simplify the exposition we focus on the case in which $\SSS$ is associated with a category $\mathcal{S}$. The reader may check however that
everything goes through in the general case because the only constructions involved can be expressed as right Kan extensions. 
The assumption means that there is a cover $\mathcal{U} = \{U_i \rightarrow S\}$ in the given pre-topology, such that for all indices $i$, the induced map
\[ p_i: U \times_S U_i \rightarrow U_i \]
has a section $s_i$. By axiom (L3 left) it suffices to show that for all $i$ the map
\[  \widetilde{p}_i: (\Delta^{\op}, \cosk^0(U\times_S U_i\,|\,U_i)) \rightarrow (\cdot, U_i) \]
is in $\mathcal{W}$ (resp.\@ in $\mathcal{W}_{U_i}$). Explicitly the simplicial object $\cosk^0(U\times_S U_i\,|\,U_i)$  is given by
\[ \xymatrix{ \cdots \ar@<-1.5ex>[r]\ar@<-0.5ex>[r]\ar@<0.5ex>[r]\ar@<1.5ex>[r]  &   U \times_S U  \times_S U  \times_S U_i \ar@<1ex>[r]\ar@<0ex>[r]\ar@<-1ex>[r] & U \times_S U  \times_S U_i \ar@<0.5ex>[r] \ar@<-0.5ex>[r] & U \times_S U_i  } \]

Since $\Delta^{\op}$ is contractible (in particular the morphism $(\Delta^{\op}, p^*T) \rightarrow (\cdot, T)$ is in $\mathcal{W}$, resp.\@ in $\mathcal{W}_T$, for any $T \in \SSS(\cdot)$), it suffices to show that the map
\[  \widetilde{p}_i: (\Delta^{\op}, \cosk^0(U \times_S U_i\,|\,U_i)) \rightarrow (\Delta^{\op}, p^* U_i) \]
is in $\mathcal{W}$ (resp.\@ in $\mathcal{W}_{U_i}$). There is a section 
\[  \widetilde{s}_i: (\Delta^{\op}, p^* U_i) \rightarrow (\Delta^{\op}, \cosk^0(U \times_S U_i\,|\,U_i)) \]
induced by $s_i$ such that $\widetilde{p}_i \circ \widetilde{s}_i = \id$. By (L1) it then suffices to check that $\widetilde{s}_i \circ \widetilde{p}_i \in \mathcal{W}$  (resp.\@ in $\mathcal{W}_{U_i}$). We will construct
a homotopy between $\id$ and $\widetilde{s}_i \circ \widetilde{p}_i$
\[(\Delta^{\op}, \Delta_1 \times \cosk^0(U \times_S U_i\,|\,U_i)) \rightarrow (\Delta^{\op}, \cosk^0(U\times_S U_i\,|\,U_i))\]
 in the sense of simplicial objects. This will suffice by Corollary~\ref{LEMMACOSKHOMOTOPY}.
Since $\Delta_1$ is 1-coskeletal, and the $\cosk^0$'s anyway, it suffices to construct the homotopy in degrees 0 and 1:
\[ \xymatrix{
     \Hom(\Delta_1, \Delta_1) \times  U \times_S U \times_S U_i \ar@<0.5ex>[r] \ar@<-0.5ex>[r]  \ar[d] & \Hom(\Delta_0, \Delta_1) \times U \times_S U_i \ar[d] \\
     U \times_S U \times_S U_i \ar@<0.5ex>[r] \ar@<-0.5ex>[r] & U \times_S U_i 
  } \]
This can be achieved by mapping $\id_{\Delta_1} \times  (U \times_S U_i) \times_{U_i} (U  \times_S U_i)$ to $(U\times_S U_i) \times_{U_i} (U \times_S U_i)$ via $(s_i \circ \pr_2) \times \id$. 
\end{proof}

\begin{DEF}\label{DEFHYPERCOVER}
A morphism $X_\bullet \rightarrow Y_\bullet$ of simplicial objects 
is called a {\bf hypercover} if the following two equivalent conditions hold:
\begin{enumerate}
\item In any diagram of simplicial objects
\[ \xymatrix{
\partial \Delta_n \otimes U \ar[r] \ar[d] & X_\bullet \ar[d] \\
\Delta_n \otimes U \ar[r] & Y_\bullet
} \]
there is a cover $\mathcal{U} = \{U_i \rightarrow U\}$ such that for all $i$ there is a lift (indicated by a dotted arrow) in the diagram
\[ \xymatrix{
\partial \Delta_n \otimes U_i \ar[r] \ar[d] & \partial \Delta_n \otimes U \ar[r]&  X_\bullet \ar[d] \\
\Delta_n \otimes U_i \ar@{.>}[rru] \ar[r] & \Delta_n \otimes U \ar[r]  & Y_\bullet
} \]
\item For any $n \ge 0$ the morphism 
\[ X_{n} \rightarrow \cosk^{n-1}(\iota_{\le n-1}^* X_\bullet\,|\,Y_\bullet)_{n}  \]
admits local sections in the pre-topology on $\SSS$ (i.e.\@ it is a local epimorphism).  
\end{enumerate}
\end{DEF}

\begin{BEM}
\begin{enumerate}
\item In particular the notion of hypercover depends only on the Grothendieck topology generated by the pre-topology because 
a morphism is a local epimorphism precisely if the sieve generated by it is a covering sieve. 

\item The equivalent condition 1.\@ of the definition of hypercover shows that, if $\SSS$ is the derivator associated with the category $\mathcal{SET}$ equipped with the discrete topology, then a hypercover is precisely a trivial Kan fibration.
\end{enumerate}
\end{BEM}

\begin{DEF}If in condition 2.\@ of Definition~\ref{DEFHYPERCOVER} the morphism is even an isomorphism for all sufficiently large $n$, then $\alpha$ is called a {\bf finite (or bounded) hypercover}. 
Equivalently we have $X_\bullet \cong \cosk^{n}(\iota_{\le n}^* X_\bullet\,|\,Y_\bullet)$ for some $n$.
\end{DEF}

\begin{LEMMA}\label{LEMMAELEMENTARYHYPERCOVER}
Let $\mathcal{W}$ be a localizer (resp.\@ $\{\mathcal{W}_S\}_S$ be a system of relative localizers) in $\mathcal{DIA}$.

For a finite hypercover $X_\bullet \rightarrow Y_\bullet$ (resp.\@ one over $(\cdot, S)$) such that 
$X_\bullet \cong \cosk^{i+1}(X_\bullet\,|\,Y_\bullet)$ and 
$\iota_{\le i-1}^* {X}_\bullet \cong \iota_{\le i-1}^* {Y}_\bullet$ the morphism $(\Delta^{\op}, X_\bullet) \rightarrow (\Delta^{\op}, Y_\bullet)$ is in $\mathcal{W}$ (resp.\@ in $\mathcal{W}_{S}$).
\end{LEMMA}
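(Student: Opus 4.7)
My plan is to reduce the statement to the \v{C}ech resolution case handled by Proposition~\ref{LEMMACECH} via an analysis concentrated at levels $i$ and $i+1$ of the hypercover. Let me introduce shorthand: write $M_n := \cosk^{n-1}(\iota_{\le n-1}^* Y_\bullet\,|\,Y_\bullet)_n$ for the $n$-th ``relative matching object'' of $Y_\bullet$. The hypothesis $\iota_{\le i-1}^* X_\bullet \cong \iota_{\le i-1}^* Y_\bullet$ (via the map $\alpha$) identifies the relative matching objects computed from $X_\bullet$ or $Y_\bullet$ through level $i$; the hypercover condition therefore yields that $X_i \to M_i$ is a local epimorphism, and likewise at level $i+1$ we get a local epimorphism $X_{i+1} \to \cosk^{i}(\iota_{\le i}^* X_\bullet\,|\,Y_\bullet)_{i+1}$. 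Finally, $X_\bullet \cong \cosk^{i+1}(X_\bullet \,|\, Y_\bullet)$ reconstructs everything above level $i+1$ by a right Kan extension from these data, so all the essential information is concentrated at levels $i$ and $i+1$.

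Second, I would factor $X_\bullet \to Y_\bullet$ through the intermediate simplicial object $Z_\bullet := \cosk^{i}(\mathrm{sk}^{i}X_\bullet\,|\,Y_\bullet)$, splitting the problem in two: a map $X_\bullet \to Z_\bullet$ which agrees on $\mathrm{sk}^i$ and differs only in the level $i+1$ data, and a map $Z_\bullet \to Y_\bullet$ which affects only level $i$. For $Z_\bullet \to Y_\bullet$, one applies axioms (L3 left) and (L4 left) together with the cover witnessing the local epi $X_i \to M_i$: after this localization $Z_\bullet$ reduces to a relative \v{C}ech nerve of a local epimorphism, and Proposition~\ref{LEMMACECH} applies, the degree shift being absorbed via tensoring with $\Delta_i$ as allowed by Lemma~\ref{LEMMASIMPLEX}. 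For $X_\bullet \to Z_\bullet$, the identical argument runs one level higher, using the local epimorphism at level $i+1$ and the fact that now both sides agree through level $i$.

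The main obstacle will be making these ``level shifts'' and ``\v{C}ech nerve replacements'' rigorous within the axiomatic framework: one needs to compare the abstract \v{C}ech nerve produced by Proposition~\ref{LEMMACECH} with the actual simplicial structure of $X_\bullet$ near levels $i$ and $i+1$. I expect Lemma~\ref{LEMMADIAG} (the bisimplicial diagonal comparison) to be the key instrument, applied to the bisimplicial object whose rows are $X_\bullet$ and whose columns are the \v{C}ech nerve of $X_i \to M_i$. Corollary~\ref{LEMMACOSKHOMOTOPY} should let one replace the simplicial map by a homotopic one induced by local sections of $X_i \to M_i$, and Proposition~\ref{PROPPROPERTIESLOCALIZER}(2) will handle the comparisons of $Z_\bullet$ with its truncations/coskelet via the restriction--coskelet adjunction. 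Induction on $i$ (or reducing the shift to $i=0$ by tensoring with $\Delta_i$) should then finish the argument, modulo the verification, at each stage, that the intermediate diagrams remain in $\mathcal{DIA}$ (using axiom (SDia3) and the assumptions of~\ref{PARTENSORSTRUCTURE}).
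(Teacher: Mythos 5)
Your proposal follows a genuinely different route from the paper, but it contains several concrete gaps that would need to be filled before it could succeed.

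The paper's proof starts from an observation you have not made: under the hypotheses, the map $X_n \to Y_n$ is a local epimorphism for \emph{every} $n$, not only at level $i$. (Indeed $Y_i = \cosk^{i-1}(\iota_{\le i-1}^*X_\bullet\,|\,Y_\bullet)_i$ because $\iota_{\le i-1}^*X_\bullet \cong \iota_{\le i-1}^*Y_\bullet$, so the hypercover condition at level $i$ gives the local epi $X_i \to Y_i$, and the coskeletal structure then propagates this to all degrees.) With that in hand, the paper forms a single coherent bisimplicial object $(X_\bullet|Y_\bullet)_{m,n} := \cosk^0(X_n\,|\,Y_n)_m$, whose columns are the \v{C}ech nerves of the various $X_n \to Y_n$; Proposition~\ref{PROPPROPERTIESLOCALIZER}.3 together with Proposition~\ref{LEMMACECH} kills the vertical maps, and the rows turn out to be hypercovers that visibly split, with a simplicial homotopy built degree-wise, so Corollary~\ref{LEMMACOSKHOMOTOPY} and (L1) finish. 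Crucially, this is a single comparison, not a factorization or induction.

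Your proposal has the following concrete difficulties. First, the ``bisimplicial object whose rows are $X_\bullet$ and whose columns are the \v{C}ech nerve of $X_i \to M_i$'' is not well-formed: the paper's bisimplicial object is coherent precisely because the \v{C}ech direction is taken over $X_n \to Y_n$ \emph{for varying $n$}; fixing the single map $X_i \to M_i$ in the $m$-direction while letting $n$ run over $\Delta^{\op}$ in the other does not produce functoriality in both variables, so Lemma~\ref{LEMMADIAG} does not apply to it. Second, the invocation of Lemma~\ref{LEMMASIMPLEX} for a ``degree shift'' is not correct: that lemma says $(\Delta^{\op}, F_\bullet \otimes \Delta_n) \to (\Delta^{\op}, F_\bullet)$ is in $\mathcal{W}$, which is a homotopy-equivalence statement, not a décalage; tensoring with $\Delta_i$ does not move the \v{C}ech nerve from degree $0$ to degree $i$. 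Third, the factorization $X_\bullet \to Z_\bullet \to Y_\bullet$ with $Z_\bullet := \cosk^i(\iota_{\le i}^*X_\bullet\,|\,Y_\bullet)$ is plausible for $Z_\bullet \to Y_\bullet$ (it satisfies the lemma's hypotheses with parameter $i-1$), but for $X_\bullet \to Z_\bullet$ you would need $X_\bullet \cong \cosk^{i+1}(\iota_{\le i+1}^*X_\bullet\,|\,Z_\bullet)$, and this is not automatic: the relative coskeleton depends on the base, and you only know $X_\bullet \cong \cosk^{i+1}(\iota_{\le i+1}^*X_\bullet\,|\,Y_\bullet)$. Identifying these two relative coskelets requires an argument you have not supplied, and it is not a formal consequence of $Z \to Y$ being an isomorphism below degree $i$. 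Until these points are resolved, the proposed route does not close.
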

\begin{proof}
Again, to simplify the exposition we focus on the case in which $\SSS$ is associated with a category $\mathcal{S}$. 
We may assume $i \ge 1$ because otherwise we are in the situation of Lemma \ref{LEMMACECH}.
The assumptions imply that the map ${X}_{i} \rightarrow {Y}_{i}$ is a local epimorphism. Indeed, this is the map
$X_{i} \rightarrow Y_i = \cosk^{i-1}(\iota_{\le i-1}^* X_\bullet\,|\,Y_\bullet)_{i}$ in this case. Therefore the morphism 
${X}_{j} \rightarrow {Y}_{j}$ is actually a local epimorphism for all $j$. 

Consider the following diagram in $\mathcal{DIA}$:
\[ \xymatrix{
(\Delta^{\op} \times \Delta^{\op}, (X_\bullet \times_{Y_\bullet} X_\bullet\,|\,X_\bullet)) \ar[r] \ar[d] & (\Delta^{\op} \times \Delta^{\op}, (X_\bullet\,|\,Y_\bullet)) \ar[d] \\
(\Delta^{\op}, X_\bullet) \ar[r] & (\Delta^{\op}, Y_\bullet)
} \]
where 
\[ (X_\bullet\,|\,Y_\bullet)_{m,n} := \cosk^0(X_n\,|\,Y_n)_m = \underbrace{X_n \times_{Y_n} \cdots \times_{Y_n}X_n}_{m+1 \text{ factors}}. \]
\[ (X_\bullet \times_{Y_\bullet} X_\bullet\,|\,X_\bullet)_{m,n} := \cosk^0(X_n\times_{Y_n} X_n\,|\,X_n)_m = \underbrace{X_n \times_{Y_n} \cdots \times_{Y_n}X_n}_{m+2 \text{ factors}}. \]
The vertical morphisms are in $\mathcal{W}$ by Proposition~\ref{PROPPROPERTIESLOCALIZER},~3.\@ because
its  columns are in  $\mathcal{W}$ by Lemma~\ref{LEMMACECH}. 
Again by Proposition~$\ref{PROPPROPERTIESLOCALIZER}$,~3.\@ it then suffices to show that the rows  
\[ \xymatrix{
p: (\Delta^{\op}, (X_\bullet \times Y_\bullet\,|\,X_\bullet)_{m, \bullet}) \ar[r] & (\Delta^{\op}, (X_\bullet\,|\,Y_\bullet)_{m, \bullet}) 
} \]
of the top horizontal morphism are in $\mathcal{W}$. These are again hypercovers of the form considered in this Lemma, in particular $i$-coskeletal, where the $i$-truncation is given by
\[ \xymatrix{
\underbrace{X_i\times_{Y_i} \cdots \times_{Y_i} X_i}_{m+2} \ar[d] \ar@<1ex>[r] \ar@<-1ex>[r]^-{\vdots} & X_{i-1}=Y_{i-1} \cdots  \ar@{=}[d] & \cdots \ar@<0.5ex>[r] \ar@<-0.5ex>[r] & X_0=Y_0 \ar@{=}[d] \\
\underbrace{ X_i\times_{Y_i} \cdots \times_{Y_i} X_i}_{m+1} \ar@<1ex>[r] \ar@<-1ex>[r]^-{\vdots} & Y_{i-1} \cdots & \cdots \ar@<0.5ex>[r] \ar@<-0.5ex>[r] & Y_0
} \]
where the left-most vertical arrow is induced by the map $\Delta_{m+1} \rightarrow \Delta_{m+2}$, $i \mapsto i$. There is a section $s$, with $s_i$ induced by the map 
\[ \Delta_{m+2} \rightarrow \Delta_{m+1}, \quad  i \mapsto \begin{cases} i & i<m+2, \\ m+1 & i=m+2. \end{cases} \] 
We will construct a homotopy $\mu: \id \Rightarrow s \circ p$ of truncated simplicial objects:
\[ \xymatrix{
\Hom(\Delta_i, \Delta_1) \times \underbrace{X_i\times_{Y_i} \cdots \times_{Y_i} X_i}_{m+2} \ar[d]^{\mu_i}  \ar@<1ex>[r] \ar@<-1ex>[r]^-{\vdots} & \Hom(\Delta_{i-1}, \Delta_1) \times Y_{i-1} \cdots  \ar[d]^{\mu_{i-1}} & \cdots \ar@<0.5ex>[r] \ar@<-0.5ex>[r] & \Hom(\Delta_0, \Delta_1) \times Y_0 \ar[d]^{\mu_0} \\
\underbrace{X_i\times_{Y_i} \cdots \times_{Y_i} X_i}_{m+2}  \ar@<1ex>[r] \ar@<-1ex>[r]^-{\vdots} & Y_{i-1} \cdots   & \cdots \ar@<0.5ex>[r] \ar@<-0.5ex>[r] & Y_0  \\
} \]
The morphism $\mu_i$ at the constant morphism $0: \Delta_i \rightarrow \Delta_1$ is given by the identity, at the 
constant morphism $1: \Delta_i \rightarrow \Delta_1$ given by $s_i \circ p_i$, and at the other morphisms $\Delta_i \rightarrow \Delta_1$ arbitrarily. 
The existence of this homotopy allows by Lemma~\ref{LEMMACOSKHOMOTOPY} and by (L1) to conclude.
\end{proof}

\begin{SATZ}\label{SATZHYPERCOVER}
Let $\mathcal{W}$ be a localizer (resp.\@ $\{\mathcal{W}_S\}_S$ be a system of relative localizers) in $\mathcal{DIA}$.

Any finite hypercover (resp.\@ one over $S$) considered as a morphism of diagrams  in $\mathcal{DIA}$
\begin{equation}\label{eqmordia}
 (\Delta^{\op}, X_\bullet) \rightarrow (\Delta^{\op}, Y_\bullet) 
\end{equation}
is in $\mathcal{W}$ (resp.\@ in $\mathcal{W}_S$). 

Let $\iota: (\Delta^{\circ})^{\op} \rightarrow \Delta^{\op}$ be the inclusion. If the morphism (\ref{eqmordia}) exists in $\mathcal{DIA}$ then also
\[ ((\Delta^{\circ})^{\op}, \iota^* X_\bullet) \rightarrow ((\Delta^\circ)^{\op}, \iota^* Y_\bullet) \]
is in $\mathcal{W}$ (resp.\@ in $\mathcal{W}_S$).
\end{SATZ}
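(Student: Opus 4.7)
The plan is to decompose any finite hypercover $X_\bullet \to Y_\bullet$ into a finite composition of \emph{elementary} hypercovers to each of which Lemma~\ref{LEMMAELEMENTARYHYPERCOVER} applies. Assuming $X_\bullet \cong \cosk^n(\iota_{\le n}^* X_\bullet \,|\, Y_\bullet)$, I set $Z^{(-1)}_\bullet := Y_\bullet$ and
\[ Z^{(k)}_\bullet := \cosk^k\bigl(\iota_{\le k}^* X_\bullet \,\bigm|\, Y_\bullet\bigr) \qquad \text{for } 0 \le k \le n, \]
so that $Z^{(n)}_\bullet = X_\bullet$. The universal property of the relative coskelet furnishes a canonical factorization $X_\bullet = Z^{(n)}_\bullet \to Z^{(n-1)}_\bullet \to \cdots \to Z^{(0)}_\bullet \to Z^{(-1)}_\bullet = Y_\bullet$ in $\mathcal{DIA}$ of the given morphism.

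The crux is to check that each step $Z^{(k)}_\bullet \to Z^{(k-1)}_\bullet$ satisfies the hypotheses of Lemma~\ref{LEMMAELEMENTARYHYPERCOVER} with index $i = k-1$. Both $(k-1)$-truncations equal $\iota_{\le k-1}^* X_\bullet$, so the truncation hypothesis is automatic. For the coskelet identity $Z^{(k)}_\bullet \cong \cosk^k(\iota_{\le k}^* Z^{(k)}_\bullet \,|\, Z^{(k-1)}_\bullet)$ I will compare universal properties: a morphism $W_\bullet \to Z^{(k)}_\bullet$ over $Y_\bullet$ is, by definition of the outer coskelet, equivalent to a map $\iota_{\le k}^* W_\bullet \to \iota_{\le k}^* X_\bullet$ over $\iota_{\le k}^* Y_\bullet$, and the same datum classifies maps into $\cosk^k(\iota_{\le k}^* X_\bullet \,|\, Z^{(k-1)}_\bullet)$ because the equality $\iota_{\le k-1}^* Z^{(k-1)}_\bullet = \iota_{\le k-1}^* X_\bullet$ forces the required factorization through $Z^{(k-1)}_\bullet$ in low degrees canonically. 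Granted this identity, Lemma~\ref{LEMMAELEMENTARYHYPERCOVER} places $Z^{(k)}_\bullet \to Z^{(k-1)}_\bullet$ in $\mathcal{W}$ (resp.\@ $\mathcal{W}_S$), and finite composition via the 2-out-of-3 property (L1) yields the first assertion.

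For the semi-simplicial statement I apply the restriction $\iota^*$ to the entire tower. Since relative coskeleta commute with $\iota^*: \SSS(\Delta^\op) \to \SSS((\Delta^\circ)^\op)$ (recorded just before Lemma~\ref{LEMMASIMPLDIA}), the resulting semi-simplicial tower consists of elementary hypercovers satisfying the same coskelet and truncation identities. The proof of Lemma~\ref{LEMMAELEMENTARYHYPERCOVER} now transports: its three inputs---the \v{C}ech resolution Proposition~\ref{LEMMACECH}, the tensoring Lemma~\ref{LEMMASIMPLEX}, and the homotopy invariance Corollary~\ref{LEMMACOSKHOMOTOPY}---all admit direct semi-simplicial analogues, the essential contractibility statements being supplied by the corollary to Lemma~\ref{LEMMASIMPLDIA} (in particular, contractibility of $\int_{(\Delta^\circ)^\op} \Delta_n^\circ$ and $\int_{(\Delta^\circ)^\op} \Delta_m$). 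Composing the elementary steps in the semi-simplicial tower gives the second assertion.

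The main obstacle is the coskelet identity underlying each elementary step. Because the relative coskelet is defined in the derivator setting via a two-step adjunction rather than an explicit limit formula, the comparison $\cosk^k(\iota_{\le k}^* X \,|\, Y) \cong \cosk^k(\iota_{\le k}^* X \,|\, \cosk^{k-1}(\iota_{\le k-1}^* X \,|\, Y))$ must be verified purely through universal properties, with careful tracking of base-change when comparing adjunctions over different base simplicial objects. A secondary book-keeping point is to check that the entire filtration $Z^{(-1)}_\bullet, \dots, Z^{(n)}_\bullet$ actually lives in $\mathcal{DIA}$---this uses the closure property (SDia2--3) together with the assumption that $(\Delta^\op, X_\bullet)$ and $(\Delta^\op, Y_\bullet)$ and all their truncations lie in $\mathcal{DIA}$---and that the degree-$k$ piece $X_k \to Z^{(k-1)}_k$ of each elementary step is precisely the matching map whose local-epi nature is the content of Definition~\ref{DEFHYPERCOVER}(2).
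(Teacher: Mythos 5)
The first half of your proposal matches the paper's proof: you decompose the hypercover into the coskeletal tower $Z^{(k)}_\bullet := \cosk^k(\iota_{\le k}^* X_\bullet \,|\, Y_\bullet)$, verify that each stage satisfies the hypotheses of Lemma~\ref{LEMMAELEMENTARYHYPERCOVER}, and conclude by (L1). The bookkeeping about why the coskelet identity $Z^{(k)}_\bullet \cong \cosk^k(\iota_{\le k}^* Z^{(k)}_\bullet\,|\, Z^{(k-1)}_\bullet)$ holds is correctly flagged as the crux; the paper leaves it implicit, and your universal-property sketch is the right way to fill it in.

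The semi-simplicial half, however, has a genuine gap. You propose to restrict the entire tower along $\iota^*$ and then re-run the proof of Lemma~\ref{LEMMAELEMENTARYHYPERCOVER} in the semi-simplicial setting, transporting Proposition~\ref{LEMMACECH}, Lemma~\ref{LEMMASIMPLEX}, and Corollary~\ref{LEMMACOSKHOMOTOPY}. This will not work: all three of those inputs lean essentially on the degeneracy maps that $\Delta^\circ$ lacks. Concretely, the homotopy constructed inside the proof of Lemma~\ref{LEMMAELEMENTARYHYPERCOVER} is specified at the \emph{constant} maps $0,1 \colon \Delta_i \to \Delta_1$, and in $\Delta^\circ$ there are no maps $\Delta_i \to \Delta_1$ at all for $i \ge 2$. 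More structurally, the semi-simplicial analogue of Lemma~\ref{LEMMASIMPLEX} would require the comma category $\Delta_m \times_{/(\Delta^\circ)^{\op}} \int_{(\Delta^\circ)^{\op}} \Delta_n^\circ$ to be contractible, and already for $m=0$, $n=1$ this category is a two-object discrete category (the two vertex inclusions $\Delta_0 \hookrightarrow \Delta_1$), hence not contractible; the paper even explicitly warns in the remark following Lemma~\ref{LEMMADIAG} that the bisimplicial diagonal lemma fails when $\Delta$ is replaced by $\Delta^\circ$. The Corollary after Lemma~\ref{LEMMASIMPLDIA} does give contractibility of $\int_{(\Delta^\circ)^\op} \Delta_n^\circ$ and of $\Delta_m \times_{/\Delta^\op} (\Delta^\circ)^\op$, but neither of these is the category you would need for the semi-simplicial tensor/homotopy machinery.

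The paper's actual route for the second assertion is both shorter and avoids the obstruction: it proves in a separate lemma (the one immediately following the theorem, via (L4 left) and Lemma~\ref{LEMMASIMPLDIA}~1.) that the forgetful morphism $((\Delta^\circ)^{\op}, \iota^* Z_\bullet) \to (\Delta^{\op}, Z_\bullet)$ lies in $\mathcal{W}$ for any simplicial diagram $Z_\bullet$. Inserting this into the commuting square with the already-established simplicial hypercover morphism $(\Delta^{\op}, X_\bullet) \to (\Delta^{\op}, Y_\bullet)$ on the right and the two forgetful morphisms horizontally, the 2-out-of-3 property (L1) immediately places the semi-simplicial morphism on the left in $\mathcal{W}$. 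You should replace the "transport the lemma" step with this transfer argument.
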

\begin{proof}
Any finite hypercover is a finite succession of hypercovers of the form considered in Lemma~\ref{LEMMAELEMENTARYHYPERCOVER}.
The additional statement is a consequence of the following Lemma. 
\end{proof}

\begin{LEMMA}
Let $\mathcal{W}$ be a localizer (resp.\@ let $\{\mathcal{W}_S\}_S$ be a system of relative localizers) in $\mathcal{DIA}$.

Let $\iota: (\Delta^{\circ})^{\op} \rightarrow \Delta^{\op}$ be the inclusion and
let $(\Delta^{\op}, X_\bullet)$ be a simplicial diagram in $\mathcal{DIA}$ (resp.\@ a simplicial diagram over $(\cdot, S)$). Then the morphism
\[ ((\Delta^{\circ})^{\op}, \iota^* X_\bullet) \rightarrow (\Delta^{\op}, X_\bullet) \]
(if in $\mathcal{DIA}$) is in $\mathcal{W}$ (resp.\@ in $\mathcal{W}_S$).  
\end{LEMMA}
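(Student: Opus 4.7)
The plan is to invoke axiom (L4 left) from Definition~\ref{DEFFUNLOC} (resp.\@ \ref{DEFFUNLOCREL}) for the morphism $w : ((\Delta^{\circ})^{\op}, \iota^* X_\bullet) \to (\Delta^{\op}, X_\bullet)$. This morphism is of pure diagram type, since the comparison $\iota^* X_\bullet \to \iota^* X_\bullet$ is the identity, and axiom (SDia3) guarantees that the relevant slice diagrams lie in $\mathcal{DIA}$. I am thus reduced to showing that, for each $[n] \in \Delta^{\op}$, the canonical projection
\[ ([n], X_n) \times_{/(\Delta^{\op}, X_\bullet)} ((\Delta^{\circ})^{\op}, \iota^* X_\bullet) \longrightarrow ([n], X_n) \]
belongs to $\mathcal{W}$ (resp.\@ to $\mathcal{W}_{X_n}$).

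Next, I would identify this slice explicitly. Its underlying category is $\{[n]\} \times_{/\Delta^{\op}} (\Delta^{\circ})^{\op}$, whose objects are pairs $([m], \mu : [m] \to [n] \text{ in } \Delta)$ and whose morphisms come from injective factorizations in $\Delta^{\circ}$. This is precisely the category $\Delta_n \times_{/\Delta^{\op}} (\Delta^{\circ})^{\op} = \int_{(\Delta^{\circ})^{\op}} \Delta_n$ that appears in the corollary following Lemma~\ref{LEMMASIMPLDIA}, and is therefore contractible, i.e.\@ lies in $\mathcal{W}_\infty$. The functor on the slice supplied by the comma construction of~\ref{DEFCOMMADIA} sends $([m], \mu)$ to $X_n \times_{X_m} X_m$, one leg of the fibre product being the identity $X_m \to X_m$; projection onto the first factor yields a canonical isomorphism with the constant functor at $X_n$.

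Combining these identifications, the slice projection becomes (canonically isomorphic to) the morphism
\[ \bigl(\Delta_n \times_{/\Delta^{\op}} (\Delta^{\circ})^{\op},\; p^* X_n\bigr) \longrightarrow (\cdot, X_n), \]
with $p$ the terminal projection. Since its underlying functor lies in $\mathcal{W}_\infty$, Theorem~\ref{SATZWE} places this morphism in $\mathcal{W}$ (resp.\@ in $\mathcal{W}_{X_n}$), and then (L4 left) immediately delivers $w \in \mathcal{W}$ (resp.\@ $w \in \mathcal{W}_S$).

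The step I expect to be most delicate is verifying that the slice functor is genuinely (canonically) the constant functor at $X_n$: one must check that the projections $X_n \times_{X_m} X_m \to X_n$ are compatible with the transition morphisms induced by the injections in $\Delta^{\circ}$, so that Theorem~\ref{SATZWE} applies in the stated form. The underlying-category contractibility is supplied directly by the corollary following Lemma~\ref{LEMMASIMPLDIA}, and everything else is a routine invocation of the localizer axioms.
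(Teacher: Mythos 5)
Your proof is correct and takes essentially the same route as the paper's: the paper also reduces via (L4 left) to the contractibility of $\Delta_m \times_{/\Delta^{\op}} (\Delta^{\circ})^{\op}$, which it cites from Lemma~\ref{LEMMASIMPLDIA}, 1. You spell out more fully what the paper leaves implicit, namely that the slice functor $([m],\mu)\mapsto X_n\times_{X_m}X_m$ is canonically the constant functor at $X_n$ because the second leg of the fibre product is the identity, so the slice projection is a map of the form $(I,p^*X_n)\to(\cdot,X_n)$ with $I$ contractible. One small caution: Theorem~\ref{SATZWE} is stated for $\mathcal{DIA}=\Cat(\SSS)$, so for a general $\mathcal{DIA}$ you should instead argue as in the paper (and as in the proof of Theorem~\ref{SATZWE} itself) that the class of functors $\alpha$ for which $(\alpha,\id)$ lies in $\mathcal{W}$ is a classical fundamental localizer on the diagram category of all $I$ with $(I,X_n)\in\mathcal{DIA}$, and then invoke the contractibility of $\int_{(\Delta^{\circ})^{\op}}\Delta_n$ directly (from Lemma~\ref{LEMMASIMPLDIA}, 1, which produces explicit adjunctions and hence works on any such diagram category) rather than passing through $\mathcal{W}_\infty$.
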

\begin{proof}
We focus on the absolute case. For the relative case the proof is identical. 
Since the morphism in the statement is of pure diagram type, we may check the condition of (L4 left):
we have to show that the category
\[ \Delta_m \times_{/\Delta^{\op}}  (\Delta^{\circ})^{\op}    \]
is contractible, say, on the diagram category of diagrams $I$ such that $(I, X_{m}) \in \mathcal{DIA}$. This is true by Lemma~\ref{LEMMASIMPLDIA},~1.
\end{proof}

\subsection{Cartesian and coCartesian objects}\label{SECTCART}

\begin{DEF} Let $\DD \rightarrow \SSS$ be a fibered derivator of domain $\Dia$. 
Let $I, E \in \Dia$ be diagrams and let $\alpha: I \rightarrow E$ be a functor in $\Dia$. 
We say that an object \[ X \in \DD(I)\] is $E$-{\bf (co-)Cartesian}, if for any morphism
$\mu: i \rightarrow j$ in $I$ mapping to an identity in $E$, the corresponding morphism
$\DD(\mu): i^*X \rightarrow j^*X$ is (co-)Cartesian. 

If $E$ is the trivial category, we omit it from the notation, and talk about (co-)Cartesian objects. 
\end{DEF}

These notions define full subcategories $\DD(I)^{E-\mathrm{cart}}$ (resp.\@ $\DD(I)^{E-\mathrm{cocart}}$) of $\DD(I)$, and $\DD(I)_F^{E-\mathrm{cart}}$ (resp.\@  $\DD(I)_F^{E-\mathrm{cocart}}$) of $\DD(I)_F$ for any $F \in \SSS(I)$.

\begin{LEMMA}
The functor $\alpha^*$ w.r.t. a morphism $\alpha: D_1 \rightarrow D_2$ in $\Dia(\SSS)$ maps Cartesian objects to Cartesian objects. 
The functor $\alpha^*$ for a morphism $\alpha: D_1 \rightarrow D_2$ in $\Dia^{\op}(\SSS)$ maps coCartesian objects to coCartesian objects. 
\end{LEMMA}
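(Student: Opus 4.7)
Both statements follow by duality, so I focus on the first. Write the morphism $\alpha$ in $\Dia(\SSS)$ as $\alpha = (\alpha_0, f)$ with $\alpha_0: I_1 \to I_2$ a functor and $f: F_1 \to \alpha_0^* F_2$ a morphism in $\SSS(I_1)$, so that by~\ref{PARPSEUDOFUNCTOR} we have $\alpha^* = f^\bullet \circ \alpha_0^*$. Given a Cartesian object $X \in \DD(I_2)_{F_2}$, I must verify that for every morphism $\mu: i \to j$ in $I_1$ the induced map $\DD(\mu)(\alpha^* X): i^*(f^\bullet \alpha_0^* X) \to j^*(f^\bullet \alpha_0^* X)$ is Cartesian over $\SSS(\mu)(F_1)$.

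The strategy is to split this verification according to the two factors $\alpha_0^*$ and $f^\bullet$. For the first factor, strict pseudo-functoriality of $\DD$ yields canonical identifications $i^* \alpha_0^* \cong (\alpha_0(i))^*$ (and similarly for $j$) and identifies $\DD(\mu)(\alpha_0^* X)$ with $\DD(\alpha_0(\mu))(X)$, which is Cartesian over $\SSS(\alpha_0(\mu))(F_2)$ by the Cartesianness of $X$. For the second factor, since the inclusion $i: \cdot \hookrightarrow I_1$ is trivially a Grothendieck opfibration, axiom (FDer0 right) provides a canonical isomorphism $i^* f^\bullet \cong f_i^\bullet \, i^*$ with $f_i := i^* f: F_1(i) \to F_2(\alpha_0(i))$, and analogously at $j$.

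Let $\cart: f^\bullet \alpha_0^* X \to \alpha_0^* X$ denote the Cartesian arrow in $\DD(I_1)$ over $f$ coming from the definition of $f^\bullet$. Naturality of the 2-cell $\DD(\mu): i^* \Rightarrow j^*$ applied to the morphism $\cart$ will produce a commutative square in $\DD(\cdot)$ with horizontal sides $\cart_i := i^* \cart$ and $\cart_j := j^* \cart$, and vertical sides $\DD(\mu)(\alpha^* X)$ and $\DD(\alpha_0(\mu))(X)$; both compositions lie over the equal composites $\SSS(\alpha_0(\mu))(F_2) \circ f_i = f_j \circ \SSS(\mu)(F_1)$, the equality reflecting the fact that $f$ is a morphism in $\SSS(I_1)$. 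Both $\cart_i$ and $\cart_j$ are Cartesian by (FDer0 right). The composite $\DD(\alpha_0(\mu))(X) \circ \cart_i$ is then a composition of two Cartesian arrows, hence Cartesian; so is the equal composite $\cart_j \circ \DD(\mu)(\alpha^* X)$, and since $\cart_j$ is Cartesian the standard cancellation lemma for Cartesian arrows forces $\DD(\mu)(\alpha^* X)$ itself to be Cartesian.

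For the $\Dia^{\op}(\SSS)$ statement the argument is strictly dual: one has $\alpha^* = f_\bullet \circ \alpha_0^*$ and uses (FDer0 left) to get the canonical identification $i^* f_\bullet \cong f_{i,\bullet}\, i^*$, together with the dual cancellation lemma for coCartesian arrows. The only delicate point I anticipate is bookkeeping---keeping track of the various canonical identifications of pullback composites and checking that the naturality square really commutes over the asserted composite base morphisms; beyond that, the proof is formal.
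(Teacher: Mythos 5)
Your overall plan is sound: decompose $\alpha^* = f^\bullet \circ \alpha_0^*$, handle $\alpha_0^*$ via strict $2$-functoriality of $\DD$, handle $f^\bullet$ via the naturality square for $\DD(\mu): i^* \Rightarrow j^*$ applied to the Cartesian arrow $\cart: f^\bullet \alpha_0^* X \to \alpha_0^* X$, and close with the cancellation lemma for Cartesian arrows. That part is correct. However, the justification of the crucial step is wrong: the inclusion $i: \cdot \to I_1$ of a single object is \emph{not} ``trivially a Grothendieck opfibration.'' It is an opfibration only when $i$ admits no non-identity outgoing morphism in $I_1$; for a generic object it fails, because a morphism $i \to j$ in $I_1$ has no coCartesian lift in the trivial category. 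Consequently you cannot invoke axiom (FDer0~right) --- which as stated only addresses Grothendieck opfibrations --- to conclude that $i^*$ sends the Cartesian arrow $\cart$ to a Cartesian arrow $\cart_i$ in $\DD(\cdot)$. This is the gap.

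The fact you actually need --- that $\beta^*$ preserves Cartesian arrows of arity $1$ for \emph{every} functor $\beta$ in $\Dia$ --- is nonetheless true in the paper's framework, but for a different reason. It is the blanket convention the authors announce in the paragraph preceding Definition~\ref{DEFDER}: the restriction to opfibrations is only needed ``for arity $n\ge 2$,'' and $1$-ary Cartesian arrows are assumed to be preserved by all $\alpha^*$. Alternatively, one can derive it from the other axioms: Proposition~\ref{PROPPUSHFWDLIMIT} shows (FDer5~left) holds in arity~$1$ for \emph{every} $\alpha$ (not only opfibrations), and the adjunction argument of Lemma~\ref{LEMMALEFTRIGHT} converts this back into (FDer0~right) for arity~$1$ and arbitrary~$\alpha$. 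Either citation repairs the step; merely calling $i$ an opfibration does not. Note also that in the dual assertion about $\Dia^{\op}(\SSS)$ the corresponding step is genuinely unproblematic: (FDer0~left) gives preservation of coCartesian arrows by $\alpha^*$ with no opfibration hypothesis at all, so the coCartesian half of the Lemma is actually the easier one, not a symmetric twin requiring the same caveat.
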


\begin{BEM}\label{BL}
The categories of coCartesian objects are a generalization of the {\bf equivariant derived categories} of Bernstein and Lunts \cite{BL94}. For this let $\DD \rightarrow \SSS^{\op}$ be the stable fibered derivator
of sheaves of abelian groups on (nice) topological spaces, where $\SSS$ is the pre-derivator associated with the category of (nice) topological spaces. 
Let $G$ be a topological group acting on a space $X$. Then we may form the following simplicial space which is an object of $\SSS(\Delta^{\op})$:
\[ [G \backslash X]_\bullet: \xymatrix{ \cdots \ar@<-1.5ex>[r]\ar@<-0.5ex>[r]\ar@<0.5ex>[r]\ar@<1.5ex>[r]  &   G \times G \times X \ar@<1ex>[r]\ar@<0ex>[r]\ar@<-1ex>[r] & G \times X \ar@<0.5ex>[r] \ar@<-0.5ex>[r] & X,  } \]
cf.\@ \cite[B1]{BL94}.
Then the category 
\[ \DD(\Delta)_{[G \backslash X]_\bullet}^{\mathrm{cocart}} \]
is equivalent to the (unbounded) equivariant derived category, cf.\@ \cite[Proposition B4]{BL94}. Note that all pull-back functors are exact in this context.
\end{BEM}

\begin{DEF}\label{DEFCARTPROJ}
Let $\DD \rightarrow \SSS$ be a fibered derivator of domain $\Dia$.
We say that $\DD \rightarrow \SSS$ {\bf admits left Cartesian projections} if for all functors $\alpha: I \rightarrow E$ in $\Dia$ and $S \in \SSS(\cdot)$, the fully-faithful inclusion 
\[ \DD(I)^{E-\mathrm{cart}}_F \rightarrow \DD(I)_F \]
has a left adjoint $\Box_!^E$.  
 More generally we have four notions with the following notations:
\[ \begin{array}{rll}
 \Box_!^E & \text{left adjoint} & \text{left Cartesian projection} \\
 \blacksquare_!^E & \text{right adjoint} & \text{right Cartesian projection} \\
 \blacksquare_*^E & \text{left adjoint} & \text{left coCartesian projection} \\
 \Box_*^E & \text{right adjoint} & \text{right coCartesian projection}
\end{array} \]
\end{DEF}
We will, in general, only use left Cartesian and right coCartesian projection, the others being somewhat unnatural.
In~\ref{THEOREMEXISTRIGHTPROJ} we will show (using Brown representability) that for an infinite fibered derivator whose fibers are stable and well-generated a right coCartesian projection exists. 
Similarly if, in addition, Brown representability for the dual holds, e.g.\@ if the fibers are compactly generated, then a left Cartesian projection exists (see \ref{THEOREMEXISTLEFTPROJ}) in many cases. 
Note that for a usual (non fibered) derivator, the notions `Cartesian' and `coCartesian' are equivalent. 
If for a fibered derivator with stable fibers both left and right Cartesian projections exist, then there is actually a {\em recollement} \cite[Proposition 4.13.1]{Kra09}:
\[ \xymatrix{
\DD(I)_F^{E-\mathrm{cart}} \ar[rr]^{\text{incl.}} && \ar@<3ex>[ll]_{\Box_!} \ar@<-3ex>[ll]_{\blacksquare_*} \DD(I)_F  \ar[rr] && \ar@<3ex>[ll] \ar@<-3ex>[ll] \DD(I)_F/\DD(I)_F^{E-\mathrm{cart}}
} \] 

\begin{BEISPIEL}
The projections are difficult to describe explicitly, except in very special situations. Here a rather trivial example where this is possible. 
Let $\DD$ be a stable derivator and consider $I = \Delta_1$, the projection $p: \Delta_1 \rightarrow \cdot$ and the inclusions $e_0, e_1: \cdot \rightarrow \Delta_1$. 
Then a left and a right Cartesian projection exist and the recollement above is explicitly given by:
\[ \xymatrix{
 \DD(\Delta_1)^{\mathrm{cart}} \cong \DD(\cdot) \ar[rr]^-{p^*} && \ar@<3ex>[ll]_-{e_1^*} \ar@<-3ex>[ll]_-{e_0^*} \DD(\Delta_1)  \ar[rr]^{\mathrm{C}} && \ar@<3ex>[ll]_{[-1] \circ e_{0,*}} \ar@<-3ex>[ll]_{e_{1!}} \DD(\cdot)
} \] 
Note that the functor $\mathrm{C}$ (Cone) may be described as either $[1] \circ e_{0}^! $ or $e_{1}^? $ (cf.\@ \cite[\S 3]{Gro13}) and that the essential image of $p^*$ is precisely the kernel of $\mathrm{C}$, which also coincides with the full subcategory of Cartesian=coCartesian objects. 
\end{BEISPIEL}

\subsection{Weak and strong $\DD$-equivalences}\label{SECTEQUIV}

\begin{DEF}[left]
Let $\Dia$ be a diagram category and let $\SSS$ be a strong right derivator with domain $\Dia$ equipped with a Grothendieck pre-topology. 
Let $\DD \rightarrow \SSS$ be a left fibered derivator satisfying (FDer0 right) and let $S \in \SSS(\cdot)$.
A morphism  $f: D_1 \rightarrow D_2$ in $\Dia(\SSS) / (\cdot, S)$ is called a {\bf weak $\DD$-equivalence} relative to $S$ if the natural transformation
\[  p_{1!} p_{1}^* \rightarrow p_{2!} p_{2}^*  \]
is an isomorphism of functors.
A morphism $f \in \Dia(\SSS)$ is called a  {\bf strong $\DD$-equivalence} if the functor $f^*$ induces an equivalence of categories
\[ f^*: \DD(D_2)^{\mathrm{cart}} \rightarrow \DD(D_1)^{\mathrm{cart}}. \]
\end{DEF}

Note that {\em weak} is a relative notion whereas {\em strong} is absolute.

\begin{DEF}[right]
Let $\Dia$ be a diagram category, and let $\SSS$ be a strong left derivator with domain $\Dia$ equipped with a Grothendieck pre-cotopology. 
Let $\DD \rightarrow \SSS$ be a right fibered derivator satisfying (FDer0 left) and $S \in \SSS(\cdot)$.
A morphism $f: D_1 \rightarrow D_2$ in $\Dia^{\op}(\SSS) / (\cdot, S)$ is called a {\bf weak $\DD$-equivalence} relative to $S$, if the natural transformation
\[  p_{2*} p_{2}^* \rightarrow p_{1*} p_{1}^*  \]
is an isomorphism of functors. 
A morphism $f \in \Dia^{\op}(\SSS)$ is called a  {\bf strong $\DD$-equivalence} if the functor $f^*$ induces an equivalence of categories
\[ f^*: \DD(D_2)^{\mathrm{cocart}} \rightarrow \DD(D_1)^{\mathrm{cocart}}. \]
\end{DEF}

For a (left and right) derivator, i.e.\@ for $\SSS = \cdot$, there is no difference between $\Dia(\SSS)$ and $\Dia^{\op}(\SSS)$ and then also the two different definitions of 
weak, resp.\@ strong $\DD$-equivalence coincide (for the case of weak $\DD$-equivalences, note that the two conditions become adjoint to each other). 
These notions of {\em $\DD$-equivalence} (right version) should be compared to the classical notions of {\em cohomological descent}, see \cite[Expos\'e V$^{\text{bis}}$]{SGAIV2}. 

\begin{LEMMA}[left] Let $f: D_1 \rightarrow D_2$ be a morphism in $\Dia(\SSS) / (\cdot, S)$. Then the following implication holds:
\[ \text{ $f$ strong $\DD$-equivalence}  \quad \Rightarrow \quad \text{$f$ weak $\DD$-equivalence relative to $S$. }\]
\end{LEMMA}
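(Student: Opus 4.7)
The plan is to reduce the claim to a Yoneda-style bijection on Hom-sets, exploiting the fact that ``constant pull-backs'' land in the full subcategory of Cartesian objects.

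First I would establish the following preliminary observation: for any diagram $D \in \Dia(\SSS)/(\cdot, S)$ with structure morphism $p: D \to (\cdot, S)$, the pull-back $p^*: \DD(\cdot)_S \to \DD(D)$ factors through $\DD(D)^{\mathrm{cart}}$. Writing $D = (I, F)$ and $p = (\pi, h)$ with $\pi: I \to \cdot$ constant, one has $p^* \mathcal{E} = h^\bullet \pi^* \mathcal{E}$, and for any $\mu: i \to j$ in $I$ the transition map $i^* p^* \mathcal{E} \to j^* p^* \mathcal{E}$ is the canonical comparison $h_i^\bullet \mathcal{E} \to h_j^\bullet \mathcal{E}$ above $F(\mu)$. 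Its $F(\mu)^\bullet$-adjoint is the canonical isomorphism $F(\mu)^\bullet h_j^\bullet \mathcal{E} \cong (h_j \circ F(\mu))^\bullet \mathcal{E} = h_i^\bullet \mathcal{E}$, where the final equality uses that $h: F \to \pi^* S$ is a morphism in $\SSS(I)$ into the constant diagram $\pi^* S$. Hence the transition is Cartesian.

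Next, by the Yoneda lemma it suffices to show that for every $\mathcal{E}, \mathcal{F} \in \DD(\cdot)_S$ the map
\[ \Hom(p_{2!} p_2^* \mathcal{E}, \mathcal{F}) \to \Hom(p_{1!} p_1^* \mathcal{E}, \mathcal{F}) \]
induced by the natural transformation $p_{1!} p_1^* \to p_{2!} p_2^*$ is a bijection. Using $p_1 = p_2 \circ f$ (so that $p_1^* = f^* p_2^*$) and the adjunctions $p_{i!} \dashv p_i^*$ provided by Proposition~\ref{THEOREMBASECHANGEDIALEFT}, this map is identified with
\[ \Hom_{\DD(D_2)}(p_2^* \mathcal{E}, p_2^* \mathcal{F}) \to \Hom_{\DD(D_1)}(f^* p_2^* \mathcal{E}, f^* p_2^* \mathcal{F}) \]
induced by $f^*$.

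By the preliminary observation, all four arguments are Cartesian; since $\DD(D_i)^{\mathrm{cart}} \subset \DD(D_i)$ is a full subcategory, these Hom-sets coincide with the corresponding ones computed inside $\DD(D_i)^{\mathrm{cart}}$, and the displayed map is simply $f^*|_{\mathrm{cart}}$ applied on morphisms. Since $f^*: \DD(D_2)^{\mathrm{cart}} \to \DD(D_1)^{\mathrm{cart}}$ is an equivalence of categories by hypothesis, this is a bijection. The main obstacle will be the preliminary observation; once constant pull-backs are known to be Cartesian, the rest is entirely formal manipulation of adjunctions.
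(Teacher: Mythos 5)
Your argument is correct. Both the paper and your proposal hinge on the same two ingredients: (i) that the structural pull-backs $p_i^*$ land in $\DD(D_i)^{\mathrm{cart}}$, and (ii) that $f^*$ is fully faithful on Cartesian objects. The paper uses these to construct a commuting triangle involving the inverse equivalence $f_{!\Box}$ and the isomorphism $p_{2!}\,f_{!\Box} \cong p_{1!}$, directly factoring the comparison map through isomorphisms. You instead pass to hom-sets by Yoneda, unwind the adjunctions $p_{i!} \dashv p_i^*$ and $f_! \dashv f^*$ to identify the comparison map with $f^*$ acting on $\Hom(p_2^*\mathcal{E}, p_2^*\mathcal{F})$, and invoke fullness and faithfulness of $f^*|_{\mathrm{cart}}$. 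Your route has the merit of never needing to manipulate the inverse $f_{!\Box}$ or to check commutativity of a triangle of natural transformations; it only uses fully-faithfulness, not essential surjectivity. The one computational point you gloss over — that the adjunction unwinding really identifies the comparison map with ``apply $f^*$'' — follows from the triangle identity $f^*(\epsilon)\circ\eta_{f^*} = \id$, and is worth spelling out if you write this up. The preliminary observation that $p^*$ is Cartesian is also implicit in the paper's proof (it is needed to make sense of $f_{!\Box}\,f^*\,p_2^*$); you do well to make it explicit.
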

\begin{proof}
The morphism in the definition of weak $\DD$-equivalence is induced by the counit w.r.t.\@ the adjunction $f^*, f_!$:
\[ p_{1!} p_{1}^* \cong p_{2!} f_! f^* p_{2}^* \rightarrow  p_{2!} p_{2}^* \]
Now let $f_{!\Box}$ be an inverse to $f^*$, as required by the definition of strong $\DD$-equivalence.  
From $f^* p_2^* \cong p_1^*$ follows $p_{2!} f_{!\Box} \cong p_{1!}$ and moreover the diagram 
\[ \xymatrix{
 p_{2!} f_! f^* p_{2}^* \ar[r] \ar[d]^\sim &p_{2!} p_{2}^* \\
 p_{2!} f_{!\Box} f^* p_{2}^* \ar[ru] & 
} \]
is commutative. Since the diagonal morphism is a natural isomorphism the statement follows. 
\end{proof}

Of course there is an analogous right version of this lemma.
The goal of this section is to prove the following two theorems: 

\begin{HAUPTSATZ}[right]\label{MAINTHEOREMCOHOMDESCENT}
Let $\Dia$ be a diagram category and let $\SSS$ be a strong left derivator with domain $\Dia$ equipped with a Grothendieck pre-cotopology. 
\begin{enumerate}
\item Let $\DD \rightarrow \SSS$ be a fibered derivator with domain $\Dia$ which is colocal in the sense of Definition~\ref{DEFFIBEREDCOLOCAL} for the Grothendieck pre-cotopology on $\SSS$. Then the collection of classes $\{\mathcal{W}_{\DD, S}\}_S$, where $\mathcal{W}_{\DD, S}$ consists of those morphisms $f: D_1 \rightarrow D_2$ in $\Dia^{\op}(\SSS)$ which are {\em weak $\DD$-equivalences} relative to $S \in \SSS(\cdot)$, forms a system of relative colocalizers.

\item Let $\DD \rightarrow \SSS$ be an infinite fibered derivator with domain $\Dia$ which is colocal in the sense of Definition~\ref{DEFFIBEREDCOLOCAL} for the Grothendieck pre-cotopology on $\SSS$, with stable, compactly generated fibers.
The class $\mathcal{W}_\DD$ consisting of those morphisms $f: D_1 \rightarrow D_2$ in $\Dia^{\op}(\SSS)$ which are {\em strong $\DD$-equivalences}
forms an absolute colocalizer.
\end{enumerate}
\end{HAUPTSATZ}

\begin{HAUPTSATZ}[left]\label{MAINTHEOREMHOMDESCENT}
Let $\Dia$ be a diagram category and let $\SSS$ be a strong right derivator with domain $\Dia$ equipped with a Grothendieck pre-topology. 
\begin{enumerate}
\item Let $\DD \rightarrow \SSS$ be a fibered derivator with domain $\Dia$, which is local in the sense of Definition~\ref{DEFFIBEREDLOCAL} for the Grothendieck pre-topology on $\SSS$. 
Then the collection of classes $\{\mathcal{W}_{\DD,S}\}_S$, where $\mathcal{W}_{\DD,S}$ consists of those morphisms $f: D_1 \rightarrow D_2$ in $\Dia(\SSS)$ which are {\em weak $\DD$-equivalences} relative to $S \in \SSS(\cdot)$, forms a system of relative localizers.

\item Let $\Dia'(\SSS) \subset \Dia(\SSS)$ be the full subcategory of the diagrams which consist of universally $\DD$-local morphisms.

Let $\DD \rightarrow \SSS$ be an infinite fibered derivator with domain $\Dia$, which is local in the sense of Definition~\ref{DEFFIBEREDLOCAL} for the Grothendieck pre-topology on $\SSS$, with stable, compactly generated fibers. The class $\mathcal{W}_\DD$ consisting of those morphisms $f: D_1 \rightarrow D_2$ in $\Dia'(\SSS)$ which are {\em strong $\DD$-equivalences}
forms an absolute localizer in $\Dia'(\SSS)$.
\end{enumerate}
\end{HAUPTSATZ}

\begin{BEM}
The restriction onto $\Dia'(\SSS)$ in the left-variant of the theorem is needed because otherwise we do not know whether left Cartesian projections exist (cf.\@ Theorem \ref{THEOREMEXISTLEFTPROJ}).
\end{BEM}

The weak $\DD$-equivalences for the case of usual derivators (i.e.\@ for $\SSS=\{ \cdot\}$) were called just `$\DD$-equivalences' by Cisinski \cite{Cis08} and it is
rather straight-forward to see from the definition of derivator that they from a fundamental localizer in the classical sense (= absolute localizer for $\SSS=\{\cdot\}$, =\nobreakspace{}system of relative localizers for $\SSS=\{\cdot\}$).

We will only prove the left-variant of the theorem. The other follows by logical duality and the
restriction to $\Dia'(\SSS)$ is not necessary because Lemma~\ref{LEMMAPROJECTIONRIGHT} is used
instead of Lemma~\ref{LEMMAPROJECTIONLEFT}. Before proving the theorem we need a couple of lemmas.
We assume for the rest of this section that $\Dia$ is a diagram category and that $\SSS$ is a strong right derivator with domain $\Dia$ equipped with a Grothendieck pre-topology.

\begin{DEF}\label{DEFHOMOTOPYEQUIV}
Two morphisms (in $\Dia(\SSS)$ or in $\Dia^{\op}(\SSS)$) 
\[ \xymatrix{
 D_1 \ar@<0.5ex>[r]^p & D_2 \ar@<0.5ex>[l]^s }
\]
such that chains of 2-morphisms
\[  p \circ s \Rightarrow \cdots \Leftarrow \cdots \Rightarrow  \id_{D_1} \qquad  s \circ p \Rightarrow \cdots \Leftarrow \cdots \Rightarrow \id_{D_2}   \]
exist are called a {\bf homotopy equivalence} (or $p$ is called as such if an $s$ with this property exists).
\end{DEF}

\begin{LEMMA}[left]\label{LEMMAHOMOTOPY}
Let $\DD$ be a left fibered derivator satisfying (FDer0 right) and let $D_1, D_2 \in \Dia(\SSS)$. 
Given any homotopy equivalence $(p, s)$, then the functors $p^*$ and $s^*$ induce an equivalence
\[ \xymatrix{
 \DD(D_2)^{\mathrm{cart}} \ar@<0.5ex>[r]^{p^*} & \DD(D_1)^{\mathrm{cart}} \ar@<0.5ex>[l]^{s^*} }
\]
\end{LEMMA}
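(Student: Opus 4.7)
The plan is to reduce the statement to the following claim: for any 2-morphism $\mu: \alpha \Rightarrow \beta$ between parallel 1-morphisms $\alpha, \beta: D \to D'$ in $\Dia(\SSS)$, the natural transformation $\alpha^* \Rightarrow \beta^*$ produced by the associated pseudo-functor (\ref{PARPSEUDOFUNCTOR}) becomes an isomorphism after restriction to $\DD(D')^{\mathrm{cart}}$. Granting this, chaining through the zig-zags of Definition~\ref{DEFHOMOTOPYEQUIV} and invoking pseudo-functoriality yields
\[ s^* \circ p^* \;\cong\; (p \circ s)^* \;\cong\; \id, \qquad p^* \circ s^* \;\cong\; (s \circ p)^* \;\cong\; \id \]
on Cartesian objects. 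Combined with the lemma recalled just above the statement (that $\alpha^*$ preserves Cartesian objects), this will give the desired equivalence of categories.

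For the key claim, write $\alpha = (\alpha_0, f_\alpha)$, $\beta = (\beta_0, f_\beta)$ with $\alpha_0, \beta_0: I \to J$ and $f_\alpha: F \to \alpha_0^* G$, $f_\beta: F \to \beta_0^* G$; by definition of 2-morphisms in $\Dia(\SSS)$, $\mu$ is a natural transformation $\alpha_0 \Rightarrow \beta_0$ satisfying $\SSS(\mu)(G) \circ f_\alpha = f_\beta$. Unpacking the pasting in~\ref{PARPSEUDOFUNCTOR}, the induced natural transformation $\alpha^* \Rightarrow \beta^*$ is the composite
\[ f_\alpha^\bullet \circ \alpha_0^* \;\stackrel{\theta}{\Rightarrow}\; f_\alpha^\bullet \circ \SSS(\mu)(G)^\bullet \circ \beta_0^* \;=\; f_\beta^\bullet \circ \beta_0^*, \]
where $\theta$ is obtained by factoring $\DD(\mu)(X): \alpha_0^* X \to \beta_0^* X$ (a morphism in $\DD(I)$ lying over $\SSS(\mu)(G): \alpha_0^* G \to \beta_0^* G$) through the Cartesian lift of $\SSS(\mu)(G)$ ending at $\beta_0^* X$, and the equality is the pull-back compatibility from (FDer0 right).

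It then remains to check that $\theta(X)$ is an isomorphism whenever $X \in \DD(J)_G$ is Cartesian. By (Der2) this may be checked pointwise, and at each $i \in I$ the morphism $\theta(X)_i$ is precisely the factorization of $\DD(\mu_i)(X): \alpha_0(i)^* X \to \beta_0(i)^* X$ through the Cartesian lift of $G(\mu_i): G(\alpha_0(i)) \to G(\beta_0(i))$. Since $X$ is a Cartesian object, however, $\DD(\mu_i)(X)$ is itself already Cartesian over $G(\mu_i)$, so this factorization is an isomorphism.

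The only real obstacle is the bookkeeping: correctly unfolding the pasted 2-morphism from~\ref{PARPSEUDOFUNCTOR}, keeping track of bases and of the (co)Cartesian direction, and identifying the resulting natural transformation with the factorization $\theta$ described above. Once this identification is in hand, the rest of the argument is formal, using only (Der2), (FDer0 right), the definition of Cartesian object, and pseudo-functoriality of $\DD$ on $\Dia(\SSS)$.
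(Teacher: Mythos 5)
Your proposal is correct and takes essentially the same approach as the paper's (one-sentence) proof: a $2$-morphism $\mu: (\alpha, f) \Rightarrow (\beta, g)$ in $\Dia(\SSS)$ induces a natural transformation between the pull-back functors that becomes an isomorphism on Cartesian objects, and the conclusion follows by chaining through the homotopy zig-zags and using pseudo-functoriality. Your unpacking of the pasting from (\ref{PARPSEUDOFUNCTOR}) into the factorization $\theta$ through a chosen Cartesian lift of $\SSS(\mu)(G)$, plus the pointwise check via (Der2), correctly fills in the details that the paper leaves implicit.
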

\begin{proof}
The 2-morphisms $\mu: (\alpha, f) \Rightarrow (\beta, g)$ in Definition~\ref{DEFHOMOTOPYEQUIV} induce morphisms between the pull-back functors
\[  (\alpha, f)^*\mathcal{E} \rightarrow (\beta, g)^* \mathcal{E} \]
which are isomorphisms on Cartesian objects. 
\end{proof}

\begin{BEISPIEL}[cf.\@ also Proposition~\ref{PROPPROPERTIESLOCALIZER}, 2.\@]
Let $I_1, I_2$ be diagrams in $\Dia$. If
\[ \xymatrix{
 I_1 \ar@<0.5ex>[r]^p & I_2 \ar@<0.5ex>[l]^s }
\]
is an adjunction where $p$ is left adjoint to $s$, and if $F \in \SSS(I_1)$ then we get an equivalence
\[ \xymatrix{
 \DD(D_2)^{\mathrm{cart}} \ar@<0.5ex>[r]^{p^*} & \DD(D_1)^{\mathrm{cart}} \ar@<0.5ex>[l]^{s^*} }
\]
where $D_1=(I_1, F)$ and $D_2=(I_2,F \circ s)$.
\end{BEISPIEL}

\begin{LEMMA}[left]\label{LEMMAPROJECTIONLEFT}
Let $\Dia$ be an infinite diagram category and
let $\DD \rightarrow \SSS$ be an infinite fibered derivator with domain $\Dia$ with stable, compactly generated fibers. 
Consider a morphism $D=(I, F) \rightarrow (\cdot, S)$ such that $F$ is a diagram of universally $\DD$-local morphisms. 
Let $U \rightarrow S$ be a universally $\DD$-local morphism. 
Write $D_U := D \times_{(\cdot, S)} (\cdot, U)$ in $\Dia(\SSS)$.
Then the following diagram is 2-commutative (via the exchange natural isomorphism):
\[ \xymatrix{ 
\DD(D)  \ar[d]_{\pr_1^*} \ar[rr]^-{\Box_!} && \DD(D)^{\mathrm{cart}} \ar[d]^{\pr_1^*} \\
\DD(D_U) \ar[rr]^-{\Box_! } && \DD(D_U)^{\mathrm{cart}}
} \]
\end{LEMMA}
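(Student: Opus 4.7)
The plan is to exhibit the 2-commutativity as an instance of a base-change isomorphism and reduce it, via adjunction, to a pointwise claim that the right adjoint of $\pr_1^*$ preserves Cartesian objects. Write $D_U = (I, F_U)$ with $F_U(i) = F(i) \times_S U$ and let $\pi_i \colon F_U(i) \to F(i)$ denote the projections, so that pointwise $\pr_1^*$ acts as $\pi_i^\bullet$ at the object $i$. First I would verify that $\pr_1^*$ sends $\DD(D)^{\mathrm{cart}}$ into $\DD(D_U)^{\mathrm{cart}}$, so that the right vertical functor is well-defined: for $X \in \DD(D)^{\mathrm{cart}}$ and $\alpha \colon i \to j$ in $I$, the pseudo-functorial equality $\pi_i^\bullet F(\alpha)^\bullet \cong F_U(\alpha)^\bullet \pi_j^\bullet$ (coming from $\pi_j \circ F_U(\alpha) = F(\alpha) \circ \pi_i$) carries the Cartesianness $i^*X \cong F(\alpha)^\bullet j^*X$ over to $i^*\pr_1^*X \cong F_U(\alpha)^\bullet j^*\pr_1^*X$. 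The universal property of $\Box_!$ applied to $\pr_1^*$ of the unit $X \to \Box_! X$ then produces the canonical exchange natural transformation $\Box_! \pr_1^* X \to \pr_1^* \Box_! X$ whose invertibility is to be shown.

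Next, because the fibers of $\DD$ are stable and compactly generated and $\pi_i^\bullet$ commutes with homotopy colimits by axiom (Dloc2 left) --- each $\pi_i$ being a pull-back of the universally $\DD$-local $U\to S$ and hence itself $\DD$-local --- Brown representability supplies a right adjoint $\pr_{1*}$ of $\pr_1^*$. For $Y \in \DD(D_U)^{\mathrm{cart}}$ the adjunction manipulation
\[
\Hom(\pr_1^*\Box_! X, Y) \;=\; \Hom(\Box_! X, \pr_{1*} Y),
\qquad
\Hom(\Box_!\pr_1^* X, Y) \;=\; \Hom(\pr_1^* X, Y) \;=\; \Hom(X, \pr_{1*} Y),
\]
together with the observation that the comparison between them is induced by precomposition with the unit $X \to \Box_! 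X$, shows that the exchange is an isomorphism precisely when $\pr_{1*} Y$ is Cartesian for every Cartesian $Y$.

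Finally, the remaining preservation statement can be verified pointwise. For $\alpha \colon i \to j$ in $I$ and $Y \in \DD(D_U)^{\mathrm{cart}}$, writing $Y_i \cong F_U(\alpha)^\bullet Y_j$ by the Cartesianness of $Y$ reduces the condition on $\pr_{1*} Y$ to the base-change isomorphism $\pi_{i*} F_U(\alpha)^\bullet \cong F(\alpha)^\bullet \pi_{j*}$. By Yoneda (using the adjunctions $F_U(\alpha)_\bullet \dashv F_U(\alpha)^\bullet$, $F(\alpha)_\bullet \dashv F(\alpha)^\bullet$, $\pi_i^\bullet \dashv \pi_{i*}$, $\pi_j^\bullet \dashv \pi_{j*}$), this base-change iso is equivalent to $F_U(\alpha)_\bullet \pi_i^\bullet \cong \pi_j^\bullet F(\alpha)_\bullet$, which is exactly the content of axiom (Dloc1 left) applied to the Cartesian square $\pi_j \circ F_U(\alpha) = F(\alpha) \circ \pi_i$ with $\pi_j$ in the role of the $\DD$-local morphism. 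The main delicate step is this last one: it hinges on $\pi_j$ being $\DD$-local, which is guaranteed by the universal $\DD$-locality of $U \to S$; the hypothesis that $F$ is a diagram of universally $\DD$-local morphisms ensures in addition that $D$ (and $D_U$) lies in $\Dia'(\SSS)$ so that the left Cartesian projections $\Box_!$ exist in the first place.
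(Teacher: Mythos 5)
Your proposal is correct and takes essentially the same route as the paper's proof. Both arguments produce $\pr_{1*}$ from (Dloc2~left) plus Brown representability and then reduce the 2-commutativity of the diagram to the claim that $\pr_{1*}$ carries Cartesian objects to Cartesian objects, which is exactly what makes the right-adjoint diagram commute strictly; you verify this pointwise via the base-change isomorphism $\pi_{i*}F_U(\alpha)^\bullet \cong F(\alpha)^\bullet\pi_{j*}$, translated through the string of adjunctions into the form $F_U(\alpha)_\bullet\pi_i^\bullet \cong \pi_j^\bullet F(\alpha)_\bullet$ guaranteed by (Dloc1~left), while the paper records the same content in the terser phrase that $\pr_1^*$ preserves coCartesian morphisms and hence $\pr_{1*}$ preserves Cartesian ones. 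Your additional preliminary check that $\pr_1^*$ lands in $\DD(D_U)^{\mathrm{cart}}$, and the closing remark tying the hypothesis on $F$ to the existence of $\Box_!$ via Theorem~\ref{THEOREMEXISTLEFTPROJ}, are both accurate and simply make explicit steps the paper leaves implicit.
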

Note that left Cartesian projectors exist for $D$ and $D_U$ by Theorem~\ref{THEOREMEXISTLEFTPROJ}.

\begin{proof} The functor $\pr_1^*$ has a right adjoint $\pr_{1*}$ by (Dloc2 left) and then Brown representability theorem.
 (Dloc1 left) says that $\pr_1^*$ preserves coCartesian morphisms, hence ${\pr_{1*}}$ preserves Cartesian morphisms. 
Therefore the right adjoint of the given diagram is the following {\em commutative} diagram:
\[ \xymatrix{ 
\DD(D)  & \DD(D)^{\mathrm{cart}} \ar[l]  \\
\DD(D_U) \ar[u]^{\pr_{1*}}  & \DD(D_U)^{\mathrm{cart}}   \ar[u]_{\pr_{1*}} \ar[l] 
} \]
Consequently the exchange morphism of the diagram in the statement is also a natural isomorphism. 
\end{proof}

\begin{LEMMA}[right]\label{LEMMAPROJECTIONRIGHT}
Let $\Dia$ be an infinite diagram category and
let $\DD \rightarrow \SSS$ be an infinite fibered derivator with domain $\Dia$ with stable, compactly generated fibers. 
Consider a morphism $D=(I, F) \rightarrow (\cdot, S)$.
Let $S \rightarrow U$ be a universally $\DD$-colocal morphism. 
Write $D_U := D \times_{(\cdot, S)} (\cdot, U)$ in $\Dia^{\op}(\SSS)$.
Then the following diagram is 2-commutative (via the exchange natural isomorphism):
\[ \xymatrix{ 
\DD(D) \ar[d]_{\pr_1^*} \ar[rr]^-{\Box_*} && \DD(D)^{\mathrm{cocart}} \ar[d]^{\pr_1^*} \\
\DD(D_U) \ar[rr]^-{\Box_* } && \DD(D_U)^{\mathrm{cocart}}
} \]
\end{LEMMA}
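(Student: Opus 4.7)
The plan is to mirror the proof of Lemma~\ref{LEMMAPROJECTIONLEFT} via duality, passing to \emph{left} adjoints instead of right ones. In $\Dia^{\op}(\SSS)$ the projection $\pr_1 \colon D_U \to D$ is of fixed shape with underlying morphism $g \colon F \to F \sqcup_S U$ in $\SSS(I)$; at each object $i \in I$, the map $g_i \colon F(i) \to F(i) \sqcup_S U$ is a homotopy pushout of the universally $\DD$-colocal morphism $S \to U$ and is therefore itself universally $\DD$-colocal. Under the pseudo-functor for $\Dia^{\op}(\SSS)$ we have $\pr_1^* = g_\bullet$.

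First I would construct a left adjoint $\pr_{1!}$ to $\pr_1^*$. By (Dloc2 right) applied pointwise, $g_\bullet$ commutes with homotopy limits as a morphism of derivators $\DD_F \to \DD_{F \sqcup_S U}$. Since the fibers of $\DD \to \SSS$ are stable and compactly generated, the dual Brown representability theorem then supplies the desired left adjoint $\pr_{1!}$.

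Next I would apply (Dloc1 right) pointwise to each pushout square
\[ \xymatrix{F(i) \sqcup_S U \ar[r] & F(j) \sqcup_S U \\ F(i) \ar[u] \ar[r] & F(j) \ar[u]} \]
attached to a morphism $\mu \colon i \to j$ in $I$. This shows that $\pr_1^* = g_\bullet$ sends Cartesian morphisms in the fibration $\DD(I) \to \SSS(I)$ to Cartesian morphisms, hence that $\pr_1^*$ restricts to a functor $\DD(D)^{\mathrm{cart}} \to \DD(D_U)^{\mathrm{cart}}$. By passage to left adjoints, $\pr_{1!}$ restricts to a functor $\DD(D_U)^{\mathrm{cocart}} \to \DD(D)^{\mathrm{cocart}}$. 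Therefore the left adjoint of the diagram in the statement is the strictly commutative square
\[ \xymatrix{
\DD(D) & \ar[l] \DD(D)^{\mathrm{cocart}} \\
\DD(D_U) \ar[u]^{\pr_{1!}} & \ar[l] \DD(D_U)^{\mathrm{cocart}} \ar[u]_{\pr_{1!}}
} \]
whose horizontals are the canonical inclusions, and so the exchange morphism of the original square is an isomorphism.

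The main obstacle I anticipate is Step 1: one must verify that dual Brown representability does apply in this derivator setting, and in particular that the pointwise commutation with homotopy limits furnished by (Dloc2 right) suffices to produce a left adjoint to $\pr_1^*$ globally, as a functor between the derivator categories $\DD(D)$ and $\DD(D_U)$. This is precisely the dual, and technically analogous, input to the one already used in Lemma~\ref{LEMMAPROJECTIONLEFT}.
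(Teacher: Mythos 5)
Your proposal follows the paper's own proof essentially verbatim: you produce $\pr_{1!}$ from (Dloc2 right) and the dual Brown representability theorem, observe that (Dloc1 right) forces $\pr_1^*$ to preserve Cartesian arrows so that the left adjoint $\pr_{1!}$ preserves coCartesian objects, and then pass to the strictly commutative square of left adjoints. The extra unpacking you give — identifying $\pr_1^*$ with $g_\bullet$ for the pushout morphism $g\colon F \to F\sqcup_S U$ in $\SSS(I)$ and checking (Dloc1/Dloc2 right) pointwise on the pushout squares over morphisms of $I$ — is exactly the content implicit in the paper's terse argument, not a different route.
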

Note that right Cartesian projectors exist for $D$ and $D_U$ by Theorem~\ref{THEOREMEXISTRIGHTPROJ}.

\begin{proof} The functor $\pr_1^*$ has a left adjoint $\pr_{1!}$ by (Dloc2 right) and by the Brown representability theorem for the dual.
 (Dloc1 right) says that $\pr_1^*$ preserves Cartesian morphisms, hence ${\pr_{1!}}$ preserves coCartesian morphisms. 
Therefore the right adjoint of the given diagram is the following {\em commutative} diagram:
\[ \xymatrix{ 
\DD(D)  & \DD(D)^{\mathrm{cocart}} \ar[l]  \\
\DD(D_U) \ar[u]^{\pr_{1!}}  & \DD(D_U)^{\mathrm{cocart}}   \ar[u]_{\pr_{1!}} \ar[l] 
} \]
Consequently the exchange morphism of the diagram in the statement is also a natural isomorphism. 
\end{proof}

\begin{LEMMA}[left]\label{LEMMAFUNDAMENTALLEFT}
Let $\DD \rightarrow \SSS$ be a fibered derivator with domain $\Dia$ admitting a left Cartesian projection (cf.\@ \ref{DEFCARTPROJ}). For any Grothendieck opfibration
\[ \xymatrix{ I \ar[d]^\pi   \\
  E   }\]
in $\Dia$, for any diagram in $F \in \SSS(I)$, and for each element $e \in E$, 
the following diagram is 2-commutative:
\[ \xymatrix{ 
\DD(I)_F  \ar[d]_{\iota^*} \ar[r]^-{\Box^E_!} & \DD(I)^{E-\mathrm{cart}}_{F} \ar[d]^{\iota^*} \\
\DD(I_e)_{F_e} \ar[r]^-{\Box_!} & \DD(I_e)^{\mathrm{cart}}_{F_e}
} \]
where $\iota: I_e \rightarrow I$ is the inclusion of the fiber.
\end{LEMMA}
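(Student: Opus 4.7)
The plan is to convert the 2-commutativity into a statement about a right adjoint. By hypothesis, $\Box_!^E$ is left adjoint to the inclusion $\mathrm{incl}^E : \DD(I)^{E-\mathrm{cart}}_F \hookrightarrow \DD(I)_F$, $\Box_!$ is left adjoint to the analogous $\mathrm{incl}$, and axiom (FDer3 right) provides a right adjoint $\iota_*$ to $\iota^*$. A standard adjunction argument (compute both $\Hom(\iota^*\Box_!^E X, Z')$ and $\Hom(\Box_! \iota^* X, Z')$ for $Z' \in \DD(I_e)_{F_e}^{\mathrm{cart}}$, using $\iota^* \dashv \iota_*$) shows that the natural exchange morphism $\Box_! \iota^* \to \iota^* \Box_!^E$ is an isomorphism if and only if $\iota_*$ sends $\DD(I_e)_{F_e}^{\mathrm{cart}}$ into $\DD(I)^{E-\mathrm{cart}}_F$.

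The crux is therefore the following claim: for $Z' \in \DD(I_e)_{F_e}^{\mathrm{cart}}$ and any morphism $\mu: i \to j$ in $I$ with $\pi(\mu) = \mathrm{id}_e$, the induced map $\DD(\mu): i^*\iota_* Z' \to j^*\iota_* Z'$ is Cartesian in $\DD$ over $\SSS(\mu)$. I would verify this by using axiom (FDer4 right) to express $i^*\iota_* Z'$ as a right Kan extension over the comma category $C_i := i \times_{/I} I_e$ (with the canonical $\SSS$-twist), and analogously at $j$, then exploiting the opfibration structure of $\pi$ to make the computation explicit.

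The opfibration property permits a clean description of $C_i$: every object $(k, \xi: i \to k)$ factors uniquely as $\xi = \xi_1 \circ \gamma_\phi^i$, where $\phi := \pi(\xi) \in \mathrm{End}_E(e)$, $\gamma_\phi^i: i \to i_\phi$ is a chosen coCartesian lift of $\phi$ over $i$, and $\xi_1: i_\phi \to k$ lies in $I_e$. Since morphisms in $C_i$ preserve $\phi$, one obtains a decomposition $C_i \cong \coprod_\phi (i_\phi/I_e)$ whose summands each admit an initial object $(i_\phi, \mathrm{id})$. The right Kan extension collapses accordingly to a product $i^*\iota_* Z' \cong \prod_\phi \SSS(\gamma_\phi^i)^\bullet i_\phi^* Z'$, and similarly at $j$. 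The universal property of coCartesian morphisms produces, for each $\phi$, a unique vertical $\mu_\phi: i_\phi \to j_\phi$ in $I_e$ satisfying the Beck--Chevalley-type identity $\gamma_\phi^j \circ \mu = \mu_\phi \circ \gamma_\phi^i$; under the decomposition, $\DD(\mu)$ becomes the product over $\phi$ of the maps induced by $\DD(\mu_\phi)$ after Cartesian pull-back along $\SSS(\gamma_\phi^i)$. Each $\DD(\mu_\phi)$ is Cartesian since $\mu_\phi \in I_e$ and $Z'$ is Cartesian, and Cartesian morphisms in $\DD \to \SSS$ are stable under Cartesian pull-back and under products; hence $\DD(\mu)$ is Cartesian.

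The main obstacle is the bookkeeping in this last step: verifying the comma-category decomposition compatibly with the $\SSS$-twist of (FDer4 right), and tracing through how $\DD(\mu)$ really decomposes as the asserted product of twist-pullbacks of $\DD(\mu_\phi)$. Once this is laid out carefully, no axioms beyond (FDer0, 3, 4 right) are invoked, and the argument is formal from the opfibration structure of $\pi$ and the Cartesian hypothesis on $Z'$.
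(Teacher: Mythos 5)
Your reduction to the claim that $\iota_*$ sends $\DD(I_e)_{F_e}^{\mathrm{cart}}$ into $\DD(I)^{E-\mathrm{cart}}_F$ is exactly the paper's strategy (the paper proves the right variant by showing $\iota_!$ preserves coCartesianness and then passes to the adjoint square; this statement is obtained by duality). Your coproduct decomposition of the comma category is also the same underlying structure the paper exploits: the paper packages it as an adjunction $\pi \dashv c_k$ between the discrete category of arrows in $E$ and the comma category (whose ``fibers'' have terminal objects in the $\iota_!$-case, dually initial objects for $\iota_*$), then does a lengthy diagram chase to track (co)Cartesianness through the Kan extension formula. Your version makes the decomposition explicit and replaces the diagram chase with a factor-by-factor argument, which is arguably more transparent, but the ``bookkeeping'' you acknowledge postponing is precisely where the paper's several pages of verification live.

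There is, however, a genuine gap in the claim as you have stated it. The condition for $\iota_* Z'$ to lie in $\DD(I)^{E-\mathrm{cart}}_F$ requires that $\DD(\mu)$ be Cartesian for \emph{every} $\mu: i \to j$ with $\pi(\mu) = \id_f$, for \emph{every} object $f$ of $E$ --- not only for $f = e$. You take $\pi(\mu) = \id_e$ and consistently index over $\mathrm{End}_E(e)$, which implicitly assumes $i, j \in I_e$. For general $f$ the index set is $\Hom_E(f, e)$ and the coCartesian lifts $\gamma_\phi^i: i \to i_\phi$ are taken over $\phi: f \to e$; your decomposition $C_i \cong \coprod_\phi (i_\phi / I_e)$ and the subsequent factor-by-factor Cartesianness check (Cartesian morphisms being preserved by $F(\gamma_\phi^i)^\bullet$-pullback, by pseudofunctoriality and the Beck--Chevalley square $F(\gamma_\phi^j) \circ F(\mu) = F(\mu_\phi) \circ F(\gamma_\phi^i)$, and by products in the fiber) still go through verbatim, but as written the argument proves strictly less than what is needed. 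You should also note explicitly that the case $\Hom_E(f,e) = \emptyset$ (empty product, so both $i^*\iota_*Z'$ and $j^*\iota_*Z'$ are terminal) is handled by the fact that $F(\mu)^\bullet$, being a right adjoint, preserves terminal objects, so the map between them is automatically Cartesian.
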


\begin{LEMMA}[right]\label{LEMMAFUNDAMENTALRIGHT}
Let $\DD \rightarrow \SSS$ be a fibered derivator with domain $\Dia$ admitting a right coCartesian projection (cf.\@ \ref{DEFCARTPROJ}). For a Grothendieck fibration
\[ \xymatrix{ I \ar[d]^\pi   \\
  E   }\]
in $\Dia$, for any diagram in $F \in \SSS(I)$, and for each element $e \in E$, 
the following diagram is 2-commutative:
\[ \xymatrix{ 
\DD(I)_F  \ar[d]_{\iota^*} \ar[r]^-{\Box^E_*} & \DD(I)_F^{E-\mathrm{cocart}} \ar[d]^{\iota^*} \\
\DD(I_e)_{F_e} \ar[r]^-{\Box_*} & \DD(I_e)_{F_e}^{\mathrm{cocart}} 
} \]
where $\iota: I_e \rightarrow I$ is the inclusion of the fiber.
\end{LEMMA}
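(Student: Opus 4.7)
The strategy is to reformulate the stated 2-commutativity via mates under the adjunctions $\iota_!^F \dashv \iota^*$ (given by (FDer3 left)) and $\mathrm{incl} \dashv \Box_*^E$ (resp.\ $\mathrm{incl} \dashv \Box_*$). Since morphisms in the fiber $I_e$ project to $\id_e$, the restriction $\iota^*$ sends $E$-coCartesian objects to coCartesian objects, and the exchange transformation $\iota^* \Box_*^E X \to \Box_* \iota^* X$ is produced by applying $\iota^*$ to the counit $\Box_*^E X \to X$ and then invoking the universal property of $\Box_*$. A Yoneda argument shows that this transformation is an isomorphism for all $X$ if and only if $\iota_!^F Y$ is $E$-coCartesian for every $Y \in \DD(I_e)_{F_e}^{\mathrm{cocart}}$. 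The lemma thereby reduces to this last claim.

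To prove it, fix such a $Y$ and a morphism $\phi: i \to j$ in $I$ lying over $\id_{e'} \in E$ for some $e' \in E$, and show that $(\iota_!^F Y)_i \to (\iota_!^F Y)_j$ is coCartesian over $F(\phi)$. By (FDer4 left) applied to the slice squares of $i$ and $j$, the values $i^* \iota_!^F Y$ and $j^* \iota_!^F Y$ are computed as relative left Kan extensions along $p_i: I_e \times_{/I} i \to \{i\}$ and $p_j: I_e \times_{/I} j \to \{j\}$ of appropriately twisted restrictions of $Y$. The key structural input is now that $\pi$ is a Grothendieck fibration: for each $\psi: e \to e'$ in $E$, a Cartesian lift $\tilde{\psi}_i: i^*_\psi \to i$ with $i^*_\psi \in I_e$ exists, and its universal property supplies a decomposition
\[ I_e \times_{/I} i \;=\; \coprod_{\psi: e \to e'} \mathcal{C}_\psi, \]
in which each $\mathcal{C}_\psi$ admits $(i^*_\psi, \tilde{\psi}_i)$ as a terminal object. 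Using (Der1) together with the standard fact that a left Kan extension along a category with terminal object reduces to evaluation at that object (a consequence of Proposition~\ref{PROPHOMCART}), one obtains
\[ (\iota_!^F Y)_i \;\cong\; \coprod_{\psi \in \Hom_E(e,\, e')} F(\tilde{\psi}_i)_\bullet\, Y_{i^*_\psi}, \]
and the analogous formula for $j$.

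For $\phi: i \to j$ in the fiber $I_{e'}$, the universal property of Cartesian morphisms furnishes for each $\psi$ a unique morphism $\phi^*_\psi: i^*_\psi \to j^*_\psi$ in $I_e$ with $\tilde{\psi}_j \circ \phi^*_\psi = \phi \circ \tilde{\psi}_i$. Since $Y$ is coCartesian in the fiber, $Y_{i^*_\psi} \to Y_{j^*_\psi}$ is coCartesian over $F(\phi^*_\psi)$. Combining this with the facts that $F(\phi)_\bullet$ commutes with coproducts (being a left adjoint) and with composition of push-forwards, a componentwise computation yields $F(\phi)_\bullet (\iota_!^F Y)_i \cong (\iota_!^F Y)_j$, which is the desired $E$-coCartesian property. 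The main technical obstacle will be rigorously establishing the pointwise formula for $(\iota_!^F Y)_i$ above, which requires careful bookkeeping of the 2-morphisms $\SSS(\mu_i)$ appearing in (FDer4 left) at the terminal objects of each $\mathcal{C}_\psi$.
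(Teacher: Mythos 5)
Your outline follows the same reduction as the paper: both pass to left adjoints (using that $\iota^*$ sends $E$-coCartesian objects to coCartesian objects, so the exchange transformation makes sense) and reduce the lemma to the claim that $\iota_!^F$ sends coCartesian objects of $\DD(I_e)_{F_e}$ to $E$-coCartesian objects of $\DD(I)_F$, and both draw the essential leverage from Cartesian lifts of the Grothendieck fibration $\pi$. In fact, your decomposition $I_e \times_{/I} i = \coprod_{\psi\colon e\to \pi(i)} \mathcal{C}_\psi$, with each $\mathcal{C}_\psi$ having the Cartesian lift $(i_\psi^*, \tilde\psi_i)$ as terminal object, is exactly the structure the paper encodes as the adjunction $\pi \dashv c_k$ between $I_e \times_{/I} i_k$ and the discrete category $e \times_{/E} f = \Hom_E(e,f)$ — same observation, presented in component form rather than as a right-adjoint section. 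Where the two accounts genuinely differ is in execution: you extract an explicit pointwise coproduct formula $(\iota_!^F Y)_i \cong \coprod_\psi F(\tilde\psi_i)_\bullet Y_{i_\psi^*}$ and then argue componentwise via the uniquely determined $\phi_\psi^*$; the paper never writes such a formula, instead threading the natural transformations $\widetilde\nu$, $\rho$, $\DD(\rho)$ through a chain of pasted squares and invoking Proposition~\ref{PROPHOMCART} and \cite[Prop.\ 1.23]{Gro13}. Your version is conceptually more transparent, and the componentwise identities you assert are true. However, the part you flag as "the main technical obstacle" — not merely establishing the pointwise formula with the correct $\SSS(\mu_i)_\bullet$-twists, but also checking that the isomorphism you produce componentwise is the canonical morphism $\DD(\phi)$ of the coherent object $\iota_!^F Y$ rather than some abstract isomorphism between its values — is not a residual afterthought; it is precisely what the paper's sequence of commutative diagrams is verifying, and it constitutes the substance of the proof. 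So the outline is sound and matches the paper's in strategy, but a complete argument still has to supply the coherence you have deferred. Two minor points: the "if and only if" you claim by Yoneda is unused (only the implication from coCartesian-preservation to 2-commutativity is needed) and is not obvious as stated, since $\iota^*$ restricted to the coCartesian subcategories need not admit a left adjoint; and your attribution of the terminal-object reduction to Proposition~\ref{PROPHOMCART} is a bit loose — the relevant fact is the homotopy exactness of Kan extension along a left adjoint, as in \cite[Prop.\ 1.23]{Gro13}, which the paper cites directly.
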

\begin{proof}We restrict to the right-variant, the other being dual.
We will show that the functor ${\iota_{!}}$ maps coCartesian objects to $E$-coCartesian ones. Then
the left adjoint of the given diagram is the diagram
\[ \xymatrix{ 
\DD(I)_F  & \DD(I)_F^{E-\mathrm{cocart}} \ar[l]  \\
\DD(I_e)_{F_e} \ar[u]^{\iota_{!}}  & \DD(I_e)_{F_e}^{\mathrm{cocart}}   \ar[u]_{\iota_{!}} \ar[l] 
} \]
which is commutative. Consequently also the diagram of the statement is 2-commutative via the natural exchange morphism.

Let $f$ in $E$ be an object and
let $\nu: i_1 \rightarrow i_2$ be a morphism in $I$ mapping to $\id_f$. 
Let $\alpha_k$ be the inclusions of $\cdot$ into $I$ with image $i_k$.
The morphism $\nu$ yields a natural transformation
\[  \nu: \alpha_1 \Rightarrow \alpha_2. \]
Consider the diagram
\[ \xymatrix{ 
 e \times_{/ E} f \ar@<.5ex>[r]^-{c_k} \ar[d]^{p'}  &  I_e  \times_{/ I} i_k \ar[r]^-{A_k} \ar@<.5ex>[l]^-{\pi} \ar[d]_{p_k} \ar@{}[dr]|{\Swarrow^{\mu_k}} & I_e  \ar[d]^{\iota}  \\
 \cdot  \ar@{=}[r]  & \cdot \ar[r]^{\alpha_k} & I
} \]
where $c_k$ is given on a morphism $\beta: e \rightarrow f$ in $E$ by the choice of a Cartesian arrow $i_k' \rightarrow i_k$. It is right adjoint to $\pi$ by
the definition of Cartesian arrow.

There is a functor (composition with $\nu$):
\[ \widetilde{\nu}: I_e  \times_{/ I} i_1  \rightarrow I_e \times_{/ I} i_2  \]
such that $A_2 \widetilde{\nu} = A_1$ and $p_2 \widetilde{\nu} = p_1$.
We have therefore a natural (point-wise) coCartesian morphism $\SSS(\mu_1)_\bullet \widetilde{\nu}^* \rightarrow \widetilde{\nu}^* \SSS(\mu_2)_\bullet$ 
of functors $\DD(I_e \times_{/ I} i_2)_{A_2^* F_e} \rightarrow \DD(I_e \times_{/ I} i_1)$.

We have also a natural transformation $\rho:  \widetilde{\nu} c_1 \rightarrow c_2$ defined for a morphism
$\beta: e\rightarrow f$ in $E$ as the unique arrow $\rho(\beta)$ over $\id_e$ making the following diagram commutative:
\[ \xymatrix{
i_1' \ar[r]^{c_1(\beta)} \ar[d]_{\rho(\beta)}&i_1 \ar[d]^\nu \\
i_2' \ar[r]^{c_2(\beta)} &i_2 
}\]
The resulting morphism $\DD(\rho): c_1^*\widetilde{\nu}^* \rightarrow c_2^*$ is point-wise coCartesian on coCartesian objects.

We get a commutative diagram of natural transformations
\[ \xymatrix{
\SSS(\mu_1)_\bullet A_1^*   \ar[r] \ar[d]^{\sim} &\SSS(\mu_1)_\bullet A_1^* \iota^* \iota_!  \ar[rr]^-{\DD(\mu_1)'} \ar[d]^{\sim} &&  p^*_1 \alpha_1^*  \ar[d]^{\sim}  \iota_!  \\
 \SSS(\mu_1)_\bullet  \widetilde{\nu}^* A_2^*  \ar[d]^{} \ar[r]^{} &\SSS(\mu_1)_\bullet  \widetilde{\nu}^*   A_2^* \iota^* \iota_! \ar[d]^{} &&  \widetilde{\nu}^* p_2^* \alpha_1^*  \iota_!  \ar[d]^{\widetilde{\nu}^* p_2^*(\DD(\nu))} \\
\widetilde{\nu}^* \SSS(\mu_2)_\bullet   A_2^*  \ar[r]^{} & \widetilde{\nu}^* \SSS(\mu_2)_\bullet   A_2^* \iota^* \iota_!  \ar[rr]^-{\widetilde{\nu}^*(\DD(\mu_2)')} &&  \widetilde{\nu}^* p_2^* \alpha_2^*  \iota_!  \\
} \]
where the first two top vertical morphisms are the natural isomorphisms induced by $A_2 \widetilde{\nu} = A_1$, the third top vertical morphism is the natural isomorphism induced by
$p_2 \widetilde{\nu} = p_1$, and the first two lower vertical morphisms are point-wise coCartesian. 
Here we use the notation $\DD(\mu_1)'$ for the morphism $\SSS(\mu_1)_\bullet X \rightarrow Y$ induced by a morphism $\DD(\mu_1): X \rightarrow Y$.

Now we apply $p_{1!}$ to the outer square:
\[ \xymatrix{
p_{1!} \SSS(\mu_1)_\bullet A_1^*   \ar[r] \ar[d]^{}  & p_{1!} p^*_1 \alpha_1^*  \ar[d]^{}  \iota_!  \\
p_{1!} \widetilde{\nu}^* \SSS(\mu_2)_\bullet   A_2^*  \ar[r]^{}  & p_{1!} \widetilde{\nu}^* p_2^* \alpha_2^*  \iota_!  \\
} \]
The left vertical map is still coCartesian (homotopy colimits preserve coCartesian morphisms).

There is a canonical isomorphism $p'_! c^*_i \rightarrow p_{i!}$ \cite[Prop. 1.23]{Gro13} 
and the natural transformation $\DD(\rho): c^*_1 \widetilde{\nu}^* \rightarrow c^*_2 $ is an isomorphism on coCartesian objects over constant diagrams.
Consider the commutative diagram:
\[ \xymatrix{ 
p'_! c^*_1 \widetilde{\nu}^* \ar[r]^-{\sim} \ar@{=}[d]  & p_{2!} \widetilde{\nu}_! c_{1!} c^*_1 \widetilde{\nu}^*   \ar[r] & p_{2!} \ar@{=}[dd] \\
p'_! c^*_1 \widetilde{\nu}^*  \ar[r]^-{\sim} & p_{2!} c_{2!} c^*_1 \widetilde{\nu}^*  \ar[u]_{\DD(\rho)^{ad}} \ar[d]^{\DD(\rho)} & \\
& p_{2!} c_{2!} c_2^*  \ar[r]^-\sim & p_{2!} } \]
where the rightmost horizontal morphisms are the respective counits.
Since $\DD(\rho)$ is an isomorphism on coCartesian objects over constant diagrams, so is the morphism $p'_! c^*_1 \widetilde{\nu}^* \rightarrow p_{2!}$.
Now we have the commutative diagram
\[ \xymatrix{
p'_! c^*_1 \widetilde{\nu}^*   \ar[r] \ar[d]^{\sim}  & p_{2!}    \\
p_{1!} \widetilde{\nu}^*  \ar[r]^{\sim}  & p_{2!} \widetilde{\nu}_! \widetilde{\nu}^*  \ar[u]^{}  \\
} \]
which shows that also the natural map $p_{1!} \widetilde{\nu}^* \rightarrow p_{2!}$ is an isomorphism on coCartesian objects over constant diagrams.

We get a commutative diagram
\[ \xymatrix{
p_{1!} \SSS(\mu_1)_\bullet A_1^*   \ar[r] \ar[d]^{}  & p_{1!} p^*_1 \alpha_1^*  \ar[d]^{}  \iota_!  \ar[r] & \alpha_1^*  \iota_! \ar[dd]^{\DD(\nu)}  \\
p_{1!} \widetilde{\nu}^* \SSS(\mu_2)_\bullet   A_2^*  \ar[d]^{} \ar[r]^{}  & p_{1!} \widetilde{\nu}^* p_2^* \alpha_2^*  \iota_!  \ar[d]^{} &  \\
p_{2!} \SSS(\mu_2)_\bullet  A_2^*  \ar[r]^{}  & p_{2!}  p_2^* \alpha_2^*  \iota_!  \ar[r]^{}  &  \alpha_2^*  \iota_! \\
} \]
where the composition of the left vertical arrows is coCartesian on coCartesian objects because the functor $\SSS(\mu_2)_\bullet   A_2^*$ 
maps coCartesian objects to coCartesian objects over constant diagrams.
The composition of the horizontal morphisms in the top and bottom rows are isomorphisms by (FDer4 left). Hence the rightmost vertical map is coCartesian, too. 
\end{proof}

\begin{proof}[Proof of Main Theorem \ref{MAINTHEOREMHOMDESCENT}, 1.] This is the case of weak $\DD$-equivalences.

(L0) and (L1) are clear. 

For (L2 left), let $D_1=(I, F)$ and $D_2=(\{e\}, F(e))$. The
projection $p$ and the inclusion $i$ of the final object induce morphisms:
\[ \xymatrix{
 D_1 \ar@<0.5ex>[r]^p & D_2 \ar@<0.5ex>[l]^i }
\]
We have $p \circ i = \id$ and there is a 2-morphism $\beta: \id \Rightarrow i \circ p$.
Therefore the statement is clear for weak $\DD$-equivalences over any base $S$.

(L3 left):
Let 
\[ \xymatrix{
D_1 \ar[rr]^w  \ar@/_10pt/[rdd]_{p_1}  \ar[dr]_{p_1'} && D_2 \ar[dl]^{p_2'}  \ar@/^10pt/[ldd]^{p_2} \\
& D_3=(E, F) \ar[d]^p \\
& (\cdot, S) 
} \]
 be a morphism as in (L3 left) over a base $S \in \SSS(\cdot)$.
We have to show that 
\[ p_{1!}\,p_{1}^* \rightarrow p_{2!}\,p_2^*  \]
is an isomorphism
and it suffices to show that the morphism 
\[ p_{1!}'\,(p_{1})^* \rightarrow p_{2!}'\,(p_2)^*  \]
is an isomorphism. This may be checked point-wise by (Der2)  and after pull-back to an open cover by condition 2.\@ of `local' for a fibered derivator (see Definition~\ref{DEFFIBEREDLOCAL}), so fix $e \in E$ and consider the 2-commutative diagrams
\[ \xymatrix{
D_{i} \times_{/D_3} (e, U_j) \ar[d]_-{p_{i,e,j}'} \ar[r]^-{\iota_{i,e,j}} & D_{i} \ar[d]^{p_i'} \\
(e, U_j) \ar[r]^{\epsilon_{e,j}} & D_3
} \]
and let $p_{i,e}: D_{i} \times_{/D_3} (e, U_i) \rightarrow D_i$ be the projection. 
Applying the functor $\epsilon_{e,j}^*$, we get
\[ \epsilon_{e,j}^* p_{1!}' (p_{1})^* \rightarrow \epsilon_{e,j}^* p_{2!}'(p_2)^*  \]
which is, using Proposition~\ref{THEOREMBASECHANGEDIALEFT} (note that $\iota_{e,j}$ is $\DD$-local by assumption), the same as
\[ (p_{1,e,j}')_! (\iota_{i,e,j})^* (p_{1})^* \rightarrow  (p_{2,e,j}')_!  (\iota_{i,e,j})^* (p_{2})^*.  \]
Now $p_{i} \circ \iota_{i,e,j} = \pi_j \circ p_{i,e,j}'$, where $\pi: (\cdot, U_j) \rightarrow (\cdot, S)$ is the structural morphism. 
Therefore we get:
\[ (p_{1,e,j}')_! (p_{1,e,j}')^* \pi_j^* \rightarrow  (p_{2,e,j}')_!  (p_{2,e,j}')^* \pi_j^* .  \]
By Lemma~\ref{LEMMAPASTINGDIA} this is induced by the canonical natural transformation which is an isomorphism by assumption.

(L4 left): By Lemma~\ref{PROPL4} we may prove axiom (L4' left) instead.
Consider a morphism $p: D_1 \rightarrow (E, F)=D_2$ in $\Dia(\SSS)$ of pure diagram type, where the underlying functor of $p$ is a Grothendieck fibration. 
It suffices to show that  the counit
\[ p_!\,p^* \rightarrow \id \]
is an isomorphism. 
This is the same as showing that the unit
\[ \id \rightarrow p_*\,p^*  \]
is an isomorphism. 
Note that $p_*$ exists because this is a morphism of diagram type and $\DD \rightarrow \SSS$ is assumed to be a right fibered derivator, too (this is
the only place, where this assumption is used for the case of weak $\DD$-equivalences). Now, since $p$ is a Grothendieck fibration, $p_*$ can be computed fiber-wise. So we have to show that
\[ \id \rightarrow p_{e,*}\,p_e^{*}  \]
is an isomorphism or, equivalently, that
\[  p_{e,!}\,p_e^* \rightarrow \id \]
is an isomorphism. This holds true because by assumption the map of fibers $I_e \rightarrow e$ is in $\mathcal{W}_{F(e)}$. 
\end{proof}

We proceed to state some consequences of the fact that {\em weak} $\DD$-equivalences form a fundamental localizer.

\begin{BEISPIEL}[Mayer-Vietoris]\label{MV2}
Let $\SSS$ be a strong right derivator (e.g.\@ associated with a category with limits) with a Grothendieck pre-topology.
We saw in Example~\ref{MV1} that for a cover $\{ U_1 \rightarrow S, U_2 \rightarrow S\}$ consisting of 2 {\em mono}morphisms, the projection
\[ p: \left( \vcenter{ \xymatrix{ \mlq \mlq U_1 \times_S U_2 \mrq \mrq \ar[r] \ar[d] & U_1 \\ U_2 & } } \right) \rightarrow S \]
belongs to any fundamental localizer. If $\DD \rightarrow \SSS$ is a fibered derivator which is local w.r.t.\@ the pre-topology on $\SSS$, Theorem~\ref{MAINTHEOREMHOMDESCENT} implies therefore that $p$ is a weak $\DD$-equivalence in $\Dia(\SSS)/(\cdot, S)$, i.e.\@ for $A \in \DD(\cdot)_S$ we have
 \[ p_!\,p^* A \cong A,\] i.e.\@ the
homotopy colimit of 
\[ \xymatrix{i_{1,2,\bullet}\,i_{1,2}^\bullet A \ar[r] \ar[d] & i_{1,\bullet}\,i_1^\bullet A \\ i_{2,\bullet}\,i_2^\bullet A & } \]
is isomorphic to $A$. If $\DD$ has stable fibers, this translates to the usual 
distinguished triangle
\[ i_{1,2,\bullet}\,i_{1,2}^\bullet A \rightarrow  i_{1,\bullet}\,i_1^\bullet A \oplus i_{2,\bullet}\,i_2^\bullet A \rightarrow A \rightarrow  i_{1,2,\bullet}\,i_{1,2}^\bullet A[1]   \]
in the language of triangulated categories.

Dually, if $\DD \rightarrow \SSS^{\op}$ is a fibered derivator which is colocal w.r.t.\@ the pre-cotopology on $\SSS^{\op}$, Theorem~\ref{MAINTHEOREMCOHOMDESCENT} implies  that $p^{\op}$ is a weak $\DD$-equivalence in $\Dia^{\op}(\SSS^{\op})/(\cdot, S)$, i.e.\@ for $A \in \DD(\cdot)_S$ we have
 \[ A \cong p_* p^* A.\] 
This means that the homotopy limit of 
\[ \xymatrix{  & i_1^{\bullet}\,i_{1,\bullet} A \ar[d]  \\ i_2^{\bullet}\,i_{2,\bullet} A \ar[r] & i_{1,2}^\bullet\,i_{1,2, \bullet} A } \]
is isomorphic to $A$. If $\DD$ has stable fibers, this translates to the usual 
distinguished triangle
\[ A \rightarrow  i_{1}^\bullet\,i_{1,\bullet} A \oplus i_{2}^\bullet\,i_{2,\bullet} A \rightarrow  i_{1,2}^{\bullet}\,i_{1,2,\bullet} A \rightarrow  A[1]   \]
in the language of triangulated categories. Note that $i_\bullet$ denotes a left adjoint push-forward along a morphism {\em in $\SSS^{\op}$}, i.e.\@ 
a left adjoint pull-back along a morphism in $\SSS$. 
\end{BEISPIEL}

\begin{BEISPIEL}[(Co)homological descent]
Let $\SSS$ be a strong right derivator with a Grothendieck pre-topology and
let $X_\bullet \in \SSS(\Delta^{\op})$ be a simplicial diagram over $S \in \SSS(\cdot)$ with underlying diagram
\[ \xymatrix{ \cdots \ar@<-1.5ex>[r]\ar@<-0.5ex>[r]\ar@<0.5ex>[r]\ar@<1.5ex>[r]  &   X_2 \ar@<1ex>[r]\ar@<0ex>[r]\ar@<-1ex>[r] & X_1 \ar@<0.5ex>[r] \ar@<-0.5ex>[r] & X_0  } \]
such that $(\id,p): (\Delta^{\op}, X_\bullet) \rightarrow (\Delta^{\op}, \pi^* S)$ is a finite hypercover.  Here $\pi: \Delta^{\op} \rightarrow \cdot$ denotes the projection. 
If $\DD \rightarrow \SSS$ is a fibered derivator which is local w.r.t.\@ the pre-topology on $\SSS$, Theorem~\ref{MAINTHEOREMHOMDESCENT} implies that $(\pi, p)$ is a weak $\DD$-equivalence in $\SSS(\cdot)/(\cdot, S)$, i.e.\@ for $A \in \DD(\cdot)_S$ we have
 \[ A \cong \pi_!\,p_\bullet\,p^\bullet\,\pi^* A.\] 
 This means that the 
homotopy colimit of $p_\bullet \,p^\bullet\, \pi^* A$ is equal to $A$. If the fibers of $\DD \rightarrow \SSS$ are in fact {\em derived categories}, this
yields a spectral sequence of homological descent because the homotopy colimit over a simplicial complex is 
the total complex of the associated double complex (a well-known fact). This double complex looks like
\[ \xymatrix{ \cdots \ar[r] & p_{2,\bullet}\,p_2^\bullet A \ar[r] & p_{1,\bullet}\,p_1^\bullet A \ar[r] & p_{0,\bullet}\,p_0^\bullet A. } \]
The point is that we get a {\em coherent} double complex. Knowing the individual morphisms
$p_{i,\bullet}\,p_i^\bullet A \rightarrow p_{i-1,\bullet}\,p_{i-1}^\bullet A$ as morphisms in the derived category $\DD(\cdot)_S$ would not be sufficient!

Dually 
(applying everything to a fibered derivator $\DD \rightarrow \SSS^{\op}$, and working in $\Dia^{\op}(\SSS^{\op}))$, one obtains the more classical spectral sequence of {\em co}homological descent. 
\end{BEISPIEL}

\begin{proof}[Proof of Main Theorem \ref{MAINTHEOREMHOMDESCENT}, 2.] This is the case of strong $\DD$-equivalences.

(L1) is clear.

For (L2 left), let $D_1=(I, F)$ and $D_2=(e, F(e))$. The
projection $p$ and the inclusion $i$ of the final object induce morphisms:
\[ \xymatrix{
 D_1 \ar@<0.5ex>[r]^p & D_2 \ar@<0.5ex>[l]^i }
\]
We have $p \circ i = \id$ and there is a 2-morphism $\beta: \id \Rightarrow i \circ p$.
Therefore the statement follows from Lemma~\ref{LEMMAHOMOTOPY}.
(Actually $(i \circ p)^*$ is left adjoint to the inclusion $\DD(D_1)^{\mathrm{cart}} \rightarrow \DD(D_1)$.)

(L3 left): It suffices to prove the following two statements:

\begin{enumerate}
\item Consider a morphism of diagrams $w=(\alpha, f): D_1=(I_1, F_1) \rightarrow D_2=(I_2, F_2)$
such that we have a commutative diagram
\[ \xymatrix{
I_1 \ar[rr]^\alpha \ar[dr]_{p_1}  && I_2 \ar[dl]^{p_2}  \\
& E 
} \]
and such that $w \times_{/E} e$ is a strong $\DD$-equivalence for all objects $e$ in $E$. 
Then $w$ is a strong $\DD$-equivalence. 

\item  Consider a morphism of diagrams $w: D_1=(I_1, F_1) \rightarrow D_2=(I_2, F_2)$ over $(\cdot, S)$ and let 
$\{U_i \rightarrow S\}$ be a covering. If $w \times_{(\cdot, S)} (\cdot, U_i)$ is a strong $\DD$-equivalence for all $i$ then $w$ is a strong $\DD$-equivalence. 
\end{enumerate}

We proceed by showing statement 1. 
Consider the following diagram over $E$ 
\[  \xymatrix{
D_1\ar[d] \ar[r]^w & D_2 \ar[d] \\
D_1\times_{/E} E \ar[r]^{w'} & D_2 \times_{/E} E
} \]
where the vertical morphisms are of pure diagram type. 
We have an adjunction
\[ \xymatrix{ I_i \ar@<2pt>[rr]^-{\kappa_i} && \ar@<2pt>[ll]^-{\iota_i} I_i \times_{/E} E } \]
where $\kappa_i$ maps an object $i$ to $(i, \id_{p(i)})$. 
We have a natural transformation  $\kappa_i \circ \iota_i \Rightarrow \id_{I_i \times_{/E} E}$ and moreover $\iota_i \circ \kappa_i = \id_E$ holds.
Actually this defines an adjunction with $\kappa_i$ left-adjoint to $\iota_i$. Furthermore, we get lifts to diagrams
\[ \xymatrix{ D_i \ar@<2pt>[rr]^-{\widetilde{\kappa}_i} && \ar@<2pt>[ll]^-{\widetilde{\iota}_i} (I_i \times_{/E} E, \iota_i \circ F) = D_1\times_{/E} E, } \]
and a 2-morphism $\widetilde{\kappa}_i \circ \widetilde{\iota}_i \Rightarrow \id_{D_1\times_{/E} E}$, and we have $\widetilde{\iota}_i \circ \widetilde{\kappa}_i = \id_{D_1}$.

Hence, by Lemma~\ref{LEMMAHOMOTOPY},
the pull-backs along $\widetilde{\iota}_1$ and $\widetilde{\iota}_2$ induce equivalences on Cartesian objects, so
 we are reduced to showing that the pull-back along $w'$ induces an equivalence on Cartesian objects. 
The underlying diagrams $I_k \times_{/E} E$ are Grothendieck opfibratons over $E$ and the functor underlying $w'$ is a map of Grothendieck opfibrations
(the push-forward along a map $\mu: e \rightarrow f$ in $E$ being given by mapping $(i, \nu: p(i) \rightarrow e)$ to $(i, \nu \circ \mu)$).
Hence w.l.o.g. we may assume that  $I_1 \rightarrow E$ is a Grothendieck opfibration and the morphism $I_1 \rightarrow I_2$ underlying $f$ is
a morphism of Grothendieck opfibrations. 

We keep the notation $w: D_1 \rightarrow D_2$, however, and the assumption translates to the statement that
the composition
\[ \xymatrix{ \DD(D_{2,e})^{\mathrm{cart}} \ar[r]^-{w_e^*} & \DD(D_{1,e})^{\mathrm{cart}} }  \]
for the fibers is an equivalence with inverse $\Box_! w_{e,!}$.

Consider the two functors:
\[ \xymatrix{ \DD(D_2)^{E-\mathrm{cart}} \ar[r]^-{incl.} & \DD(D_2) \ar[r]^-{w^*} & \DD(D_1). }  \]

We first show that the counit 
\[ \Box_!^E w_! w^* \mathcal{E} \rightarrow \mathcal{E} \]
is an isomorphism for every $E$-Cartesian $\mathcal{E}$.

This can be checked after pulling back to the fibers. Let $\iota_k: I_{k,e} \rightarrow I_k$ be the inclusion of the fiber over some $e \in E$.

We have the isomorphisms
\[  \iota_2^* \Box_!^E w_! w^* \mathcal{E} \cong \Box_!  w_{e,!}  \iota_1^* w^*  \mathcal{E} \cong  \Box_! w_{e,!}  w^{e,*} \iota_2^* \mathcal{E} \cong \iota_2^* \mathcal{E}, \]
where we used the isomorphism $\iota_2^* \Box_!^E \cong  \Box_! \iota_2^*$ (Lemma~\ref{LEMMAFUNDAMENTALLEFT}) and the isomorphism $\iota_2^* w_{!} \cong w_{e,!} \iota_1^*$ (exists for morphisms of pure diagram type because we have a morphism of Grothendieck cofibrations, see Proposition~\ref{PROPHOMCART},~3. and for morphisms of fixed shape by axiom (FDer0 left)).  The morphism $\Box_! w_{e,!}  w^{e,*}\mathcal{E} \rightarrow \mathcal{E}$ is an isomorphism for Cartesian $\mathcal{E}$ by assumption.

We now show that the unit
\[  \mathcal{E} \rightarrow w^* \Box_!^E w_! \mathcal{E}  \] 
is an isomorphism for every $E$-Cartesian $\mathcal{E}$. This can be checked again on the fibers:
\[ \iota_1^* w^* \Box_!^E w_! \mathcal{E} \cong w_e^* \iota_2^* \Box_!^E w_! \mathcal{E} \cong w_e^* \Box_! w_{e,!} \iota_1^* \mathcal{E} \cong \iota_1^* \mathcal{E}. \]

Therefore we have already proven that the functors
\[ \xymatrix{ \DD(D_2)^{E-\mathrm{cart}} \ar@<2pt>[rr]^-{w^*}  && \ar@<2pt>[ll]^-{\Box_!^E w_!} \DD(D_1)^{E-\mathrm{cart}} } \]
form an equivalence. 

We conclude by showing that $\Box_!^E w_! $ maps Cartesian objects to Cartesian objects:
Let $\nu: e \rightarrow f$ be a morphisms of $E$. It induces a morphism (choice of push-forward for $I_k \rightarrow E$)
\[  \widetilde{\nu}_k: D_{k,e} \rightarrow D_{k,f}  \]
(not of diagram type!) and a 2-morphism: $\iota_{k,e} \rightarrow \iota_{k,f} \circ \widetilde{\nu}_k$.

\begin{itemize}
\item {\em Claim:} It suffices to show that for all $\nu: e \rightarrow f$ the induced morphism
\[ \iota_{2,e}^* \Box_!^E w_!  \mathcal{E} \rightarrow  \widetilde{\nu}_2^* \iota_{2,f}^* \Box_!^E w_! \mathcal{E} \] 
is an isomorphism for every Cartesian $\mathcal{E}$.
\end{itemize}

{\em Proof of the claim:} Every morphism $\mu: i \rightarrow i''$ in $I$ with $p(\mu) = \nu$, say,
is the composition of a coCartesian $\mu'$ and some morphism $\mu''$ with $p(\mu'') = \id_f$. Since $\mathcal{E}$ is $E$-Cartesian, the
morphism $\mathcal{E}(\mu'')$ is Cartesian. Hence to show that $\mathcal{E}(\mu)$ is Cartesian it suffices to see that $\mathcal{E}(\mu')$ is Cartesian. 
A reformulation is, however, that the morphism of the claim be an isomorphism. \qed

Using the same argument as in the first part of the proof, we have to show that 
\[ \Box_!  w_{e,!} \iota_{1,e}^* \mathcal{E} \rightarrow  \widetilde{\nu}_2^* \Box_! w_{f,!}  \iota_{1,f}^* \mathcal{E} \] 
is an isomorphism for every Cartesian $\mathcal{E}$.
Since both sides are Cartesian objects, this can be checked after applying $w_{e}^*$ which is an equivalence on Cartesian objects:
\[ w_{e}^* \Box_!  w_{e,!} \iota_{1,e}^* \mathcal{E} \rightarrow w_{e}^* \widetilde{\nu}_2^* \Box_! w_{f,!}  \iota_{1,f}^* \mathcal{E}. \] 
We have $w_{e}^* \widetilde{\nu}_2^* = \widetilde{\nu}_1^* w_{f}^*$ because the map of diagrams underlying $w$ is a morphism of Grothendieck opfibrations. Hence,  after applying $w_{e}^*$, we get
\[ w_{e^*} \Box_!  w_{e,!} \iota_{1,e}^* \mathcal{E} \rightarrow  \widetilde{\nu}_1^* w_{f^*} \Box_! w_{f,!}  \iota_{1,f}^* \mathcal{E}. \] 
Since $w_{e^*} \Box_!  w_{e,!}$ and $w_{f^*} \Box_!  w_{f,!}$ are equivalences on Cartesian objects, we get
\[ \iota_{1,e}^* \mathcal{E} \rightarrow \widetilde{\nu}_1^*  \iota_{1,f}^* \mathcal{E}. \] 
A slightly tedious check shows that this is again the morphism induced by the 2-morphism  $\iota_{1,e} \rightarrow \iota_{1,f} \circ \widetilde{\nu}_1$. It is an isomorphism because $\mathcal{E}$ is Cartesian. 

We will now show statement 2. 
Consider a diagram
\[ \xymatrix{
D_1 \ar[rr]^w \ar[dr]_{p_1}  && D_2 \ar[dl]^{p_2}  \\
& (\cdot, S) 
} \]

For any $i$ (index of the cover in L4 left) we have the following commutative diagrams of objects in $\Dia(\SSS)$:
\[ \xymatrix{
D_1 \times_S U_i \ar[r]^{w_i} \ar[d]_{\pr_1^{(i)}} & D_2 \times_S U_i \ar[d]^{\pr_1^{(i)}} \\
D_1 \ar[r]^w & D_2
} \]
The morphisms $\pr_1^{(i)}$ are of fixed shape. 
We first show that the unit is an isomorphism
\[  \mathcal{E} \rightarrow w^* \Box_!\, w_!\, \mathcal{E}  \] 
for any Cartesian $\mathcal{E}$. 
Note that by the stability axiom of a Grothendieck pre-topology also the collections $(D_1 \times_S U_i)_j \rightarrow D_{1,j}$
are covers for any $j \in I_1$, where $I_1$ is the underlying diagram of $D_1$. 
Since $\DD$ is local w.r.t.\@ the Grothendieck pre-topology (and by axiom Der2), the family
$(\pr_1^{(i)})^*$ is jointly conservative. 
Therefore it suffices to show that the unit is an isomorphism after applying $(\pr_1^{(i)})^*$.
We get
\[ (\pr_1^{(i)})^* \mathcal{E} \rightarrow (\pr_1^{(i)})^*\, w^* \Box_!\, w_!\, \mathcal{E}  \] 
which is the same as
\[ (\pr_1^{(i)})^* \mathcal{E} \rightarrow w_i^*\, (\pr_1^{(i)})^* \Box_!\, w_!\, \mathcal{E}.  \] 
Since $(\pr_1^{(i)})^* $ commutes with $\Box_!$ (Lemma \ref{LEMMAPROJECTIONLEFT}) and with $w_!$ (Proposition~\ref{THEOREMBASECHANGEDIALEFT},~2.\@), we get
\[ (\pr_1^{(i)})^* \mathcal{E} \rightarrow w_i^*  \Box_!\, w_{i,!}\,  (\pr_1^{(i)})^* \mathcal{E}.  \] 
This morphism is an isomorphism by assumption.
In the same way one shows that the counit is an isomorphism.

(L4 left): By Lemma~\ref{PROPL4} we may prove axiom (L4' left) instead.
We have shown during the proof for (L4' left) for the case of weak $\DD$-equivalences that
\[ p_!\, p^* \rightarrow \id \]
is an isomorphism, hence on Cartesian objects the same holds for the natural transformation
\[ \Box_!\, p_!\, p^* \rightarrow \id. \]
We have to show that also the counit
\begin{equation}\label{eqpbox} \id \rightarrow  p^* \Box_!\, p_!  \end{equation}
is an isomorphism on Cartesian objects. 
First note that $p_*$ also is a right adjoint of $p^*$ when restricted to the full subcategories of Cartesian objects because $p_*$ preserves Cartesian objects. 
Indeed, $p_*$ can be computed fiber-wise because $p$ is a Grothendieck fibration. The fibers being contractible in the
sense of any localizer on $\Dia$ implies that the functors $p_e^*, p_{e,*}$ induce an equivalence $\DD(D_e)^{\cart} \cong \DD(\cdot)_{F(e)}$. Note: This uses
that (L1--L3 left) hold for the class of strong $\DD$-equivalences on the fiber $\DD_{F(e)}$, a fact which has been proven already.  
Therefore we pass to the right adjoints of the functors in~(\ref{eqpbox}) and have to show that the counit
\[ p^*\, p_* \rightarrow  \id  \]
is an isomorphism on Cartesian objects. 
Again this can be checked fiber-wise, i.e.\@ we have to show that the counit
\[  p_e^{*}\, p_{e,*}  \rightarrow \id  \]
is an isomorphism on Cartesian objects. But the pair of functors is an equivalence as we have seen, and the claim follows.
\end{proof}

We proceed to state some consequences of the fact that {\em strong} $\DD$-equivalences form a fundamental localizer.

\begin{KOR}[left]
Let $\SSS$ be a strong right derivator.
If $\DD \rightarrow \SSS$ is an infinite fibered derivator which is local w.r.t.\@ the pre-topology on $\SSS$ (cf.\@ \ref{DEFPRETOP}) with stable, compactly generated fibers then 
for any finite hypercover $f: X_\bullet \rightarrow Y_\bullet$ considered in $\Dia(\SSS)'$ the functor $f^*$ induces an equivalence
\[ \DD(Y_\bullet)^{\mathrm{cart}} \rightarrow \DD(X_\bullet)^{\mathrm{cart}}. \]
Here $\Dia(\SSS)'$ is the full subcategory of diagrams with universally $\DD$-local morphisms. 
\end{KOR}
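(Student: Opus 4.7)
The plan is to combine two results already proved in the paper: the left variant of Main Theorem \ref{MAINTHEOREMHOMDESCENT}, part 2, which asserts that strong $\DD$-equivalences form an absolute localizer $\mathcal{W}_\DD$ in $\Dia'(\SSS)$, together with Theorem \ref{SATZHYPERCOVER}, which asserts that every finite hypercover lies in every localizer on a category of $\SSS$-diagrams. Concatenating these two statements immediately gives that a finite hypercover $f\colon X_\bullet \to Y_\bullet$ in $\Dia'(\SSS)$ is a strong $\DD$-equivalence, which by definition means that $f^*\colon \DD(Y_\bullet)^{\mathrm{cart}} \to \DD(X_\bullet)^{\mathrm{cart}}$ is an equivalence.

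The only point that requires a moment's care is the verification that Theorem \ref{SATZHYPERCOVER} applies with $\mathcal{DIA} := \Dia'(\SSS)$, that is, that $\Dia'(\SSS)$ is a (full sub-)category of $\SSS$-diagrams in the sense of Definition \ref{DEFSDIAGRAMCAT}, and in particular contains the simplicial and truncated simplicial diagrams $(\Delta^{\op}, X_\bullet)$ and slice diagrams appearing in the proof of the lemmas building up to Theorem \ref{SATZHYPERCOVER}. This follows from the stability properties of the class of universally $\DD$-local morphisms under homotopy pull-back, which was built into the very definition of \emph{universally} $\DD$-local (Definition \ref{DEFDLOCAL}): each of the constructions used (finite coproducts, slice diagrams $D_1 \times_{/D_3}(e,U_{e,i})$ with $U_{e,i}\to F(e)$ part of a cover, and the tensor constructions $F_\bullet \otimes \Delta_n$) produces diagrams whose structure maps are again universally $\DD$-local whenever the inputs are, so they remain in $\Dia'(\SSS)$.

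Granting this, the proof is a one-line invocation: by Main Theorem \ref{MAINTHEOREMHOMDESCENT}(2), the class $\mathcal{W}_\DD$ of strong $\DD$-equivalences is an absolute localizer in $\Dia'(\SSS)$; by Theorem \ref{SATZHYPERCOVER} applied to this localizer, every finite hypercover in $\Dia'(\SSS)$ belongs to $\mathcal{W}_\DD$, and hence induces an equivalence on Cartesian objects by the very definition of strong $\DD$-equivalence.

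The main (only) obstacle is thus the bookkeeping check that $\Dia'(\SSS)$ really satisfies (SDia1--3) relative to the given pre-topology, which reduces to checking that the class of universally $\DD$-local morphisms is stable under the operations of Definition \ref{DEFSDIAGRAMCAT}; this is essentially automatic from the definition and the axioms of a Grothendieck pre-topology (Definition \ref{DEFPRETOP}, clause 2).
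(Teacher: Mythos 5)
Your proof is correct and matches the paper's (implicit) approach exactly: the paper states this Corollary without separate argument, treating it as the combination of Main Theorem \ref{MAINTHEOREMHOMDESCENT}(2) (strong $\DD$-equivalences form an absolute localizer in $\Dia'(\SSS)$) with Theorem \ref{SATZHYPERCOVER} (finite hypercovers lie in every localizer). You are also right that the non-trivial content hiding beneath the one-liner is the verification that $\Dia'(\SSS)$ is closed under the diagram constructions used in section \ref{SECTSIMP} --- slice diagrams by cover morphisms, tensors $F_\bullet \otimes \Delta_n$, relative coskelets, and the iterated fiber products $X_n\times_{Y_n}\cdots\times_{Y_n}X_n$ appearing in Lemma \ref{LEMMAELEMENTARYHYPERCOVER} --- which does hold because universal $\DD$-locality is by definition stable under (homotopy) pullback and the cover morphisms are themselves universally $\DD$-local by clause 1 of Definition \ref{DEFFIBEREDLOCAL} together with the stability axiom of a Grothendieck pre-topology. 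The paper glosses over this bookkeeping; flagging it is the right instinct.
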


\begin{KOR}[right]
Let $\SSS$ be a strong right derivator.
If $\DD \rightarrow \SSS^{\op}$ is an infinite fibered derivator which is colocal w.r.t.\@ the pre-cotopology on $\SSS^{\op}$ (cf.\@ \ref{DEFPRETOP}) with stable, compactly generated fibers then 
for any finite hypercover $f: X_\bullet \rightarrow Y_\bullet$ considered in $\Dia^{\op}(\SSS^{\op})$ the functor $f^*$ induces an equivalence
\[ \DD(Y_\bullet)^{\mathrm{cocart}} \rightarrow \DD(X_\bullet)^{\mathrm{cocart}}. \] 
\end{KOR}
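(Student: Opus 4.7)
The plan is to derive the corollary as a direct combination of the right version of Main Theorem~\ref{MAINTHEOREMCOHOMDESCENT} with the dual form of Theorem~\ref{SATZHYPERCOVER}. All hypotheses needed for these two inputs are precisely those assumed in the statement, so the argument should reduce to a two-step application followed by unwinding the relevant definition.

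First I would invoke Main Theorem~\ref{MAINTHEOREMCOHOMDESCENT}, part~2. Under the stated hypotheses (infinite fibered derivator $\DD \to \SSS^{\op}$, colocal structure for the pre-cotopology, stable compactly generated fibers), it yields that the class $\mathcal{W}_\DD$ of strong $\DD$-equivalences in $\Dia^{\op}(\SSS^{\op})$ forms an absolute colocalizer.

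Next I would apply Theorem~\ref{SATZHYPERCOVER} in its dual form. By the remark following Definition~\ref{DEFFUNLOC}, absolute colocalizers in $\mathcal{DIA}^{\op}$ correspond bijectively to absolute localizers in $\mathcal{DIA}$, and the analogous correspondence holds for the notion of finite hypercover (defined via coskeleta, which are self-dual under this identification). Consequently, every finite hypercover, viewed as a morphism in $\Dia^{\op}(\SSS^{\op})$, belongs to every absolute colocalizer there, hence in particular to $\mathcal{W}_\DD$. Unwinding the definition of strong $\DD$-equivalence in $\Dia^{\op}(\SSS^{\op})$, membership of $f$ in $\mathcal{W}_\DD$ says exactly that
\[ f^*: \DD(Y_\bullet)^{\mathrm{cocart}} \rightarrow \DD(X_\bullet)^{\mathrm{cocart}} \]
is an equivalence of categories, which is the desired conclusion.

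I do not foresee any genuine obstacle: the substantive work lies in the Main Theorem and in Theorem~\ref{SATZHYPERCOVER}, both already established in the text. The only point requiring mild attention is the bookkeeping for the duality $\mathcal{DIA} \leftrightarrow \mathcal{DIA}^{\op}$ and the verification that a finite hypercover in one setting corresponds to a finite hypercover in the dual setting; but this follows immediately from the definition (via the coskeleton) which is symmetric in the roles of source and target simplicial objects.
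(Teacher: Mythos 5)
Your proposal is correct and matches the argument the paper intends (the corollary is stated without explicit proof, as a direct consequence). The chain is exactly as you describe: Main Theorem~\ref{MAINTHEOREMCOHOMDESCENT}, part~2, gives that strong $\DD$-equivalences in $\Dia^{\op}(\SSS^{\op})$ form an absolute colocalizer; Theorem~\ref{SATZHYPERCOVER} together with the remark after Definition~\ref{DEFFUNLOC} (localizers in $\mathcal{DIA}$ correspond to colocalizers in $\mathcal{DIA}^{\op}$) puts the image of the hypercover in every such colocalizer, hence in $\mathcal{W}_\DD$; unwinding the definition of strong $\DD$-equivalence in the right setting gives the desired equivalence of cocartesian subcategories. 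One small remark: invoking a \emph{self-duality of coskeleta} is unnecessary and slightly off-target --- you never need to know that the transported morphism is itself ``a hypercover'' in any dual sense; you only need that the correspondence $\mathcal{W} \leftrightarrow \mathcal{W}^{\op}$ preserves membership of a given morphism, so membership of the hypercover in all localizers of $\Cat(\SSS)$ transports directly to membership of its image in all colocalizers of $\Cat^{\op}(\SSS^{\op})$.
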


\begin{KOR}
If $\DD$ is an infinite derivator (not fibered) with domain $\Cat$ which is stable and well-generated, then for each homotopy type $I$, we get a
category $\DD(I)^{\cart}$ well-defined up to equivalence of categories. Moreover each morphism $I \rightarrow J$ of homotopy types gives rise to
a corresponding functor $\alpha^*:  \DD(J)^{\cart} \rightarrow  \DD(I)^{\cart}$. It is, however, not possible to arrange those as
a pseudo functor $\mathcal{HOT} \rightarrow \mathcal{CAT}$, but it is possible to arrange them as a pseudo-functor $\mathcal{HOT}^{(2)} \rightarrow \mathcal{CAT}$ where $\mathcal{HOT}^{(2)}$ is the homotopy 2-category (2-truncation) of
any model for the homotopy theory of spaces (cf.\@ also \ref{APPMULTI}).
\end{KOR}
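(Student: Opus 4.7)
The plan is to specialize Main Theorem \ref{MAINTHEOREMHOMDESCENT} (or equivalently its dual \ref{MAINTHEOREMCOHOMDESCENT}) to the trivial base pre-derivator $\SSS = \{\cdot\}$, viewing $\DD$ as the (left and right) fibered derivator $\DD \to \{\cdot\}$. In this setting $\Dia(\SSS) = \Cat$, every morphism is vacuously (universally) $\DD$-local, and the notions of Cartesian and coCartesian object coincide. The theorem then gives that the class $\mathcal{W}_\DD$ of strong $\DD$-equivalences --- functors $\alpha : I \to J$ for which $\alpha^* : \DD(J)^{\cart} \to \DD(I)^{\cart}$ is an equivalence of categories --- is an absolute localizer in $\Cat$. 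I would then invoke Cisinski's theorem (Remark \ref{REMWMIN}) that the minimal absolute localizer in $\Cat$ is $\mathcal{W}_\infty$, the class of Thomason equivalences; hence $\mathcal{W}_\infty \subset \mathcal{W}_\DD$. This immediately yields the first assertion: $\DD(I)^{\cart}$ depends only on the homotopy type of $I$, up to equivalence of categories.

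To define the action on 1-morphisms, given a homotopy class $f : I \to J$ in $\mathcal{HOT}$, I would represent $f$ by a span of functors $I \xleftarrow{\pi} I' \xrightarrow{\alpha} J$ with $\pi \in \mathcal{W}_\infty$ (for instance via the category of simplices construction or Thomason's model structure on $\Cat$) and set $f^* := \alpha^* \circ (\pi^*)^{-1}$, where a quasi-inverse $(\pi^*)^{-1}$ is chosen once and for all. Any two such spans are connected by a further zigzag of Thomason equivalences, and each leg acts as an equivalence on $(-)^{\cart}$ by the previous step, so $f^*$ is well-defined up to canonical natural isomorphism.

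For a 2-cell $\theta : f \Rightarrow g$ in $\mathcal{HOT}^{(2)}$ --- a homotopy class of homotopies --- choose, after refining the spans if necessary, a common representing object $I'$ and a functor $\mu : I' \times \Delta_1 \to J$ realising $\theta$, with $\mu \circ e_0$ and $\mu \circ e_1$ the chosen representatives of the two sides. The pre-derivator 2-functoriality assigns to $\mu$ a natural transformation $\DD(\mu) : (\mu \circ e_0)^* \Rightarrow (\mu \circ e_1)^*$, whose restriction to cartesian objects is a natural isomorphism exactly by the argument of Lemma \ref{LEMMAHOMOTOPY}. The content of the 2-truncation is that two representatives $\mu, \mu'$ of the same 2-cell are related by a homotopy $I' \times \Delta_2 \to J$; applying the same principle one dimension up --- the fact that the two face inclusions $\Delta_1 \hookrightarrow \Delta_2$ are Thomason equivalences, hence lie in $\mathcal{W}_\DD$ --- shows that $\DD(\mu)$ and $\DD(\mu')$ induce the same natural isomorphism on cartesian objects.

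The hard part will be the pseudo-functor coherence: one must verify compatibility of the various chosen quasi-inverses under composition of spans and under vertical and horizontal pasting of 2-cells, so that the construction satisfies the pseudo-functor axioms for $\mathcal{HOT}^{(2)} \to \mathcal{CAT}$. This is standard but unilluminating bookkeeping, closely analogous to the proof that equivalences of categories organise into a 2-category. The reason the construction stops at the 2-truncation is that genuine 3-cells of $\mathcal{HOT}$ (homotopies between homotopies between homotopies) would have to be encoded as coherence data of dimension $\ge 3$, which the strict 2-category $\mathcal{CAT}$ does not possess.
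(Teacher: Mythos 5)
The paper states this corollary without proof, and your overall plan — specialize the Main Theorem to $\SSS=\{\cdot\}$, use that Cartesian=coCartesian, invoke Cisinski's characterization $\mathcal{W}^{\min}_{\Cat}=\mathcal{W}_\infty$ from Remark~\ref{REMWMIN}, and then descend along spans $I\leftarrow I'\rightarrow J$ with one leg a Thomason equivalence — is exactly the right approach. There are, however, two places where the argument as written has a real gap or a misattribution.

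First, the generation hypothesis. The corollary assumes \emph{well-generated}, but both Main Theorems~\ref{MAINTHEOREMHOMDESCENT} and~\ref{MAINTHEOREMCOHOMDESCENT}, part~2, are stated for \emph{compactly generated} fibers, and you cite them as if they were interchangeable (``or equivalently its dual''). As stated, neither directly applies under the corollary's hypothesis. What rescues the corollary is that for $\SSS=\{\cdot\}$ the pre-(co)topology is trivial, so the step of the proof of Theorem~\ref{MAINTHEOREMCOHOMDESCENT}, part~2, that genuinely needs compact generation — namely the left adjoint $\pr_{1!}$ in Lemma~\ref{LEMMAPROJECTIONRIGHT}, obtained via left Brown representability — is vacuous (there are no nontrivial cocovers to pull back along). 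The only projector actually used is the right coCartesian projector of Lemma~\ref{LEMMAFUNDAMENTALRIGHT}, which exists under well-generatedness by Theorem~\ref{THEOREMEXISTRIGHTPROJ}. So you should commit to the coCartesian/right version and explicitly point out why compact generation drops out; the left version really does need compact generation (Theorem~\ref{THEOREMEXISTLEFTPROJ}) and cannot be used here.

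Second, the 2-cell argument. Your mechanism for showing that 2-homotopic homotopies $\mu,\mu'\colon I'\times\Delta_1\to J$ induce the same natural isomorphism — ``the two face inclusions $\Delta_1\hookrightarrow\Delta_2$ are Thomason equivalences, hence lie in $\mathcal{W}_\DD$'' — is not what is doing the work. A functor $\nu\colon I'\times\Delta_2\to J$ relating $\mu$ and $\mu'$ exhibits one as a pasting of the other with a third 2-cell, and what you need is that $\DD$ is a strict $2$-functor on $\Cat$ (so $\DD$ preserves this pasting on the nose) together with the observation that the third 2-cell, being a homotopy rel endpoints, induces the identity on Cartesian objects — the latter is exactly the non-fibered case of Lemma~\ref{LEMMAHOMOTOPY}, where the morphisms are identities in $J$. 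The $\mathcal{W}_\DD$-membership of $\Delta_1\hookrightarrow\Delta_2$ gives an equivalence of the Cartesian subcategories but does not by itself equate the two natural transformations. This is fixable, but as written it is the wrong lemma.

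Finally, a minor point: your concluding paragraph explains why the \emph{construction} does not extend to $\mathcal{HOT}$, not why \emph{no} pseudofunctor $\mathcal{HOT}\to\mathcal{CAT}$ with the desired values can exist. The corollary asserts the latter. The paper likewise offers no argument, so this is not a defect relative to the source, but you should be aware the two statements are different.
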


\section{Representability}\label{SECTBROWN}

In this section we exploit the consequences that Brown representability type results have for fibered derivators. In particular these results are useful
to see that under certain circumstances a left fibered (multi\nobreakdash-)derivator is already a right fibered (multi\nobreakdash-)derivator, provided that its fibers are nice (i.e.\@ stable and well-generated derivators).
Furthermore they provide us with (co)Cartesian projectors that are needed for the strong form of (co)homological descent. 
In contrast to the rest of the article the results are stated in a rather unsymmetric form. This is due to the fact that in applications 
the stable derivators will rather be well-generated or compactly generated whereas their duals will rather not satisfy this condition. 
All the auxiliary results are taken from \cite{Kra09} and \cite{Nee01}.

\subsection{Well-generated triangulated categories and Brown representability}

\begin{DEF}[cf.\@ {\cite[5.1, 6.3]{Kra09}}]\label{DEFGENER}
Let $\mathcal{D}$ be a category with zero object and small coproducts. 
We call $\mathcal{D}$ {\bf perfectly generated} if there is a set of objects $\mathcal{T}$ in $\mathcal{D}$ such that the following conditions hold:
\begin{enumerate} 
\item An object $X \in \DD(\cdot)$ is zero if and only if $\Hom(T, X) = 0$ for all $T \in \mathcal{T}$.
\item If $\{ X_i \rightarrow Y_i \}$ is any set of maps, and $\Hom(T, X_i) \rightarrow \Hom(T, Y_i)$ is surjective for all $i$, then
$\Hom(T, \coprod_i X_i) \rightarrow \Hom(T,  \coprod_i Y_i)$ is also surjective.
\end{enumerate} 
The category $\mathcal{D}$ is called {\bf well-generated} if there is a set of objects $\mathcal{T}$ in $\mathcal{D}$ such that in addition to 1., 2.\@ 
there is a regular cardinal $\alpha$ such that the following condition holds:
\begin{enumerate} 
\item[3.] All objects $T \in \mathcal{T}$ are $\alpha$-small, cf.\@ \cite[6.3]{Kra09}.
\end{enumerate} 

The category $\mathcal{D}$ is called {\bf compactly generated} if there is a set of objects $\mathcal{T}$ in $\mathcal{D}$ such that in addition to 1., 2.\@ the following two equivalent conditions hold:
\begin{enumerate} 
\item[4.] All $T \in \mathcal{T}$ are $\aleph_0$-small.
\item[4'.] All $T \in \mathcal{T}$ are compact, i.e.\@ for each morphism $\gamma: T \rightarrow \coprod_{i \in I} X_i$ there is a finite subset $J \subseteq I$ such that $\gamma$ factors through $\coprod_{i \in J} X_i$. 
\end{enumerate} 

\end{DEF}

\begin{DEF}
A pre-derivator $\DD$ whose domain $\Dia$ is infinite (i.e.\@ closed under infinite disjoint unions) is called {\bf infinite} if the restriction-to-$I_j$ functors induce an equivalence
\[ \DD( \coprod_{j \in J} I_j ) \cong \prod_{j \in J} \DD(I_j) \]
for all sets $J$. 
\end{DEF}

Recall (cf.\@ \cite[4.4]{Kra09}) that a functor from a triangulated category $\mathcal{D}$ to an abelian category is called {\bf cohomological}
if it sends distinguished triangles to exact sequences.

We recall the following theorem:
\begin{SATZ}[right Brown representability]\label{THEOREMTRIANGREP}
Let $\mathcal{D}$ be a perfectly generated triangulated category with small coproducts. Then a functor $F: \mathcal{D}^{\op} \rightarrow \mathcal{AB}$ is cohomological 
and sends coproducts to products if and only if it is representable.  An exact functor $\mathcal{D} \rightarrow \mathcal{E}$ between triangulated categories
commutes with coproducts if and only if it has a right adjoint.
\end{SATZ}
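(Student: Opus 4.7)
The plan is to follow the classical argument of Neeman/Krause adapted to the perfectly generated setting, which the paper cites from [Kra09] and [Nee01]. The first statement splits into an easy direction and a hard direction, and the second statement is a formal consequence of the first.

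For the easy direction, I would observe that if $F \cong \Hom_{\mathcal{D}}(-, X)$ for some $X$, then $F$ is cohomological because any distinguished triangle yields a long exact sequence after applying $\Hom(-, X)$ (a standard consequence of the axioms of triangulated categories), and $F$ sends coproducts to products by the universal property of coproducts. This half is purely formal and uses none of the hypotheses on $\mathcal{D}$.

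For the hard direction, given $F: \mathcal{D}^{\op} \to \mathcal{AB}$ cohomological and sending coproducts to products, I would build a representing object $X$ by transfinite (in fact countable) iteration, using the generating set $\mathcal{T}$ from Definition \ref{DEFGENER}. First I would set $X_0 := \coprod_{T \in \mathcal{T}, t \in F(T)} T$, which comes equipped with a natural transformation $\eta_0 : \Hom(-, X_0) \to F$ whose components at each $T \in \mathcal{T}$ are surjective by construction. Then, inductively, to remove the kernel of $\eta_n$ on the generators, I would set $Y_n := \coprod_{T \in \mathcal{T}, \xi \in \ker(\eta_n)_T} T$, form the distinguished triangle
\[ Y_n \longrightarrow X_n \longrightarrow X_{n+1} \longrightarrow Y_n[1], \]
and extend $\eta_n$ to $\eta_{n+1}$ on $X_{n+1}$ using that $F$ is cohomological (so $F(X_n) \to F(Y_n)$ sends $\eta_n(\mathrm{id})$ to zero, lifting to $F(X_{n+1})$). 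Finally, I would take $X := \hocolim X_n$, defined by the standard triangle
\[ \coprod_n X_n \xrightarrow{1 - \text{shift}} \coprod_n X_n \longrightarrow X \longrightarrow \left(\coprod_n X_n\right)[1], \]
and check that the induced map $\Hom(-, X) \to F$ is a natural isomorphism on each $T \in \mathcal{T}$; conservativity of $\mathcal{T}$ (condition 1 of perfectly generated) then upgrades this to an isomorphism on all of $\mathcal{D}$. The main obstacle lies in this last step: one needs the surjectivity/Mittag-Leffler type property (condition 2 of perfectly generated) to control $\Hom(T, X)$ in terms of the tower $\Hom(T, X_n)$, so that one can pass the surjectivity of $\eta_n$ on generators through the homotopy colimit and also pass injectivity (via the kernel-killing step) through the limit. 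This is exactly where the perfect generation hypothesis is used in an essential way.

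For the statement about adjoints, I would use the first part as a black box. Given an exact functor $L : \mathcal{D} \to \mathcal{E}$ commuting with coproducts and an object $Y \in \mathcal{E}$, the functor $\Hom_{\mathcal{E}}(L(-), Y) : \mathcal{D}^{\op} \to \mathcal{AB}$ is cohomological (since $L$ is exact and $\Hom_{\mathcal{E}}(-, Y)$ is cohomological) and sends coproducts to products (since $L$ preserves coproducts and $\Hom_{\mathcal{E}}(-, Y)$ converts coproducts to products). By the representability statement already established, there exists $R(Y) \in \mathcal{D}$ with $\Hom_{\mathcal{D}}(-, R(Y)) \cong \Hom_{\mathcal{E}}(L(-), Y)$; the assignment $Y \mapsto R(Y)$ is then functorial by Yoneda and provides the right adjoint. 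The converse is immediate since any left adjoint preserves colimits, in particular coproducts.
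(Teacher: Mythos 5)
The paper does not actually prove this statement; its proof is a single citation, \cite[Theorem 5.1.1]{Kra09}. Your proposal is therefore best read as a reconstruction of the argument the paper delegates to Krause (and, for the compactly generated case, Neeman). In broad outline your reconstruction is faithful: the iterated construction $X_0 = \coprod_{T,t} T$, killing kernels of $\eta_n$ at each stage via a cone, passing to the homotopy colimit, and identifying the perfect-generation hypothesis (condition~2 of Definition~\ref{DEFGENER}) as what controls $\Hom(T,\hocolim X_n)$ in the absence of compactness, is exactly how the result is proved. The deduction of the adjoint functor statement from representability is also correctly and completely argued.

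The one place where you gloss over a genuine subtlety is the final sentence of the hard direction, \emph{``conservativity of $\mathcal{T}$ (condition 1 of perfectly generated) then upgrades this to an isomorphism on all of $\mathcal{D}$.''} Condition~1 says only that $\mathcal{T}$ detects zero \emph{objects}; it does not, by itself, allow one to conclude that a natural transformation between two not-yet-representable cohomological functors which is an isomorphism on $\mathcal{T}$ is an isomorphism everywhere (the kernel and cokernel of $\eta$ need not be cohomological, so vanishing on generators is not directly usable). The actual argument proceeds in two steps. First, one observes that the class of $Y$ for which $\eta_Y$ is an isomorphism is a localizing subcategory of $\mathcal{D}$ (closed under shifts, cones, retracts, and small coproducts, since both $\Hom(-,X)$ and $F$ send coproducts to products and one may apply the five lemma); since it contains $\mathcal{T}$, it contains the localizing subcategory $\langle\mathcal{T}\rangle$ generated by $\mathcal{T}$. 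Second, one shows $\langle\mathcal{T}\rangle = \mathcal{D}$: given any $Y\in\mathcal{D}$, apply the \emph{same construction} to the representable functor $\Hom(-,Y)$; the resulting $X'\in\langle\mathcal{T}\rangle$ comes with a map $f\colon X'\to Y$ (by Yoneda) inducing isomorphisms $\Hom(T,X')\to\Hom(T,Y)$ for all $T\in\mathcal{T}$ and all shifts, and \emph{here} condition~1 forces the cone of $f$ to vanish, so $Y\cong X'\in\langle\mathcal{T}\rangle$. Only after these two steps does conservativity yield the global isomorphism. So the idea is right, but as written the passage from generators to all objects is compressed past the point of being an argument, and it is exactly the sort of step that, in the compactly generated setting, tempts one to assert ``generates as a localizing subcategory'' as part of the hypothesis when Krause's definition deliberately assumes only the weaker conditions~1--2.
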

\begin{proof}
See \cite[Theorem 5.1.1]{Kra09}.
\end{proof}

It can be shown that for a compactly generated triangulated category $\mathcal{D}$ with small coproducts, $\mathcal{D}^{\op}$ is 
perfectly generated and has small coproducts. Therefore the dual version of the previous theorem holds in this case: 

\begin{SATZ}[left Brown representability]\label{THEOREMTRIANGREPLEFT}
Let $\mathcal{D}$ be a compactly generated triangulated category with small coproducts. Then a functor $F: \mathcal{D} \rightarrow \mathcal{AB}$ is homological 
and sends products to products if and only if $F$ is representable.  An exact functor $\mathcal{D} \rightarrow \mathcal{E}$ between triangulated categories
commutes with products if and only if it has a left adjoint.
\end{SATZ}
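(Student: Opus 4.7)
The plan is to deduce the theorem from the previous one (right Brown representability, Theorem~\ref{THEOREMTRIANGREP}) applied to $\mathcal{D}^{\op}$. As highlighted in the paragraph preceding the statement, the key input is that for a compactly generated triangulated category $\mathcal{D}$ with small coproducts, the opposite category $\mathcal{D}^{\op}$ is itself perfectly generated and has small coproducts. With this in hand both assertions follow essentially formally, so the real work lies in verifying the hypotheses for $\mathcal{D}^{\op}$.

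First I would check that $\mathcal{D}^{\op}$ has small coproducts, equivalently that $\mathcal{D}$ has small products. Given a family $(Y_i)$ in $\mathcal{D}$, the functor $X \mapsto \prod_i \Hom_{\mathcal{D}}(X, Y_i)$ on $\mathcal{D}^{\op}$ is cohomological and sends coproducts in $\mathcal{D}$ to products of abelian groups, and is therefore representable by Theorem~\ref{THEOREMTRIANGREP} applied to $\mathcal{D}$ itself (which is perfectly generated, being compactly generated). Next I would establish perfect generation of $\mathcal{D}^{\op}$. The idea is: given compact generators $\mathcal{T}$ of $\mathcal{D}$, construct for each $T \in \mathcal{T}$ a Brown--Comenetz-type dual $T^\vee \in \mathcal{D}$ representing the functor $X \mapsto \Hom_{\Z}(\Hom_{\mathcal{D}}(T, X), \Q/\Z)$; this is representable via Theorem~\ref{THEOREMTRIANGREP} because $\Hom_{\Z}(-,\Q/\Z)$ is exact and compactness of $T$ makes the composite functor turn coproducts in $\mathcal{D}$ into products. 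Then $\{T^\vee : T \in \mathcal{T}\}$ is a set of perfect generators of $\mathcal{D}^{\op}$: detection of the zero object follows because $\Q/\Z$ is a cogenerator of $\mathcal{AB}$ and $\mathcal{T}$ generates $\mathcal{D}$, while condition~2 of Definition~\ref{DEFGENER} translates, via the bijection $\Hom_{\mathcal{D}^{\op}}(T^\vee, X) \cong \Hom_{\Z}(\Hom_{\mathcal{D}}(T, X), \Q/\Z)$ and injectivity of $\Q/\Z$, to a condition about coproducts in $\mathcal{D}^{\op}$ which reduces to compactness of $T$.

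With both hypotheses established, applying Theorem~\ref{THEOREMTRIANGREP} to $\mathcal{D}^{\op}$ immediately yields the first claim: a homological functor $F \colon \mathcal{D} \to \mathcal{AB}$ preserving products may be viewed as a cohomological functor on $(\mathcal{D}^{\op})^{\op}$ sending coproducts in $\mathcal{D}^{\op}$ (= products in $\mathcal{D}$) to products in $\mathcal{AB}$, hence is representable by some $X \in \mathcal{D}^{\op}$, giving $F \cong \Hom_{\mathcal{D}}(X, -)$. For the adjoint statement, one direction is standard (right adjoints preserve limits, in particular products). Conversely, if $F \colon \mathcal{D} \to \mathcal{E}$ is exact and preserves products, then for each $Y \in \mathcal{E}$ the composite $\Hom_{\mathcal{E}}(Y, F(-)) \colon \mathcal{D} \to \mathcal{AB}$ is homological (as $F$ is exact and $\Hom_{\mathcal{E}}(Y,-)$ is cohomological) and preserves products (as both $F$ and $\Hom_{\mathcal{E}}(Y,-)$ do); by the first part it is representable by an object $G(Y) \in \mathcal{D}$ with $\Hom_{\mathcal{D}}(G(Y), -) \cong \Hom_{\mathcal{E}}(Y, F(-))$, and Yoneda makes $G$ into a functor left adjoint to $F$.

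The main obstacle is the construction of the Brown--Comenetz duals and the verification that $\mathcal{D}^{\op}$ is perfectly generated; this is the substantive content, and uses the compactness (as opposed to mere well-generation) of the generators of $\mathcal{D}$ in an essential way. Once that is carried out, the representability statement and the adjoint functor statement are formal consequences of the right Brown representability theorem applied in the opposite category.
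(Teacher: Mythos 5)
Your proposal is correct and follows the same route the paper intends: dualize Theorem~\ref{THEOREMTRIANGREP} after establishing that $\mathcal{D}^{\op}$ has small coproducts and is perfectly generated, the latter via Brown--Comenetz duals of the compact generators; the paper simply cites this dualization as a known fact and you have filled in the details. One small imprecision: compactness of $T$ is used to show that the functor $X \mapsto \Hom_{\Z}(\Hom_{\mathcal{D}}(T,X),\Q/\Z)$ turns coproducts into products (so that $T^\vee$ exists by right Brown representability), whereas condition~2 of Definition~\ref{DEFGENER} for the family $\{T^\vee\}$ in $\mathcal{D}^{\op}$ actually holds automatically, since $\Hom_{\mathcal{D}}(T,-)$ always preserves products and $\Q/\Z$ is injective — so it does not ``reduce to compactness'' as you say.
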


\begin{SATZ}\label{THEOREMTRIANGLOC}
Let $\mathcal{D}$ be a well-generated triangulated category with small coproducts. Consider a functor $F: \mathcal{D} \rightarrow \mathcal{AB}$ which is cohomological 
and commutes with coproducts. Then there exists a right adjoint to the inclusion of the full subcategory of objects $X$ such that $F(X[n]) = 0$ for all $n \in \Z$ (i.e.\@ this subcategory is reflective).
\end{SATZ}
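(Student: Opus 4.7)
Let $\mathcal{S} = \{X \in \mathcal{D} : F(X[n]) = 0 \text{ for all } n \in \Z\}$. The plan is to recognize $\mathcal{S}$ as a localizing subcategory and then apply right Brown representability (Theorem~\ref{THEOREMTRIANGREP}) to its inclusion in $\mathcal{D}$. The first step is routine: $\mathcal{S}$ is manifestly closed under shifts; it is closed under coproducts because $F$ commutes with coproducts and products of zero groups are zero; and it is closed under extensions because the cohomological $F$ turns any distinguished triangle $X \to Y \to Z \to X[1]$ into a long exact sequence $\cdots \to F(X[n]) \to F(Y[n]) \to F(Z[n]) \to F(X[n+1]) \to \cdots$, so vanishing on two vertices forces vanishing on the third for every $n$. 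Hence the inclusion $i: \mathcal{S} \hookrightarrow \mathcal{D}$ is an exact, coproduct-preserving functor of triangulated categories.

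Second, by Theorem~\ref{THEOREMTRIANGREP}, once $\mathcal{S}$ is itself known to be perfectly generated (in the sense of Definition~\ref{DEFGENER}), the exactness and coproduct-preservation of $i$ automatically yield the desired right adjoint $R: \mathcal{D} \to \mathcal{S}$. Alternatively, the same conclusion can be phrased object-wise: for each $Y \in \mathcal{D}$, the functor $\Hom_\mathcal{D}(i(-), Y): \mathcal{S}^{\op} \to \mathcal{AB}$ is cohomological and sends coproducts in $\mathcal{S}$ to products, so Theorem~\ref{THEOREMTRIANGREP} applied inside $\mathcal{S}$ represents it by some $R(Y) \in \mathcal{S}$, and the assembled $Y \mapsto R(Y)$ is the sought right adjoint. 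Either way, the whole question reduces to producing a small set of perfect generators of $\mathcal{S}$.

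The final, and hardest, step is that construction. The well-generatedness of $\mathcal{D}$ supplies a regular cardinal $\alpha$ and a set $\mathcal{T}$ of $\alpha$-small perfect generators; one then builds, by transfinite induction of length $\alpha$, for each $Y \in \mathcal{D}$ a distinguished triangle $R(Y) \to Y \to L(Y) \to R(Y)[1]$ with $R(Y) \in \mathcal{S}$ and $L(Y) \in \mathcal{S}^\perp$. The induction proceeds by attaching at each stage cones of maps from shifts of objects of $\mathcal{T}$ so as to progressively annihilate nontrivial classes detected by $F$; the $\alpha$-smallness of the $T \in \mathcal{T}$ and the fact that $F$ preserves the relevant coproducts guarantee that the process stabilises by stage $\alpha$. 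The objects $R(Y)$ obtained this way (for $Y$ ranging over a suitable set) assemble into a perfect (in fact $\alpha$-small) generating set for $\mathcal{S}$, completing the argument.

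The main obstacle is precisely the last paragraph: setting up the transfinite small-object argument and verifying that $\alpha$-smallness propagates through the construction. This is Krause's cohomological localisation theorem for well-generated triangulated categories, and a careful write-up would essentially reproduce the proof of the corresponding result in \cite{Kra09}; for the present paper it will suffice to invoke that reference once the reduction to it has been made as above.
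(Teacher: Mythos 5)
The paper's proof of this theorem consists entirely of a citation to \cite[Theorem 7.1.1]{Kra09}, and your proposal ultimately does the same — so in the end you are taking the same approach. Your sketch of what is going on behind the citation is essentially correct: $\mathcal{S}=\ker F$ is localizing, and the real content is Krause's construction showing the inclusion has a right adjoint.

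One remark worth making about the internal organization of your sketch: steps two and three partially duplicate each other. Once you have carried out the transfinite construction in your final paragraph and produced, for \emph{every} $Y\in\mathcal{D}$, a triangle $R(Y)\to Y\to L(Y)\to R(Y)[1]$ with $R(Y)\in\mathcal{S}$ and $L(Y)\in\mathcal{S}^\perp$, the functor $Y\mapsto R(Y)$ is already a right adjoint to $i$ by a standard argument about localizing pairs; you do not then need to re-route through Brown representability by extracting a generating set from the $R(Y)$'s. Conversely, if you insist on the Brown-representability reduction of your second paragraph, what you must exhibit is a small perfect (indeed $\alpha$-compact) generating set for $\mathcal{S}$, and that is where the genuine technical work lies — Krause establishes it via the $\alpha$-filtered-colimit machinery of his Theorem 7.2.1 rather than by a bare small-object argument, and the "$\alpha$-smallness propagates" step is more delicate than your phrasing suggests. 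Since you acknowledge that a careful write-up would reproduce Krause's proof and defer to the reference, none of this constitutes a gap; it is merely that the two halves of your argument are alternative routes, not stages of a single one.
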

\begin{proof}
See \cite[Theorem 7.1.1]{Kra09}.
\end{proof}

\begin{LEMMA}\label{LEMMAGENERATION}
Let $\Dia$ be an infinite diagram category (\ref{DEFDIAGRAMCAT}).
Let $\DD \rightarrow \SSS$ be an infinite left fibered derivator with domain $\Dia$.
If $\DD(\cdot)_S$ for all $S \in \SSS(\cdot)$ is perfectly generated (resp.\@ well-generated, resp.\@ compactly generated), then the same holds for $\DD(I)_{S'}$ for all $I \in \Dia$ and for all $S' \in \SSS(I)$. Furthermore the categories $\DD(I)_{S'}$ all have small coproducts. 
\end{LEMMA}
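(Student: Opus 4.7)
The plan is to build everything up from the infinite coproduct structure and the adjunction $\iota_{i,!} \dashv \iota_i^*$ coming from (FDer3 left).

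First I would establish small coproducts in $\DD(I)_{S'}$. Given a set $\{X_j\}_{j \in J}$ of objects of $\DD(I)_{S'}$, let $p = \nabla: \coprod_{j \in J} I \rightarrow I$ be the folding (codiagonal) functor, which lies in $\Dia$ since $\Dia$ is infinite. By (Der1) applied in its infinite form, $\DD(\coprod_J I) \simeq \prod_J \DD(I)$, so the family $\{X_j\}$ corresponds to a single object $\widetilde{X} \in \DD(\coprod_J I)_{p^*S'}$ (note that $p^*S'$ restricted to the $j$-th copy of $I$ is $S'$). Setting $\coprod_j X_j := p_!\widetilde{X}$, which exists by (FDer3 left), the adjunction $p_! \dashv p^*$ combined with (Der1) gives $\Hom_{\DD(I)_{S'}}(p_!\widetilde{X}, Y) \cong \Hom_{\DD(\coprod_J I)_{p^*S'}}(\widetilde{X}, p^*Y) \cong \prod_{j \in J} \Hom_{\DD(I)_{S'}}(X_j, Y)$, so this is indeed the coproduct.

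For the generation properties, for each $i \in I$ let $\iota_i: \{i\} \hookrightarrow I$ and $S_i := \iota_i^* S'$, and choose a set of perfect (resp.\ well, resp.\ compact) generators $\mathcal{T}_i \subset \DD(\cdot)_{S_i}$. Define
\[ \mathcal{T} := \{ \iota_{i,!} T \mid i \in I, \ T \in \mathcal{T}_i \}, \]
which is a set because $I$ is small. The key technical ingredient is that $\iota_i^*$ commutes with arbitrary coproducts: the fibre square
\[ \xymatrix{ \coprod_J \{i\} \ar[r] \ar[d] & \coprod_J I \ar[d]^p \\ \{i\} \ar[r]^-{\iota_i} & I } \]
is Cartesian with $p$ a Grothendieck opfibration, hence homotopy exact by Proposition \ref{PROPHOMCART}, 2., so $\iota_i^* p_! \cong q_!$ where $q: \coprod_J \{i\} \rightarrow \{i\}$ computes the coproduct in $\DD(\{i\})_{S_i}$. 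Together with the adjunction $\Hom(\iota_{i,!}T, X) = \Hom(T, \iota_i^* X)$, this immediately reduces each of the conditions of Definition \ref{DEFGENER} for $\DD(I)_{S'}$ to the corresponding condition in the various fibres $\DD(\cdot)_{S_i}$: condition (1) follows from (Der2) applied to $X \rightarrow 0$; condition (2) is a direct translation under the adjunction; conditions (3), (4), (4$'$) are transferred by the adjunction-plus-coproduct-commutation argument, taking $\alpha$ to be any regular cardinal bounding all the $\alpha_i$ in the well-generated case (the set of relevant $i$ is bounded by $|I|$, which is small).

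The only real obstacle is justifying the two commutations used throughout: that $\iota_i^*$ preserves infinite coproducts and that $\iota_{i,!}$ exists with the expected adjunction; both are handled by (FDer3 left) and by the homotopy-exactness of the Cartesian square above via Proposition \ref{PROPHOMCART}, noting crucially that the folding map $p$ is a Grothendieck opfibration so that the hypothesis of that proposition is met. Everything else is a routine transfer of the set-theoretic smallness conditions along the adjunctions.
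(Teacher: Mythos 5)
Your argument is correct and follows the same route as the paper: generators of the form $i_!T$, conservativity from (Der2), and commutation of $i^*$ with coproducts via the homotopy exactness of the Cartesian square over the opfibration $\coprod_J I \to I$ (the paper writes it as $O\times I \to I$), with the adjunction $i_!\dashv i^*$ transferring the smallness conditions fibrewise. Your remark about choosing a single regular cardinal $\alpha$ bounding the $\alpha_i$ in the well-generated case makes explicit a point the paper leaves implicit but is otherwise the same proof.
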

\begin{proof}
A set of generators as requested is given by the set $\mathcal{T}_I:=\{i_! T\}_{i \in I, T \in \mathcal{T}}$.
Indeed, an object $X \in \DD(I)$ is zero if  $i^*X$ is zero for all $i \in X$ by (Der2). Therefore
$X$ is zero if $\Hom(i_!T, X) = \Hom(T, i^*X) = 0$ for all $i \in I$ and for every $T \in \mathcal{T}$.
We have to show that $\Hom(i_! T, \coprod_i X_i) \rightarrow \Hom(i_! T,  \coprod_i Y_i)$ is an isomorphism for a family $\{X_i \rightarrow Y_i\}_{i \in O}$ of morphisms as in Definition~\ref{DEFGENER}, 2.
We have $\Hom(i_! T, \coprod_i X_i) = \Hom(T, i^* \coprod_i X_i) = \Hom(T, \coprod_i i^* X_i)$, where we used that $i^*$ commutes with coproducts. This follows because
the Cartesian diagram
\[\xymatrix{
O \ar[r] \ar[d] & O \times I \ar[d] \\
 \cdot \ar[r] & I
}\]
is homotopy exact. Note that, since $\DD$ is infinite, coproducts exist and are equal to the corresponding homotopy coproducts.
The map $\Hom(T, \coprod_i i^* X_i) \rightarrow \Hom(T,  \coprod_i i^* Y_i)$ is surjective by assumption.

We have to show that a morphism
\[  i_! T \rightarrow \coprod_{i \in I} Y_i \]
in $\DD(I)_{S'}$ factors trough $\coprod_{i \in J} Y_i$ for some subset $J \subset I$ of cardinality less than $\alpha$.
By the same reasoning as above, we get a morphism
\[  T \rightarrow \coprod_{i \in I} i^* Y_i \]
Hence, there is some subset $J \subset I$, as required, such that this morphism factors through it. The same then holds for the original morphism. Since there
is no need to enlarge $J$, the same statement holds for finite subsets. 

The categories $\DD(I)_{S'}$ have small coproducts because $\DD \rightarrow \SSS$ is infinite and left fibered. 
\end{proof}

\begin{DEF}\label{DEFGENFIBERS}
Let $\DD \rightarrow \SSS$ be an infinite left fibered derivator with domain $\Dia$.
We will say that $\DD \rightarrow \SSS$ has {\bf perfectly-generated} (resp.\@ {\bf well-generated}, resp.\@ {\bf compactly-generated}) {\bf fibers}, if 
all categories $\DD(\cdot)_{S}$ are perfectly-generated (resp.\@ well-generated, resp.\@ compactly-generated) for all $S \in \SSS(\cdot)$. 
It follows from the previous Lemma that, in this case, for all $I \in \Dia$ and for all $S' \in \SSS(I)$ the category $\DD(I)_{S'}$ is also perfectly-generated (resp.\@ well-generated, resp.\@ compactly-generated). 
\end{DEF}

\subsection{Left and Right}

\begin{SATZ}[left]\label{SATZLEFTBROWNDER}
Let $\Dia$ be an infinite diagram category (cf.\@ \ref{DEFDIAGRAMCAT}).
Let $\DD$ and $\EE$ be infinite left derivators with domain $\Dia$ such that for all $I \in \Dia$ the pre-derivators $\DD_I$ and $\EE_I$ are stable (left and right) derivators with domain $\Posf$.
Assume that $\DD$ is perfectly generated.
Then a morphism of derivators $F: \DD \rightarrow \EE$ commutes with all homotopy colimits w.r.t.\@ $\Dia$ if and only if it has a right adjoint. 
\end{SATZ}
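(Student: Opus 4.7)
The \emph{if} direction is immediate: if $F$ admits a right adjoint $G$, then for every $\alpha : I \to J$ in $\Dia$ the functor $F(I)$ commutes with $\alpha_!$ by abstract nonsense (any left adjoint commutes with left adjoints), so $F$ commutes with all homotopy colimits in the sense of derivators.

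For the \emph{only if} direction, the plan is to invoke Brown representability (Theorem~\ref{THEOREMTRIANGREP}) fibrewise and then glue. First, fix $I \in \Dia$. By hypothesis the shifted pre-derivator $\DD_I$ is a stable (left and right) derivator with domain $\Posf$, and hence $\DD(I) = \DD_I(\cdot)$ is a triangulated category in a natural way (the fact recalled after Definition~\ref{DEFFIBDER}). Since $\DD$ is infinite and perfectly generated, Lemma~\ref{LEMMAGENERATION} shows that $\DD(I)$ is perfectly generated as a triangulated category and has small coproducts (the coproduct indexed by a set $O$ is the homotopy colimit $p_!$ along $p : O \to \cdot$, with $O$ seen as discrete, which exists because $\Dia$ is infinite). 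The same reasoning applies to $\EE(I)$. Next, $F(I): \DD(I) \to \EE(I)$ is an \emph{exact} functor of triangulated categories and commutes with arbitrary small coproducts: exactness follows because cones and shifts are expressible as homotopy (co)limits over finite diagrams in $\Posf$ and $F$ commutes with all such, while compatibility with coproducts is the special case of a discrete indexing diagram. Theorem~\ref{THEOREMTRIANGREP} then produces, for each $Y \in \EE(I)$, an object $G(I)(Y) \in \DD(I)$ and a natural isomorphism
\[ \Hom_{\EE(I)}(F(I)(X), Y) \cong \Hom_{\DD(I)}(X, G(I)(Y)). \]

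The second step is to assemble the $G(I)$ into a morphism of pre-derivators. Given $\alpha : I \to J$ in $\Dia$, the hypothesis that $F$ commutes with homotopy colimits gives a natural isomorphism $F(J)\,\alpha_! \cong \alpha_!\,F(I)$. Passing to right adjoints on both sides yields a canonical natural isomorphism
\[ G(I)\,\alpha^* \cong \alpha^*\,G(J), \]
which is the required compatibility. One checks by the usual calculus of mates that these isomorphisms are compatible with composition of functors and with 2-morphisms $\mu : \alpha \Rightarrow \beta$, so that $G$ defines a (pseudo-natural) morphism of pre-derivators. The unit and counit of the fibrewise adjunctions $F(I) \dashv G(I)$ are natural in $I$ by construction, hence assemble to a derivator-adjunction $F \dashv G$.

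The main obstacle I expect is purely formal: verifying the 2-categorical coherence of the assembled right adjoint $G$, i.e.\@ that the mate transformations produced fibrewise by Brown representability glue into a genuine morphism of pre-derivators and that the unit/counit are genuine modifications. Everything triangulated---the existence of $G(I)$, its exactness, and its commutation with products---comes for free from Theorem~\ref{THEOREMTRIANGREP}; the only work is bookkeeping the mates. Note that one does \emph{not} need the full machinery of \ref{THEOREMTRIANGLOC} here, nor any assumption on $\EE$ beyond being an infinite left derivator whose fibres over $\Posf$ are stable, since Brown representability is applied only to $\DD(I)$.
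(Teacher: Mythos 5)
Your proof is correct and follows essentially the same route as the paper's: apply right Brown representability (Theorem~\ref{THEOREMTRIANGREP}) fibrewise to the exact, coproduct-preserving functors $F(I)$ on the perfectly generated triangulated categories $\DD(I)$ (using Lemma~\ref{LEMMAGENERATION}), then assemble the pointwise right adjoints into a morphism of pre-derivators via the mate of the exchange isomorphism $\alpha_! F \cong F \alpha_!$, citing \cite[Lemma~2.1]{Gro13} for the coherence bookkeeping. The extra details you supply (the easy ``if'' direction, the justification of exactness of $F(I)$ via finite homotopy (co)limits, and the remark that Brown representability is needed only on the $\DD$ side) are all correct and match what the paper leaves implicit.
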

\begin{proof}
Let $I$ be in $\Dia$. Since $\DD_I$ and $\EE_I$ are stable, $\DD(I)$ is canonically triangulated, and we may use Theorem~\ref{THEOREMTRIANGREP} of
right Brown representability.
It follows that the functor $F(I): \DD(I) \rightarrow \EE(I)$ has a right adjoint $G(I)$, 
because it is triangulated, commutes with small coproducts and $\DD(I)$ is perfectly generated.
To construct a morphism of derivators out of this collection, for any $\alpha: I \rightarrow J$, we have to give an isomorphism:
$G(J) \alpha^* \rightarrow \alpha^*G(I)$. We may take the adjoint of the isomorphism $\alpha_! F(J) \rightarrow F(I) \alpha_!$ expressing that 
$F$ commutes with all homotopy colimits (see \cite[Lemma 2.1]{Gro13} for details).
\end{proof}

Analogously, using Theorem~\ref{THEOREMTRIANGREPLEFT} of left Brown representability, we obtain:

\begin{SATZ}[right]Let $\Dia$ be an infinite diagram category (cf.\@ \ref{DEFDIAGRAMCAT}).
Let $\DD$ and $\EE$ be infinite right derivators with domain $\Dia$ such that for all $I \in \Dia$, the pre-derivators $\DD_I$ and $\EE_I$ are stable (left and right) derivators with domain $\Posf$.
Assume that $\DD$ is compactly generated.
Then a morphism of derivators $F: \DD \rightarrow \EE$ commutes with all homotopy limits w.r.t.\@ $\Dia$ if and only if it has a left adjoint. 
\end{SATZ}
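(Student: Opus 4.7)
The plan is to mimic the proof of Theorem \ref{SATZLEFTBROWNDER}, substituting left Brown representability (Theorem \ref{THEOREMTRIANGREPLEFT}) for right Brown representability and swapping the roles of left/right Kan extensions throughout. First I would check that the hypotheses propagate pointwise: for every $I \in \Dia$, the category $\DD(I)$ is triangulated with small coproducts (because $\DD_I$ is stable and $\DD$ is infinite), and it is compactly generated (this is the content of the obvious analogue of Lemma \ref{LEMMAGENERATION} in the non-fibered setting: a set of compact generators for $\DD(\cdot)$ is sent by the various $i_!$ to a set of compact generators for $\DD(I)$, using that $i^*$ commutes with coproducts by homotopy exactness of the evident disjoint-union square). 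The functor $F(I)\colon \DD(I) \to \EE(I)$ is triangulated because both $\DD_I$ and $\EE_I$ are stable and $F$ is a morphism of derivators, and it commutes with small products because $F$ is assumed to commute with all homotopy limits in $\Dia$ and products are the homotopy limits along the discrete diagrams $\coprod_{j \in J}\cdot \to \cdot$.

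Having verified these pointwise hypotheses, Theorem \ref{THEOREMTRIANGREPLEFT} produces a left adjoint $G(I) \dashv F(I)$ for each $I \in \Dia$. The next step is to assemble the collection $\{G(I)\}_{I \in \Dia}$ into a morphism of pre-derivators, that is, to produce for every $\alpha\colon I \to J$ in $\Dia$ a natural isomorphism
\[ \alpha^* G(J) \iso G(I) \alpha^*. \]
Here I apply the mate correspondence: since $\DD$ and $\EE$ are right derivators, $\alpha^*$ has a right adjoint $\alpha_*$ on both sides, and the hypothesis that $F$ commutes with homotopy limits yields an isomorphism $F(J) \alpha_* \iso \alpha_* F(I)$. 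The functors $\alpha^* G(J)$ and $G(I) \alpha^*$ from $\EE(J)$ to $\DD(I)$ have right adjoints $F(J) \alpha_*$ and $\alpha_* F(I)$ respectively; by uniqueness of left adjoints, the isomorphism of right adjoints induces an isomorphism of left adjoints, which is the desired exchange. Checking that this datum is compatible with compositions $\beta \circ \alpha$ and with $2$-morphisms is a routine verification of mate-compatibility and is carried out exactly as in \cite[Lemma~2.1]{Gro13}.

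The converse direction is immediate: if $F$ admits a (pointwise) left adjoint in the sense of derivators, then each $F(I)$ is a right adjoint and therefore preserves all limits, in particular the homotopy limits $\alpha_*$.

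The only substantive step is the first one, namely verifying that $\DD(I)$ is again compactly generated so that Theorem \ref{THEOREMTRIANGREPLEFT} is applicable; the remainder is formal, reducing to Brown representability pointwise and the mate correspondence. The duality-swap from Theorem \ref{SATZLEFTBROWNDER} is clean because both derivators are stable and the $\alpha^* \dashv \alpha_*$ adjunctions are available throughout.
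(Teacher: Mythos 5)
Your proof follows the same route the paper intends: pointwise left Brown representability, then patch the pointwise left adjoints $G(I)$ into a morphism of derivators via mates. The paper does not write this out — it simply says "Analogously, using Theorem~\ref{THEOREMTRIANGREPLEFT} of left Brown representability, we obtain" — so your explicit mate argument ($\alpha^*G(J)$ and $G(I)\alpha^*$ having right adjoints $F(J)\alpha_*$ and $\alpha_*F(I)$, which are identified by the exchange isomorphism) is precisely the dualization the author is gesturing at, and the converse direction is likewise correct.

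The one step you should be more careful about is the propagation of compact generation to $\DD(I)$. You invoke "the obvious analogue of Lemma~\ref{LEMMAGENERATION}" and produce generators $i_!T$, but that lemma is stated (and proved) for an infinite \emph{left} derivator: the functors $i_!$ are the relative left Kan extensions, and their existence is exactly what is \emph{not} guaranteed by the hypothesis that $\DD$ is a right derivator. More fundamentally, even the existence of small coproducts in $\DD(I)$ is obtained in Lemma~\ref{LEMMAGENERATION} from $\DD$ being infinite \emph{and left-fibered}; a right derivator gives products in $\DD(I)$ via $p_*$ over discrete diagrams, not coproducts. So the "obvious analogue" is not obvious: you would need either to read "$\DD$ compactly generated" as meaning each $\DD(I)$ is compactly generated (so no propagation is needed), or to supply an argument that does not use $i_!$. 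To be fair, the paper's "analogously" glosses over exactly this same point, and in the situations where the theorem is actually applied (Theorem preceding it establishes left-fibered structure, and in practice the derivator is both left and right) the issue evaporates; but as written your appeal to $i_!$ needs a justification that a bare right derivator does not supply.
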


\begin{SATZ}[left]Let $\Dia$ be an infinite diagram category (cf.\@ \ref{DEFDIAGRAMCAT}).
Let $\DD \rightarrow \SSS$ be an infinite left fibered (multi-)derivator with domain $\Dia$ whose {\em fibers} $\DD_S$ for every $I \in \Dia$ and all $S \in \SSS(I)$ are stable (left and right) derivators with domain $\Posf$. Assume that $\DD$ has perfectly generated fibers.
Then $\DD$ is a {\em right} fibered (multi-)derivator, too.
\end{SATZ}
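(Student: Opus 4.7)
The plan is to apply Brown representability (Theorem~\ref{THEOREMTRIANGREP} and its derivator version Theorem~\ref{SATZLEFTBROWNDER}) to produce the missing right adjoints, and then to deduce the remaining right-fibered axioms from the already-established left-fibered ones via the general left/right base-change duality discussed in~\ref{PARDERBASECHANGE}.

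By Lemma~\ref{LEMMAGENERATION}, every fiber $\DD(I)_{S'}$ is perfectly generated, has small coproducts, and is canonically triangulated by stability. First I would establish that $\DD(I) \to \SSS(I)$ is a bifibration of multicategories for each $I$. Given a multimorphism $f \in \Hom_{\SSS(I)}(S_1, \dots, S_n; T)$ and fixed objects $\mathcal{E}_k \in \DD(I)_{S_k}$ for $k \neq i$, Lemma~\ref{LEMMAMORPHDERLEFT} asserts that
\[ f_\bullet(\mathcal{E}_1, \dots, -, \dots, \mathcal{E}_n): \DD_{S_i} \longrightarrow \DD_T \]
is a morphism of fiber derivators that commutes with all homotopy colimits. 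Theorem~\ref{SATZLEFTBROWNDER} therefore furnishes a right adjoint, and in particular at level $\cdot$ one obtains the desired pull-back $f^{i,\bullet}(\mathcal{E}_1, \dots, \mathcal{E}_n; -)$. These assemble into a fibration-of-multicategories structure on each $\DD(I) \to \SSS(I)$. With the bifibration in place, Lemma~\ref{LEMMALEFTRIGHT} applies: implication~(\ref{impl2}) turns the given (FDer5 left) into (FDer0 right), and implication~(\ref{impl1}) turns the given (FDer0 left) into (FDer5 right).

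For (FDer3 right), I would view $\alpha^*: \DD_S \to \DD_{\alpha^* S}$ as a morphism of fiber derivators: at level $K$ it is $(\id_K \times \alpha)^*$ between the appropriate fibers, and compatibility with pull-backs is immediate from 2-functoriality. To verify left continuity, one checks that for every $\beta: K \to K'$ the Cartesian square
\[ \xymatrix{ K \times I \ar[r]^{\beta \times \id} \ar[d]_{\id \times \alpha} & K' \times I \ar[d]^{\id \times \alpha} \\ K \times J \ar[r]_{\beta \times \id} & K' \times J } \]
is homotopy exact, which follows by a pointwise analysis via Proposition~\ref{PROPHOMCART} after reducing to slice categories. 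Theorem~\ref{SATZLEFTBROWNDER} then yields a right adjoint $\alpha_*$ at the derivator level; at level $\cdot$ this is the right adjoint $\DD(I)_{\alpha^* S} \to \DD(J)_S$ required by (FDer3 right).

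Finally, for (FDer4 right) the relevant square
\[ \xymatrix{ j \times_{/J} I \ar[r]^-\iota \ar[d]_{\alpha_j} & I \ar[d]^\alpha \\ \{j\} \ar[r]^j & J } \]
is homotopy exact by Proposition~\ref{PROPHOMCART}, part~1. Since $\DD \to \SSS$ is now both left and right fibered, the observation at the end of~\ref{PARDERBASECHANGE} applies: the left base-change morphism~(\ref{eqbasechangeleft}) and the right base-change morphism~(\ref{eqbasechangeright}) are mutually adjoint, so one is an isomorphism if and only if the other is. The isomorphism supplied by (FDer4 left) for this square therefore transfers to give (FDer4 right). The main obstacle I anticipate is the verification in the third paragraph that $\alpha^*$ commutes with all homotopy colimits in the strong derivator-morphism sense required by Brown representability; once that base-change fact is in hand, all the remaining right-fibered axioms follow by assembly and by the already-established left/right duality.
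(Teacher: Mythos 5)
Your proposal follows the paper's argument essentially step by step: Lemma~\ref{LEMMAMORPHDERLEFT} plus Theorem~\ref{SATZLEFTBROWNDER} to produce the multi-pullbacks, Lemma~\ref{LEMMALEFTRIGHT} to convert the left (FDer0/FDer5) axioms into the right ones, Brown representability again for $\alpha_*$, and base-change adjointness for (FDer4~right). The one spot to tighten is the left-continuity of $\alpha^*$, which you yourself flag as the main obstacle. Your candidate square with general $\beta: K \to K'$ is not directly covered by Proposition~\ref{PROPHOMCART}, since neither $\beta \times \id_J$ nor $\id_{K'} \times \alpha$ is in general a Grothendieck (op)fibration, so a ``pointwise analysis via slice categories'' would have to redo the proof of that proposition from scratch. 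But for a morphism of derivators it is enough to verify commutation with colimits, i.e.\ the case $K' = \cdot$, because relative Kan extensions are computed pointwise by colimits over slice categories (this is exactly what axioms (FDer3--4) buy you). With $K' = \cdot$ the relevant Cartesian square is
\[ \xymatrix{ K \times I \ar[r]^-{\pr_I} \ar[d]_{\id_K \times \alpha} & I \ar[d]^{\alpha} \\ K \times J \ar[r]_-{\pr_J} & J } \]
and $\pr_J$ is a Grothendieck fibration, so Proposition~\ref{PROPHOMCART},~2 applies at once; this is the square the paper implicitly cites. Your handling of (FDer4~right) via the mutual adjointness of the two base-change morphisms from~\ref{PARDERBASECHANGE} is correct and is actually slightly more explicit than the paper's citation of Proposition~\ref{PROPHOMCART},~1; it is worth observing that the adjointness uses only (FDer0) and (FDer3) on both sides (not (FDer4)), so there is no circularity even though (FDer4~right) is the axiom being established.
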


\begin{proof}
Let $I \in \Dia$ and let $f \in \Hom_{\SSS(I)}(S_1, \dots, S_n; T)$ be a multimorphism. By Lemma~\ref{LEMMAMORPHDERLEFT}, fixing $\mathcal{E}_1, \overset{\widehat{i}}{\dots}, \mathcal{E}_n$, the association
\begin{eqnarray*} 
 \DD(I \times J)_{p^*S_i} &\rightarrow& \DD(I)_{p^*T} \\
\mathcal{E}_i &\mapsto& (p^*f)_\bullet(p^* \mathcal{E}_1, \dots,\ \mathcal{E}_i,\  \dots, p^* \mathcal{E}_n)
\end{eqnarray*}
defines a morphism of derivators
\[ \DD_{S_i} \rightarrow \DD_{T} \]
which is left continuous. Hence by Theorem~\ref{SATZLEFTBROWNDER} it has a right adjoint. This shows the first part of (FDer0 right), i.e.\@ the functor $\DD(I) \rightarrow \SSS(I)$ is an opfibration, too, for
every $I \in \Dia$. 
Then axiom (FDer5 left) implies the remaining assertion of (FDer0 right) while (FDer0 left) implies (FDer5 right), see Lemma~\ref{LEMMALEFTRIGHT}.

Similarly a morphism $\alpha: I \rightarrow J$ in $\Dia$ induces a morphism of derivators
\[ \alpha^* : \DD_S \rightarrow \DD_{\alpha^*S}. \]
It commutes with homotopy colimits by Proposition~\ref{PROPHOMCART},~2.
Therefore $\alpha^*$ has a right adjoint $\alpha_*$ by the previous theorem, i.e.\@ (FDer3 right) holds.
(FDer4 right) is then a consequence of Lemma~\ref{PROPHOMCART}, 1. 
\end{proof}

Analogously, using Theorem~\ref{THEOREMTRIANGREPLEFT} of left Brown representability, we obtain:

\begin{SATZ}[right]
Let $\Dia$ be an infinite diagram category (cf.\@ \ref{DEFDIAGRAMCAT}).
Let $\DD \rightarrow \SSS$ be an infinite right fibered (multi-)derivator with domain $\Dia$, whose {\em fibers} $\DD_S$ for every $I \in \Dia$ and for all $S \in \SSS(I)$ are stable (left and right) derivators with domain $\Posf$. Assume that $\DD$ has compactly generated fibers.
Then $\DD$ is a {\em left} fibered (multi-)derivator, too.
\end{SATZ}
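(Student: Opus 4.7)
The proof will proceed by logical duality to the preceding left-variant theorem, substituting left Brown representability (Theorem~\ref{THEOREMTRIANGREPLEFT}) for right Brown representability (Theorem~\ref{THEOREMTRIANGREP}) at the key step. The hypothesis that the fibers are compactly generated is precisely what is needed to apply left Brown representability: recall that for a compactly generated triangulated category $\mathcal{D}$ with small coproducts, the opposite $\mathcal{D}^{\op}$ is perfectly generated and has small coproducts, so an exact functor between such categories commuting with products has a left adjoint.

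First I would establish the right dual of Theorem~\ref{SATZLEFTBROWNDER}: if $\DD$ and $\EE$ are infinite right derivators with stable fibers and $\DD$ is compactly generated (in the sense of Definition~\ref{DEFGENFIBERS}, read in the obvious right-variant), then a morphism $F: \DD \rightarrow \EE$ commutes with all homotopy limits if and only if it admits a left adjoint. This is verbatim dual to the proof of Theorem~\ref{SATZLEFTBROWNDER}: pointwise use left Brown representability to produce the adjoints $F(I)^{\text{l.a.}}$; the pseudo-natural structure on the collection $\{F(I)^{\text{l.a.}}\}$ is obtained by taking the adjoint of the isomorphism $F(J) \alpha_* \rightarrow \alpha_* F(I)$ coming from right continuity.

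With this tool in hand, I verify the axioms of a left fibered (multi-)derivator one by one. For (FDer0 left), given $I \in \Dia$ and a multimorphism $f \in \Hom_{\SSS(I)}(S_1, \dots, S_n; T)$, Lemma~\ref{LEMMAMORPHDERRIGHT} produces, for fixed $\mathcal{E}_1, \overset{\widehat{i}}{\dots}, \mathcal{E}_n$, a morphism of derivators $\DD_T \rightarrow \DD_{S_i}$ given by $\mathcal{F} \mapsto f^{\bullet,i}(\mathcal{E}_1, \dots, \mathcal{E}_n; \mathcal{F})$ which is right continuous; the right dual of Theorem~\ref{SATZLEFTBROWNDER} then furnishes a left adjoint, yielding the required push-forward $f_\bullet$ and establishing the first part of (FDer0 left), namely that $\DD(I) \rightarrow \SSS(I)$ is an opfibration of multicategories. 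For (FDer3 left), a functor $\alpha: I \rightarrow J$ in $\Dia$ induces $\alpha^*: \DD_S \rightarrow \DD_{\alpha^*S}$ which commutes with homotopy limits by the right variant of Proposition~\ref{PROPHOMCART}, 2.\@; again the right dual of Theorem~\ref{SATZLEFTBROWNDER} provides the required left adjoint $\alpha_!$. Axiom (FDer4 left) then follows from Proposition~\ref{PROPHOMCART}, 1., whose statement is self-dual.

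Finally, for the multi-case, the second half of (FDer0 left) (compatibility of $\alpha^*$ with coCartesian arrows) together with (FDer5 left) follows automatically by invoking Lemma~\ref{LEMMALEFTRIGHT}: the hypothesis (FDer0 right) for $\DD \rightarrow \SSS$ is given, and the equivalence (\ref{impl2}) of that lemma directly yields (FDer5 left), whereas (\ref{impl1}) together with the first part of (FDer0 left) just established provides (FDer5 right), which is redundant here but confirms consistency. The main obstacle — really the only non-formal step — is the dualization of Theorem~\ref{SATZLEFTBROWNDER}; once that is in place, everything else is a mechanical transcription of the left-variant proof.
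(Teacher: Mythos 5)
Your proposal takes exactly the approach the paper intends: the paper's own ``proof'' is the single sentence ``Analogously, using Theorem~\ref{THEOREMTRIANGREPLEFT} of left Brown representability, we obtain:'' and you have correctly expanded this by dualizing the left-variant theorem's proof step by step, using Lemma~\ref{LEMMAMORPHDERRIGHT} in place of Lemma~\ref{LEMMAMORPHDERLEFT} and the right-dual of Theorem~\ref{SATZLEFTBROWNDER} (which the paper states without proof immediately after the left version) in place of that theorem itself.

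There is one slip in the last paragraph, where you invoke Lemma~\ref{LEMMALEFTRIGHT}. You write that ``(\ref{impl1}) together with the first part of (FDer0 left) just established provides (FDer5 right), which is redundant here,'' but that is running the implication backwards and, more importantly, it never actually establishes the second half of (FDer0 left), which you claim ``follows automatically.'' The correct dual of the left-variant argument (``Then axiom (FDer5 left) implies the remaining assertion of (FDer0 right) while (FDer0 left) implies (FDer5 right)'') is: axiom (FDer5 right), which holds by hypothesis since $\DD\rightarrow\SSS$ is right fibered, implies the remaining assertion of (FDer0 left) via (\ref{impl1}) of Lemma~\ref{LEMMALEFTRIGHT} (applied once the opfibration structure makes $\DD(I)\rightarrow\SSS(I)$ a bifibration), while (FDer0 right), given by hypothesis, implies (FDer5 left) via (\ref{impl2}). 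Also note that (\ref{impl1}) equates (FDer5 right) with ``(FDer0 left) for $n$-ary morphisms'' in its entirety, not merely with the opfibration condition; so the first part of (FDer0 left) alone would not give you (FDer5 right) in any case. The conclusion you state is still correct, but as written the chain of implications does not produce the missing piece of (FDer0 left).
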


\subsection{(Co)Cartesian projectors}

\begin{SATZ}[right]\label{THEOREMEXISTRIGHTPROJ}
 Let $\DD \rightarrow \SSS$ be an infinite fibered left derivator (w.r.t.\@ $\Dia$) whose fibers are stable derivators w.r.t.\@ $\Posf$.
Assume that $\DD(\cdot)_S$ is well-generated for every $S \in \SSS(\cdot)$.  
Then for all $I \in \Dia$, for all $F \in \SSS(I)$, and for all functors $I \rightarrow E$ in $\Dia$ the fully-faithful inclusion 
\[ \DD(I)^{E-\mathrm{cocart}}_F \rightarrow \DD(I)_F \]
has a right adjoint $\Box_*^E$.  

If $\DD \rightarrow \SSS$ also satisfies (FDer0 right) and
if $F$ is such that $F(\mu)$ satisfies (Dloc2 left) for every $\mu$ mapping to an identity in $E$, then the fully-faithful inclusion
\[ \DD(I)^{E-\mathrm{cart}}_F \rightarrow \DD(I)_F \]
has a right adjoint $\blacksquare_*^E$.
\end{SATZ}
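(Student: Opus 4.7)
The plan is to exhibit $\DD(I)^{E-\cocart}_F$ (respectively $\DD(I)^{E-\cart}_F$) as the vanishing locus of a coproduct-preserving cohomological functor into a Grothendieck abelian category, so that Krause's reflectivity theorem (the Grothendieck-abelian-target version of Theorem \ref{THEOREMTRIANGLOC}, cf.\ \cite{Kra09}) produces the desired right adjoint. As a preliminary setup, note that the fiber $\DD(I)_F$ is a well-generated stable triangulated category with small coproducts: stability and the triangulated structure come from (FDer7) applied to the fiber derivator $\DD_F$, and well-generatedness is Lemma~\ref{LEMMAGENERATION}. A key preliminary observation is that for every $i \in I$ the restriction $i^* \colon \DD(I)_F \to \DD(\cdot)_{i^*F}$ commutes with small coproducts: by (Der1) and infiniteness of $\DD$, the coproduct over a set $J$ is computed as $p_!$ of the tuple viewed in $\DD(J \times I)_{\pr_2^*F}$, and the Cartesian square
\[ \xymatrix{ J \ar[r] \ar[d] & J \times I \ar[d]^p \\ \cdot \ar[r]^-i & I } \]
is homotopy exact by Proposition~\ref{PROPHOMCART}, 2., since $p$ is a Grothendieck opfibration.

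Next, for every morphism $\mu\colon i \to j$ in $I$ whose image in $E$ is an identity, introduce the defect functor
\[ C_\mu \colon \DD(I)_F \longrightarrow \DD(\cdot)_{j^*F}, \qquad C_\mu(X) := \Cone\bigl(\SSS(\mu)_\bullet\, i^*X \longrightarrow j^*X\bigr). \]
This is triangulated and preserves small coproducts: $i^*$ and $j^*$ do by the previous step, $\SSS(\mu)_\bullet$ does as a left adjoint, and $\Cone$ does as a triangulated functor. By unwinding definitions, $X$ is $E$-coCartesian if and only if $C_\mu(X) = 0$ for every such $\mu$. Composing each $C_\mu$ with a coproduct-preserving cohomological embedding $H_\mu$ of $\DD(\cdot)_{j^*F}$ into a Grothendieck abelian category $\mathcal{A}_\mu$ whose (shift-stable) kernel consists of the zero objects---for instance the restricted Yoneda embedding along a set of $\alpha$-small perfect generators, cf.\ \cite{Kra09}---and taking the product over the (small) set of such $\mu$, produces a single cohomological, coproduct-preserving functor
\[ H \colon \DD(I)_F \longrightarrow \prod_\mu \mathcal{A}_\mu \]
whose shift-stable vanishing locus is exactly $\DD(I)^{E-\cocart}_F$. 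Applying Krause's theorem to $H$ delivers the right adjoint $\Box_*^E$.

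For the $E$-Cartesian statement the argument is formally the same, with $C_\mu$ replaced by
\[ C'_\mu(X) := \Cone\bigl(i^*X \longrightarrow \SSS(\mu)^\bullet\, j^*X\bigr), \]
whose target makes sense thanks to (FDer0 right). The only step that is no longer automatic is the coproduct preservation of $C'_\mu$: since $\SSS(\mu)^\bullet$ is a priori only a right adjoint and therefore commutes with limits rather than colimits, this is where the extra hypothesis enters---(Dloc2 left) for $F(\mu)$ asserts exactly that $\SSS(\mu)^\bullet$ commutes with homotopy colimits, and in particular with small coproducts. With this in hand, the rest of the argument transplants verbatim. I expect this coproduct-preservation verification to be the main conceptual obstacle; everything else is essentially a packaging exercise around Krause's reflectivity theorem together with the stability hypothesis (FDer7).
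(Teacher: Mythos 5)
Your proof is correct and tracks the paper's argument closely: both exhibit $\DD(I)^{E\text{-}\mathrm{cocart}}_F$ as the shift-stable kernel of a coproduct-preserving exact functor $D = \prod_\mu D_\mu$ assembled from the cones of the coCartesianity-defect morphisms $\SSS(\mu)_\bullet\,i^*X \to j^*X$, and then conclude by Krause after noting that each $i^*$ preserves coproducts (via the homotopy exact square you describe) and that the fibers are well-generated by Lemma~\ref{LEMMAGENERATION}. The one divergence is in the final invocation: the paper cites Theorem 7.4.1 of \cite{Kra09}, which directly asserts that the kernel of a coproduct-preserving exact functor \emph{between triangulated categories}, with well-generated source, is again well-generated (and hence coreflective by Brown representability), so your additional step of composing each $C_\mu$ with a cohomological coproduct-preserving embedding into a Grothendieck abelian category and applying Theorem~\ref{THEOREMTRIANGLOC} is a valid but avoidable detour; the treatment of the Cartesian case via (Dloc2 left) matches the paper exactly.
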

\begin{proof}
Consider the set $O$ of morphisms $\mu: i \rightarrow j$ which map to an identity in $E$, and
for each morphism $\mu \in O$
the composition $D_\mu$:
\[  \xymatrix{  \DD(I)_{F} \ar[r]^-{\mu^*} &  \DD(\rightarrow)_{\mu^*F } \ar[r]^-{F(\mu)_\bullet} & \DD(\rightarrow)_{i^*F} \ar[r]^-{\Cone} & \DD(\cdot)_{i^*F}} \] 
We define a functor $D$ by
\[ \prod_{\mu \in O} D_\mu: \DD(I)_{F} \rightarrow  \prod_{\mu \in O} \DD(\cdot)_{i^*F}  = \DD(O)_{\iota^*F}, \] 
where $\iota: O \rightarrow I$ is the map ``source''.
$D$ commutes with coproducts, as all functors in the succession do, and it is exact. Therefore by \cite[Theorem 7.4.1]{Kra09} the 
triangulated subcategory $\DD(I)^{E-\mathrm{cocart}}_F = \ker D$
is well-generated and hence the inclusion in the statement has a right adjoint. 
In the Cartesian case, $F(\mu)^\bullet$ commutes with coproducts only if $F(\mu)$ satisfies (Dloc2 left). 
\end{proof}

\begin{SATZ}[left]\label{THEOREMEXISTLEFTPROJ}
Let $\DD \rightarrow \SSS$ be an infinite fibered derivator (with domain $\Dia$) whose fibers are stable.
Assume that all $\DD(\cdot)_S$ for $S \in \SSS(\cdot)$ are compactly generated.  

Let $I \rightarrow E$ be a functor in $\Dia$ and let $F \in \SSS(I)$. Suppose that $F(\mu)$ satisfies (Dloc2 left) for every morphism $\mu$ in $I$ that maps to an identity in $E$.  
Then the fully-faithful inclusion 
\[ \DD(I)^{E-\mathrm{cart}}_F \rightarrow \DD(I)_F \]
has a left adjoint $\Box_!^E$. 
\end{SATZ}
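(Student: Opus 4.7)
The proof is dual to that of Theorem \ref{THEOREMEXISTRIGHTPROJ}, with the roles of products and coproducts interchanged. Let $O$ denote the set of morphisms $\mu: i \to j$ in $I$ that are sent to an identity by the given functor $I \to E$. For each such $\mu$, form the composite
\[
D_\mu : \DD(I)_F \xrightarrow{\mu^*} \DD(\rightarrow)_{\mu^* F} \xrightarrow{F(\mu)^\bullet} \DD(\rightarrow)_{i^* F} \xrightarrow{\Cone} \DD(\cdot)_{i^* F},
\]
in which the middle step applies $F(\mu)^\bullet$ to the target of the arrow, converting the datum of a morphism $i^*X \to j^*X$ over $F(\mu)$ into its $F(\mu)^\bullet$-adjoint $i^*X \to F(\mu)^\bullet j^*X$ over $\id_{i^*F}$. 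By (Der1), these assemble into an exact functor $D = \prod_{\mu \in O} D_\mu : \DD(I)_F \to \DD(O)_{\iota^* F}$, where $\iota: O \to I$ is the source map. By construction, $X$ is $E$-Cartesian precisely when every morphism $i^*X \to F(\mu)^\bullet j^*X$ is an isomorphism, that is, when $D(X) = 0$; hence $\DD(I)^{E-\mathrm{cart}}_F = \ker D$.

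Next I verify that $D$ is triangulated and commutes with small products and small coproducts. Products are preserved because $\mu^*$ has a left adjoint (by (FDer3 left)) and therefore preserves limits, $F(\mu)^\bullet$ is a right adjoint to $F(\mu)_\bullet$, and $\Cone$ preserves finite (co)limits in the stable setting. Coproducts are preserved because $\mu^*$ has a right adjoint (by (FDer3 right), using that $\DD \to \SSS$ is fibered in both directions), $F(\mu)^\bullet$ commutes with coproducts precisely by the hypothesis that $F(\mu)$ satisfies (Dloc2 left), and $\Cone$ is itself a colimit.

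Finally, Lemma \ref{LEMMAGENERATION} upgrades compact generation of the fibers $\DD(\cdot)_S$ to compact generation of $\DD(I)_F$ and $\DD(O)_{\iota^* F}$; by a theorem of Krause their opposite categories are then well-generated. The opposite functor $D^{\op}: \DD(I)_F^{\op} \to \DD(O)_{\iota^* F}^{\op}$ is exact and preserves coproducts (since $D$ preserves products), whence \cite[Theorem 7.4.1]{Kra09} applied to the well-generated $\DD(I)_F^{\op}$ yields that $\ker D^{\op} = (\ker D)^{\op}$ is well-generated and that its inclusion into $\DD(I)_F^{\op}$ admits a right adjoint. Dualizing, the inclusion $\DD(I)^{E-\mathrm{cart}}_F = \ker D \hookrightarrow \DD(I)_F$ admits a left adjoint, which we denote $\Box_!^E$. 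The principal subtlety lies in this passage to opposite categories, made possible by the compact (rather than merely well-) generation of the fibers; the hypothesis (Dloc2 left) on $F(\mu)$ enters exclusively to guarantee that $D$ preserves coproducts, equivalently that $\DD(I)^{E-\mathrm{cart}}_F$ is closed under coproducts in $\DD(I)_F$, which is essential both for the Krause-style localization argument to apply and for $\Box_!^E$ to have the expected behavior on coproducts.
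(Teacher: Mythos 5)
Your construction of the functor $D = \prod_{\mu\in O} D_\mu$ and the verification that it is exact, kills exactly the $E$-Cartesian objects, preserves products (since each factor is built from right adjoints and $\Cone$), and preserves coproducts thanks to (Dloc2 left) are all in order and match the spirit of the paper. The gap is in the final step: you assert that the opposite of a compactly generated triangulated category is \emph{well-generated}. This is not what Krause's theorem gives you. The standard result (and what the paper's Theorem~\ref{THEOREMTRIANGREPLEFT} encapsulates) is that the opposite of a compactly generated category is only \emph{perfectly} generated; well-generation additionally demands the $\alpha$-smallness condition (\ref{DEFGENER},~3.), and this is not inherited by the opposite --- indeed, the opposite of a compactly generated category is typically not even compactly generated, and is generally believed to fail well-generation. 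Since \cite[Theorem 7.4.1]{Kra09} genuinely requires well-generation of the ambient category, you cannot apply it to $\DD(I)_F^{\op}$, and the existence of a right adjoint to the inclusion $(\ker D)^{\op}\hookrightarrow \DD(I)_F^{\op}$ does not follow from it.

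The paper sidesteps this by invoking a different tool: \cite[Proposition 5.2.1]{Kra09}, which requires only perfect generation of the ambient category plus local smallness of the Verdier quotient. Perfect generation of $\DD(I)_F^{\op}$ is fine (that \emph{does} follow from compact generation of $\DD(I)_F$), and local smallness of the quotient $\DD(I)_F/\DD(I)_F^{E-\cart}$ is supplied for free by the right coCartesian-projection statement already proved in Theorem~\ref{THEOREMEXISTRIGHTPROJ} (its second part yields $\blacksquare_*^E$, a Bousfield colocalization, which forces the quotient to be locally small). Dualizing the resulting right adjoint then gives $\Box_!^E$. So the missing idea in your argument is precisely the local-smallness input, used in place of well-generation of the opposite; without something like it, the passage through Brown-representability-type machinery for $\DD(I)_F^{\op}$ does not go through.
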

\begin{proof}
As in the proof of the previous theorem we have an exact functor 
\[ F^{\cart}:  \DD(I)_F \rightarrow \mathcal{T} \]
into another triangulated category which commutes with coproducts and such that the subcategory of $E$-Cartesian objects is precisely its kernel. 
Lemma \ref{LEMMAGENERATION} implies that $\DD(I)_F$ is compactly generated, and hence $\DD(I)_F^{\op}$ is perfectly generated. Furthermore,
Theorem~\ref{THEOREMEXISTRIGHTPROJ} implies that the categories $\DD(I)_F/\DD(I)_F^{E-\cart}$ are locally small. 
Note that 
\[ \DD(I)_F^{\op}/(\DD(I)_F^{E-\cart})^{\op} = (\DD(I)_F/\DD(I)_F^{E-\cart})^{\op}. \]
Therefore \cite[Proposition 5.2.1]{Kra09} implies that a right adjoint to the inclusion $(\DD(I)_F^{E-\cart})^{\op} \rightarrow (\DD(I)_F)^{\op}$ exists. 
So a left adjoint to the inclusion
\[ \DD(I)_F^{E-\cart} \rightarrow \DD(I)_F \]
exists. 
\end{proof}

In the compactly generated case, $\Box_!^E$ should exist unconditionally, but we were not able to prove this.

\section{Constructions}\label{SECTCONSTR}

\subsection{The fibered multiderivator associated with a fibered multicategory}

\begin{PAR}The most basic situation in which a (non-trivial) fibered (multi-)derivator can be constructed arises from a bifibration of (locally small) multicategories
\[ p: \mathcal{D} \rightarrow  \mathcal{S} \]
where we are given a set of weak equivalences $\mathcal{W}_S \subset \Mor(\mathcal{D}_S)$ for each object $S$ of $\mathcal{S}$.
In the examples we have in mind, the objects of $\mathcal{S}$ are spaces (or schemes), the objects of $\mathcal{D}$ are chain complexes of sheaves (coherent, etale Abelian, etc.) on them, and
the morphisms in $\mathcal{W}_S$ are the quasi-isomorphisms. In these examples the multicategory-structure arises from the tensor product and it is even,
in most cases, the more natural structure because no particular tensor-product is chosen a priori. 
\end{PAR}

\begin{DEF}\label{DEFFIBDERMODEL}
In the situation above, 
let $\SSS$ be the pre-multiderivator associated with the multicategory $\mathcal{S}$. We define a pre-multiderivator
$\DD$ as follows (cf.\@ \ref{PROPMULTILOC} for localizations of multicategories): 
\[ \DD(I) = \Hom(I, \mathcal{D})[\mathcal{W}_I^{-1}] \]
where $\mathcal{W}_I$ is the set of natural transformations which are element-wise in the union $\bigcup_S \mathcal{W}_S$.
The functor $p$ obviously induces a morphism of pre-multiderivators
\[ \widetilde{p}: \DD \rightarrow \SSS \]
Observe that morphisms in $\mathcal{W}_I$, by definition, necessarily map to identities in $\Hom(I, \mathcal{S})$.
\end{DEF}

In this section we prove that the above morphism of pre-(multi-)derivators is a left (resp.\@ right) fibered (multi-)derivator on directed (resp.\@ inverse) diagrams, provided that the fibers
are model categories whose structure is compatible with the structure of bifibration. We use the definition of a model category from \cite{Hov99}.
We denote the cofibrant replacement functor by $Q$ and the fibrant replacement functor by $R$.

\begin{DEF}\label{PARQUILLENN}
A {\bf bifibration of (multi\nobreakdash-)model-categories}
is a bifibration of (multi-)categories $p: \mathcal{D} \rightarrow \mathcal{S}$ 
together with the collection of a closed model structure on the fiber
\[ (\mathcal{D}_S, \Cof_S, \Fib_S, \mathcal{W}_S) \]
for any object $S$ in $\mathcal{S}$
 such that the following two properties hold:
\begin{enumerate}
\item For any $n\ge 1$ and for every multimorphism 
\[ \xymatrix{
S_1 \ar@{-}[rd] \\
\ar@{}[r]|{\vdots} & f \ar[r] &  T  \\
S_n \ar@{-}[ru] \\
}\]
the push-forward $f_\bullet$ and the various pull-backs $f^{\bullet,j}$ define a Quillen
adjunction in $n$-variables
\[ \xymatrix{ \prod_i (\mathcal{D}_{S_i}, \Cof_{S_i}, \Fib_{S_i}, \mathcal{W}_{S_i}) \ar[rr]^-{f_\bullet} && (\mathcal{D}_T, \Cof_T, \Fib_T, \mathcal{W}_T) } \]
\[ \xymatrix{(\mathcal{D}_T, \Cof_T, \Fib_T, \mathcal{W}_T) \times  \prod_{i \not= j} (\mathcal{D}_{S_i}, \Cof_{S_i}, \Fib_{S_i}, \mathcal{W}_{S_i}) \ar[rr]^-{f^{\bullet,j}} && (\mathcal{D}_{S_j}, \Cof_{S_j}, \Fib_{S_j}, \mathcal{W}_{S_j}) } \]

\item  For any 0-ary morphism $f$  in $\mathcal{S}$, let $f_\bullet()$ be the corresponding unit object (i.e.\@ the object representing the 0-ary morphism functor $\Hom_f(;-)$) and consider the cofibrant replacement $Qf_\bullet() \rightarrow f_\bullet()$. Then the natural morphism
\[ F_\bullet(X_1, \dots, X_{i-1}, Qf_\bullet(), X_i,  \dots, X_n) \rightarrow F_\bullet(X_1, \dots, X_{i-1}, f_\bullet(), X_i, \dots, X_n)  \cong (F \circ_i f)_\bullet(X_1, \dots, X_n)    \]
is a weak equivalence if all $X_i$ are cofibrant. Here $F$ is any morphism which is composable with $f$. 
\end{enumerate}
\end{DEF}

\begin{BEM}
If $\mathcal{S}=\{\cdot\}$, the above notion coincides with the notion of monoidal model-category in the sense of \cite[Definition 4.2.6]{Hov99}.
In this case it is enough to claim property 1. for $n=1, 2$. 
\end{BEM}

\begin{SATZ}\label{SATZEXISTENCEFMULTIDER}
Under the conditions of Definition~\ref{PARQUILLENN} the morphism of pre-derivators 
\[ \widetilde{p}: \DD \rightarrow \SSS \]
(defined in \ref{DEFFIBDERMODEL}) is a left fibered multiderivator (satisfying also FDer0 right) with domain $\Dir$ and a right fibered multiderivator (satisfying also FDer0 left)  with domain $\Inv$.
Furthermore for all $S \in \SSS(\cdot)$ its fiber $\DD_S$ (c.f.\@ \ref{DEFFDERFIBERS}) is just the pre-derivator associated with the pair $(\mathcal{D}_S, \mathcal{W}_S)$. 
\end{SATZ}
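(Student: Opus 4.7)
The plan is to equip, for each directed (resp.\@ inverse) diagram $I$ and each $S \in \SSS(I) = \Hom(I,\mathcal{S})$, the fiber category
\[ \Hom(I,\mathcal{D})_S := \Hom(I,\mathcal{D}) \times_{\Hom(I,\mathcal{S})} \{S\} \]
with a model structure whose weak equivalences are those natural transformations which are level-wise in $\bigcup_S \mathcal{W}_S$. For $I \in \Dir$ this will be the \emph{projective/Reedy} model structure (fibrations and weak equivalences pointwise; cofibrations described by latching-object conditions using the Quillen push-forwards $f_\bullet$), and for $I \in \Inv$ the dual \emph{injective/Reedy} structure (cofibrations and weak equivalences pointwise; fibrations described by matching-object conditions using the $f^\bullet$). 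The existence of these structures rests on the fact that $\mathcal{D}_{S(i)}$ is a model category for each object $i$ of $I$, and that the transition functors $f_\bullet$, resp.\@ $f^\bullet$, form Quillen adjunctions of several variables by hypothesis (1) in Definition \ref{PARQUILLENN}. By construction the localization $\Hom(I,\mathcal{D})_S[\mathcal{W}_{I,S}^{-1}]$ agrees with the corresponding homotopy category, and taking the disjoint union over $S$ yields $\DD(I)$. Axioms (Der1) and (Der2) then follow immediately: (Der1) because the model structure is compatible with disjoint unions of diagrams, and (Der2) because weak equivalences are detected pointwise.

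Next I would verify (FDer0 left/right). Given a multimorphism $f \in \Hom_{\SSS(I)}(S_1,\dots,S_n;T)$, define
\[ f_\bullet(\mathcal{E}_1,\dots,\mathcal{E}_n) := f_\bullet(Q\mathcal{E}_1,\dots,Q\mathcal{E}_n) \]
by applying the pointwise multi-Quillen left adjoint to cofibrant replacements, and dually for $f^{\bullet,i}$. The fact that this is well-defined on the localization, and that it yields an (op)fibration of multicategories on $\DD(I)$, reduces to the multi-Quillen adjunction of hypothesis (1) together with the compatibility (2) for the unit. The compatibility with pull-back along $\alpha:I \to J$, which is the remaining content of (FDer0 left/right), is immediate from the pointwise nature of the constructions, observing that in the left-fibered case (domain $\Dir$) only Grothendieck opfibrations $\alpha$ are required to preserve Cartesian morphisms, matching the condition that $\alpha^*$ then restricts to a Reedy left Quillen functor.

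For (FDer3 left) and (FDer4 left), fix $\alpha:I \to J$ in $\Dir$ and $S \in \SSS(J)$. The ordinary left Kan extension $\alpha_!: \Hom(I,\mathcal{D})_{\alpha^*S} \to \Hom(J,\mathcal{D})_S$ exists (computed slice-wise) and is a left Quillen functor with respect to the projective structures defined above, because $\alpha^*$ preserves level-wise fibrations and weak equivalences. Its left-derived functor $L\alpha_! = \alpha_! \circ Q$ provides the relative Kan extension $\alpha_!^S$ of (FDer3 left). For (FDer4 left) one uses the classical pointwise formula
\[ (j^*\alpha_!\mathcal{E})(j) = \colim_{I \times_{/J} j} \iota^*\mathcal{E}, \]
now interpreted as a homotopy colimit over the slice category $I \times_{/J} j$ (which inherits a directed Reedy structure), i.e.\@ as $\alpha_{j,!}^{L}\iota^*$ applied after pushing the structure along the Cartesian 2-cell. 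The key technical input is that, for directed diagrams and Reedy-cofibrant objects, pointwise cofibrancy is preserved under $\iota^*$ and under $\SSS(\mu)_\bullet$, so the strictly commutative square remains homotopically exact after derivation. The right/inverse case is formally dual.

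The main obstacle is (FDer5 left): the compatibility
\[ \alpha_!(\alpha^*\xi)_\bullet(\alpha^*-,\dots,-,\dots,\alpha^*-) \;\cong\; \xi_\bullet(-,\dots,\alpha_!-,\dots,-). \]
To prove this I would restrict first to Grothendieck opfibrations $\alpha$, for which $\alpha_!$ can be computed fiber-wise, reducing to the case $J = \cdot$; then the statement becomes a projection-formula-type compatibility between the fiber-wise homotopy colimit and the multi-Quillen tensor $\xi_\bullet$. On cofibrant arguments this holds strictly, because the left Kan extension of a projectively cofibrant diagram is again cofibrant, and the left Quillen functor $\xi_\bullet$ commutes with ordinary colimits in each variable; hypothesis (2) of Definition \ref{PARQUILLENN} is needed precisely to handle the nullary case and to ensure that inserting cofibrant replacements does not destroy the isomorphism. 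Putting all of this together yields the left/right fibered multiderivator structure on the domains $\Dir$ and $\Inv$, respectively, and the identification of the fibers $\DD_S$ with the homotopy pre-derivator of the model category $(\mathcal{D}_S, \mathcal{W}_S)$ is immediate from the construction, since over a constant $S$ the combined model structure reduces to the standard projective/injective model structure on $\Hom(I,\mathcal{D}_S)$.
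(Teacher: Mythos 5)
Your overall strategy matches the paper's: equip each fiber $\Hom(I,\mathcal{D})_F$ (for $I$ directed) with a Reedy-type model structure with pointwise fibrations and weak equivalences and latching-object cofibrations defined via the push-forwards $f_\bullet$, show that the push-forwards and pull-backs are Quillen (multi-)adjunctions, derive, and check the axioms using the explicit slice-wise formula for $\alpha_!$. The identification of the fiber $\DD_S$ and the verification of (Der1)--(Der2) are handled in essentially the same way.

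There are, however, two nontrivial points that your proposal glosses over. First, the phrase ``dually for $f^{\bullet,i}$'' hides a real subtlety: unlike the push-forward, which is computed pointwise, the strict pull-back functor $f^{\bullet,i}$ for an $n$-ary multimorphism ($n\geq 2$) in $\Hom(I,\mathcal{D})\rightarrow\Hom(I,\mathcal{S})$ is \emph{not} pointwise. It must be computed as a limit over a variant of the twisted arrow category (the paper's $X_{i_1}(I)$ in Proposition~\ref{PARBIFIBI}), which requires completeness of the fibers. This construction is also what underlies Lemma~\ref{LEMMABIFIBI}, the key strict-level input to (FDer5~left): the observation that the constant pull-back $p^*$ commutes with $f^{\bullet,i}$ relies on an explicit adjunction of twisted-arrow categories. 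Your more direct Quillen-adjunction argument for (FDer5) (reduce to $J=\cdot$ using the homotopy-exact square for opfibrations, then use preservation of colimits by $\xi_\bullet$ on cofibrant diagrams) is a legitimate alternative route, and it is roughly what the paper does at the derived level, but the strict-level commutation is what makes it actually close. Second, the assertion that the localized functor $\DD(I)\rightarrow\SSS(I)$ is again a \emph{bifibered multicategory} with derived push-forwards and pull-backs (the paper's Deligne-type Proposition~\ref{PROPDELIGNE}) is a substantial technical step. It requires a homotopy-category analysis via the various left/right homotopy relations on multimorphism sets (Lemmas~\ref{LEMMAHC0}--\ref{LEMMAHC4}) and a careful treatment of the $0$-ary morphisms, which is precisely where hypothesis (2) of Definition~\ref{PARQUILLENN} enters. ``Reduces to the multi-Quillen adjunction'' is correct as a slogan but elides the content. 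Neither gap is fatal to your plan, but both must be filled for the proof to be complete.
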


There are techniques by Cisinski \cite{Cis03} which allow to extend a derivator to more general diagram categories. We will explain in a forthcoming article
how these can be applied to fibered (multi\nobreakdash-)derivators.  

The proof of the theorem will occupy the rest of this section.
First we have:
\begin{PROP}\label{PARBIFIBI}
Let $\mathcal{D} \rightarrow \mathcal{S}$ be a bifibration of multicategories with complete fibers. 
For any diagram category $I$, the functors
\[ p_I: \Hom(I, \mathcal{D}) \rightarrow \Hom(I, \mathcal{S}) = \SSS(I) \]
are bifibrations of multicategories. 

Morphisms in $\Hom(I, \mathcal{D})$ are Cartesian, if and only if they are point-wise Cartesian. 
The 1-ary morphisms in $\Hom(I, \mathcal{D})$ are coCartesian, if and only if they are point-wise coCartesian. 
\end{PROP}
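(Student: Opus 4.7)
The plan is to verify the bifibration axioms for $p_I$ by pointwise construction, reducing everything to the assumption that $\mathcal{D} \to \mathcal{S}$ is itself a bifibration of multicategories.

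First I would handle the existence of Cartesian lifts. Given a multimorphism $\xi: (\mathcal{F}_1, \ldots, \mathcal{F}_n) \to \mathcal{G}$ in $\Hom(I, \mathcal{S})$, an index $j$, objects $\mathcal{E}_k \in \Hom(I, \mathcal{D})$ above $\mathcal{F}_k$ for $k \neq j$, and $\mathcal{E} \in \Hom(I, \mathcal{D})$ above $\mathcal{G}$, the strategy is as follows. At each $i \in I$, choose a Cartesian multimorphism $\xi_i: (\mathcal{E}_1(i), \ldots, \widetilde{\mathcal{E}}_j(i), \ldots, \mathcal{E}_n(i)) \to \mathcal{E}(i)$ above $\xi(i)$, thereby defining an object $\widetilde{\mathcal{E}}_j(i)$ above $\mathcal{F}_j(i)$. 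For each morphism $\alpha: i \to i'$ in $I$, the universal property of $\xi_{i'}$, applied to the composition $\mathcal{E}(\alpha) \circ \xi_i$ fed with the morphisms $\mathcal{E}_k(\alpha)$ in the remaining slots ($k \neq j$), produces a unique $1$-ary morphism $\widetilde{\mathcal{E}}_j(\alpha): \widetilde{\mathcal{E}}_j(i) \to \widetilde{\mathcal{E}}_j(i')$ over $\mathcal{F}_j(\alpha)$ making the relevant diagram commute. Uniqueness in the universal property gives functoriality of $\widetilde{\mathcal{E}}_j$, and the family $(\xi_i)_i$ assembles into a natural transformation $\widetilde{\xi}$ in $\Hom(I, \mathcal{D})$.

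Next I would verify that $\widetilde{\xi}$ is Cartesian in $\Hom(I, \mathcal{D})$ at slot $j$. Given a competing multimorphism $\eta: (\mathcal{E}_1, \ldots, \mathcal{E}'_j, \ldots, \mathcal{E}_n) \to \mathcal{E}$ in $\Hom(I, \mathcal{D})$ together with a factorization $p_I(\eta) = \xi \circ_j g$ for some $g: p_I(\mathcal{E}'_j) \to \mathcal{F}_j$, the sought $1$-ary morphism $\mathcal{E}'_j \to \widetilde{\mathcal{E}}_j$ over $g$ is obtained componentwise from the pointwise Cartesian property, and naturality in $i \in I$ follows from uniqueness applied at each arrow of $I$. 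For $1$-ary coCartesian lifts the argument is entirely dual: given $f: \mathcal{F} \to \mathcal{G}$ in $\Hom(I, \mathcal{S})$ and $\mathcal{E}$ above $\mathcal{F}$, choose pointwise coCartesian lifts $\mathcal{E}(i) \to \widetilde{\mathcal{E}}(i)$ above $f(i)$ and assemble them into a functor via the coCartesian universal property; the universal property in $\Hom(I, \mathcal{D})$ is then checked componentwise.

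The two characterizations are an immediate consequence: uniqueness up to unique isomorphism of Cartesian (resp.\ $1$-ary coCartesian) lifts shows that any such lift in $\Hom(I, \mathcal{D})$ is isomorphic to the pointwise-(co)Cartesian one constructed above, hence pointwise (co)Cartesian; conversely, any natural transformation that is pointwise (co)Cartesian is of the form just described, and therefore satisfies the required universal property. The main technical point is the functoriality assertion in the first step, which rests on applying the universal property of $n$-ary Cartesian morphisms coherently over all arrows of $I$; once this is established, the rest reduces to routine diagram chasing. Completeness of the fibers is not logically needed for this proposition, but is the standing hypothesis under which all the pointwise universal constructions are guaranteed to assemble into well-defined functors.
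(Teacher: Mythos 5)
Your construction of the $n$-ary Cartesian lift for $n \ge 2$ has a gap precisely at the functoriality step, and this is where the paper's proof departs essentially from yours. You want to define $\widetilde{\mathcal{E}}_j$ pointwise by $\widetilde{\mathcal{E}}_j(i) := \xi(i)^{\bullet,j}(\ldots,\mathcal{E}_k(i),\ldots;\mathcal{E}(i))$ and then obtain $\widetilde{\mathcal{E}}_j(\alpha)$ from the universal property of $\xi_{i'}$, ``fed with the morphisms $\mathcal{E}_k(\alpha)$ in the remaining slots.'' But the universal property of $\xi_{i'}$ at slot $j$ produces maps into $\widetilde{\mathcal{E}}_j(i')$ from data with $\mathcal{E}_k(i')$ fixed in the slots $k\ne j$, whereas the composite $\mathcal{E}(\alpha)\circ\xi_i$ has $\mathcal{E}_k(i)$ in those slots. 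Plugging in $\mathcal{E}_k(\alpha): \mathcal{E}_k(i)\to\mathcal{E}_k(i')$ goes the wrong way: a Cartesian morphism is \emph{contravariant} in the inactive slots, so you would need maps $\mathcal{E}_k(i')\to\mathcal{E}_k(i)$, which you do not have. (For $n=1$ there are no inactive slots and your argument is fine; for the coCartesian/pushforward side all slots are covariant and your pointwise argument is fine for every arity.) This variance asymmetry is exactly why the proposition only claims that $1$-ary coCartesian morphisms, not all of them, are detected pointwise, and why the paper's proof is asymmetric between the two halves.

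The paper works around this by \emph{not} taking the pointwise value for the $n\ge 2$ pullback: it sets $f^{1,\bullet}(\mathcal{E}_2,\ldots,\mathcal{E}_n;\mathcal{F})(i_1) := \lim_{X_{i_1}(I)} f_\alpha^{1,\bullet}(\mathcal{E}_2(i_2),\ldots,\mathcal{E}_n(i_n);\mathcal{F}(j))$, indexing the limit over a twisted-arrow-type category $X_{i_1}(I)$ of tuples $(i_2,\ldots,i_n,j,\alpha_k: i_k\to j)$, which is covariant in $j$ and contravariant in $i_2,\ldots,i_n$; this fixes the variance mismatch. The required Cartesian morphism into $\mathcal{F}$ is then built from the limit projections, and its universal property is checked directly. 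This is precisely where the ``complete fibers'' hypothesis enters --- your closing remark that ``completeness of the fibers is not logically needed for this proposition'' is therefore incorrect, and is a symptom of the gap: if the pointwise formula worked, completeness would indeed be irrelevant, but it doesn't. (As a sanity check, for $\mathcal{S}=\{\cdot\}$ with $\mathcal{D}$ closed monoidal, the tensor on $\Hom(I,\mathcal{D})$ is pointwise but the internal hom is an end, i.e.\@ a limit, not a pointwise formula.)
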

\begin{proof}[Proof (Sketch).]
We choose push-forward functors $f_\bullet$ and pull-back functors $f^{i,\bullet}$ for $\mathcal{D} \rightarrow \mathcal{S}$ as usual. 
Let $f \in \Hom(S_1, \dots, S_n; T)$ be a morphism in $\Hom(I, \mathcal{S})$.
We define a functor
\[ f_\bullet: \Hom(I, \mathcal{D})_{S_1} \times \cdots \times \Hom(I, \mathcal{D})_{S_n} \rightarrow \Hom(I, \mathcal{D})_T \]
by
\[ \mathcal{E}_1, \dots, \mathcal{E}_n \mapsto \{i \mapsto (f_i)_\bullet( \mathcal{E}_1(i), \dots,  \mathcal{E}_n(i)) \}.  \]
Note that a morphism $\alpha: i \rightarrow i'$ in $I$ induces a {\em well-defined} morphism 
\[ (f_i)_\bullet( \mathcal{E}_1(i), \dots,  \mathcal{E}_n(i)) \rightarrow (f_{i'})_\bullet( \mathcal{E}_1(i'), \dots,  \mathcal{E}_n(i'))\]
lying over $T(\alpha)$. 
The functor $f_\bullet$ comes equipped with a morphism in
\[   \Hom( \mathcal{E}_1, \dots, \mathcal{E}_n; f_\bullet (\mathcal{E}_1, \dots, \mathcal{E}_n)) \]
which is checked to be Cartesian in the strong form of Definition~\ref{DEFCARTCOCART}. 

For 1-ary morphisms we can perform the same construction to produce coCartesian morphisms. For $n\ge 2$ the construction is more complicated. 
Let $f \in \Hom(S_1, \dots, S_n; T)$ be a morphism with $n\ge 2$. 
To ease notation, we construct a pull-back functor w.r.t.\@ first slot. The other constructions are completely symmetric. 

For any $i_1 \in I$ consider the category (a variant of the twisted arrow category)
\[ X_{i_1}(I):=\{\ (i_2, \dots, i_n, j, \{\alpha_k\}_{k=1..n})\ |\ \alpha_k: i_k \rightarrow j\}  \]
which is covariant in $j$ and contravariant in $i_2, \dots, i_n$. For any $\beta: i_1 \rightarrow i'_1$ we have an induced functor $\widetilde{\beta}: X_{i_1}(I) \rightarrow X_{i_1'}(I)$.
Any object in $X_{i_1}(I)$ defines by pre-composition with $S_k(\alpha_k)$ for all $1 \le k \le n$ a morphism
$f_\alpha \in \Hom( S_1(i_1), \dots, S_n(i_n); T(j))$.  

We define a functor 
\[ f^{1,\bullet}: (\Hom(I, \mathcal{D})_{S_2})^{\op} \times \cdots \times  (\Hom(I, \mathcal{D})_{S_n})^{\op} \times \Hom(I, \mathcal{D})_{T} \rightarrow \Hom(I, \mathcal{D})_{S_1} \]
assigning to $\mathcal{E}_2, \dots, \mathcal{E}_n; \mathcal{F}$ the following functor $X_{i_1}(I) \rightarrow \mathcal{D}_{S_1(i_1)}$:
\[ \alpha \mapsto (f_\alpha)^{1,\bullet}( \mathcal{E}_2(i_2), \dots,   \mathcal{E}_n(i_n); \mathcal{F}(j)) \]
and then taking $\lim_{X_{i_1}(I)}$ which exists because the fibers are required to be complete. For the functoriality note that for $\beta: i_1 \rightarrow i_1'$ we have a natural morphism
\[ \lim_{X_{i_1}(I)} \cdots  \rightarrow  \lim_{X_{i_1'}(I)} \cdots \]
induced by $\widetilde{\beta}$.

We define a morphism
\[ \Xi \in \Hom_f(f^{1,\bullet}(\mathcal{E}_2, \dots, \mathcal{E}_n; \mathcal{F}), \mathcal{E}_2, \dots, \mathcal{E}_n; \mathcal{F}) \]
and we will show that it is coCartesian w.r.t.\@ the first slot in a weak sense. 
At some object $i \in I$, the morphism $\Xi$ is given by composing the projections from $\lim_{X_{i=i_1}(I)} f_\alpha^{1,\bullet}( \mathcal{E}_2(i_2), \dots,   \mathcal{E}_n(i_n); \mathcal{F}(j))$ to 
$f_i^{1, \bullet}( \mathcal{E}_2(i), \dots,   \mathcal{E}_n(i); \mathcal{F}(i) )$ (note that $f_i = f_\alpha$ for $\alpha=\{\id_i\}_k$) and then composing with the coCartesian morphism (in $\mathcal{D}$) in 
\[ \Hom(f_{i}^{1,\bullet}(\mathcal{E}_2(i), \dots, \mathcal{E}_n(i); \mathcal{F}(i)), \mathcal{E}_2(i), \dots, \mathcal{E}_n(i); \mathcal{F}(i)). \]
One checks that the so defined $\Xi$ is functorial in $i$.  It remains to be shown that the composition with $\Xi$ induces an isomorphism
\begin{equation}\label{isococart}
 \Hom_{\id_{S_1}}(\mathcal{E}_1; f^{1, \bullet}(\mathcal{E}_2, \dots, \mathcal{E}_n; \mathcal{F})) \rightarrow \Hom_f(\mathcal{E}_1, \dots, \mathcal{E}_n; \mathcal{F}).
 \end{equation}
We will give a map in the other direction which is inverse to composition with $\Xi$. 
Let 
\[ a \in \Hom_f(\mathcal{E}_1, \dots, \mathcal{E}_n; \mathcal{F}) \] 
be a morphism. To give a morphism on the left hand side of (\ref{isococart}), for any $i_1$ and $\alpha \in X_{i_1}(I)$ we have to give a
morphism (functorial in $i_1$)
\[ \mathcal{E}_1(i_1) \rightarrow f_\alpha^{1, \bullet}(\mathcal{E}_2(i_2); \dots, \mathcal{E}_n(i_n); \mathcal{F}(j))  \]
or, which is the same, a morphism
\[ \Hom_{f_\alpha}(\mathcal{E}_1(i_1), \mathcal{E}_2(i_2), \dots, \mathcal{E}_n(i_n); \mathcal{F}(j)). \]
But we have such a morphism, namely the pre-composition of $a_j$ with the $n$-tuple $\{\mathcal{E}_k(\alpha_k)\}_k$.
(Because we know already that $\Hom(I, \mathcal{D}) \rightarrow \Hom(I, \mathcal{S})$ is an {\em op}fibration of multicategories, it suffices to 
establish that $\Xi$ is coCartesian in this weak form.)
\end{proof}

\begin{BEM}
The construction in the proof of the above Proposition will become much clearer, when we define a fibered multiderivator {\em itself} as
a six-functor-formalism, similar to the definition mentioned in the introduction. For example, for $\mathcal{S}=\{\cdot\}$ we will get an external and internal monoidal product, resp.\@ right adjoints which a clear relation. 
We have in that case
\[ \boxtimes: \Hom(I, \mathcal{D}) \times \Hom(J, \mathcal{D}) \rightarrow \Hom(I \times J, \mathcal{D}) \]
by applying $\otimes$ {\em point-wise} and 
\[ \mathcal{HOM}_{l/r}: \Hom(I, \mathcal{D}) \times \Hom(J, \mathcal{D}) \rightarrow \Hom(I^{\op} \times J, \mathcal{D}) \]
by applying $\Hom_{l/r}$ {\em point-wise}. 
The formula for the internal hom obtained in the proof of the proposition boils down to the formula
\[ \Hom_{l/r}(\mathcal{E}, \mathcal{F})(i_1) = \int_{i} \mathcal{HOM}_{l/r}(\mathcal{E}(i), \mathcal{F}(i))^{\Hom(i_1, i)} \]
where $\int_{i}$ is the categorical end. 
We refer to a subsequent article \cite{Hor15} for an explanation of this in the language of six-functor-formalisms. 
\end{BEM}

We will need later the following
\begin{LEMMA}\label{LEMMABIFIBI}
Let $f \in \Hom(S_1, \dots, S_n; T)$ be a morphism in $\Hom(I, \mathcal{S})$ for some $n\ge 2$. 
Consider the pull-back functor $f^{j, \bullet}$  constructed in the proof of Proposition~\ref{PARBIFIBI}. Let $p: I \times J \rightarrow I$ be the projection and fix objects
$\mathcal{E}_1, \mathop{\dots}\limits^{\widehat{j}}, \mathcal{E}_n, \mathcal{F}$  in $\mathcal{D}$ lying over $S_1, \mathop{\dots}\limits^{\widehat{j}}, S_n, T$. Then the natural morphism
\[ p^* f^{j, \bullet}(\mathcal{E}_1,  \mathop{\dots}\limits^{\widehat{j}},  \mathcal{E}_n; \mathcal{F}) \rightarrow (p^* f)^{j, \bullet}(p^*\mathcal{E}_1,  \mathop{\dots}\limits^{\widehat{j}},  p^*\mathcal{E}_n; p^*\mathcal{F})  \]
is an isomorphism, or, in other words, the functor $p^*: \Hom(I, \mathcal{D}) \rightarrow  \Hom(I \times J, \mathcal{S})$ maps Cartesian arrows to Cartesian arrows. 
\end{LEMMA}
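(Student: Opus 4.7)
The plan is to reduce the claim to the universal property characterizing the pullback $f^{j,\bullet}$. Recall from the proof of Proposition~\ref{PARBIFIBI} that $f^{j,\bullet}(\mathcal{E}_1,\mathop{\dots}\limits^{\widehat{j}},\mathcal{E}_n;\mathcal{F})$, together with its structural morphism $\Xi$ over $f$, is characterized up to unique isomorphism as the source of a multi-morphism in $\Hom(I,\mathcal{D})$ for which composition in the $j$-th slot induces the bijection
\[ \Hom_{\id_{S_j}}(\mathcal{G}\,;\,f^{j,\bullet}(\mathcal{E}_1,\mathop{\dots}\limits^{\widehat{j}},\mathcal{E}_n;\mathcal{F})) \;\iso\; \Hom_f(\mathcal{E}_1,\mathop{\dots}\limits^{\widehat{j}},\mathcal{G},\mathop{\dots}\limits^{\widehat{j}},\mathcal{E}_n\,;\,\mathcal{F}) \]
for every $\mathcal{G}$. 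By the same characterization applied in $\Hom(I\times J,\mathcal{D}) \to \Hom(I\times J,\mathcal{S})$, the lemma will follow once I show that $p^*\Xi$ is weakly Cartesian in the $j$-th slot over $p^*f$, the claimed morphism being then the comparison forced by uniqueness.

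To verify this bijection for $p^*\Xi$, I will fix an arbitrary $\mathcal{G} \in \Hom(I \times J,\mathcal{D})_{p^*S_j}$ together with $a \in \Hom_{p^*f}(p^*\mathcal{E}_1,\mathop{\dots}\limits^{\widehat{j}},\mathcal{G},\mathop{\dots}\limits^{\widehat{j}},p^*\mathcal{E}_n;p^*\mathcal{F})$, and construct a unique $b: \mathcal{G} \to p^*f^{j,\bullet}(\mathcal{E}_1,\mathop{\dots}\limits^{\widehat{j}},\mathcal{E}_n;\mathcal{F})$ whose composition with $p^*\Xi$ in slot $j$ is $a$. For each $k \in J$, restriction to $I \times \{k\}$ gives $\mathcal{G}(-,k) \in \Hom(I,\mathcal{D})_{S_j}$, and, since each $p^*\mathcal{E}_\ell$ and $p^*\mathcal{F}$ is constant in the $J$-direction, a multi-morphism $a_k \in \Hom_f(\mathcal{E}_1,\mathop{\dots}\limits^{\widehat{j}},\mathcal{G}(-,k),\mathop{\dots}\limits^{\widehat{j}},\mathcal{E}_n;\mathcal{F})$. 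The universal property of $\Xi$ in $\Hom(I,\mathcal{D})$ then provides a unique $b_k: \mathcal{G}(-,k) \to f^{j,\bullet}(\mathcal{E}_1,\mathop{\dots}\limits^{\widehat{j}},\mathcal{E}_n;\mathcal{F})$ with $\Xi$ composed with $b_k$ in slot $j$ equal to $a_k$.

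The main bookkeeping step is to check that $\{b_k\}_{k\in J}$ is natural in $k$ and thus assembles into a single morphism $b$ in $\Hom(I\times J,\mathcal{D})_{p^*S_j}$. Since $p^*f^{j,\bullet}(\ldots)$ is constant along $J$, for each $\tau: k \to k'$ in $J$ naturality amounts to the equality $b_{k'} \circ \mathcal{G}(-,\tau) = b_k$; both sides, composed with $\Xi$ in slot $j$, become $a_k$ by the functoriality of $a$ along $(\id_I,\tau)$, so the uniqueness clause of the universal property of $\Xi$ forces them to agree. Naturality of each $b_k$ in the $I$-direction is automatic, and uniqueness of the resulting $b$ is inherited from the slice-wise uniqueness of the $b_k$. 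The main, rather mild, obstacle is simply keeping the multicategorical composition in the $j$-th slot unambiguous throughout; no structural input beyond the universal property of $\Xi$ already established in Proposition~\ref{PARBIFIBI} is required.
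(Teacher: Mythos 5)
Your argument is correct, but it takes a genuinely different route from the one in the paper. The paper's proof works with the explicit end/limit formula for $f^{j,\bullet}$: it exhibits the comparison morphism as a map of limits indexed by the two twisted-arrow-type categories $X_{i_1,j_1}(I\times J)$ and $X_{i_1}(I)$, and shows it is an isomorphism by producing a retraction $\widetilde{p}\circ\widetilde{s}=\id$ together with a chain of natural transformations between $\widetilde{s}\circ\widetilde{p}$ and $\id$ living purely in the $J$-direction, which are crushed to identities because the diagram being limited is constant along $J$. You instead bypass the formula entirely and verify the weak Cartesian universal property of $p^*\Xi$ directly: you restrict an arbitrary $a$ over $p^*f$ to the slices $I\times\{k\}$, apply the universal property of $\Xi$ in $\Hom(I,\mathcal{D})$ to get unique $b_k$, and then reassemble these by checking compatibility under $\tau\colon k\to k'$ via the uniqueness clause (it suffices to check morphisms of the form $(\id_I,\tau)$, the general case factoring through $(\mu,\id)\circ(\id,\tau)$). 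The paper's version is shorter once one accepts the explicit limit description, since everything reduces to one cofinality observation; your version is more robust in that it never uses how $f^{j,\bullet}$ was built, only that $\Xi$ is (weakly) Cartesian, so it would transfer verbatim to any bifibration of multicategories $\mathcal{D}\to\mathcal{S}$ in which Cartesian lifts exist, regardless of construction. One small point worth making explicit when writing this up: showing that $p^*\Xi$ is weakly Cartesian only gives that $p^*\Xi$ and the chosen Cartesian lift $\Xi'$ of $p^*f$ both present $(p^*f)^{j,\bullet}$, and then the stated comparison is the unique map over the identity induced by Cartesianness of $\Xi'$, hence invertible; you gesture at this with ``the comparison forced by uniqueness'' but it deserves a sentence.
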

\begin{proof}
Again, we assume $j=1$ to ease the notation. The statement concerning the other pull-backs is completely symmetric. 
We have by definition
\[ (f^{1, \bullet}(\mathcal{E}_2, \dots, \mathcal{E}_n; \mathcal{F}))(i') = \lim_{\alpha \in X_{i_1}(I)} f_\alpha^{1, \bullet} (\mathcal{E}_2(i_1), \dots, \mathcal{E}_n(i_n); \mathcal{F}(i'))  \]
and
\[ ((p^*f)^{1, \bullet}(\mathcal{E}_2, \dots, \mathcal{E}_n; \mathcal{F}))(i',j') = \lim_{\alpha \in X_{i_1,j_1}(I \times J)} (p^*f)_\alpha^{1, \bullet} ((p^*\mathcal{E}_2)(i_1,j_1), \dots, (p^*\mathcal{E}_n)(i_n,j_n); (p^*\mathcal{F})(i',j'))  \]
The natural map in question is induced by the functor $\widetilde{p}: X_{i_1,j_1}(I \times J) \rightarrow X_{i_1}(I)$ which forgets all data involving the $J$ direction. Now there is also a functor $\widetilde{s}: X_{i_1}(I) \rightarrow X_{i_1,j_1}(I \times J)$ which is constant on the $J$-component with value $\{\id_{j_1}\}_{k=1..n}$. We have $\widetilde{p} \circ \widetilde{s} = \id$ and a chain of natural transformations $\widetilde{s}\circ\widetilde{p} \Leftarrow \cdots \Rightarrow \id$ involving only
data in the $J$-direction. However, all the
natural transformations are mapped to identities by the functor 
\[ \alpha \mapsto \lim_{\alpha \in X_{i_1,j_1}(I \times J)} (p^*f)_\alpha^{1, \bullet} ((p^*\mathcal{E}_2)(i_1,j_1), \dots, (p^*\mathcal{E}_n)(i_n,j_n); (p^*\mathcal{F})(i',j')) \] 
because everything is constant along the $J$-direction. This shows that the natural morphism in the statement is an isomorphism. 
\end{proof}

If $I$ is directed or inverse we want to show that also 
$p_I$ is a bifibration of multi-model-categories in the sense of Definition~\ref{PARQUILLENN}.

Afterwards we will apply the following variant and generalization to multicategories of the results in \cite[Expos\'e XVII, \S 2.4]{SGAIV3}.
\begin{PROP}\label{PROPDELIGNE}
Let $p: \mathcal{D} \rightarrow \mathcal{S}$ be a bifibration of (multi\nobreakdash-)model-categories in the sense of \ref{PARQUILLENN}.
Let $\mathcal{W}$ be the union of the $\mathcal{W}_S$ over all objects $S \in \mathcal{S}$. 
Then the fibers of $\widetilde{p}: \mathcal{D}[\mathcal{W}^{-1}] \rightarrow \mathcal{S}$ (as ordinary categories) are the homotopy categories $\mathcal{D}_S[\mathcal{W}_S^{-1}]$
and $\widetilde{p}$ is again a bifibration of multicategories such that the push-forward $F_\bullet$ along any $F \in \Hom_{\mathcal{S}}(X_1, \dots, X_n; Y)$
(for $n\ge 1$) is the left derived functor of the corresponding push-forward w.r.t. $p$. Similarly the pull-back w.r.t.\@ some slot ist the right derived functor of the corresponding pull-back w.r.t. $p$. 
\end{PROP}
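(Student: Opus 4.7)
The strategy is to construct candidate derived push-forward and pull-back functors on $\widetilde{p}: \mathcal{D}[\mathcal{W}^{-1}] \rightarrow \mathcal{S}$ by deriving the Quillen multi-adjunctions on the fibers, verify they assemble into a bifibration of multicategories, and finally compare the fibers of $\widetilde{p}$ with the localized fibers $\mathcal{D}_S[\mathcal{W}_S^{-1}]$.

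For each multimorphism $F \in \Hom_\mathcal{S}(X_1,\dots,X_n; Y)$, axiom~1 of Definition~\ref{PARQUILLENN} asserts that the unreduced push-forward $F_\bullet$ is a left Quillen functor of $n$ variables. By the standard Ken Brown lemma applied in each variable separately, $F_\bullet$ preserves weak equivalences between objects that are cofibrant in each argument, so the formula
\[ LF_\bullet(\mathcal{E}_1,\dots,\mathcal{E}_n) := F_\bullet(Q\mathcal{E}_1,\dots,Q\mathcal{E}_n) \]
descends to a well-defined functor on the localized fibers. Dually, for each $1 \le j \le n$,
\[ RF^{j,\bullet}(\mathcal{E}_1, \mathop{\dots}\limits^{\widehat{j}}, \mathcal{E}_n; \mathcal{F}) := F^{j,\bullet}(Q\mathcal{E}_1, \mathop{\dots}\limits^{\widehat{j}}, Q\mathcal{E}_n; R\mathcal{F}) \]
is well-defined, using the fibrant replacement $R$ in the target slot and cofibrant replacements elsewhere. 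The underived adjunction isomorphisms in $n$ variables descend to adjunctions of $LF_\bullet$ and each $RF^{j,\bullet}$ at the level of homotopy classes $[-,-]$, which (by the usual zigzag-versus-homotopy argument in a model category) agree with the Hom-sets in the localization between cofibrant-fibrant representatives. This yields the candidate multi-bifibration structure.

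The subtle point in turning this into a pseudo-functor on the multicategory $\mathcal{S}$ is functoriality under composition: for $F \circ_i G$ in $\mathcal{S}$, we need $L(F \circ_i G)_\bullet \cong LF_\bullet \circ_i LG_\bullet$. When $G$ has arity $n \ge 1$, since $G_\bullet$ is a left Quillen functor of $n$ variables it preserves cofibrant outputs from cofibrant inputs, so both sides compute to $F_\bullet(\dots, G_\bullet(Q\mathcal{E}_1,\dots,Q\mathcal{E}_n), \dots)$ applied to a tuple that remains cofibrant in each slot. The delicate case is precisely when $G$ is $0$-ary: here the unit $f_\bullet()$ is a priori not cofibrant, and the required comparison $F_\bullet(\dots, Qf_\bullet(), \dots) \to F_\bullet(\dots, f_\bullet(), \dots) \cong (F \circ_i f)_\bullet(\dots)$ being a weak equivalence on cofibrant inputs is \emph{exactly} axiom~2 of Definition~\ref{PARQUILLENN}. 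An analogous dual argument, using fibrant replacements and the corresponding right-derived composition, handles the pull-back functors $RF^{j,\bullet}$.

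The main obstacle will be identifying the fibers of $\widetilde{p}$ with $\mathcal{D}_S[\mathcal{W}_S^{-1}]$: a priori, a morphism in $\mathcal{D}[\mathcal{W}^{-1}]$ lying over $\id_S$ is represented by a zigzag $X \leftarrow Z_1 \to Z_2 \leftarrow \cdots \to Y$ which traverses multiple fibers of $p$. The backward arrows belong to some $\mathcal{W}_T$ and thus project to identities, but the forward arrows project to arbitrary morphisms whose total composition is $\id_S$. Following the pattern of \cite[Expos\'e XVII, \S2.4]{SGAIV3}, the rectification will proceed by repeatedly replacing adjacent fragments $Z_k \to Z_{k+1} \leftarrow Z_{k+2}$ by a fragment in a single fiber: factor the forward arrow $Z_k \to Z_{k+1}$ through the coCartesian lift $Z_k \to f_\bullet QZ_k$ (using a cofibrant replacement in $\mathcal{D}_{S_k}$), then use the weak equivalence $QZ_{k+1} \to Z_{k+1}$ and the Quillen adjunction to absorb the base change. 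Iterating, the zigzag is replaced by one entirely in $\mathcal{D}_S$ representing the same morphism, which together with the functor $\mathcal{D}_S[\mathcal{W}_S^{-1}] \to \mathcal{D}[\mathcal{W}^{-1}]_S$ produced by the universal property exhibits the claimed equivalence. The bifibration axioms for $\widetilde{p}$ then follow because (co)Cartesian arrows in $\mathcal{D}[\mathcal{W}^{-1}]$ are characterized by the adjunction, which was shown above to be provided by the derived $LF_\bullet$ and $RF^{j,\bullet}$.
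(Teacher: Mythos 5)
The spirit of your proposal is close to the paper's: both approaches want to show that the derived Quillen functors provide the (co)Cartesian lifts in $\mathcal{D}[\mathcal{W}^{-1}]$, with the unit-cofibrancy axiom (axiom~2 of Definition~\ref{PARQUILLENN}) used precisely where you indicate. But you invoke as "the usual zigzag-versus-homotopy argument" the claim that $\Hom$-sets in the localized multicategory $\mathcal{D}[\mathcal{W}^{-1}]$ over a fixed multimorphism $F$ agree with homotopy classes of $F$-covering multimorphisms between fibrant-cofibrant representatives. That claim is exactly the technical heart of the result, and it is \emph{not} the standard single-model-category statement: the usual argument concerns 1-ary morphisms within one model category and identifies $\Hom_{\mathcal{C}[\mathcal{W}^{-1}]}(X,Y)$ with homotopy classes. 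Here one must compare $\Hom_F(\mathcal{E}_1,\dots,\mathcal{E}_n;\mathcal{F})$ in the localization of a \emph{multicategory fibered over $\mathcal{S}$} with a homotopy relation on multimorphisms, and that requires defining right homotopy (via a path object on the target) and $i$-left homotopy (via a cylinder object on the $i$-th source), proving they agree on fibrant-cofibrant objects, and proving the chain of equivalences $\mathcal{D}[\mathcal{W}^{-1}] \simeq \mathcal{D}^{\Cof}[(\mathcal{W}^{\Cof})^{-1}] \simeq \mathcal{D}^{\Fib,\Cof}[(\mathcal{W}^{\Fib,\Cof})^{-1}] \simeq \mathcal{D}^{\Fib,\Cof}/\!\sim$. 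The paper devotes Lemmas~\ref{LEMMAHC0} through~\ref{LEMMAHC4} to this, and without it your step~(3) is a gap.

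Relatedly, your zigzag rectification to identify the fibers is both underspecified and, given the correct approach, unnecessary. "Use the Quillen adjunction to absorb the base change" does not by itself describe an operation on morphisms of $\mathcal{D}[\mathcal{W}^{-1}]$ that manifestly preserves the class in the localization, and carrying out a formal rectification of zigzags that wander across fibers would be a significant piece of work. Once the comparison $\mathcal{D}[\mathcal{W}^{-1}] \simeq \mathcal{D}^{\Fib,\Cof}/\!\sim$ is established as an equivalence \emph{over $\mathcal{S}$}, the fiber over $S$ is manifestly $\mathcal{D}_S^{\Fib,\Cof}/\!\sim_S\ \simeq\ \mathcal{D}_S[\mathcal{W}_S^{-1}]$, and nothing further needs to be rectified. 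Likewise, functoriality under composition is automatic once coCartesian morphisms have been produced (composition of coCartesian morphisms is coCartesian), so the care you put into that part addresses a non-issue; the effort should be redirected to establishing the homotopy-class description of the localized $\Hom$-sets, which is where the real work lies.
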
 

\begin{PAR}
The above proposition and its proof have several well-known consequences which we mention, despite being all elementary, because the proof below gives a
uniform treatment of all the cases.

\begin{enumerate}
\item The homotopy category of a model category is locally small and can be described as the category of cofibrant/fibrant objects modulo homotopy of morphisms. 
{\em Apply the proof of the proposition to the (trivial) bifibration of ordinary categories $\mathcal{D} \rightarrow \{\cdot\}$}.
\item Quillen adjunctions lead to an adjunction of the derived functors on the homotopy categories. {\em Apply the proposition to a bifibration of ordinary categories
  $\mathcal{D} \rightarrow \Delta_1$}.
\item The homotopy category of a closed monoidal model category is a closed monoidal category. 
{\em Apply the proposition to a  bifibration of multicategories $\mathcal{D} \rightarrow \{\cdot\}$}.
\item Quillen adjunctions in $n$ variables lead to an adjunction in $n$ variables on the homotopy categories. {\em Apply the proposition to a bifibration of multicategories
  $\mathcal{D} \rightarrow \Delta_{1,n}$, where the multicategory $\Delta_{1,n}$ consists of $n+1$ objects and one $n$-ary morphism connecting them.}
\end{enumerate}

Before proving Proposition~\ref{PROPDELIGNE}, we define homotopy relations on $\Hom_F(\mathcal{E}_1, \dots, \mathcal{E}_n; \mathcal{F})$ where $F \in \Hom(X_1, \dots, X_n; Y)$ is a multimorphism in $\mathcal{S}$.
\end{PAR}

\begin{DEF}
\begin{enumerate}
\item 
Two morphisms $f$ and $g$ in $\Hom_F(\mathcal{E}_1, \dots, \mathcal{E}_n; \mathcal{F})$ are called {\bf right homotopic} if there is a path object of $\mathcal{F}$
\[\xymatrix{ \mathcal{F} \ar[r] &  \mathcal{F}' \ar@<+2pt>[r]^{\pr_1} \ar@<-2pt>[r]_{\pr_2} & \mathcal{F} } \] 
and a morphism $\Hom(\mathcal{E}_1, \dots, \mathcal{E}_n; \mathcal{F}')$ over the same multimorphism $F$ such that the compositions with $\pr_1$ and $\pr_2$ are $f$ and $g$, respectively.

\item 
For $n\ge 1$, two morphisms $f$ and $g$ in $\Hom_F(\mathcal{E}_1, \dots, \mathcal{E}_n; \mathcal{F})$  are called {\bf $i$-left homotopic} if there is a cylinder object $\mathcal{E}_i'$ of $\mathcal{E}_i$
\[\xymatrix{ \mathcal{E}_i \ar@<+2pt>[r]^{\iota_{1}} \ar@<-2pt>[r]_{\iota_{2}} &  \mathcal{E}'_i  \ar[r] & \mathcal{E}_i } \] 
and a morphism $\Hom(\mathcal{E}_1, \dots, \mathcal{E}_i', \dots, \mathcal{E}_n; \mathcal{F})$ over $F$ such that the compositions with $\iota_{1}$ and $\iota_{2}$ are $f$ and $g$, respectively.
\end{enumerate}

\end{DEF}

\begin{LEMMA}\label{LEMMAHC0}
\begin{enumerate}
\item 
The condition `right homotopic' is preserved under pre-composition, while the condition `$i$-left homotopic' is preserved under post-composition. 
\item Let $n \ge 1$. 
If $f, g \in \Hom(\mathcal{E}_1, \dots, \mathcal{E}_n; \mathcal{F})$ are $i$-left homotopic and all $\mathcal{E}_i$ are cofibrant then $f$ and $g$ are  right homotopic.
If $f, g \in \Hom(\mathcal{E}_1, \dots, \mathcal{E}_n; \mathcal{F})$ are right homotopic, $\mathcal{F}$ is fibrant, and all $\mathcal{E}_j$ for $j\not=i$ are cofibrant then $f$ and $g$ are $i$-left homotopic.

\item Let $n \ge 1$. 
In $\Hom(\mathcal{E}_1, \dots, \mathcal{E}_n; \mathcal{F})$ right homotopy is an equivalence relation if all $\mathcal{E}_i$ are cofibrant.
In $\Hom(\mathcal{E}_1, \dots, \mathcal{E}_n; \mathcal{F})$ $i$-left homotopy is an equivalence relation if $\mathcal{F}$ is fibrant, and all $\mathcal{E}_j$, $j\not=i$ are cofibrant
\end{enumerate}
\end{LEMMA}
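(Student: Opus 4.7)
The plan is to reduce every assertion to its classical one-variable analogue (see Hovey, \emph{Model Categories}, Proposition~1.2.5 and Corollary~1.2.6) by invoking the Quillen $n$-variable adjunctions of Definition~\ref{PARQUILLENN}(1). For every $1 \le i \le n$ one has natural bijections
\[
\Hom_F(\mathcal{E}_1,\dots,\mathcal{E}_n;\mathcal{F}) \;\cong\; \Hom_{\id}(\mathcal{E}_i;\,F^{i,\bullet}(\mathcal{E}_1,\mathop{\dots}\limits^{\widehat{i}},\mathcal{E}_n;\mathcal{F})) \;\cong\; \Hom_{\id}(F_\bullet(\mathcal{E}_1,\dots,\mathcal{E}_n);\,\mathcal{F})
\]
with Hom-sets in the single fibers $\mathcal{D}_{S_i}$ and $\mathcal{D}_T$. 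Part~1 is then a direct functoriality argument: if $h \in \Hom_F(\mathcal{E}_1,\dots,\mathcal{E}_n;\mathcal{F}')$ witnesses $f \sim_r g$, and $a_j \colon \mathcal{E}''_j \to \mathcal{E}_j$ are arbitrary morphisms, then $h \circ (a_1,\dots,a_n)$ still lands in the same path object $\mathcal{F}'$ and has projections $f \circ (a_1,\dots,a_n)$ and $g \circ (a_1,\dots,a_n)$; post-composition for $i$-left homotopy is identical.

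For Part~2, observe that the first adjunction above sends an $i$-left homotopy using a cylinder $\mathcal{E}'_i$ of $\mathcal{E}_i$ to an ordinary left homotopy between the adjoint morphisms $\widetilde{f}, \widetilde{g} \colon \mathcal{E}_i \to F^{i,\bullet}(\ldots;\mathcal{F})$ using the same cylinder. When $\mathcal{E}_j$ is cofibrant for all $j \neq i$, the functor $F^{i,\bullet}(\ldots;-) \colon \mathcal{D}_T \to \mathcal{D}_{S_i}$ is right Quillen; hence it preserves fibrant objects and, by Ken Brown's lemma, sends path objects of fibrant objects to path objects. So when $\mathcal{F}$ is additionally fibrant, a right homotopy in the multi-sense corresponds to an ordinary right homotopy in $\Hom(\mathcal{E}_i;F^{i,\bullet}(\ldots;\mathcal{F}))$. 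Dually, under the second adjunction with all $\mathcal{E}_i$ cofibrant, the pushout--product axiom yields that $F_\bullet(\mathcal{E}_1,\dots,\mathcal{E}_n)$ is cofibrant and that $F_\bullet(\mathcal{E}_1,\dots,\mathcal{E}'_i,\dots,\mathcal{E}_n)$ is a cylinder object for it, so $i$-left homotopies in the multi-sense correspond to ordinary left homotopies out of this cofibrant source. The two implications of Part~2 are then precisely Hovey's classical statements that cofibrant source makes $\sim_l \Rightarrow \sim_r$ and fibrant target makes $\sim_r \Rightarrow \sim_l$.

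For Part~3 I would combine Part~2 with the classical one-variable equivalence-relation statements. Under the hypothesis that all $\mathcal{E}_i$ are cofibrant, the second adjunction identifies $\sim_r$ on $\Hom_F(\mathcal{E}_1,\dots,\mathcal{E}_n;\mathcal{F})$ with right homotopy on $\Hom(F_\bullet(\mathcal{E}_1,\dots,\mathcal{E}_n);\mathcal{F})$; the source is cofibrant, and by Hovey's Proposition~1.2.5 combined with Part~2 (passing through a fibrant replacement of $\mathcal{F}$ if needed) $\sim_r$ coincides with $\sim_l$ there and is thus an equivalence relation. Under the hypothesis that $\mathcal{F}$ is fibrant and $\mathcal{E}_j$ is cofibrant for $j \neq i$, the first adjunction translates the problem to $\Hom(\mathcal{E}_i;F^{i,\bullet}(\ldots;\mathcal{F}))$ with fibrant target, where Hovey's Proposition~1.2.5 again identifies $\sim_l$ with $\sim_r$ and supplies the equivalence relation property. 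The main obstacle is the careful verification that the Quillen $n$-variable adjunction preserves cylinder and path object structures in the above sense --- in particular, that $F_\bullet$ applied to a cylinder in one slot with the remaining entries cofibrant is again a cylinder of $F_\bullet(\mathcal{E}_1,\dots,\mathcal{E}_n)$, a fact which rests on the pushout--product part of the Quillen $n$-adjunction axiom.
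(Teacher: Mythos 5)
Your proposal is correct and follows essentially the same route as the paper: both transport the multi-variable homotopy relations through the adjunction to $\Hom(F_\bullet(\mathcal{E}_1,\dots,\mathcal{E}_n);\mathcal{F})$ (and, dually, to $\Hom(\mathcal{E}_i;F^{i,\bullet}(\dots;\mathcal{F}))$), using the pushout--product part of the Quillen $n$-variable adjunction to see that $F_\bullet$ of a cylinder in the $i$-th slot is again a cylinder when the other entries are cofibrant, and then invoke Hovey's one-variable Proposition~1.2.5. The paper is simply much terser, treating Part~1 as obvious, Part~2 via the $F_\bullet$ adjunction with a ``dually'' for the converse, and Part~3 by a bare citation of \cite[Proposition 1.2.5 (iii)]{Hov99}.
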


In particular on the category $\mathcal{D}^{\Cof,\Fib}$ of fibrant/cofibrant objects, $i$-left homotopy=right homotopy is an equivalence relation, 
which is compatible with composition. 

\begin{proof}
1. is obvious.

2. If all $\mathcal{E}_i$ are cofibrant then also $F_\bullet(\mathcal{E}_1, \dots, \mathcal{E}_n)$ is cofibrant and $f$ and $g$ correspond uniquely to morphisms $f', g': F_\bullet(\mathcal{E}_1, \dots, \mathcal{E}_n) \rightarrow \mathcal{F}$. Since $f$ and $g$ are $i$-left homotopic, there is a cylinder object 
\[ \xymatrix{ \mathcal{E}_i \ar@<2pt>[r] \ar@<-2pt>[r] &  \mathcal{E}_i' \ar[r] & \mathcal{E}_i } \]
realizing the $i$-left homotopy. Since $\mathcal{E}_i$ is cofibrant so is $\mathcal{E}_i'$.
Hence also 
\[ \xymatrix{ F_\bullet(\mathcal{E}_1, \dots, \mathcal{E}_n) \ar@<2pt>[r] \ar@<-2pt>[r] & F_\bullet(\mathcal{E}_1, \dots, \mathcal{E}_i', \dots, \mathcal{E}_n) \ar[r] & F_\bullet(\mathcal{E}_1, \dots, \mathcal{E}_n) } \]
is a cylinder object because all $\mathcal{E}_j$ are cofibrant, and hence also $f'$ and $g'$ are left homotopic. These are therefore also right homotopic and hence so are $f$ and $g$.
Dually we obtain the second statement. 

3. follows from \cite[Proposition 1.2.5, (iii)]{Hov99}.
\end{proof}

\begin{LEMMA}\label{LEMMAHC1}
Two $i$-left homotopic morphisms become equal in $\mathcal{D}^{\Cof}[(\mathcal{W}^{\Cof})^{-1}]$. 
\end{LEMMA}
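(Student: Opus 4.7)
The plan is to realize the homotopy explicitly via a cylinder object and then observe that the two insertions into the cylinder are forced to coincide after localization, so that post-composing with the homotopy equates $f$ and $g$.

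First, since $f$ and $g$ are $i$-left homotopic, there is a cylinder object
\[ \xymatrix{ \mathcal{E}_i \ar@<+2pt>[r]^{\iota_{1}} \ar@<-2pt>[r]_{\iota_{2}} &  \mathcal{E}'_i  \ar[r]^{\pi} & \mathcal{E}_i } \]
together with a homotopy $H \in \Hom_F(\mathcal{E}_1, \dots, \mathcal{E}'_i, \dots, \mathcal{E}_n; \mathcal{F})$ such that $H \circ_i \iota_1 = f$ and $H \circ_i \iota_2 = g$. All input objects $\mathcal{E}_j$ lie in $\mathcal{D}^{\Cof}$, and the standard model-category argument shows that $\mathcal{E}'_i$ is again cofibrant (the structural map $\mathcal{E}_i \sqcup \mathcal{E}_i \to \mathcal{E}'_i$ is a cofibration from a cofibrant object). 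Moreover $\pi$ is a weak equivalence between cofibrant objects, so $\pi \in \mathcal{W}^{\Cof}$.

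Next, I would pass to the localized multicategory $\mathcal{D}^{\Cof}[(\mathcal{W}^{\Cof})^{-1}]$. Here $\pi$ becomes invertible, and the identities $\pi \circ \iota_1 = \id_{\mathcal{E}_i} = \pi \circ \iota_2$ force $\iota_1 = \pi^{-1} = \iota_2$ as $1$-ary morphisms in the localization. This is a purely $1$-categorical fact and requires only the universal property of localization at the class $\mathcal{W}^{\Cof}$ of $1$-ary morphisms.

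Finally, composition in the $i$-th slot of the multicategory $\mathcal{D}^{\Cof}$ with a $1$-ary morphism is a functor, and by the universal property of the multicategorical localization (cf.\ \ref{PROPMULTILOC}) this composition descends to $\mathcal{D}^{\Cof}[(\mathcal{W}^{\Cof})^{-1}]$. Consequently,
\[ f = H \circ_i \iota_1 = H \circ_i \iota_2 = g \]
holds in the localization. The one delicate point — the main obstacle to making this airtight — is the compatibility of the multicategorical composition with the $1$-ary localization, i.e.\ that inverting $\mathcal{W}^{\Cof}$ genuinely yields a multicategory in which composition in each slot factors through the localization functor; but this is exactly what \ref{PROPMULTILOC} provides.
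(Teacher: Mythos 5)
Your proof is correct and follows essentially the same route as the paper: take a cylinder object realizing the $i$-left homotopy, observe that it lies in $\mathcal{D}^{\Cof}$ so that the projection $\pi$ (a weak equivalence between cofibrants) becomes invertible in $\mathcal{D}^{\Cof}[(\mathcal{W}^{\Cof})^{-1}]$, conclude that $\iota_1=\iota_2$ there, and hence $f=g$ by composing with the homotopy. The paper states this more tersely but the underlying argument is identical; your extra remarks about $\mathcal{E}_i'$ being cofibrant and about the compatibility of slot-composition with the multicategorical localization (\ref{PROPMULTILOC}) are exactly the points the paper implicitly relies on.
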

\begin{proof}
This follows from the fact that a cylinder object 
\[ \xymatrix{ \mathcal{E}_i \ar@<2pt>[r]^{\iota_1} \ar@<-2pt>[r]_{\iota_2} &  \mathcal{E}_i' \ar[r]^p & \mathcal{E}_i } \]
automatically lies in $\mathcal{D}^{\Cof}$ if $\mathcal{E}_i$ does, and 
the two maps $\iota_1$ and $\iota_2$ become equal because $p$ becomes invertible. 
\end{proof}

We have to distinguish the easier case, in which all objects $F_\bullet()$ for 0-ary morphisms $F$ are cofibrant. 
Otherwise we define
a category $\widetilde{\mathcal{D}^{\Cof}[(\mathcal{W}^{\Cof})^{-1}]}$ in which we set 
$\Hom_{F}(;\mathcal{F}):=\Hom_{\mathcal{D}_S[\mathcal{W}_S^{-1}]}(QF_\bullet(); \mathcal{F})$ for all $\mathcal{F}$, where $F$ is a 0-ary morphism with domain $S$. 
Composition is given as follows: 
For a morphism $f \in \Hom_G(\mathcal{E}_1, \dots, \mathcal{E}_n; \mathcal{F})$ with cofibrant $\mathcal{E}_i$ and $\mathcal{F}$ and $\xi : QF_\bullet() \rightarrow \mathcal{E}_i$, we define the composition 
$\xi \circ f$ as the following composition

\[ \xymatrix{
\mathcal{E}_1 \ar@{-}[rd] \\
\ar@{}[r] \vdots^{\widehat{i}} & \cocart \ar[r] & (F\circ G)_\bullet (\mathcal{E}_2, \overset{\widehat{i}}{\dots}, \mathcal{E}_n) \ar[r]^-\sim &  G_\bullet (\mathcal{E}_1, \dots, F_\bullet(), \dots, \mathcal{E}_n) & \ar[l] \\
\mathcal{E}_n \ar@{-}[ru] \\
} \]
\[  \xymatrix{
& \ar[l] G_\bullet (\mathcal{E}_1, \dots, QF_\bullet(), \dots, \mathcal{E}_n) \ar[r] &G_\bullet (\mathcal{E}_1, \dots,  \mathcal{E}_n) \ar[r] &  \mathcal{F}.   \\
} \]

One checks that the so-defined composition is associative and independent of the choice of the push-forwards.

\begin{LEMMA}\label{LEMMAHC2}
If the object $F_\bullet()$ is cofibrant for every 0-ary morphism  $F$ then the natural functor
\[ \mathcal{D}^{\Cof}[(\mathcal{W}^{\Cof})^{-1}] \rightarrow \mathcal{D}[\mathcal{W}^{-1}] \]
is an equivalence of categories.
Otherwise it is, if we replace $\mathcal{D}^{\Cof}[(\mathcal{W}^{\Cof})^{-1}]$ by $\widetilde{\mathcal{D}^{\Cof}[(\mathcal{W}^{\Cof})^{-1}]}$.
\end{LEMMA}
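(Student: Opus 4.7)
The plan is to adapt Quillen's classical proof that the homotopy category of a model category is equivalent to the category of cofibrant-fibrant objects modulo homotopy, upgraded to the multicategory setting using the Quillen adjunctions in $n$ variables provided by Definition~\ref{PARQUILLENN}. First I would dispose of essential surjectivity: for any $\mathcal{F} \in \mathcal{D}$ lying over $S \in \mathcal{S}$, the cofibrant replacement $Q\mathcal{F} \to \mathcal{F}$ is in $\mathcal{W}_S$ and hence becomes invertible in $\mathcal{D}[\mathcal{W}^{-1}]$, so $\mathcal{F}$ is isomorphic there to an object in the image of $\mathcal{D}^{\Cof}$. This works uniformly in both cases of the statement.

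For fully faithfulness, I would insert the intermediate homotopy multicategory $\pi\mathcal{D}^{\Cof,\Fib}$, whose objects are the cofibrant-fibrant objects of $\mathcal{D}$ and whose multimorphism sets are homotopy equivalence classes (well-defined and composition-compatible by Lemma~\ref{LEMMAHC0}), and factor the natural functor as
\[ \mathcal{D}^{\Cof}[(\mathcal{W}^{\Cof})^{-1}] \longrightarrow \pi\mathcal{D}^{\Cof,\Fib} \longrightarrow \mathcal{D}[\mathcal{W}^{-1}]. \]
The left arrow is induced by a fiberwise functorial fibrant replacement $R\colon \mathcal{D}^{\Cof} \to \mathcal{D}^{\Cof,\Fib}$; that it descends to the localization and is an equivalence follows because $\mathcal{E} \to R\mathcal{E}$ is a weak equivalence between cofibrant objects (hence inverted in the source) and because Lemma~\ref{LEMMAHC1} ensures that $i$-left homotopic maps are identified in $\mathcal{D}^{\Cof}[(\mathcal{W}^{\Cof})^{-1}]$. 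The right arrow is essentially surjective by $RQ$-replacement; for its fully faithfulness one must reduce an arbitrary zigzag as in \ref{PROPMULTILOC} to a single multimorphism. The main obstacle lies here: a zigzag representing an element of $\Hom^{\mathcal{D}[\mathcal{W}^{-1}]}_F(\mathcal{E}_1,\dots,\mathcal{E}_n;\mathcal{F})$ can have backward weak equivalences in any of the $n+1$ slots, and no straightforward Ore-type calculus of fractions is available for multimorphism sets. To deal with this I would invoke Ken Brown's Lemma together with property~1 of Definition~\ref{PARQUILLENN}: the push-forward $f_\bullet$ preserves weak equivalences between cofibrant tuples and, with the other slots cofibrant, $f^{\bullet,i}$ preserves weak equivalences between fibrant objects. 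Applying $RQ$-replacement fiberwise along the zigzag, any backward weak equivalence can be absorbed into a homotopy by either pushing it through $f_\bullet$ into the target or pulling it through $f^{\bullet,i}$ into a source slot, shrinking the zigzag length at each step until only a single multimorphism remains. Faithfulness follows by running the same shrinking on a zigzag witnessing equality in $\mathcal{D}[\mathcal{W}^{-1}]$ and noting that the resulting single multimorphism differs from the original by $i$-left homotopy.

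For the modified statement with $\widetilde{\mathcal{D}^{\Cof}[(\mathcal{W}^{\Cof})^{-1}]}$ when some $F_\bullet()$ fails to be cofibrant, the explicit composition formula defined above (the Cartesian factorization through $G_\bullet(\mathcal{E}_1,\dots,QF_\bullet(),\dots,\mathcal{E}_n)$) is precisely engineered so that property~2 of Definition~\ref{PARQUILLENN} turns the comparison morphism
\[ G_\bullet(\mathcal{E}_1,\dots,QF_\bullet(),\dots,\mathcal{E}_n) \longrightarrow G_\bullet(\mathcal{E}_1,\dots,F_\bullet(),\dots,\mathcal{E}_n) \]
into a weak equivalence, hence an isomorphism in the target localization. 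Thus associativity and independence from the chosen push-forwards (both to be verified by direct diagram chases) together with the fact that the naive functor $\widetilde{\mathcal{D}^{\Cof}[(\mathcal{W}^{\Cof})^{-1}]} \to \mathcal{D}[\mathcal{W}^{-1}]$ agrees on unary and higher-arity morphisms with the easy case reduce everything to the previously proved equivalence. The resulting comparison is then seen to be an equivalence by exactly the same essential-surjectivity and zigzag-shrinking argument, with the $QF_\bullet()$-replacement absorbed automatically into the representation of 0-ary morphisms in $\widetilde{\mathcal{D}^{\Cof}[(\mathcal{W}^{\Cof})^{-1}]}$.
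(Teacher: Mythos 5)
Your proof takes a genuinely different route from the paper, and the route it takes has a gap at exactly the point you flag. You correctly identify the difficulty --- multimorphisms in $\mathcal{D}[\mathcal{W}^{-1}]$ are tree-shaped zigzags (cf.\@ \ref{PROPMULTILOC}) with formal inverses of weak equivalences sitting in arbitrary slots --- but the remedy you propose (absorbing a backward weak equivalence by pushing it through $f_\bullet$ or pulling it through $f^{\bullet,i}$ while shrinking the tree step by step) is an assertion, not an argument. No induction is set up: there is no statement of what invariant the shrinking preserves, no argument that the order of shrinking does not matter, no argument that the output multimorphism is well-defined up to homotopy, and the faithfulness sentence (``run the same shrinking on a zigzag witnessing equality'') is left entirely unjustified. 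Brown's Lemma is also invoked in a regime where it need not apply: intermediate nodes of a generic zigzag are neither cofibrant nor fibrant. Factoring through your intermediate multicategory $\pi\mathcal{D}^{\Cof,\Fib}$ does not dissolve the problem, because proving the second factor $\pi\mathcal{D}^{\Cof,\Fib} \to \mathcal{D}[\mathcal{W}^{-1}]$ is fully faithful is the same zigzag-normalization question you started with, and the claim that the first factor is an equivalence is just as unproved.

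The paper avoids zigzags entirely by constructing an explicit inverse functor $\Phi$ \emph{on $\mathcal{D}$ itself}. On objects, $\Phi$ is a chosen cofibrant replacement $\mathcal{F} \mapsto Q\mathcal{F}$. On a genuine multimorphism $f \in \Hom_F(\mathcal{E}_1, \dots, \mathcal{E}_n; \mathcal{F})$ with $n \ge 1$ (not a zigzag), one precomposes with the $Q\mathcal{E}_i \to \mathcal{E}_i$, rewrites the result adjointly as a map $Q\mathcal{E}_i \to F^{\bullet,i}(Q\mathcal{E}_1, \overset{\widehat{i}}{\dots}, Q\mathcal{E}_n; \mathcal{F})$, and lifts against the map
\[ F^{\bullet,i}(Q\mathcal{E}_1, \overset{\widehat{i}}{\dots}, Q\mathcal{E}_n; Q\mathcal{F}) \longrightarrow F^{\bullet,i}(Q\mathcal{E}_1, \overset{\widehat{i}}{\dots}, Q\mathcal{E}_n; \mathcal{F}), \]
which is a trivial fibration by property~1 of Definition~\ref{PARQUILLENN} since the source slots are cofibrant and $Q\mathcal{F} \to \mathcal{F}$ is a trivial fibration. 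The lift exists because $Q\mathcal{E}_i$ is cofibrant, and any two lifts are left homotopic, hence equal in $\mathcal{D}^{\Cof}[(\mathcal{W}^{\Cof})^{-1}]$ by Lemma~\ref{LEMMAHC1}; this makes $\Phi$ a well-defined functor $\mathcal{D} \to \mathcal{D}^{\Cof}[(\mathcal{W}^{\Cof})^{-1}]$. It sends weak equivalences to isomorphisms by 2-out-of-3, so it descends to $\mathcal{D}[\mathcal{W}^{-1}]$ by the universal property of the localization --- this is the step that sidesteps all reasoning about arbitrary zigzags. The case $n=0$ is treated by $QF_\bullet()$, which is exactly where $\widetilde{\mathcal{D}^{\Cof}[(\mathcal{W}^{\Cof})^{-1}]}$ is needed. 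I would replace your zigzag-shrinking plan by this construction of $\Phi$; the homotopy category $\pi\mathcal{D}^{\Cof,\Fib}$ and Brown's Lemma are not needed for this lemma (they surface only later, in Lemma~\ref{LEMMAHC3} and Lemma~\ref{LEMMAHC4}).
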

\begin{proof}
The inclusion $\mathcal{D}^{\Cof} \rightarrow \mathcal{D}$ induces a functor $\Xi: \mathcal{D}^{\Cof}[(\mathcal{W}^{\Cof})^{-1}] \rightarrow \mathcal{D}[\mathcal{W}^{-1}]$. If the objects $F_\bullet()$ are not cofibrant then $\Xi$ may be modified to a functor 
\[\widetilde{\mathcal{D}^{\Cof}[(\mathcal{W}^{\Cof})^{-1}]} \rightarrow \mathcal{D}[\mathcal{W}^{-1}] \]
as follows: a 0-ary morphism $QF_\bullet() \rightarrow \mathcal{F}$ is mapped to the composition 
\[ \xymatrix{ \ar@{o->}[rr]^-{\cocart} &&  F_\bullet() & \ar[l] QF_\bullet() \ar[r] & \mathcal{F}} \] in $\mathcal{D}[\mathcal{W}^{-1}]$. 

We now specify a functor $\Phi$ in the other direction. $\Phi$ maps an object $X$ to a cofibrant replacement $QX$. 
For $n\ge 1$, a morphism $f \in \Hom(\mathcal{E}_1, \dots, \mathcal{E}_n; \mathcal{F})$ over $F$
is mapped to the following morphism. Composing with the morphisms $Q\mathcal{E}_i \rightarrow \mathcal{E}_i$, 
we get a morphism $f' \in \Hom(Q\mathcal{E}_1, \dots, Q\mathcal{E}_n; \mathcal{F})$ or equivalently a morphism
$X_i \rightarrow F^{\bullet, i}(Q\mathcal{E}_1, \overset{\widehat{i}}{\dots}, Q\mathcal{E}_n; \mathcal{F})$.
Now choose a lift (dotted arrow in the diagram)
\[ \xymatrix{
& F^{\bullet, i}(Q\mathcal{E}_1, \overset{\widehat{i}}{\dots}, Q\mathcal{E}_n; Q\mathcal{F}) \ar[d]   \\
 Q\mathcal{E}_i \ar[r] \ar@{.>}[ru] &  F^{\bullet, i}(Q\mathcal{E}_1, \overset{\widehat{i}}{\dots}, Q\mathcal{E}_n; \mathcal{F})  \\
} \]
which exists because the vertical map is again a trivial fibration (because all the $Q\mathcal{E}_i$ are cofibrant).
The resulting map in $\Hom(Q\mathcal{E}_1, \dots, Q\mathcal{E}_n; P\mathcal{F})$ is actually well-defined in 
$\mathcal{D}^{\Cof}[(\mathcal{W}^{\Cof})^{-1}]$. Indeed, two different lifts are left homotopic because $Q\mathcal{E}_i$ is cofibrant \cite[Proposition 1.2.5. (iv)]{Hov99}, and
therefore also the two morphisms in $\Hom(Q\mathcal{E}_1, \dots, Q\mathcal{E}_n; Q\mathcal{F})$ become equal in $\mathcal{D}^{\Cof}[(\mathcal{W}^{\Cof})^{-1}]$ by Lemma~\ref{LEMMAHC1}. 
From this it follows that $\Phi$ is indeed a functor on $n$-ary morphisms for $n\ge 1$.

For $n=0$, a morphism $f \in \Hom(; \mathcal{F})$ over $F$ 
corresponds to a morphism $F_\bullet() \rightarrow \mathcal{F}$. If $F_\bullet()$ is cofibrant, this morphism lifts (again uniquely up to right homotopy) to a morphism 
$F_\bullet() \rightarrow Q\mathcal{F}$, i.e.\@ to a morphism in $\Hom_F(; Q\mathcal{F})$.  
If $F_\bullet()$ is not cofibrant then the composition lifts to a morphism: $Q F_\bullet() \rightarrow P\mathcal{F}$ which is defined to be the image of $\Phi$. 
Furthermore $\Phi$ is inverse to $\Xi$ up to isomorphism. 
\end{proof}
\begin{LEMMA}\label{LEMMAHC3}
Right homotopic morphisms become equal in $\mathcal{D}^{\Cof, \Fib}[(\mathcal{W}^{\Cof, \Fib})^{-1}]$. 
\end{LEMMA}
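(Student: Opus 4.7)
The plan is to mirror the argument of Lemma~\ref{LEMMAHC1}, with the roles of cofibrancy/fibrancy and of cylinder/path objects interchanged. Suppose $f, g \in \Hom_F(\mathcal{E}_1, \dots, \mathcal{E}_n; \mathcal{F})$, with all $\mathcal{E}_i$ and $\mathcal{F}$ fibrant-cofibrant, are right homotopic via a path object
\[ \mathcal{F} \stackrel{\Delta}{\longrightarrow} \mathcal{F}' \stackrel{(\pr_1, \pr_2)}{\longrightarrow} \mathcal{F} \times \mathcal{F} \]
and a multimorphism $H \in \Hom(\mathcal{E}_1, \dots, \mathcal{E}_n; \mathcal{F}')$ over $F$ with $\pr_1 \circ H = f$ and $\pr_2 \circ H = g$. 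First, I would arrange that $\mathcal{F}'$ is itself fibrant-cofibrant by passing to a ``very good'' path object, i.e.\@ by factoring the diagonal $\mathcal{F} \to \mathcal{F} \times \mathcal{F}$ as a trivial cofibration followed by a fibration. The resulting $\mathcal{F}'$ is then fibrant (since $\mathcal{F} \times \mathcal{F}$ is and $\mathcal{F}' \to \mathcal{F} \times \mathcal{F}$ is a fibration) and cofibrant (since $\mathcal{F}$ is and $\Delta$ is a cofibration).

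The next step is to show we may assume the homotopy $H$ itself uses such a very good $\mathcal{F}'$. For $n \ge 1$, I would pass to the adjoint via the Quillen multi-adjunction: $H$ corresponds to $\widetilde H : F_\bullet(\mathcal{E}_1, \dots, \mathcal{E}_n) \to \mathcal{F}'$ in the fiber $\mathcal{D}_T$, and $F_\bullet(\mathcal{E}_1, \dots, \mathcal{E}_n)$ is cofibrant by property~1 of Definition~\ref{PARQUILLENN}. Then $\widetilde f, \widetilde g$ are right homotopic morphisms from a cofibrant object to a fibrant object in the ordinary model category $\mathcal{D}_T$; by the standard fact \cite[Lemma~1.2.4]{Hov99} the right homotopy may be realized through any very good path object, in particular through the fibrant-cofibrant $\mathcal{F}'$ constructed above, and re-applying the adjunction transports this homotopy back to a multimorphism $H$ with the required target. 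For $n = 0$, the morphisms $f, g$ correspond to morphisms $QF_\bullet() \to \mathcal{F}$ in the sense of the category $\widetilde{\mathcal{D}^{\Cof}[(\mathcal{W}^{\Cof})^{-1}]}$ defined before Lemma~\ref{LEMMAHC2}, and the same reasoning applies with $QF_\bullet()$ (which is cofibrant by construction) replacing $F_\bullet(\mathcal{E}_1, \dots, \mathcal{E}_n)$.

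With $\mathcal{F}'$ now fibrant-cofibrant, the weak equivalence $\Delta: \mathcal{F} \to \mathcal{F}'$ lies in $\mathcal{W}^{\Cof,\Fib}$ and hence becomes invertible in $\mathcal{D}^{\Cof,\Fib}[(\mathcal{W}^{\Cof,\Fib})^{-1}]$. The identities $\pr_1 \circ \Delta = \id_{\mathcal{F}} = \pr_2 \circ \Delta$ then force $\pr_1 = \pr_2 = \Delta^{-1}$ in the localization. Post-composing with the multimorphism $H$ yields
\[ f = \pr_1 \circ H = \pr_2 \circ H = g \]
in $\mathcal{D}^{\Cof,\Fib}[(\mathcal{W}^{\Cof,\Fib})^{-1}]$, which is the claim. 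The genuine obstacle lies in the second paragraph: formally inverting weak equivalences is straightforward once $\mathcal{F}'$ is in the localized category, but ensuring that the \emph{given} homotopy can be replaced by one through a fibrant-cofibrant path object requires invoking the model-category machinery in the fiber $\mathcal{D}_T$, and is precisely where the Quillen-adjunction hypothesis of Definition~\ref{PARQUILLENN} enters.
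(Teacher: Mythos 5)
Your proof is correct and follows essentially the same route as the paper: reduce to a fibrant\nobreakdash-cofibrant (``very good'') path object and then observe that $\pr_1=\pr_2$ once the weak equivalence $\mathcal{F}\to\mathcal{F}'$ becomes invertible in the localization. The paper cites \cite[Proposition 1.2.6]{Hov99} and glosses over the passage; you have spelled it out via the adjoint $\widetilde H\colon F_\bullet(\mathcal{E}_1,\dots,\mathcal{E}_n)\to\mathcal{F}'$ in the fiber $\mathcal{D}_T$, which is exactly the implicit step and uses the Quillen multi-adjunction as intended.
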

\begin{proof}
The assertion follows from the fact that there exists a path object 
\[ \xymatrix{ \mathcal{F} & \ar@<2pt>[l]^{\pr_2} \ar@<-2pt>[l]_{\pr_1}  \mathcal{F}' & \ar[l]_i \mathcal{F} } \]
where $\mathcal{F}'$ is cofibrant and fibrant which realizes the right homotopy \cite[Proposition 1.2.6.]{Hov99}. This uses that all sources are cofibrant and the domain is fibrant.
The two morphisms $\pr_1$ and $\pr_2$ become equal because $i$ becomes invertible. 
\end{proof}

\begin{LEMMA}\label{LEMMAHC4}
The functor $\mathcal{D}^{\Fib,\Cof}[(\mathcal{W}^{\Fib,\Cof})^{-1}] \rightarrow \mathcal{D}^{\Cof}[(\mathcal{W}^{\Cof})^{-1}]$ and the functor\\
$\widetilde{\mathcal{D}^{\Fib,\Cof}[(\mathcal{W}^{\Fib,\Cof})^{-1}]} \rightarrow \widetilde{\mathcal{D}^{\Cof}[(\mathcal{W}^{\Cof})^{-1}]}$, respectively, are equivalences of multicategories.
\end{LEMMA}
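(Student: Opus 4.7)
My plan is to construct a quasi-inverse functor $\Psi$ in the opposite direction, proceeding in close analogy with the construction of $\Phi$ in the proof of Lemma~\ref{LEMMAHC2}, but using fibrant replacement in place of cofibrant replacement. On objects $\Psi$ sends a cofibrant $\mathcal{E}$ to a fibrant replacement $R\mathcal{E}$, obtained via a trivial cofibration $\mathcal{E} \to R\mathcal{E}$; this preserves cofibrancy because trivial cofibrations are in particular cofibrations and $\mathcal{E}$ is cofibrant. On an $n$-ary morphism $f \in \Hom_F(\mathcal{E}_1,\dots,\mathcal{E}_n;\mathcal{F})$ with all $\mathcal{E}_i$ cofibrant (and $n \ge 1$) I take the adjoint $F_\bullet(\mathcal{E}_1,\dots,\mathcal{E}_n) \to \mathcal{F}$, compose with $\mathcal{F} \to R\mathcal{F}$, and then solve the lifting problem
\[ \xymatrix{
F_\bullet(\mathcal{E}_1,\dots,\mathcal{E}_n) \ar[r] \ar@{ (->}[d]_-{\sim} & R\mathcal{F} \ar[d] \\
F_\bullet(R\mathcal{E}_1,\dots,R\mathcal{E}_n) \ar@{.>}[ru] & \cdot
} \]
The dotted lift corresponds to a morphism $\Psi(f) \in \Hom_F(R\mathcal{E}_1,\dots,R\mathcal{E}_n;R\mathcal{F})$.

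The main technical point — and what I expect to be the key obstacle — is verifying that the left vertical map is indeed a trivial cofibration. This is the iterated pushout-product version of axiom (1) of Definition~\ref{PARQUILLENN}: factoring $F_\bullet(\mathcal{E}_1,\dots,\mathcal{E}_n) \to F_\bullet(R\mathcal{E}_1,\dots,R\mathcal{E}_n)$ as a composition of $n$ steps, where the $i$-th step replaces $\mathcal{E}_i$ by $R\mathcal{E}_i$, one obtains at each step the pushout-product of a trivial cofibration $\mathcal{E}_i \to R\mathcal{E}_i$ (between cofibrant objects) with the identity in the other slots; by the Quillen adjunction in $n$ variables this is a trivial cofibration. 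Composing gives a trivial cofibration, so the lift exists because $R\mathcal{F}$ is fibrant.

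Well-definedness of $\Psi(f)$ up to the relation imposed in $\mathcal{D}^{\Cof,\Fib}[(\mathcal{W}^{\Cof,\Fib})^{-1}]$ follows from the standard argument: any two such lifts are right homotopic via a path object of $R\mathcal{F}$ with cofibrant and fibrant middle term (cf.\@ Lemma~\ref{LEMMAHC3}), hence become equal in the localization. Functoriality of $\Psi$ with respect to composition of multimorphisms is then a formal check, using that the pull-back of a lift along a further morphism $\xi$ is again a valid lift for $\xi \circ f$, up to right homotopy. For $n=0$ one argues as in Lemma~\ref{LEMMAHC2}: in the easy case where each $F_\bullet()$ is cofibrant, directly lift $F_\bullet() \to \mathcal{F} \to R\mathcal{F}$ using that $F_\bullet()$ is cofibrant and $\mathcal{F} \to R\mathcal{F}$ is a trivial cofibration (so the lift exists up to right homotopy); in the general case the object $QF_\bullet()$ is already cofibrant and one obtains $QF_\bullet() \to R\mathcal{F}$ by composition, which yields $\Psi$ on the tilde version.

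Finally I check that $\Psi$ is quasi-inverse to the inclusion: the trivial cofibration $\mathcal{E} \to R\mathcal{E}$ provides a natural isomorphism in $\mathcal{D}^{\Cof}[(\mathcal{W}^{\Cof})^{-1}]$ between $\mathcal{E}$ and $R\mathcal{E}$, which upgrades to a natural isomorphism of the composite functors on objects; compatibility with $n$-ary morphisms reduces to the statement that the lifts chosen in the construction of $\Psi$ are, by construction, compatible with the inclusion $\mathcal{E}_i \to R\mathcal{E}_i$. The identifications on morphism sets follow because $\Hom_F(\mathcal{E}_1,\dots,\mathcal{E}_n;R\mathcal{F})$ modulo right homotopy equals $\Hom_F(R\mathcal{E}_1,\dots,R\mathcal{E}_n;R\mathcal{F})$ modulo right homotopy, which in turn equals the Hom in $\mathcal{D}^{\Cof,\Fib}[(\mathcal{W}^{\Cof,\Fib})^{-1}]$ by Lemma~\ref{LEMMAHC3}.
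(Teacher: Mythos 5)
Your proof matches the paper's: you construct the quasi‑inverse by fibrant replacement $\mathcal{E}\mapsto R\mathcal{E}$ (observing that this stays cofibrant), solve the same lifting problem through the trivial cofibration $F_\bullet(\mathcal{E}_1,\dots,\mathcal{E}_n)\to F_\bullet(R\mathcal{E}_1,\dots,R\mathcal{E}_n)$ into the fibrant target $R\mathcal{F}$, and establish well‑definedness by right homotopy and Lemma~\ref{LEMMAHC3}. This is exactly the argument given in the paper.
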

\begin{proof}
The proof is analogous to that of Lemma~\ref{LEMMAHC2} but with some minor changes which require, in particular, the chosen order of restriction to cofibrant and fibrant objects.
We specify again a functor $\Phi$ in the other direction. On objects, $\Phi$ maps $\mathcal{E}$ to a fibrant replacement $R\mathcal{E}$. Note that $R\mathcal{E}$ is still cofibrant.  
A morphism $f \in \Hom(\mathcal{E}_1, \dots, \mathcal{E}_n; \mathcal{F})$ over $F$ corresponds to a morphism
$F_\bullet(\mathcal{E}_1, \dots, \mathcal{E}_n) \rightarrow \mathcal{F}$.
Now choose a lift (dotted arrow in the diagram)
\[ \xymatrix{
F_\bullet(\mathcal{E}_1, \dots, \mathcal{E}_n)   \ar[d] \ar[r]  &  \mathcal{F} \ar[r] & R\mathcal{F}  \\
F_\bullet(R\mathcal{E}_1, \mathcal{E}_2, \dots, \mathcal{E}_n) \ar[d] & \\
\vdots \ar[d] \\
F_\bullet(R\mathcal{E}_1, R\mathcal{E}_2, \dots, R\mathcal{E}_n) \ar@{.>}[rruuu] &   
} \]
It exists because the vertical maps are again trivial cofibrations (because all the $\mathcal{E}_i$ and $R\mathcal{E}_i$ are cofibrant). 
The lift is well-defined in $\mathcal{D}^{\Cof, \Fib}[(\mathcal{W}^{\Cof, \Fib})^{-1}]$, because two lifts in the triangle above become right homotopic
(because $R\mathcal{F}$ is fibrant by \cite[Proposition 1.2.5. (iv)]{Hov99}).
Therefore also the corresponding morphisms in $\Hom(R\mathcal{E}_1, \dots, R\mathcal{E}_n; R\mathcal{F})$
become equal in $\mathcal{D}^{\Cof, \Fib}[(\mathcal{W}^{\Cof, \Fib})^{-1}]$ by the previous lemma. 
It follows that $\Phi$ is indeed a functor which is inverse to the inclusion up to isomorphism. 
\end{proof}

\begin{LEMMA}
If the objects $F_\bullet()$ for all 0-ary morphisms in $\mathcal{S}$ are cofibrant then 
the natural functor
\[ \mathcal{D}^{\Fib,\Cof}[(\mathcal{W}^{\Fib,\Cof})^{-1}] \rightarrow \mathcal{D}^{\Fib,\Cof}/\sim \]
is an isomorphism of categories. Otherwise it is, if we modify the 0-ary morphisms as before. 
\end{LEMMA}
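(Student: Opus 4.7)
The plan is to establish the isomorphism by verifying: (a) the functor is well-defined as a morphism of (multi\nobreakdash-)categories, (b) it is bijective on objects (trivially, as both sides have the same objects), (c) it is bijective on $n$-ary morphism sets for $n \ge 1$, and (d) it is bijective on 0-ary morphism sets, treating the two cases of the statement separately.

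First, for well-definedness: on $\mathcal{D}^{\Fib,\Cof}$ left homotopy (in every slot) and right homotopy coincide and are congruences by Lemma~\ref{LEMMAHC0} (using that all sources are cofibrant and the target is fibrant), so $\mathcal{D}^{\Fib,\Cof}/\sim$ is indeed a (multi-)category. Lemma~\ref{LEMMAHC3} then guarantees that homotopic morphisms get identified in the localization, so the natural functor descends.

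For the bijectivity on $n$-ary morphisms with $n \ge 1$, the essential ingredient is a Whitehead-type lemma: every weak equivalence $w : \mathcal{E} \to \mathcal{E}'$ in $\mathcal{D}^{\Fib,\Cof}$ admits a homotopy inverse. This follows by the classical argument: factor $w$ as a trivial cofibration followed by a trivial fibration, and on fibrant-cofibrant objects the lifting axioms provide sections/retractions that are homotopy inverses. Given this, surjectivity on $\Hom$-sets is a calculus-of-fractions argument: an arbitrary morphism in $\mathcal{D}^{\Fib,\Cof}[(\mathcal{W}^{\Fib,\Cof})^{-1}]$ is represented by a zig-zag in which every backwards arrow is a weak equivalence, hence a homotopy equivalence, so the zig-zag collapses to a single multimorphism modulo homotopy. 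For injectivity, if two multimorphisms $f, g$ become equal in the localization, the zig-zag witnessing this again consists of homotopy equivalences by Whitehead, and one checks $f \sim g$ using that homotopy is a congruence in $\mathcal{D}^{\Fib,\Cof}$.

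For $n = 0$ in the cofibrant case ($F_\bullet()$ cofibrant), a 0-ary morphism $F_\bullet() \to \mathcal{F}$ with target in $\mathcal{D}^{\Fib,\Cof}$ behaves just like a 1-ary morphism from the cofibrant object $F_\bullet()$ to the fibrant-cofibrant $\mathcal{F}$, and the same argument as in case (c) applies (the necessary right-homotopies exist because $\mathcal{F}$ is fibrant, independently of whether $F_\bullet()$ is fibrant). In the general case one works instead with $\widetilde{\mathcal{D}^{\Fib,\Cof}[(\mathcal{W}^{\Fib,\Cof})^{-1}]}$, where 0-ary $\Hom$-sets are computed with $QF_\bullet()$ in place of $F_\bullet()$; this reduces the 0-ary case verbatim to the cofibrant case already handled, and compatibility with composition is precisely the definition given before Lemma~\ref{LEMMAHC2}.

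The main obstacle will be the Whitehead-type lemma in the multicategorical setting: one must verify that homotopy inverses constructed in a single slot behave correctly under the $n$-ary composition, i.e.\ that pre-composing with a homotopy equivalence in one slot preserves homotopy classes of multimorphisms. This compatibility is already implicitly contained in the proof of Lemma~\ref{LEMMAHC0}, since the cylinder/path objects constructed there interact correctly with $F_\bullet$ and $F^{\bullet,i}$ by the Quillen adjunction in $n$ variables of Definition~\ref{PARQUILLENN}, so no genuinely new ideas are required beyond carefully keeping track of slots.
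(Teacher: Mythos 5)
Your proposal is correct in outline and rests on the same key ingredient as the paper's argument --- the Whitehead lemma for fibrant-cofibrant objects --- but it is structured differently. The paper argues via the universal property: it observes that the quotient functor $\mathcal{D}^{\Fib,\Cof} \to \mathcal{D}^{\Fib,\Cof}/\sim$ sends weak equivalences to isomorphisms (Whitehead, i.e.\ \cite[Proposition 1.2.8]{Hov99}) and that this quotient then satisfies the universal property of the localization by the path-object argument of \cite[Proposition 1.2.9]{Hov99}. Since the localization is determined up to unique isomorphism over $\mathcal{D}^{\Fib,\Cof}$, and both sides have the same objects, the comparison is an isomorphism. You instead attack bijectivity on $\Hom$-sets directly: you collapse zig-zags using homotopy inverses for surjectivity and try to trace the elementary moves of the localization for injectivity. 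Both routes work, but the universal-property version quietly absorbs the two steps that you flag as informal: the ``zig-zag collapses'' step really amounts to constructing a well-defined functor out of $\mathcal{D}^{\Fib,\Cof}[(\mathcal{W}^{\Fib,\Cof})^{-1}]$, and the injectivity step (``the zig-zag witnessing equality consists of homotopy equivalences'') requires you to verify that each generating relation of the localization is homotopy-preserving. The universal-property formulation gets both of these for free once one knows that any functor sending $\mathcal{W}^{\Fib,\Cof}$ to isomorphisms already identifies homotopic maps. What your version buys is greater concreteness; what the paper's version buys is that the awkward bookkeeping --- the independence of the chosen homotopy inverses, the compatibility of single-slot homotopy inverses with $n$-ary composition that you isolate as the ``main obstacle'' --- never has to be made explicit, because it is subsumed by the factorization through the quotient. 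Your treatment of the $0$-ary case via $QF_\bullet()$ is consistent with the modified $\Hom$-sets introduced before Lemma~\ref{LEMMAHC2} and is fine.
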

\begin{proof}
The natural functor $\mathcal{D}^{\Fib,\Cof} \rightarrow \mathcal{D}^{\Fib,\Cof}/\sim$ takes weak equivalences to isomorphisms \cite[Proposition 1.2.8]{Hov99}
and has the universal property of $\mathcal{D}^{\Fib,\Cof}[(\mathcal{W}^{\Fib,\Cof})^{-1}]$ by the same argument as in \cite[Proposition 1.2.9]{Hov99}.
\end{proof}

\begin{proof}[Proof of Proposition \ref{PROPDELIGNE}]
The previous lemmas showed that $\mathcal{D}[\mathcal{W}^{-1}]$ is equivalent to $\mathcal{D}^{\Fib,\Cof}/\sim$ if all objects of the form $F_\bullet()$ are cofibrant,
or if we replace the second multicategory by $\widetilde{\mathcal{D}^{\Fib,\Cof}/\sim}$, where we set $\Hom_{F,\widetilde{\mathcal{D}^{\Fib,\Cof}/\sim}}(; X):=\Hom_{\mathcal{D}_S[\mathcal{W}_{S}^{-1}]}(F_\bullet(), \mathcal{F})$ for all 0-ary morphism $F$ in $\mathcal{S}$ with domain $S$ and for every $\mathcal{F} \in \mathcal{D}_S$.

It remains to show that the functor
\[ p\,/\sim\,:\ \mathcal{D}^{\Fib,\Cof}/\sim\ \rightarrow \mathcal{S} \]
is bifibered  if all $F_\bullet()$ are cofibrant or otherwise bifibered for 
$n\ge 1$ (i.e.\@ (co)Cartesian morphisms exist for $n\ge1$). (The modification $\widetilde{\mathcal{D}^{\Fib,\Cof}/\sim}$ has been constructed in such a way that
it has coCartesian morphisms for $n=0$.)

We show that $p\,/\sim$ is opfibered, the other case being similar. 
Let $F$ be a multimorphism in $\mathcal{S}$ with codomain $S$. The set $\Hom_F(\mathcal{E}_1, \dots, \mathcal{E}_n; \mathcal{F})$ modulo right homotopy  is in bijection with the set
$\Hom_{\mathcal{D}_Y}(F_\bullet(\mathcal{E}_1, \dots, \mathcal{E}_n), \mathcal{F})$ modulo right homotopy. Since $\mathcal{F}$ is fibrant, the latter set is the same as
$\Hom_{\mathcal{D}_S}(R(F_\bullet(\mathcal{E}_1, \dots, \mathcal{E}_n)), \mathcal{F})$ modulo right homotopy. 
Hence morphisms in $\Hom_F(\mathcal{E}_1, \dots, \mathcal{E}_n; \mathcal{F})$ uniquely decompose as the composition
\[ \xymatrix{
\mathcal{E}_1 \ar@{-}[rd] \\
\ar@{}[r] \vdots & \cocart \ar[r] & F_\bullet(\mathcal{E}_1, \dots, \mathcal{E}_n) \ar[r] & R(F_\bullet(\mathcal{E}_1, \dots, \mathcal{E}_n))  \\
\mathcal{E}_n \ar@{-}[ru] \\
}\]
followed by a morphism in $\Hom_{\mathcal{D}_S}(R(F_\bullet(\mathcal{E}_1, \dots, \mathcal{E}_n)), \mathcal{F})$ modulo right homotopy.
More generally, by the same argument, a morphism in some
$\Hom_{GF}(\mathcal{F}_1, \dots, \mathcal{E}_1, \dots, \mathcal{E}_n, \dots, \mathcal{F}_m; \mathcal{G})$, where 
$G$ is another multimorphism in $\mathcal{S}$, modulo right homotopy
factorizes uniquely into the above composition followed by a morphism in 
\[ \Hom_{G}(\mathcal{F}_1, \dots, R(F_\bullet(\mathcal{E}_1, \dots, \mathcal{E}_n)), \dots, \mathcal{F}_m; \mathcal{G}) \] modulo right homotopy.

It remains to see that the push-forward in $\mathcal{D}[\mathcal{W}^{-1}]$ corresponds to the left derived functor of $F_\bullet$. 
For any objects $\mathcal{E}_1, \dots, \mathcal{E}_n$ the composition 
\[ \xymatrix{
RQ\mathcal{E}_1 \ar@{-}[rd] \\
\ar@{}[r] \vdots  & \cocart \ar[r] & F_\bullet(RQ\mathcal{E}_1, \dots, RQ\mathcal{E}_n) \ar[r] & R(F_\bullet(RQ\mathcal{E}_1, \dots, RQ\mathcal{E}_n))  \\
RQ\mathcal{E}_n \ar@{-}[ru] \\
}\]
is a coCartesian 
morphism lying over $F$, with domains isomorphic to the $\mathcal{E}_i$. 

However, the object $R(F_\bullet(RQ\mathcal{E}_1, \dots, RQ\mathcal{E}_n))$ is isomorphic to the value of the left derived functor of $F_\bullet$ at $\mathcal{E}_1, \dots, \mathcal{E}_n$. 
\end{proof}

\begin{PAR}\label{PARREEDY}
We now focus on the left case. If $I$ is a directed diagram, we proceed to construct a model structure on the fibers of the bifibration of multicategories (cf.\@ \ref{PARBIFIBI}):
\[ \Hom(I, \mathcal{D}) \rightarrow \Hom(I, \mathcal{S}) = \SSS(I). \]
This model structure is an analogue of the classical Reedy model structure and it has the property
that pull-backs w.r.t.\@ diagrams and the corresponding relative left Kan extension functors form a Quillen adjunction.

Let $I \in \Dir$ and let $F: I \rightarrow \mathcal{S}$ be a functor. We will define a model-category structure
\[ (\mathcal{D}_F, \Cof_F, \Fib_F, \mathcal{W}_F) \]
where $\mathcal{D}_F$ is the fiber of $\Hom(I, \mathcal{D})$ over $F$ and
where $\mathcal{W}_F$ is the class of morphisms which are element-wise in the corresponding $\mathcal{W}_{F(i)}$.

For any $G \in \mathcal{D}_F$, and for any $i \in I$, we define a {\bf latching object}
\[ L_i G := \colim_{I_i} \{F(\alpha)_\bullet G(j)\}_{\alpha:j \rightarrow i}, \] 
Here $I_i$ is the full subcategory of $I \times_{/I} i$ consisting of all objects except $\id_i$. 
We have a canonical morphism 
\[ L_i G \rightarrow G(i)  \]
in $\mathcal{D}_{F(i)}$.
We define $\Fib_F$ to be the class of morphisms which are element-wise in the corresponding $\Fib_{F(i)}$.
We define $\Cof_F$ to be the class of morphisms $G \rightarrow H$ such that for any $i \in I$ the induced morphism 
$\delta$ in the diagram
\[ \xymatrix{
L_i G \ar[d] \ar[r] & L_i H \ar[d] \\
 G(i) \ar[r] &  \text{push-out} \ar@{.>}[r]^-\delta & H(i)
} \]
belongs to $\Cof_{F(i)}$. We call a morphism $G \rightarrow H$ in $\Cof_F$ temporarily an {\bf acyclic cofibration} if $\delta$ is, in addition, a weak equivalence.
The proof that this yields a model-category structure is completely analogous to the classical case \cite[\S 5.1]{Hov99} (if $\mathcal{S}$ is trivial). We need a couple of lemmas:
\end{PAR}

\begin{LEMMA} \label{LEMMAPREPMODELCAT1fc}
The class of cofibrations (resp. acyclic cofibrations) in $\mathcal{D}_F$ consists precisely of the morphisms which
have the left lifting property w.r.t.\@ trivial fibrations (resp. fibrations).
These are stable under retracts.
\end{LEMMA}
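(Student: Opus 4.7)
The plan is to establish the lifting characterization by induction on the degree filtration of the directed diagram $I$, and then derive stability under retracts formally from this characterization.

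First I would show the ``easy'' direction: every morphism in $\Cof_F$ (resp.\@ acyclic cofibration) has the left lifting property w.r.t.\@ trivial fibrations (resp.\@ fibrations). Fix a cofibration $f: G \rightarrow H$ in $\mathcal{D}_F$ and a trivial fibration $p: X \rightarrow Y$, together with a lifting problem $\alpha: G \rightarrow X$, $\beta: H \rightarrow Y$ satisfying $p \alpha = \beta f$. Since $I$ is directed, it carries a degree function $I \rightarrow \N_0$, and I would construct the lift $\gamma: H \rightarrow X$ by induction on the degree of objects. Assuming $\gamma$ has been defined compatibly on all objects of degree $< n$, for $i$ of degree $n$ the partial data assembles into a morphism $L_i H \rightarrow X(i)$, which together with $\alpha(i)$ yields a morphism out of the pushout $G(i) \cup_{L_i G} L_i H$ into $X(i)$. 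The remaining lifting problem
\[ \xymatrix{ G(i) \cup_{L_i G} L_i H \ar[r] \ar[d]_\delta & X(i) \ar[d]^{p(i)} \\ H(i) \ar[r] \ar@{.>}[ru] & Y(i) } \]
lives entirely in the fiber $\mathcal{D}_{F(i)}$, where the left vertical map $\delta$ is a cofibration by assumption on $f$, and the right vertical map is a trivial fibration in $\mathcal{D}_{F(i)}$ (restrictions of pointwise (trivial) fibrations). A lift exists by the model structure on $\mathcal{D}_{F(i)}$. The acyclic-cofibration case is identical, with $\delta$ a trivial cofibration and $p(i)$ only a fibration.

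For the converse, I would use that any morphism admits a factorization as a morphism in $\Cof_F$ (resp.\@ acyclic cofibration) followed by a trivial fibration (resp.\@ fibration) --- constructed degreewise by applying the small object argument in each fiber $\mathcal{D}_{F(i)}$ to the pushout map $\delta$, so that the constructed cofibration is itself pointwise built from fiberwise cofibrations. Given any $f$ having LLP w.r.t.\@ trivial fibrations, factor $f = p i$ with $i \in \Cof_F$ and $p$ a trivial fibration; the LLP produces a lift that exhibits $f$ as a retract of $i$. Since cofibrations in $\Cof_F$ are manifestly stable under retracts --- the latching construction $L_i$ preserves retracts, pushouts preserve retracts, and each $\Cof_{F(i)}$ is stable under retracts --- the morphism $f$ itself lies in $\Cof_F$. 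Stability of both classes under retracts is then immediate from the LLP characterization (LLP against any fixed class is automatically retract-stable).

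The main obstacle will be the bookkeeping in the inductive step: one must verify that the two maps into the pushout (coming from $\alpha(i)$ on $G(i)$ and from the previously constructed $\gamma$ on $L_i H$) actually agree on $L_i G$, and that the resulting $\gamma(i)$ is compatible with all structural maps in $I$ of higher degree that have not yet been treated. Both follow from the functoriality of the latching object $L_i$ and from the compatibility $p \alpha = \beta f$, but one must be careful that the choice of lift at degree $n$ does not obstruct extension to higher degrees --- this works because any such future compatibility constraint is itself a lifting problem of the same form, solved by the same induction.
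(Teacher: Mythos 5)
Your proposal follows essentially the same strategy as the paper: the ``easy'' direction (cofibrations satisfy LLP against trivial fibrations, and acyclic cofibrations against fibrations) by induction on the degree function of the directed diagram $I$, reducing each stage to a fiberwise lifting problem against the pushout-product map $\delta$; the converse via factorization plus the retract argument; and retract-stability of $\Cof_F$ from the fact that $L_i$, pushouts, and $\Cof_{F(i)}$ all respect retracts. The paper spells out only the acyclic-cofibration case of the induction, leaves retract-stability as an exercise, and invokes the retract argument ``as for model categories'' with the needed factorization deferred to the subsequent lemma --- all of which you make explicit, so your write-up is if anything more complete.
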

\begin{proof}This is shown as in the classical case:
we first prove that acyclic cofibrations have the lifting property w.r.t.\@ fibrations. 
Consider a diagram
\[ \xymatrix{ G_1 \ar[r] \ar[d]^\alpha & H_1 \ar[d]^\beta \\ G_2 \ar[r] & H_2 \\  } \]
where $\alpha$ is an acyclic cofibration and $\beta$ is a fibration.
We proceed by induction on $n$ and assume that for all $i \in I$ with $\nu(i) < n$ a map
$G_2(i) \rightarrow H_1(i)$ has been constructed such that it is a lift in the above diagram, evaluated at $i$.
For each $i$ of degree $n$ consider the following diagram (where the morphism $L_i G_2 \rightarrow L_i H_1 \rightarrow H_1(i)$ is formed using the already constructed lifts):
\[ \xymatrix{ G_1(i) \coprod_{L_i G_1} L_i G_2 \ar[r] \ar[d]^{\alpha'(i)} & H_1(i) \ar[d]^{\beta(i)} \\ G_1(i) \ar[r] & H_2(i) \\  } \]
Here $\alpha'(i)$ is a trivial $\Cof_{F(i)}$-cofibration by definition, and $\beta(i)$ is a $\Fib_{F(i)}$-fibration by definition. Hence a lift exists.
In the same way the statement for cofibrations and for trivial fibrations is shown.
Closure under retracts is left as an exercise for the reader.
The assertion that the class of acyclic cofibrations (resp.\@ cofibrations) is {\em precisely} the class of morphisms that have the left lifting property w.r.t.\@ fibrations (resp.\@ trivial fibrations) follows from the retract argument as
for model categories.
\end{proof}

\begin{LEMMA}\label{LEMMAPREPMODELCAT2fc}
There exists a functorial factorization of morphisms in $\mathcal{D}_F$
into a fibration followed by an acyclic cofibration and into a trivial fibration followed by a cofibration.
\end{LEMMA}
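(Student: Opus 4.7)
The plan is to construct both factorizations by induction on the degree function $\nu: I \to \N_0$ that witnesses the directedness of $I$, following the classical Reedy construction but taking care of the dependence on $F$ via the push-forward functors $F(\alpha)_\bullet$.

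First I would fix a morphism $\varphi: G \to H$ in $\mathcal{D}_F$ and build the factorization $G \to Z \to H$ one level at a time. Assume inductively that $Z(j)$ together with $G(j) \to Z(j) \to H(j)$ has been constructed for every $j$ with $\nu(j) < n$, and that these are compatible with all morphisms $\alpha: j \to j'$ of $I$ (including the relevant coCartesian lifts realizing $F(\alpha)_\bullet$). For $i \in I$ with $\nu(i) = n$ this determines the latching object $L_i Z := \colim_{I_i} \{F(\alpha)_\bullet Z(j)\}_{\alpha: j \to i}$, together with canonical maps $L_i G \to L_i Z \to L_i H$ (using that $F(\alpha)_\bullet$ is a functor, applied to the partial factorization). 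Form the push-out $P_i := G(i) \coprod_{L_i G} L_i Z$ in $\mathcal{D}_{F(i)}$, which comes with a canonical map $P_i \to H(i)$. Apply the functorial factorization in the model category $\mathcal{D}_{F(i)}$ to this map: for the first factorization, decompose $P_i \to H(i)$ as a (trivial) cofibration $P_i \to Z(i)$ followed by a fibration $Z(i) \to H(i)$; for the second, decompose as a cofibration $P_i \to Z(i)$ followed by a trivial fibration $Z(i) \to H(i)$. The composite $G(i) \to P_i \to Z(i)$ then has the desired $\delta_i$ (a trivial cofibration, respectively a cofibration) in $\mathcal{D}_{F(i)}$ by construction.

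To make $Z$ into an object of $\mathcal{D}_F$, I would spell out the transition morphisms: for any $\alpha: j \to i$ in $I$, the map $F(\alpha)_\bullet Z(j) \to Z(i)$ is obtained by composing the canonical map $F(\alpha)_\bullet Z(j) \to L_i Z$, the map $L_i Z \to P_i$, and the new map $P_i \to Z(i)$. Functoriality in $\alpha$ follows from the universal property of the colimit defining $L_i Z$ and from the pseudo-functor structure on $\alpha \mapsto F(\alpha)_\bullet$ (which is part of the data of the opfibration $\mathcal{D} \to \mathcal{S}$). That the resulting pair of morphisms $G \to Z \to H$ composes to $\varphi$, and that the middle map is the required (trivial) cofibration and the last is the required (trivial) fibration, is then immediate from the level-wise definitions and Lemma~\ref{LEMMAPREPMODELCAT1fc}. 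Functoriality of the overall factorization in $\varphi$ follows from the functoriality of the factorizations in each fiber $\mathcal{D}_{F(i)}$, which is part of the model-category data assumed in Definition~\ref{PARQUILLENN} (via the small object argument applied fiberwise).

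The main obstacle is the coherence of the construction across different fibers: the push-forwards $F(\alpha)_\bullet$ must interact correctly with the functorial factorizations chosen in each $\mathcal{D}_{F(i)}$. The key fact needed here is that $F(\alpha)_\bullet$ is a left Quillen functor (part 1 of Definition~\ref{PARQUILLENN} with $n=1$), which guarantees $F(\alpha)_\bullet$ preserves cofibrations and trivial cofibrations, and in particular that the latching map $L_i G \to L_i Z$ is itself a (trivial) cofibration whenever the previously constructed $\delta_j$ are. Once this is verified, the push-out $P_i$ retains the correct cofibration properties with respect to $G(i)$, and the desired factorization axioms in $\mathcal{D}_F$ drop out of those in each $\mathcal{D}_{F(i)}$.
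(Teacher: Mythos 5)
Your construction matches the paper's proof exactly: proceed by induction on the degree, form the pushout $G(i)\coprod_{L_iG}L_iZ$, and factor the induced map to the target in the fiber $\mathcal{D}_{F(i)}$, reading off $\delta_i$ from the chosen fiber-wise factorization. The ``main obstacle'' you raise at the end is not actually needed for this lemma --- there is no coherence constraint between the factorizations chosen at different levels, and the fact that $F(\alpha)_\bullet$ preserves (trivial) cofibrations is not used here (it is used later, e.g.\@ in the proof that acyclic cofibrations coincide with trivial cofibrations).
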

\begin{proof}
We show this again by induction on $n$. We do the first case, the other being similar.
Let $G \rightarrow K$ a morphism in $\mathcal{D}_F$. We have the following diagram:
\[ \xymatrix{ L_i G \ar[r] \ar[d] & L_i H \ar[rr] \ar[d] &&  L_i K  \ar[d] \\ G(i) \ar[r] & G(i) \coprod_{L_i G} L_i H \ar@{.>} [r]& H(i) \ar@{.>}[r] & K(i) \\  } \]
Here the top row is constructed using the already defined factorizations. The object $H(i)$ and the dotted maps are constructed as the factorization in the model category
$\mathcal{D}_{F(i)}$ into a trivial $\Cof_{F(i)}$-cofibration followed by $\Fib_{F(i)}$-fibration. 
\end{proof}

\begin{LEMMA}\label{LEMMAPREPMODELCAT3fc}
The classes of cofibrations, acyclic cofibrations, fibrations and weak equivalences are stable under composition.
\end{LEMMA}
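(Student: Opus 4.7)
The plan is to handle the four classes separately, with the two pointwise-defined classes (fibrations and weak equivalences) being immediate, and the two classes characterized by lifting properties (cofibrations and acyclic cofibrations) being handled by the usual abstract argument.

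First, for fibrations and weak equivalences: by definition in \ref{PARREEDY}, a morphism in $\mathcal{D}_F$ is a fibration (resp.\@ a weak equivalence) if and only if each of its components $G(i) \rightarrow H(i)$ is a fibration (resp.\@ weak equivalence) in the model category $\mathcal{D}_{F(i)}$. Since each $(\mathcal{D}_{F(i)}, \Cof_{F(i)}, \Fib_{F(i)}, \mathcal{W}_{F(i)})$ is a model category, its fibrations and weak equivalences are closed under composition; hence the same holds pointwise in $\mathcal{D}_F$.

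Second, for cofibrations and acyclic cofibrations: by Lemma~\ref{LEMMAPREPMODELCAT1fc}, these classes are characterized as those morphisms having the left lifting property with respect to trivial fibrations (resp.\@ fibrations). It is a purely formal fact that any class of morphisms characterized by the left lifting property with respect to a fixed class is stable under composition: given composable maps $f: G_1 \rightarrow G_2$ and $g: G_2 \rightarrow G_3$ both having the LLP with respect to a map $p: X \rightarrow Y$, and any commutative square with outer morphism $gf$ on the left and $p$ on the right, one first uses the LLP of $f$ to produce a lift $G_2 \rightarrow X$, and then applies the LLP of $g$ to the resulting square to produce the desired lift $G_3 \rightarrow X$. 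Applied here with $p$ ranging over trivial fibrations (resp.\@ fibrations) this gives closure of (acyclic) cofibrations under composition.

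I do not expect any serious obstacle here: once the characterization in Lemma~\ref{LEMMAPREPMODELCAT1fc} is in hand, and once fibrations/weak equivalences are defined pointwise, the statement is a formal consequence and needs no reference to the latching-object structure of $\mathcal{D}_F$ beyond what is already encoded in those definitions.
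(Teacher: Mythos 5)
Your proof is correct and takes essentially the same approach as the paper: the paper's (terse) proof also invokes the lifting-property characterization from Lemma~\ref{LEMMAPREPMODELCAT1fc} for the (acyclic) cofibrations and the pointwise definition for the remaining classes. You spell out the standard LLP-composition argument and the pointwise observation in slightly more detail, but there is no substantive difference.
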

\begin{proof}This follows from the characterization by a lifting property (resp.\@ by definition for the case of the weak equivalences).   
\end{proof}

\begin{LEMMA}\label{LEMMAPREPMODELCAT4fc}
Acyclic cofibrations are precisely the trivial cofibrations.
\end{LEMMA}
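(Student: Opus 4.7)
The plan is to prove the two inclusions in the standard Reedy order: first the direction ``(temporary) acyclic cofibration $\Rightarrow$ cofibration and pointwise weak equivalence,'' which is the substantive one, and then deduce the reverse direction by a retract argument.

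For the forward direction, let $q: G \to H$ be an acyclic cofibration in the temporary sense, so every $\delta_i: G(i) \coprod_{L_iG} L_iH \to H(i)$ is a trivial cofibration in $\mathcal{D}_{F(i)}$. I would argue by induction on the degree $\nu(i)$ that $q(i)$ is a weak equivalence in $\mathcal{D}_{F(i)}$ (and, simultaneously, that $L_iq$ is a weak equivalence). In the base case $\nu(i) = 0$, the indexing category $I_i$ is empty, so $L_iG$ and $L_iH$ are initial, the pushout is $G(i)$ itself, and $\delta_i$ coincides with $q(i)$. For the inductive step, factor
\[
q(i): G(i) \to G(i) \coprod_{L_iG} L_iH \xrightarrow{\delta_i} H(i);
\]
the second map is a trivial cofibration by hypothesis, so it suffices to see that the first map, which is the pushout of $L_iq$ along $\lambda_G: L_iG \to G(i)$, is a weak equivalence. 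For this I need $L_iq$ to be a weak equivalence between objects whose pushout along $\lambda_G$ is homotopy invariant.

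The main obstacle is exactly this latching-object step, and here I would use the Quillen-adjunction-in-$n$-variables property from Definition~\ref{PARQUILLENN}, together with Ken Brown's lemma, combined with the directedness of $I_i$. Since $I$ is directed and every $\alpha \in I_i$ is a non-identity morphism into $i$, each domain $j$ has $\nu(j) < \nu(i)$, so by induction $q(j): G(j) \to H(j)$ is a weak equivalence. The functors $F(\alpha)_\bullet$ are left Quillen (property~1 of~\ref{PARQUILLENN}), so by Ken Brown they preserve weak equivalences between cofibrant objects; the needed cofibrancy on the $G(j)$ and $H(j)$ is supplied by an auxiliary simultaneous induction showing that, for a cofibration-morphism $q$, the latching maps $\lambda_G$ and $\lambda_H$ are cofibrations in $\mathcal{D}_{F(i)}$ (reducing, up to handling of $0$-ary morphisms via property~2 of~\ref{PARQUILLENN}, to the classical Reedy argument). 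This gives $L_iq$ a weak equivalence, and being a pushout of a weak equivalence along the cofibration $\lambda_G$, the first factor is a weak equivalence by the standard pushout lemma in the model category $\mathcal{D}_{F(i)}$. By two-out-of-three, $q(i)$ is a weak equivalence.

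For the reverse direction, let $f: G \to H$ be a cofibration that is a pointwise weak equivalence. Apply Lemma~\ref{LEMMAPREPMODELCAT2fc} to factor $f = p \circ q$ with $q$ an acyclic cofibration and $p$ a fibration. By the forward direction just proved, $q$ is a pointwise weak equivalence; by two-out-of-three in each $\mathcal{D}_{F(i)}$, so is $p$, and hence $p$ is a pointwise trivial fibration, i.e.\@ a trivial fibration in $\mathcal{D}_F$. By Lemma~\ref{LEMMAPREPMODELCAT1fc}, $f$ has the left lifting property against trivial fibrations, so lifting in the square with $q$ on top, $\id_H$ on the bottom, $f$ on the left and $p$ on the right exhibits $f$ as a retract of $q$ via the standard retract diagram. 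Since acyclic cofibrations are closed under retracts (Lemma~\ref{LEMMAPREPMODELCAT1fc}), $f$ is an acyclic cofibration.
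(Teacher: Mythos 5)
Your reverse direction matches the paper's: factor a trivial cofibration $f$ as $pg$ with $g$ an acyclic cofibration and $p$ a fibration, use the forward direction and two-out-of-three to see $p$ is a trivial fibration, lift to exhibit $f$ as a retract of $g$, and conclude by retract-closure. That part is fine.

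Your forward direction, however, has a genuine gap. You want to prove that $L_iq$ is a \emph{weak equivalence} via Ken Brown's lemma applied to the left Quillen functors $F(\alpha)_\bullet$, and then deduce that the pushout of $L_iq$ along $\lambda_G$ is a weak equivalence. Both of these steps require cofibrancy that the hypotheses do not supply. Ken Brown's lemma needs $G(j)$ and $H(j)$ to be cofibrant objects of $\mathcal{D}_{F(j)}$, and the pushout step (whether phrased as the cube lemma or as ``pushout of a weak equivalence along a cofibration'') requires $L_iG$, $L_iH$ and $G(i)$ to be cofibrant — in a general model category a pushout of a weak equivalence along a cofibration is \emph{not} a weak equivalence without such cofibrancy or left properness. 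You try to source this cofibrancy from an auxiliary claim that $\lambda_G$ and $\lambda_H$ are cofibrations, but that is precisely the assertion that $G$ and $H$ are Reedy \emph{cofibrant objects} of $\mathcal{D}_F$; it does not follow from $q: G\to H$ being an acyclic cofibration, which constrains only the \emph{relative} latching maps $G(i)\coprod_{L_iG} L_iH \to H(i)$, not the absolute ones. Since the lemma imposes no cofibrancy on $G$ or $H$, the route through weak equivalences and Ken Brown cannot be closed.

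The paper's proof sidesteps this entirely by never descending from trivial cofibrations to weak equivalences. It shows that $L_iq: L_iG\to L_iH$ is itself a \emph{trivial cofibration}: the morphism of $I_i$-diagrams $\{F(\alpha)_\bullet G(j)\}_\alpha \to \{F(\alpha)_\bullet H(j)\}_\alpha$ is a Reedy trivial cofibration over the constant diagram $F(i)$ by Lemmas~\ref{LEMMAPREPMODELCAT5fc} and~\ref{LEMMAPREPMODELCAT6fc} (which assert exactly that restriction to the latching category and the fibrewise pushforward preserve (trivial) cofibrations — facts that hold with no cofibrancy input, because left Quillen functors always preserve trivial cofibrations), and $\operatorname{colim}_{I_i}$ is left Quillen for the classical Reedy structure on the direct category $I_i$. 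Once $L_iq$ is a trivial cofibration, $G(i)\to G(i)\coprod_{L_iG}L_iH$ is one too (pushouts of trivial cofibrations along \emph{arbitrary} maps are trivial cofibrations, no properness needed), and composing with the trivial cofibration $\delta_i$ gives that $q(i)$ is a trivial cofibration, in particular a weak equivalence. If you rework your argument to track trivial cofibrations instead of bare weak equivalences, the cofibrancy issues evaporate and you recover the paper's proof.
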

\begin{proof}
We begin by showing that an acyclic cofibration is a weak equivalence.
It suffices to show that in the diagram
\[ \xymatrix{
L_i G \ar[d] \ar[r] & L_i H \ar[d] \\
 G(i) \ar[r] &   H(i)
} \]
the top horizontal morphism is a trivial cofibration. Then the lower horizontal morphism is a composition of two trivial cofibrations and hence is a weak equivalence.
The top morphism is indeed a trivial cofibration because
the morphism of $I_i$-diagrams (cf.\@ \ref{PARREEDY})
\[ \{F(\alpha)_\bullet G(j) \}_{\alpha: j \rightarrow i} \rightarrow \{F(\alpha)_\bullet G(j) \}_{\alpha: j \rightarrow i} \]
is a trivial cofibration in the classical sense (i.e.\@ over the constant diagram over $I_i$ with value $F(i)$) because of Lemmas\ref{LEMMAPREPMODELCAT5fc} and \ref{LEMMAPREPMODELCAT6fc}.

In the other direction, let $f$ be a trivial cofibration and factor it as $f=p g$, where $g$ is an acyclic cofibration and $p$ is a fibration. It follows that $p$ is a weak-equivalence.
Now construct a lift in the diagram
\[ \xymatrix{
F \ar[r]^g \ar[d]_f & H  \ar[d]^p \\
G \ar@{=}[r] & G
}\] 
This shows that $f$ is a retract of $g$, and hence is an acyclic cofibration, too.
\end{proof}

\begin{LEMMA}\label{LEMMAPREPMODELCAT5fc}
For each (1-ary) morphism of diagrams $f \in \Hom_{\mathcal{S}}(X_1;  Y)$ there is an associated push-forward and an associated pull-back, 
defined by taking the point-wise push-forward $f_\bullet$, and point-wise pull-back $f^{\bullet}$ (cf.\@ \ref{PARBIFIBI}), respectively.
The push-forward $f_\bullet$ respects the classes of cofibrations and acyclic cofibrations.
The pull-back $f^{\bullet}$ respects the classes of fibrations and trivial fibrations.
\end{LEMMA}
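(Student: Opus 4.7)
The plan is to prove the two assertions separately, treating the pull-back case first as it is essentially immediate and then handling the push-forward case via an explicit analysis of latching objects.

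For the pull-back $f^{\bullet}$: by construction $\Fib_F$ consists of point-wise fibrations, and trivial fibrations are point-wise trivial fibrations (indeed, by Lemma \ref{LEMMAPREPMODELCAT4fc} trivial cofibrations coincide with acyclic cofibrations, so by the lifting characterization of Lemma \ref{LEMMAPREPMODELCAT1fc} trivial fibrations may be tested point-wise as well). The pull-back $f^{\bullet}$ is defined point-wise, and at each $i \in I$ the functor $(f_i)^{\bullet} : \mathcal{D}_{Y(i)} \to \mathcal{D}_{X_1(i)}$ is the right adjoint in a Quillen adjunction by property 1 of Definition \ref{PARQUILLENN}, hence preserves fibrations and trivial fibrations. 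Therefore $f^\bullet$ preserves $\Fib_F$ and trivial fibrations.

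For the push-forward $f_{\bullet}$: let $G \to H$ be a cofibration in $\mathcal{D}_F$. I need to show that for each $i \in I$ the relative latching map
\[
(f_{\bullet} G)(i) \coprod_{L_i(f_{\bullet} G)} L_i(f_{\bullet} H) \longrightarrow (f_{\bullet} H)(i)
\]
lies in $\Cof_{Y(i)}$. The crux is the identification $L_i(f_{\bullet} G) \cong (f_i)_\bullet L_i G$, which I would establish as follows. By definition $L_i(f_{\bullet} G) = \colim_{I_i} \{Y(\alpha)_\bullet (f_j)_\bullet G(j)\}_{\alpha: j \to i}$. Since $f$ is a natural transformation over $I$, one has $Y(\alpha) \circ f_j = f_i \circ X_1(\alpha)$, hence a canonical pseudo-functorial isomorphism $Y(\alpha)_\bullet (f_j)_\bullet \cong (f_i)_\bullet X_1(\alpha)_\bullet$. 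Because $(f_i)_\bullet$ admits the right adjoint $(f_i)^\bullet$, it preserves colimits, and consequently
\[
L_i(f_{\bullet} G) \cong \colim_{I_i} (f_i)_\bullet X_1(\alpha)_\bullet G(j) \cong (f_i)_\bullet \colim_{I_i} X_1(\alpha)_\bullet G(j) = (f_i)_\bullet L_i G,
\]
and analogously for $H$. One checks that these identifications are compatible with the canonical maps to $(f_i)_\bullet G(i)$ and $(f_i)_\bullet H(i)$.

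Applying $(f_i)_\bullet$ to the pushout square defining the relative latching map for $G \to H$ in $\mathcal{D}_{X_1}$ yields, by preservation of colimits, the pushout square for the relative latching map of $f_{\bullet} G \to f_{\bullet} H$, so that the target map is precisely $(f_i)_\bullet$ applied to the relative latching map $G(i) \coprod_{L_i G} L_i H \to H(i)$. By hypothesis this latching map is in $\Cof_{X_1(i)}$, and in the case of an acyclic cofibration it is also a weak equivalence in $\mathcal{D}_{X_1(i)}$. Since $(f_i)_\bullet$ is a left Quillen functor in the fibers (again by property 1 of Definition \ref{PARQUILLENN}), it preserves cofibrations and trivial cofibrations, which yields the desired conclusion.

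The only subtle point to verify carefully is the compatibility of the pseudo-functorial isomorphism $Y(\alpha)_\bullet (f_j)_\bullet \cong (f_i)_\bullet X_1(\alpha)_\bullet$ with composition of morphisms in $I_i$, so that the identification $L_i(f_{\bullet} G) \cong (f_i)_\bullet L_i G$ is really an isomorphism of objects under $(f_i)_\bullet G(i)$; but this is a routine verification using the coherence of the chosen push-forward functors for the bifibration.
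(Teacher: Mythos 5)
Your argument for $f^\bullet$ coincides with the paper's argument, but your argument for $f_\bullet$ takes a genuinely different route. The paper proves only that $f^\bullet$ preserves (trivial) fibrations and then invokes the adjunction $f_\bullet \dashv f^\bullet$ together with the lifting characterization from Lemma~\ref{LEMMAPREPMODELCAT1fc} to conclude that $f_\bullet$ preserves (acyclic) cofibrations. You instead prove the push-forward half directly, by establishing the key identification $L_i(f_\bullet G) \cong (f_i)_\bullet L_i G$ (using naturality of $f$ and the fact that $(f_i)_\bullet$ is a left adjoint, hence preserves colimits) and then showing that the relative latching map of $f_\bullet G \to f_\bullet H$ is $(f_i)_\bullet$ applied to the relative latching map of $G \to H$. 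Since $(f_i)_\bullet$ is left Quillen, the conclusion follows. Both approaches are correct; the paper's is slicker, while yours is more explicit and has the merit of exhibiting the latching-object compatibility that underlies the Quillen-pair assertion, at the cost of a coherence verification you correctly flag. Note also that your direct computation reproves the adjunction direction implicitly, which is mild redundancy.

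One small blemish: in the pull-back half you justify ``trivial fibrations are pointwise trivial fibrations'' by appealing to Lemma~\ref{LEMMAPREPMODELCAT4fc}. This is both unnecessary and, within the paper's numbering, a circularity risk, since the paper's proof of Lemma~\ref{LEMMAPREPMODELCAT4fc} cites the present Lemma~\ref{LEMMAPREPMODELCAT5fc}. The fact is immediate: trivial fibrations are, by definition, morphisms in $\Fib_F \cap \mathcal{W}_F$, and both $\Fib_F$ and $\mathcal{W}_F$ are defined pointwise, so their intersection is the class of pointwise trivial fibrations. Replace the parenthetical with this direct remark to keep the logical order clean.
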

\begin{proof}
It suffices (by the lifting property) to show that $f^\bullet$ respects fibrations and trivial fibrations. This is clear because they are defined point-wise.
\end{proof}

A posteriori this will say that the pair of functors $f^\bullet, f_\bullet$ form a Quillen adjunction between the corresponding model categories (cf.\@ \ref{LEMMABIFIBMULTIMODELCAT}).

\begin{LEMMA}\label{LEMMAPREPMODELCAT6fc}
Let $i \in I$ be an object, let $\iota: I_i \rightarrow I$ be the corresponding latching category with its natural functor to $I$,
and let $F_i:= \iota^*F: I_i \rightarrow \mathcal{S}$ be the restriction of $F$ to $I_i$. 
The pull-back $\iota^*: \mathcal{D}_F \rightarrow \mathcal{D}_{F_i}$ respects cofibrations  and acyclic cofibrations.
\end{LEMMA}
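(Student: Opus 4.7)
The plan is to show that for each object $(j,\alpha) \in I_i$, the latching object of $\iota^*G$ at $(j,\alpha)$ coincides canonically with the latching object $L_jG$ in $I$, so that the Reedy-type cofibration condition at $(j,\alpha)$ for $\iota^*G \to \iota^*H$ is nothing but the Reedy-type cofibration condition at $j$ for $G \to H$. Since the latter holds at every object of $I$ by assumption, the former holds at every object of $I_i$, and the same reasoning (carried out in parallel with the extra weak equivalence requirement) handles acyclic cofibrations.

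First I would unravel the definitions. An object of $I_i = I\times_{/I}i \setminus\{\id_i\}$ is a pair $(j,\alpha\colon j\to i)$ and, by the definition of the slice, a morphism $(k,\beta)\to(j,\alpha)$ in $I_i$ is a morphism $\gamma\colon k\to j$ in $I$ with $\alpha\circ\gamma = \beta$. Hence the latching category $(I_i)_{(j,\alpha)}$ (all non-identity morphisms to $(j,\alpha)$) is canonically isomorphic, via $((k,\beta),\gamma)\mapsto(k,\gamma)$, to the latching category $I_j$. I would note that this bijection is well-defined: because $I$ is directed, a non-identity $\gamma\colon k\to j$ forces $\deg(k)<\deg(j)<\deg(i)$, so $(k,\alpha\gamma)\neq(i,\id_i)$ automatically, and every $(k,\gamma)\in I_j$ yields a valid object of $I_i$ via $\beta := \alpha\gamma$.

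Second, under this isomorphism the diagrams computing the two latching objects agree on the nose: an object $(k,\gamma)\in I_j$ contributes $F(\gamma)_\bullet G(k)$ to $L_jG$, while the corresponding object $((k,\alpha\gamma),\gamma)\in (I_i)_{(j,\alpha)}$ contributes $F_i(\gamma)_\bullet (\iota^*G)(k) = F(\gamma)_\bullet G(k)$, and the transition maps induced by a morphism in $I_j$ match those in $(I_i)_{(j,\alpha)}$. Therefore the canonical map is an isomorphism
\[ L_{(j,\alpha)}\iota^*G \iso L_j G, \]
and it fits into a commutative square with $G(j) = (\iota^*G)(j,\alpha)$ identifying the two maps to the target. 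Consequently the pushout-corner map
\[ (\iota^*G)(j,\alpha)\sqcup_{L_{(j,\alpha)}\iota^*G} L_{(j,\alpha)}\iota^*H \longrightarrow (\iota^*H)(j,\alpha) \]
is literally equal to
\[ G(j)\sqcup_{L_jG} L_jH \longrightarrow H(j). \]

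Finally I would conclude: if $G\to H$ is a cofibration in $\mathcal{D}_F$, then the latter map lies in $\Cof_{F(j)}$ for every $j\in I$, in particular for every $j$ appearing as the first coordinate of some $(j,\alpha)\in I_i$; hence $\iota^*G\to\iota^*H$ is a cofibration in $\mathcal{D}_{F_i}$. The same argument, tracking the additional condition that the pushout-corner map be a weak equivalence, proves the statement for acyclic cofibrations. There is no real obstacle here beyond a careful verification of the isomorphism $(I_i)_{(j,\alpha)}\cong I_j$ and the compatibility of the diagrams under this isomorphism, which is where I would spend the most care.
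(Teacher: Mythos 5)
Your proposal is correct and takes essentially the same approach as the paper's proof, which simply asserts that ``the pull-back induces an isomorphism of the corresponding latching objects as in the classical case.'' You have spelled out the canonical isomorphism $(I_i)_{(j,\alpha)}\cong I_j$ and the resulting identification of pushout-corner maps, which is exactly the content of that one-line argument.
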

\begin{proof}
It is easy to see that the pull-back induces an isomorphism of the corresponding latching objects as in the classical case.
\end{proof}

\begin{KOR}
The structure constructed in \ref{PARREEDY} defines a model category. 
\end{KOR}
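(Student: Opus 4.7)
The plan is to verify each of Hovey's model category axioms for the triple $(\Cof_F, \Fib_F, \mathcal{W}_F)$ on $\mathcal{D}_F$ by stitching together the five preparatory lemmas; essentially all the work has already been done, what remains is organization.

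First I would check bicompleteness of $\mathcal{D}_F$. Since $\mathcal{D} \to \mathcal{S}$ is a bifibration with complete fibers and $F: I \to \mathcal{S}$ is fixed, the fiber $\mathcal{D}_F = \Hom(I, \mathcal{D})_F$ inherits all small limits and colimits pointwise from the $\mathcal{D}_{F(i)}$: limits are computed using the right adjoint (pull-back) structure along identities, colimits using push-forward along identities, and these are preserved by the transition functors $F(\alpha)_\bullet$ and $F(\alpha)^\bullet$ up to the coherence inherent in the bifibration. (This is the same argument as in Proposition~\ref{PARBIFIBI} specialized to the fiber over $F$.) Next, the 2-out-of-3 property for $\mathcal{W}_F$ is immediate from its pointwise definition and the corresponding property in each $\mathcal{D}_{F(i)}$.

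Second, retract closure: for $\Fib_F$ and $\mathcal{W}_F$ this is immediate from their pointwise definitions and the corresponding retract-closure in each $\mathcal{D}_{F(i)}$; for $\Cof_F$ and the class of acyclic cofibrations, retract-closure is asserted in Lemma~\ref{LEMMAPREPMODELCAT1fc}. The lifting axioms MC4 follow directly from Lemma~\ref{LEMMAPREPMODELCAT1fc}, which says precisely that cofibrations (resp.\ acyclic cofibrations) coincide with the classes of morphisms having the left lifting property with respect to trivial fibrations (resp.\ fibrations); the other direction of the lifting characterization requires identifying acyclic cofibrations with trivial cofibrations, which is exactly Lemma~\ref{LEMMAPREPMODELCAT4fc}. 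The factorization axiom MC5 is exactly the content of Lemma~\ref{LEMMAPREPMODELCAT2fc}, which even gives functorial factorizations.

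The main obstacle, if any, is conceptual rather than technical: one needs to be careful that the ``trivial cofibration = acyclic cofibration'' identification (Lemma~\ref{LEMMAPREPMODELCAT4fc}) really does close the loop between the definition of $\Cof_F$ via the latching map condition and the characterization via lifting. The retract argument used there is standard and relies only on the existence of factorizations (Lemma~\ref{LEMMAPREPMODELCAT2fc}) and closure of weak equivalences under the relevant compositions (Lemma~\ref{LEMMAPREPMODELCAT3fc}). Once this is checked, all of Hovey's axioms MC1--MC5 are seen to hold, and the corollary follows. No additional input beyond the preceding lemmas is needed.
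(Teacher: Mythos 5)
Your proof is correct and takes essentially the same route as the paper, which simply cites "the previous Lemmas"; you have usefully spelled out which of Lemmas~\ref{LEMMAPREPMODELCAT1fc}--\ref{LEMMAPREPMODELCAT4fc} delivers which of Hovey's axioms (retract closure and lifting from~\ref{LEMMAPREPMODELCAT1fc}, factorization from~\ref{LEMMAPREPMODELCAT2fc}, stability under composition from~\ref{LEMMAPREPMODELCAT3fc}, the identification of acyclic cofibrations with trivial cofibrations from~\ref{LEMMAPREPMODELCAT4fc}), and noted that bicompleteness and 2-out-of-3 hold pointwise since each $\mathcal{D}_{F(i)}$ is a model category.
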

\begin{proof}
This follows from the previous Lemmas. 
\end{proof}

\begin{PROP}\label{PROPFDER3}
For any morphism of directed diagrams $\alpha: I \rightarrow J$, and for any functor $F: J \rightarrow \mathcal{S}$, 
the functor
\[ \alpha^*: \mathcal{D}_F \rightarrow \mathcal{D}_{\alpha^*F} \]
has a left adjoint $\alpha_!^F$. The pair $\alpha^*, \alpha_!^F$ define a Quillen adjunction.
\end{PROP}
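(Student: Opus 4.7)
My plan is to construct the left adjoint $\alpha_!^F$ via a pointwise Kan-extension-type colimit formula in the fibers of $p$, and then establish the Quillen condition by the easier dual check that $\alpha^*$ preserves fibrations and trivial fibrations.

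For $G \in \mathcal{D}_{\alpha^* F}$ and $j \in J$, I will set
\[
(\alpha_!^F G)(j) := \colim_{(i,\mu) \in I \times_{/J} j} F(\mu)_\bullet G(i),
\]
the colimit being taken in the model category $\mathcal{D}_{F(j)}$, which is cocomplete. The transition morphism attached to $\beta : (i,\mu) \to (i',\mu')$ in the slice comes from $G(\beta)$ via the canonical identification $F(\mu)_\bullet \cong F(\mu')_\bullet F(\alpha(\beta))_\bullet$. Functoriality of $\alpha_!^F G$ in $j$, for $\nu : j \to j'$, will be obtained by combining the induced post-composition functor $I \times_{/J} j \to I \times_{/J} j'$ with the isomorphism $F(\nu)_\bullet F(\mu)_\bullet \cong F(\nu\circ\mu)_\bullet$ and the fact that the left adjoint $F(\nu)_\bullet$ commutes with the colimit.

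To check the adjunction $\alpha_!^F \dashv \alpha^*$, I will unfold both sides: a morphism $\alpha_!^F G \to H$ over $\id_F$ corresponds, by the universal property of the colimit and the $(F(\mu)_\bullet, F(\mu)^\bullet)$-adjunction, to a family of morphisms $G(i) \to H(j)$ over $F(\mu)$ compatible in $(i,\mu) \in I \times_{/J} j$ and in $j \in J$. Evaluating at $\mu = \id_{\alpha(i)}$ yields a morphism $G \to \alpha^* H$ in $\mathcal{D}_{\alpha^* F}$, and the reconstruction of the general data via post-composition with $H(\mu) : H(\alpha(i)) \to H(j)$ provides the inverse bijection; compatibility is forced by naturality.

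For the Quillen condition it suffices, since the pair is adjoint, to verify that $\alpha^*$ preserves fibrations and trivial fibrations. By construction in \ref{PARREEDY} the fibrations and weak equivalences in both $\mathcal{D}_F$ and $\mathcal{D}_{\alpha^* F}$ are defined componentwise in the fibers $\mathcal{D}_{F(j)}$, and $\alpha^*$ is simply restriction along $\alpha$, so this preservation is immediate. The technical heart of the proof will lie not in the model-categorical content but in the book-keeping needed to verify that the pointwise colimit assembles into a genuine functor $J \to \mathcal{D}$ lying over $F$, i.e.\@ that the coCartesian structure morphisms coming from the bifibration $p$ cohere with the colimit formula and its functoriality in $j$.
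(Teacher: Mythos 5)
Your proof follows essentially the same approach as the paper's: the same pointwise colimit formula $(\alpha_!^F G)(j) := \colim_{(i,\mu)\in I\times_{/J} j} F(\mu)_\bullet G(i)$, the same functoriality in $j$ via the post-composition functor on slices combined with the fact that $F(\nu)_\bullet$ commutes with colimits, the same unfolding of the adjunction bijection using evaluation at $\mu=\id_{\alpha(i)}$ in one direction and post-composition with $H(\mu)$ in the other, and the same observation that the Quillen condition is immediate because fibrations and (trivial) fibrations are objectwise and $\alpha^*$ is just restriction. (You also correctly write $G \in \mathcal{D}_{\alpha^*F}$, whereas the paper has a small typo writing $\mathcal{D}_F$ at that point.)
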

\begin{proof}
That the two functors define a Quillen adjunction is clear once we have shown that $\alpha_!$ exists because $\alpha^*$ preserves fibrations and weak equivalences.
Let $G$ be an object of $\mathcal{D}_F$. We define
\[ (\alpha_! G)(j)  := \colim_{I \times_{/J} j} \SSS(\mu)_\bullet \iota_j^* G. \]
For each morphism $\mu: j \rightarrow j'$ we get a functor
\[ \widetilde{\mu}: I \times_{/J} j \rightarrow I \times_{/J} j'  \]
and hence an induced morphism
\[ F(\mu)_\bullet \SSS(\mu)_\bullet  \iota_j^* G \rightarrow \widetilde{\mu}^* \SSS(\mu')_\bullet  \iota_{j'}^*. \]
Sincd $F(\mu)_\bullet$ commutes with colimits we get a morphism 
\[ F(\mu)_\bullet \colim_{I \times_{/J} j}  \SSS(\mu)_\bullet  \iota_j^* G \rightarrow \colim_{I \times_{/J} j'} \SSS(\mu')_\bullet \iota_{j'}^* \]
which we define to be $(\alpha_! G)(\mu)$.
We now proceed to show that the functor we have constructed is indeed adjoint to $\alpha^*$.
A morphism $\mu: G \rightarrow \alpha^*H$ is given by a collection of maps
$a(i): G(i) \rightarrow H(\alpha(i))$ for all objects $i \in I$, subject to the condition that the diagram
\[ \xymatrix{
F(\alpha(\lambda))_\bullet G(i) \ar[rr]^{F(\alpha(\lambda))_\bullet a(i)} \ar[d]^{\overline{G(\lambda)}} & & F(\alpha(\lambda))_\bullet H(\alpha(i)) \ar[d]^{\overline{H(\alpha(\lambda))}}  \\
G(i')  \ar[rr]^{a(i')}  & & H(\alpha(i'))  
} \]
commutes for each morphism $\lambda: i \rightarrow i'$ in $I$. 
For each $j \in J$ and morphism $\mu: \alpha(i) \rightarrow j$ we get a morphism
\[ \overline{H(\mu)} \circ (F(\mu)_\bullet  a(i)): F(\mu)_\bullet G(i) \rightarrow H(j) \] 
and therefore for fixed $j$ a morphism
\[ \colim_{I \times_{/J} j} \SSS(\mu)_\bullet  \iota_j^* G \rightarrow H(j).  \]
One checks that this yields a morphism 
$\alpha_! G \rightarrow H.$
On the other hand, let $b:  \alpha_! G \rightarrow H$ be a morphism
given by 
\[ b(j): \colim_{I \times_{/J} j} \SSS(\mu)_\bullet  \iota_j^* G \rightarrow H(j)  \]
or equivalently for all $\mu: \alpha(i) \rightarrow j$ by morphisms
\[  F(\mu)_\bullet G(i) \rightarrow H(j).  \]
In particular, if $\mu$ is the identity of $\alpha(i)$, we get morphisms
\[  G(i) \rightarrow H(\alpha(i))  \]
which constitute a morphism of diagrams $G \rightarrow \alpha^*H$.
One checks that these associations are inverse to each other.
\end{proof}

\begin{LEMMA}
Let $\alpha: I \rightarrow J$ be a morphism of directed diagrams and let $j$ be an object of $J$.
The functor $\iota_j^*: \mathcal{D}_I \rightarrow \mathcal{D}_{I \times_{/J} j}$ respects cofibrations and trivial cofibrations.
\end{LEMMA}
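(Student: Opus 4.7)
The plan is to reduce the Reedy cofibration criterion for $\iota_j^* f$ at each object of $I \times_{/J} j$ to the Reedy cofibration criterion for $f$ at the corresponding object of $I$, by identifying the latching categories and latching objects. We fix a diagram $F : I \to \mathcal{S}$, a morphism $f : G \to H$ in the fiber $\mathcal{D}_F$, and denote by $\iota_j^*F = F \circ \iota_j$ the pulled-back diagram on $I \times_{/J} j$, so that $(\iota_j^*F)(i,\mu) = F(i)$ and $(\iota_j^*F)(\lambda) = F(\lambda)$ for a morphism $\lambda : (i',\mu') \to (i,\mu)$.

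First, I would show that the forgetful functor
\[
\Phi : (I \times_{/J} j) \times_{/(I \times_{/J} j)} (i,\mu) \longrightarrow I \times_{/I} i,\quad (\lambda : (i',\mu') \to (i,\mu)) \mapsto (\lambda : i' \to i),
\]
is an isomorphism of categories: given $\lambda : i' \to i$ in $I$, the only possible lift to the comma category is forced by $\mu' = \mu \circ \alpha(\lambda)$. Clearly $\Phi$ sends $\id_{(i,\mu)}$ to $\id_i$, so $\Phi$ restricts to an isomorphism of the latching subcategory $(I \times_{/J} j)_{(i,\mu)}$ onto $I_i$.

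Second, I would use $\Phi$ to identify the diagrams of which the latching objects are the colimits. At a non-identity morphism $\lambda : (i',\mu') \to (i,\mu)$ of the comma category, the corresponding functor reads
\[
(\iota_j^*F)(\lambda)_\bullet (\iota_j^*G)(i',\mu') = F(\lambda)_\bullet G(i'),
\]
which under $\Phi$ matches the diagram $\{F(\lambda)_\bullet G(i')\}_{\lambda : i' \to i, \lambda \neq \id_i}$ whose colimit is $L_i G$. Hence there is a canonical isomorphism $L_{(i,\mu)}(\iota_j^*G) \cong L_i G$ (and similarly for $H$), natural in $f$. Note also that $I \times_{/J} j$ is directed: composing the directedness functor of $I$ with the projection $\iota_j$ produces a functor to $\N_0^{\op}$ whose preimages of identities are identities, by the injectivity inherent in $\Phi$ just established. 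So the Reedy model structure on $\mathcal{D}_{\iota_j^*F}$ is indeed defined.

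Finally, the Reedy cofibration condition for $\iota_j^*f$ at $(i,\mu)$ reads
\[
(\iota_j^*G)(i,\mu) \coprod_{L_{(i,\mu)}(\iota_j^*G)} L_{(i,\mu)}(\iota_j^*H) \longrightarrow (\iota_j^*H)(i,\mu),
\]
which, by the above identifications, is precisely the Reedy cofibration condition for $f$ at $i$, a morphism lying in $\Cof_{F(i)}$ by assumption. The condition to be additionally a weak equivalence in $\mathcal{D}_{F(i)}$ is also transported unchanged, so trivial Reedy cofibrations are preserved as well. The only potential obstacle is checking the naturality in $f$ of the identifications $L_{(i,\mu)}(\iota_j^*-) \cong L_i(-)$, but this is immediate from the fact that $\Phi$ is an isomorphism of indexing categories and that the value of the diagram at each object is given by the same formula on either side.
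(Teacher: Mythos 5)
Your proof is correct and follows the same strategy as the paper's own (very terse) argument: identify the latching category $(I \times_{/J} j)_{(i,\mu)}$ with $I_i$ via the forgetful functor, deduce the canonical isomorphism of latching objects $L_{(i,\mu)}\iota_j^*G \cong L_i G$, and conclude that the Reedy (trivial) cofibration condition is preserved verbatim. You merely spell out details the paper leaves implicit, such as the directedness of $I \times_{/J} j$, which in any case already follows from $\Dir$ being a diagram category closed under slices.
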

\begin{proof} This follows easily from the fact that $\iota_j$ induces a canonical identification
\[ I_i = ({I \times_{/J} j})_\mu \]
for any $\mu= (i, \alpha(i) \rightarrow j)$. For this implies that we have a canonical isomorphism $L_i G \cong L_\mu \iota^*_j G$. 
\end{proof}

\begin{LEMMA}\label{LEMMABIFIBMULTIMODELCAT}
The bifibration of multicategories, defined in \ref{PARBIFIBI}
\[ \Hom(I, \mathcal{D}) \rightarrow \Hom(I, \mathcal{S}) = \SSS(I) \]
equipped with the model-category structures constructed in \ref{PARREEDY} is a bifibration of multi-model-categories in the sense of \ref{PARQUILLENN}.
\end{LEMMA}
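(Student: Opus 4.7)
The plan is to verify the two conditions of Definition~\ref{PARQUILLENN}. For the $n$-ary part (condition 1) I need to show that for any multimorphism $f \in \Hom_{\SSS(I)}(S_1, \dots, S_n; T)$, the pointwise push-forward $f_\bullet$ together with the pull-backs $f^{\bullet, j}$ from Proposition~\ref{PARBIFIBI} form a Quillen adjunction in $n$ variables between the Reedy-like model structures on the fibers of $p_I$. For the 0-ary part (condition 2) I need to check that the cofibrant replacement $Q f_\bullet() \rightarrow f_\bullet()$ in the Reedy-like structure on $\mathcal{D}_{T}$ becomes a weak equivalence after inserting it into any multimorphism whose other arguments are Reedy-cofibrant. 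Since weak equivalences, cofibrations and fibrations are all detected or built through the pointwise structures on $\mathcal{D}_{T(i)}$, the strategy is to reduce each axiom inductively, via the latching maps at objects $i \in I$ of degree $n$, to the corresponding assumption holding pointwise in $\mathcal{D} \rightarrow \mathcal{S}$.

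For condition~1, the standard reformulation is the multi-Leibniz pushout-product axiom: given cofibrations $g_k : \mathcal{A}_k \hookrightarrow \mathcal{B}_k$ in the Reedy-like structure on $\mathcal{D}_{S_k}$, the canonical map
\[ f_\bullet(\mathcal{B}_1, \dots, \mathcal{B}_n) \cup_{\mathrm{colim}_{\emptyset \neq T \subsetneq [n]} f_\bullet(\mathcal{C}_T)} \cdots \longrightarrow f_\bullet(\mathcal{B}_1, \dots, \mathcal{B}_n) \]
(where in $\mathcal{C}_T$ one puts $\mathcal{A}_k$ at positions $k \in T$ and $\mathcal{B}_k$ elsewhere) is a cofibration, and acyclic when some $g_k$ is. I would check this at each $i \in I$ by computing the latching map at $i$ of the Leibniz construction. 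Since $(f_i)_\bullet$ is a left adjoint in each slot (by Definition~\ref{PARQUILLENN}.1 applied to $\mathcal{D} \rightarrow \mathcal{S}$) and since the push-forwards $S_k(\alpha)_\bullet$ are likewise left adjoints commuting with $(f_j)_\bullet$ through the coCartesian-morphism description of $f_\bullet$ in Proposition~\ref{PARBIFIBI}, colimits over the slice $I_i$ pass through $(f_i)_\bullet$ slotwise. Combined with Lemma~\ref{LEMMAPREPMODELCAT6fc}, this identifies the latching $i$-term of the multi-Leibniz map with a multi-Leibniz pushout-product in $\mathcal{D}_{T(i)}$ between the pointwise latching extensions of the $g_k$. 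The latter are cofibrations (acyclic if some $g_k$ is) by definition of the Reedy cofibrations, and so the pointwise Quillen adjunction in $n$ variables for $(f_i)_\bullet$ (available by assumption on $\mathcal{D} \to \mathcal{S}$) gives exactly what is needed.

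For condition~2, I would take for $Q f_\bullet()$ the Reedy cofibrant replacement, whose value at $i$ differs from the pointwise cofibrant replacement of $(f_i)_\bullet()$ only by further trivial cofibrations (this is the standard Reedy fact, already implicit in the proof of Lemma~\ref{LEMMAPREPMODELCAT2fc}). Testing the weak equivalence pointwise and using that $F_\bullet$ preserves weak equivalences between Reedy-cofibrant objects (a consequence of condition~1 via Ken Brown's lemma, applied slotwise), the claim reduces at each $i$ to the corresponding statement for the pointwise bifibration $\mathcal{D} \rightarrow \mathcal{S}$ at the object $F(i) \circ_i f_i$, which holds by assumption.

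The main obstacle is the bookkeeping in paragraph two: one must match the iterated pushout defining the domain of the multi-Leibniz map for $f_\bullet$ with the iterated pushout computing the latching map of $(f_i)_\bullet$ at $i$, taking simultaneously into account the transition maps $S_k(\alpha)_\bullet$ and the non-strict functoriality of the push-forward choices. The cleanest way to organize this is probably an induction on $n$, reducing the multi-Leibniz axiom to the one-variable case already handled by Lemma~\ref{LEMMAPREPMODELCAT5fc} together with Lemma~\ref{LEMMABIFIBI}, which ensures that the relevant coherences between $f_\bullet$, its pull-backs, and the diagram-pullback $\pr_1^*$ are indeed strict isomorphisms.
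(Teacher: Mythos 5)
Your proposal proceeds on the cofibration side: you aim to verify the multi-Leibniz (pushout-product) axiom for $f_\bullet$, controlling Reedy cofibrations through the latching maps at each object $i\in I$. The paper's proof instead works on the \emph{fibration} side of the $n$-variable Quillen axiom. For $n=2$ it exhibits the "pullback-product" map $f^{\bullet,2}(\mathcal{E}_1';\mathcal{F})\to f^{\bullet,2}(\mathcal{E}_1;\mathcal{F})\times_{f^{\bullet,2}(\mathcal{E}_1;\mathcal{F}')}f^{\bullet,2}(\mathcal{E}_1';\mathcal{F}')$ and argues that it is a fibration. Since fibrations in the model structure of \ref{PARREEDY} are defined pointwise, and fibered products in the fibers of $\Hom(I,\mathcal{D})$ are computed pointwise, the fibration condition reduces to a pointwise statement; and a Reedy cofibration is in particular a pointwise cofibration (by the argument of Lemma~\ref{LEMMAPREPMODELCAT4fc}), so the pointwise statement is supplied directly by the assumption on $\mathcal{D}\to\mathcal{S}$. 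This sidesteps all of the latching bookkeeping that you identify as the "main obstacle."

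Your route is not wrong in principle, but the step you call routine is in fact the real content. The assertion that "colimits over the slice $I_i$ pass through $(f_i)_\bullet$ slotwise," and the consequent identification of the latching $i$-term of the multi-Leibniz map with a multi-Leibniz construction of pointwise latching maps, does not follow formally from adjointness in each variable: $(f_i)_\bullet$ preserves colimits in each slot \emph{separately}, but the latching colimit over $I_i$ is a colimit of a diagram varying \emph{jointly} in all slots, so the naive interchange fails. Even in the classical case $\mathcal{S}=\{\cdot\}$ (the Reedy model structure on $\mathcal{C}^I$), proving that a pointwise Quillen bifunctor remains Quillen for the Reedy structures requires a genuine filtration argument on the latching side, not a direct interchange. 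Also, your appeal to Lemma~\ref{LEMMABIFIBI} is misplaced: that lemma concerns compatibility of $\pr_1^*$ with the Cartesian arrows of the multicategory $\Hom(I,\mathcal{D})$, whereas what you need here is the coherence $T(\alpha)_\bullet\circ(f_{i'})_\bullet\cong(f_i)_\bullet\circ\bigl(S_1(\alpha)_\bullet\times\cdots\times S_n(\alpha)_\bullet\bigr)$ coming from functoriality of $f$ in $\Hom(I,\mathcal{S})$ and the choice of coCartesian lifts. If you want to pursue the cofibration side you must establish an analogue of the Reedy latching-filtration argument; otherwise, the fibration-side reduction is the much shorter path.
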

\begin{proof}
First for each multi-morphism of diagrams $f \in \Hom_{\mathcal{S}}(X_1, \dots, X_n;  Y)$ we have to see that 
the push-forward and the various pull-backs form a Quillen adjunction in $n$ variables.
The case $n=1$ has been treated above. We only work out the case $n=2$, the proof for higher $n$ being similar.
It suffices to check the following: for any cofibration
$\mathcal{E}_1 \rightarrow \mathcal{E}_1'$ and for any fibration $\mathcal{F} \rightarrow \mathcal{F}'$ the dotted induced morphism in the following diagram
\[  \xymatrix{
 f^{\bullet,2}(\mathcal{E}_1'; \mathcal{F}) \ar@{.>}[r] & \text{pull-back} \ar[r] \ar[d] &  f^{\bullet,2}(\mathcal{E}_1'; \mathcal{F}') \ar[d] \\
& f^{\bullet,2}(\mathcal{E}_1; \mathcal{F}) \ar[r] &  f^{\bullet,2}(\mathcal{E}_1; \mathcal{F}') 
 }  \]
is a fibration. Since fibrations are defined point-wise and fibered products are computed point-wise, we have only to see 
that the assertion holds point-wise. Now $\mathcal{F} \rightarrow \mathcal{F}'$ is a point-wise fibration and $\mathcal{E}_1 \rightarrow \mathcal{E}_1'$ is a Reedy cofibration, so 
by the reasoning in the proof of Lemma~\ref{LEMMAPREPMODELCAT4fc} it is in particular
a point-wise cofibration. Hence the assertion holds because of the assumption that $\mathcal{D} \rightarrow \mathcal{S}$ 
is a bifibration of multi-model-categories (\ref{PARQUILLENN}).
The requested property for the 0-ary push-forward is easier and is left to the reader.
\end{proof}

\begin{PROP}\label{SATZFDER0}
The functor $\DD(I) \rightarrow \SSS(I)$ defined in \ref{DEFFIBDERMODEL} is a bifibration of multicategories whose fibers are equivalent to $\mathcal{D}_F[\mathcal{W}_F^{-1}]$. The pull-back and push-forward functors are given by the left derived functors of $f_\bullet$, and by the right derived functors of $f^{\bullet,j}$, respectively.
\end{PROP}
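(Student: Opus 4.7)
The plan is to apply Proposition \ref{PROPDELIGNE} to the bifibration of multi-model-categories
\[ \Hom(I, \mathcal{D}) \rightarrow \Hom(I, \mathcal{S}) = \SSS(I) \]
provided by Lemma \ref{LEMMABIFIBMULTIMODELCAT} (using the Reedy-style model structures on its fibers $\mathcal{D}_F$ constructed in \ref{PARREEDY}). For this to be a direct application, I need to observe that the class $\mathcal{W}_I$ used in Definition \ref{DEFFIBDERMODEL} to form $\DD(I) = \Hom(I, \mathcal{D})[\mathcal{W}_I^{-1}]$ coincides with the union $\bigcup_F \mathcal{W}_F$ of the weak equivalences in the Reedy model structures on each fiber $\mathcal{D}_F$. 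This is immediate from the definitions: in both cases these are precisely the natural transformations lying over an identity in $\SSS(I)$ whose components are element-wise in $\bigcup_S \mathcal{W}_S$.

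With this identification, Proposition \ref{PROPDELIGNE} yields all three claims at once: the localized functor $\DD(I) \rightarrow \SSS(I)$ is a bifibration of multicategories; its fiber over $F \in \SSS(I)$ is the homotopy category $\mathcal{D}_F[\mathcal{W}_F^{-1}]$; and the multi-ary push-forward along $f \in \Hom_{\SSS(I)}(X_1, \dots, X_n; Y)$ is the left derived functor of the component-wise push-forward $f_\bullet$ of Proposition \ref{PARBIFIBI}, while the pull-backs $f^{\bullet,j}$ are the right derived functors of the (more intricate, limit-over-$X_{i_j}(I)$) pull-backs constructed in \ref{PARBIFIBI}.

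For the inverse case (needed later in Theorem \ref{SATZEXISTENCEFMULTIDER}), one applies the dual construction: when $I \in \Inv$ there is a dual Reedy-style model structure on $\mathcal{D}_F$ in which fibrations are defined using \emph{matching} objects and cofibrations are point-wise, yielding again a bifibration of multi-model-categories. Applying Proposition \ref{PROPDELIGNE} to it gives the analogous conclusion in the inverse case.

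The only nontrivial point is conceptual rather than computational: one must be comfortable that Proposition \ref{PROPDELIGNE} applies equally to multicategories whose set of objects is large (such as $\Hom(I, \mathcal{D})$ and $\Hom(I, \mathcal{S})$), which is manifest from its proof since the arguments were entirely local in the morphism spaces. The bookkeeping for $0$-ary multimorphisms (where the modified category $\widetilde{\mathcal{D}^{\Cof}[(\mathcal{W}^{\Cof})^{-1}]}$ may be needed if the units $f_\bullet()$ fail to be cofibrant) goes through unchanged; no separate argument is required at the level of $\Hom(I,\mathcal{D})$ because cofibrancy of units in the Reedy model structure is detected point-wise by the definition of $\Cof_F$ at objects $i \in I$ with empty latching category.
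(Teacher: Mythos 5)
Your proof is correct and takes essentially the same route as the paper: it combines Lemma \ref{LEMMABIFIBMULTIMODELCAT} (the Reedy-type bifibration of multi-model-categories on $\Hom(I,\mathcal{D}) \to \SSS(I)$) with Proposition \ref{PROPDELIGNE} to obtain the bifibration after localization, with the derived (co)Cartesian structure. The extra remarks you add — identifying $\mathcal{W}_I$ with $\bigcup_F \mathcal{W}_F$ (which the paper notes just after Definition \ref{DEFFIBDERMODEL}), the dual construction for $I \in \Inv$, and the size bookkeeping — are all consistent with, and merely spell out, what the paper's one-line proof leaves implicit.
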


\begin{proof}
We have seen in~\ref{LEMMABIFIBMULTIMODELCAT} that the fibers of $\Hom(I, \mathcal{D}) \rightarrow \SSS(I)$ are a bifibration of multi-model-categories 
in the sense of~\ref{PARQUILLENN}. 
Therefore by Proposition~\ref{PROPDELIGNE} we get that $\DD(I) \rightarrow \SSS(I)$ are bifibered multicategories with the requested properties. 
\end{proof}

\begin{proof}[Proof of Theorem~\ref{SATZEXISTENCEFMULTIDER}]
(Der1) and (Der2) for $\DD$ and $\SSS$  are obvious.

(FDer0 left) and the first part of (FDer0 right) follow from Theorem~\ref{SATZFDER0}.

(FDer3 left) follows from~\ref{PROPFDER3}.

(FDer4 left):
By construction of $\alpha_!$ the natural base-change
\begin{equation}\label{eqbasechangemc} \colim \SSS(\mu)_\bullet \iota_j^* G \rightarrow j^* \alpha_! G  \end{equation}
is an isomorphism for the non-derived functors.
For the derived functors the same follows because all functors in the equation respect cofibrations and trivial cofibrations and
all functors which have to be derived in (\ref{eqbasechangemc}) are left Quillen functors and hence can be derived by composing them 
with cofibrant replacement.

(FDer3 right) and (FDer4 right) are shown precisely the same way.

(FDer5 left):
Fixing a morphism $f \in \Hom(S_1, \dots, S_n; T)$ in $\mathcal{S}$ and objects $\mathcal{E}_2, \dots, \mathcal{E}_n$ over $S_2, \dots, S_n$ we have by Theorem~\ref{SATZFDER0} a push-forward functor
\begin{eqnarray*} 
\DD(I \times J)_{p^*S_1} &\rightarrow& \DD(I \times J)_{p^*T} \\
 \mathcal{E}_1 &\mapsto& (p^*f)_\bullet(\mathcal{E}_1, p^*\mathcal{E}_2, \dots, p^*\mathcal{E}_n)
\end{eqnarray*}
(we denote it with the same letter as the underived version)
which, by (FDer0 left), defines a morphism of pre-derivators 
\[ \DD_{S_1} \rightarrow \DD_{T}. \]
We first show that it preserves colimits, i.e.\@ that for $p: J \rightarrow \cdot$ we have that for all $\mathcal{E}_1 \in \mathcal{D}_{p^*S_1}(I \times J)$ the natural morphism
\[  f_\bullet(p_*\mathcal{E}_1, \mathcal{E}_2, \cdots, \mathcal{E}_n) \rightarrow  p_* (p^*f)_\bullet(\mathcal{E}_1, p^*\mathcal{E}_2, \cdots, p^*\mathcal{E}_n)  \]
(where we wrote $p$ also for the projection $p: I \times J \rightarrow I$) 
is an isomorphism. This is the same as showing that
\[  p^* f^{1,\bullet}(\mathcal{E}_1, \dots, \mathcal{E}_n) \rightarrow (p^*f)^{1,\bullet}(p^* \mathcal{E}_1, \dots, p^*\mathcal{E}_n)  \]
is an isomorphism. This follows from Lemma~\ref{LEMMABIFIBI} because it suffices to check this for the underived functors. 
Now let $\alpha: I \rightarrow J$ be a Grothendieck opfibration. To show that
\[  f_\bullet(\alpha_*\mathcal{E}_1, \mathcal{E}_2, \dots, \mathcal{E}_n) \rightarrow  \alpha_* (\alpha^* f)_\bullet(\mathcal{E}_1, \alpha^*\mathcal{E}_2, \dots, \alpha^*\mathcal{E}_n)  \]
is an isomorphism we may show this point-wise. Indeed, after applying $j^*$
we get 
\[ (j^*f)_\bullet(j^* \alpha_* \mathcal{E}_1, j^*\mathcal{E}_2, \dots, j^*\mathcal{E}_n) \rightarrow  j^* \alpha_* (\alpha^* f)_\bullet(\mathcal{E}_1, \alpha^*\mathcal{E}_2, \dots, \alpha^*\mathcal{E}_n)  \]
\[ (j^*f)_\bullet(p_* \iota_j^* \mathcal{E}_1, j^*\mathcal{E}_2, \dots, j^*\mathcal{E}_n) \rightarrow  p_*  \iota_j^*  (\alpha^* f)_\bullet (\mathcal{E}_1, \alpha^*\mathcal{E}_2, \dots, \alpha^*\mathcal{E}_n)  \]
where $\iota_j: I_j \rightarrow I$ is the inclusion of the fiber. Note that the commutative diagram 
\[ \xymatrix{
I_j \ar[r]^{\iota_j} \ar[d]_p & I \ar[d]^\alpha \\
j \ar[r] & J
} \]
is homotopy exact by Lemma~\ref{PROPHOMCART}, 2.\@ because $\alpha$ is a Grothendieck opfibration. Finally we get the morphism
\[ (j^*f)_\bullet(p_* \iota_j^* \mathcal{E}_1, j^*\mathcal{E}_2, \dots, j^*\mathcal{E}_n) \rightarrow  p_* (j^*f)_\bullet(\iota_j^* \mathcal{E}_1, p^* j^* \mathcal{E}_2, \dots, p^* j^* \mathcal{E}_n)  \]
which is an isomorphism by the above reasoning.

By Lemma~\ref{LEMMALEFTRIGHT} the full content of (FDer0 right) follows from (FDer5 left) while (FDer5 right) follows from (FDer0 left). 
\end{proof}

\newpage

\appendix
\section{Fibrations of categories}\label{APP}

\subsection{Grothendieck (op-)fibrations}\label{APPGROTH}

\begin{PAR}[right]
Let $p: \mathcal{D} \rightarrow \mathcal{S}$ be a functor, and let $f: S \rightarrow T$ be a morphism
in $\mathcal{S}$.  A morphism
$\xi: \mathcal{E}' \rightarrow \mathcal{E}$ over $f$ is called {\bf Cartesian} if the composition with $\xi$ induces an isomorphism
\[  \Hom_{g}(\mathcal{F}, \mathcal{E}') \cong \Hom_{f \circ g}(\mathcal{F}, \mathcal{E}) \]
for any morphism $g: R \rightarrow S$ in $\mathcal{S}$ and for every $\mathcal{F} \in \mathcal{D}_R$.

The functor $p$ is called a {\bf Grothendieck fibration} if for any $f: S \rightarrow T$ and for every object $\mathcal{E}$ in $\mathcal{D}_T$ (i.e.\@ such that $p(\mathcal{E})=T$) there
exists a Cartesian morphism $\mathcal{E}' \rightarrow \mathcal{E}$.
\end{PAR}

\begin{PAR}[left]
Let $p: \mathcal{D} \rightarrow \mathcal{S}$ be a functor, and let $f: S \rightarrow T$ be a morphism
in $\mathcal{S}$.  A morphism 
$\xi: \mathcal{E} \rightarrow \mathcal{E}'$ over $f$ is called {\bf coCartesian} if  the composition with $\xi$ induces an isomorphism
\[  \Hom_{g}(\mathcal{E'}, \mathcal{F}) \cong \Hom_{g \circ f}(\mathcal{E}, \mathcal{F}) \]
for any morphism $g: T \rightarrow U$ in $\mathcal{S}$  and for every $\mathcal{F} \in \mathcal{D}_U$ .

The functor $p$ is called {\bf a Grothendieck opfibration} if for any $f: S \rightarrow T$ and for every object $\mathcal{E}$ in $\mathcal{D}_S$ there
exists a coCartesian morphism $\mathcal{E} \rightarrow \mathcal{E}'$.
\end{PAR}

\begin{PAR}
The functor $p$ is a Grothendieck opfibration if and only if $p^{\op}: \mathcal{D}^{\op} \rightarrow \mathcal{S}^{\op}$ is a Grothendieck fibration.
We say that $p$ is a {\bf bifibration} if is a fibration and an opfibration at the same time.
If $p: \mathcal{D} \rightarrow \mathcal{S}$ is a Grothendieck fibration we may choose an associated pseudo-functor, i.e.
to each $S \in \mathcal{S}$ we associate the category $\mathcal{D}_S$, and to each $f: S \rightarrow T$ we associate
a push-forward functor
\[ f_\bullet: \mathcal{D}_S \rightarrow \mathcal{D}_T \]
such that for each $\mathcal{E}$ in $\mathcal{D}_S$ there is a Cartesian morphism $\mathcal{E} \rightarrow f_\bullet \mathcal{E}$.
The same holds similarly for an opfibration with the pull-back $f^\bullet$ instead of the push-forward.
If the functor $p$ is a bifibration, $f_\bullet$ is left adjoint to $f^\bullet$. 
Situations where this is the opposite can be modeled by considering bifibrations $\mathcal{D} \rightarrow \mathcal{S}^{\op}$.
\end{PAR}

\subsection{Fibered multicategories and the six functors}\label{APPMULTI}

\begin{PAR}We give a definition of a (op-)fibered multicategory. This is a straightforward generalization of the notion of (op-)fibered category given in the section~\ref{APPGROTH}. It is very useful to encode the formalism of the Grothendieck six functors.
Details about (op-)fibered multicategories can be found, for instance, in \cite{Her00, Her04}.

The reader should keep in mind that a multicategory
abstracts the properties of multilinear maps, and indeed every monoidal category gives rise to a multicategory setting
\begin{equation} \label{eqmonmulti} 
\Hom(A_1, \dots, A_n; B) := \Hom((A_1 \otimes (A_2 \otimes ( \cdots) ) ), B).  
\end{equation}
\end{PAR}

\begin{DEF}
A {\bf multicategory} $\mathcal{D}$ consists of 
\begin{itemize}
\item a class of objects $\Ob(\mathcal{D})$;
\item for every $n \in \Z_{\ge 0}$, and objects $X_1, \dots, X_n, Y$ a class
\[ \Hom(X_1, \dots, X_n; Y);  \]
\item a composition law, i.e. for objects $X_1, \dots, X_n$, $Y_1, \dots, Y_m$, $Z$ and for each integer $1\le i \le m$ a map:
\[ \Hom(X_1, \dots, X_n; Y_i) \times \Hom(Y_1, \dots, Y_m; Z) \rightarrow \Hom(Y_1, \dots, Y_{i-1}, X_1, \dots, X_n, Y_{i+1}, \dots, Y_m; Z); \]
\item for each object $X \in \Ob(\mathcal{D})$ an identity $\id_X \in \Hom(X; X)$; 
\end{itemize}
satisfying associativity and identity laws.
A symmetric (braided) multicategory is given by an action of the symmetric (braid) groups, i.e.\@ isomorphisms
\[  \alpha: \Hom(X_1, \dots, X_n; Y)  \rightarrow \Hom(X_{\alpha(1)}, \dots, X_{\alpha(n)}; Y) \]
for $\alpha \in S_n$ (resp.\@ $\alpha \in B_n$) forming an action which is compatible with composition in the obvious way (substitution of strings in the braid group).
\end{DEF}

In some references the composition is defined in a seemingly more general way; in the presence of identities these descriptions are, however, equivalent.
We denote a multimorphism in $f \in \Hom(X_1, \dots, X_n; Y)$ also by
\[ \xymatrix{
X_1 \ar@{-}[rd] \\
\ar@{}[r]|{\vdots} & f \ar[r] &  Y  \\
X_n \ar@{-}[ru] \\
}\]
for $n\ge 1$, or by
\[ \xymatrix{
\ar@{o->}[rr]^f &&  Y  
}\]
for $n=0$.

We will also need the definition of a strict 2-multicategory which is a multicategory enriched in (usual) categories: 

\begin{DEF}
A (strict) {\bf 2-multicategory} $\mathcal{D}$ consists of 
\begin{itemize}
\item a class of objects $\Ob(\mathcal{D})$;
\item for every $n \in \Z_{\ge 0}$, and objects $X_1, \dots, X_n, Y$ a category
\[ \Hom(X_1, \dots, X_n; Y);  \]
\item a composition, i.e.\@ for objects $X_1, \dots, X_n$, $Y_1, \dots, Y_m$, $Z$ and for each integer $1\le i \le m$ a functor:
\[ \Hom(X_1, \dots, X_n; Y_i) \times \Hom(Y_1, \dots, Y_m; Z) \rightarrow \Hom(Y_1, \dots, Y_{i-1}, X_1, \dots, X_n, Y_{i+1}, \dots, Y_m; Z); \]
\item for each object  $X \in \Ob(\mathcal{D})$ an identity object $\id_X$ in the category $\Hom(X; X)$;
\end{itemize}
satisfying strict associativity and identity laws.
A symmetric (braided) 2-multicategory is given by an action of the symmetric (braid) groups, i.e.\@ isomorphisms of categories
\[  \alpha: \Hom(X_1, \dots, X_n; Y) \rightarrow \Hom(X_{\alpha(1)}, \dots, X_{\alpha(n)}; Y) \]
for $\alpha \in S_n$ (resp.\@ $\alpha \in B_n$) forming an action which is strictly compatible with composition in the obvious way (substitution of strings in the braid group).
\end{DEF}

The 1-composition of 2-morphisms is (as for usual 2-categories) determined by the following whiskering operations: Let $f, g \in \Hom(X_1, \dots, X_n; Y_i)$ be 1-morphisms
and let $h \in \Hom(Y_1, \dots, Y_m; Z)$ be a 1-morphism and let $\mu: f \Rightarrow g$ be a 2-morphism in $\Mor(\Hom(X_1, \dots, X_n; Y_i))$. Then we define
\[ h  \ast \mu := \id_h \cdot \mu \]
where the right hand side is the image of the morphism $\id_h \times \mu$ under the composition functor.
Similarly we define $\mu \ast h$ for $\mu: f \Rightarrow g$ with $f, g \in \Hom(Y_1, \dots, Y_m; Z)$ and $h \in \Hom(X_1, \dots, X_n; Y_i)$.

\begin{PAR}
We leave it to the reader to state the obvious definition of a functor between multicategories.
Similarly there is a definition of a {\bf opmulticategory}, in which we have classes
\[ \Hom(X; Y_1, \dots, Y_n)  \]
and similar data. For a multicategory $\mathcal{D}$ we get a natural opmulticategory $\mathcal{D}^{\mathrm{\mathrm{\op}}}$ by reversing the arrows.

The trivial category $\{\cdot\}$ is considered as a multicategory setting all $\Hom(\cdot, \dots, \cdot\,;\,\cdot)$ to the 1-element set. It is the final object in the ``category'' of multicategories.
\end{PAR}

To clarify the precise relation between multicategories and monoidal categories we have to define Cartesian and coCartesian morphisms. 
It turns out that we can actually give a definition which is a common generalization of coCartesian morphisms in opfibered categories and the morphisms expressing the existence of a tensor product:

\begin{DEF}\label{DEFCARTCOCART}
Consider a functor of
multicategories $p: \mathcal{D} \rightarrow \mathcal{S}$. We call a morphism
\[ \xi \in \Hom(X_1, \dots, X_n; Y) \] in $\mathcal{D}$ {\bf coCartesian} w.r.t.\@ $p$, 
if for all $Y_1,\dots,Y_m,Z$ with $Y_i=Y$, and for all 
\[ f \in \Hom(p(Y_1), \dots, p(Y_m); p(Z)) \]
the map
\begin{eqnarray*} 
\Hom_{f}(Y_1, \dots, Y_m; Z) &\rightarrow& \Hom_{f \circ p(\xi)}(Y_1, \dots, Y_{i-1}, X_1, \dots, X_n, Y_{i+1}, \dots, Y_m; Z) \\
 \alpha &\mapsto& \alpha \circ \xi 
\end{eqnarray*} 
is bijective. 
We call a morphism
\[ \xi \in \Hom(X_1, \dots, X_n; Y)\] in $\mathcal{D}$  {\bf Cartesian} w.r.t.\@ $p$ at the $i$-th slot, if for all $Y_1, \dots, Y_m$, and for all $f \in \Hom(p(Y_1), \dots, p(Y_m); p(X_i))$ the map
\[ \Hom_{f}(Y_1, \dots, Y_m; X_i) \rightarrow \Hom_{ p(\xi) \circ f}(X_1, \dots, X_{i-1}, Y_1, \dots, Y_m, X_{i+1}, \dots, X_n; Z). \]
\[ \alpha \mapsto \xi \circ \alpha \]
is bijective.

The functor $p: \mathcal{D} \rightarrow \mathcal{S}$ is called an {\bf opfibered multicategory} if for every
$g \in \Hom(S_1, \dots, S_n; T)$ in $\mathcal{S}$, and for every collection of objects $X_i$ with $p(X_i) = S_i$ there is some object $Y$ over $T$ and some coCartesian morphism
$\xi \in \Hom(X_1, \dots, X_n; Y)$ such that $p(\xi)=g$.

The functor $p: \mathcal{D} \rightarrow \mathcal{S}$ is called a {\bf fibered multicategory} if for every $1 \le j \le n$, for each
$g \in \Hom(S_1, \dots, S_n; T)$ in $\mathcal{S}$, for every collection of objects $X_i$ for $i \not= j$ with $p(X_i) = S_i$, and for every $Y$ over $T$, there is some object $X_j$ and some Cartesian morphism w.r.t.\@ the $j$-th slot $\xi \in \Hom(X_1, \dots, X_n; Y)$ such that $p(\xi)=g$.

The functor $p: \mathcal{D} \rightarrow \mathcal{S}$ is called a {\bf bifibered multicategory} if it is both fibered and opfibered.

A {\bf morphism of (op)fibered multicategories} is a commutative diagram of functors
\[ \xymatrix{  \mathcal{D}_1  \ar[r]^F \ar[d]  & \mathcal{D}_2 \ar[d]  \\
 \mathcal{S}_1  \ar[r]^G & \mathcal{S}_2  } \]
 such that $F$ maps (co-)Cartesian morphisms to (co-)Cartesian morphisms.
\end{DEF}

It turns out that the composition of Cartesian morphisms is Cartesian (and similarly for coCartesian morphisms if they are composed w.r.t.\@ the right slot)\footnote{As with fibered categories there are weaker notions of Cartesian which still uniquely determine a Cartesian morphism (up to
isomorphism) from given objects over a given multimorphism, however, do not imply stability under composition. Similarly for coCartesian morphisms.}.

\begin{LEMMA}\label{LEMMAPROPMULTI}
\begin{enumerate}
\item An opfibered multicategory $p: \mathcal{D} \rightarrow \{\cdot\}$ is a monoidal category defining $X \otimes Y$ to be the target of a coCartesian arrow from the pair $X, Y$
over the unique map in $\Hom(\cdot, \cdot; \cdot)$ of the final multicategory $\{\cdot\}$.

Conversely any monoidal category gives rise to an opfibered multicategory $p: \mathcal{D} \rightarrow \{\cdot\}$ via (\ref{eqmonmulti}). A multicategory $\mathcal{D}$ is a {\em closed} category if and only if it is fibered over $\{\cdot\}$.
In particular, the fibers of an (op)fibered multicategory $p: \mathcal{D} \rightarrow \mathcal{S}$ are always closed/monoidal in the following sense:
given any functor of multicategories\footnote{This specifies also morphisms
in $\Hom(\underbrace{X, \dots, X}_n; X)$, for all $n$, compatible with composition.} $x: \{\cdot\} \rightarrow \mathcal{S}$, the category $\mathcal{D}_x$ of objects over $x$ is closed/monoidal.

\item Given (op)fibered multicategories $p: \mathcal{C} \rightarrow \mathcal{D}$ and $q: \mathcal{D} \rightarrow \mathcal{E}$ also the composition $q\circ p$ is an (op)fibered multicategory. In particular, if we have an opfibered multicategory $p: \mathcal{C} \rightarrow \mathcal{S}$ and if $\mathcal{S} \rightarrow \{\cdot\}$ is opfibered (i.e.\@ $\mathcal{S}$ is monoidal) then also $\mathcal{C} \rightarrow \{\cdot\}$ is opfibered (i.e.\@ $\mathcal{C}$ is monoidal).
The same holds dually. A morphism $\alpha$ is (co)Cartesian for $q\circ p$ if and only if $\alpha$ is (co)Cartesian for $p$ and $p(\alpha)$ is (co)Cartesian for $q$.
\end{enumerate}
\end{LEMMA}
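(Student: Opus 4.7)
The plan for part~1 is to construct the monoidal structure from an opfibered structure by using coCartesian lifts as universal tensor products. Concretely, for each pair of objects $X, Y$ pick a coCartesian morphism $\xi_{X,Y} \in \Hom(X, Y; X \otimes Y)$ over the unique binary morphism of $\{\cdot\}$, and define the unit $\mathbb{1}$ as the target of a coCartesian lift of the unique $0$-ary morphism. Functoriality of $\otimes$ in both variables, the associator, the left and right unitors, and the pentagon and triangle coherences then all fall out from the universal property: since coCartesian morphisms are stable under composition (in the last slot) as noted after Definition~\ref{DEFCARTCOCART}, both iterated tensor products $X \otimes (Y \otimes Z)$ and $(X \otimes Y) \otimes Z$ arise as targets of coCartesian lifts of the \emph{same} ternary morphism in $\{\cdot\}$, and the associator is the unique induced vertical isomorphism.

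For the converse direction in part~1, I would start from a monoidal category $(\mathcal{D}, \otimes, \mathbb{1})$ and verify that the formula~(\ref{eqmonmulti}) defines a multicategory in which the composition is given by tensoring morphisms and then composing, and that for each $X, Y$ the identity of $X \otimes Y$ viewed as an element of $\Hom(X, Y; X \otimes Y) = \Hom(X \otimes Y, X \otimes Y)$ is coCartesian. The closed/fibered equivalence is the dual: a fibered structure over $\{\cdot\}$ picks out for each pair $(X, Y)$ an internal hom $[X, Y]$ as the source of a Cartesian morphism (with respect to the second slot) in $\Hom(X, [X, Y]; Y)$, which is nothing but the evaluation. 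Finally, for the statement about fibers $\mathcal{D}_x$ over $x\colon \{\cdot\} \to \mathcal{S}$, I would observe that (op)fibered multicategories are stable under base change, so pulling back yields an (op)fibered multicategory $\mathcal{D}_x \to \{\cdot\}$, and the previous parts apply.

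For part~2 the plan is the standard two-step argument. Given $h \in \Hom(qp(X_1), \ldots, qp(X_n); T)$ in $\mathcal{E}$, first use opfiberedness of $q$ to choose a coCartesian lift $\widetilde{h}$ over $q$ from $(p(X_1), \ldots, p(X_n))$, and then opfiberedness of $p$ to choose a coCartesian lift $\widetilde{\widetilde{h}}$ of $\widetilde{h}$ over $p$ from $(X_1, \ldots, X_n)$. To check that $\widetilde{\widetilde{h}}$ is coCartesian for $q \circ p$, I would chase the defining universal property: any multimorphism $\alpha$ in $\mathcal{C}$ that postcomposes $\widetilde{\widetilde{h}}$ factors first by universal property of $\widetilde{h}$ (applied to $p(\alpha)$ in $\mathcal{D}$) and then by universal property of $\widetilde{\widetilde{h}}$ (applied to the resulting lift in $\mathcal{C}$), and the factorization is unique. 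The characterization of (co)Cartesian morphisms for $q \circ p$ as those $\alpha$ for which both $\alpha$ is (co)Cartesian for $p$ and $p(\alpha)$ is (co)Cartesian for $q$ then follows from uniqueness of (co)Cartesian lifts up to vertical isomorphism: any (co)Cartesian lift for $q \circ p$ is isomorphic to one built in two steps as above. The Cartesian case is dual, with care taken to perform the lifts in the correct slot.

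\emph{Expected obstacle.} The statements are essentially formal, and no real difficulty is hidden; the main nuisance will be the multivariable bookkeeping, especially keeping the slot indices consistent for the Cartesian direction (internal hom, $n$-ary factorizations) where one must single out one slot and treat the others as parameters, and verifying that compositions stay in the appropriate slot for coCartesianness to be preserved.
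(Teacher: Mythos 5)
The paper does not actually prove this lemma; it is stated in the appendix as a collection of standard facts about (op)fibered multicategories (cf.\@ Hermida), and the text moves on after remarking that the second item ``encapsulates the distinction between internal and external tensor product.'' Your proof supplies the standard verifications and is correct in its essentials: extracting the monoidal structure from coCartesian lifts over $\{\cdot\}$ and getting coherence from uniqueness of coCartesian targets, the converse via (\ref{eqmonmulti}), the closed/fibered duality with the Cartesian lift in one slot serving as evaluation, the fiber $\mathcal{D}_x$ via base change, and the two-step lifting argument for composition.

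Two small points worth tightening. First, when you invoke stability of coCartesian morphisms under composition ``in the last slot,'' what you actually need (and what the remark after Definition~\ref{DEFCARTCOCART} asserts) is stability under composition in the slot into which the target of the first morphism is substituted; for the associator this is an inner slot, not the last one. Second, your converse implication in part~2 (``any coCartesian lift for $q\circ p$ is isomorphic to one built in two steps'') needs one more observation to close: if $\alpha = \phi \circ \alpha'$ with $\alpha'$ the two-step lift and $\phi$ a vertical isomorphism for $q\circ p$, then $\phi$ need not be vertical for $p$ --- $p(\phi)$ is merely an isomorphism over $\id$ in $\mathcal{E}$. The argument still works because isomorphisms are always coCartesian and coCartesians compose, so $\alpha$ inherits $p$-coCartesianness from $\alpha'$ and $p(\alpha) = p(\phi)\circ p(\alpha')$ inherits $q$-coCartesianness; but this step should be made explicit rather than subsumed under ``uniqueness up to vertical isomorphism.''
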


Similarly, the unit $1$ is just the target of a coCartesian morphism in $\Hom(;1)$ which exists by definition
(the existence is also required for the empty set of objects).

The second part of the lemma encapsulates the distinction between internal and external tensor product in a four (or six) functor context, see \ref{CONCRETE6FU}.

\begin{PAR}
Let $\mathcal{D}, \mathcal{S}$ be (usual) multicategories. 
More generally any opfibered multicategory $\mathcal{D} \rightarrow \mathcal{S}$ gives rise to a pseudo-functor of 2-multi-categories

\[ \mathcal{S} \rightarrow \mathcal{MCAT}^{2-\op} \]
where $\mathcal{MCAT}$ is the 2-multicategory of categories, whose objects are categories and the morphism categories are defined to be:
\[ \Hom_{\mathcal{MCAT}}(\mathcal{C}_1, \dots, \mathcal{C}_n; \mathcal{D}) := \mathrm{Fun}(\mathcal{C}_1 \times \dots \times \mathcal{C}_n, \mathcal{D}) \]

Here by a pseudo-functor $\Psi: \mathcal{S} \rightarrow \mathcal{T}$, where $\mathcal{T}$ is a 2-multicategory, we understand the obvious generalization of the usual concept of a pseudo-functor. This means that 
for each $f \in \Hom_{\mathcal{S}}(S_1, \dots, S_n; T)$ we are given a functor $\Psi(f) \in \Hom(\Psi(S_1), \dots, \Psi(S_n); T)$ and for each composition 
$g \cdot f$ a natural isomorphism
\begin{equation}\label{eqpseudo}
 \Psi_{f,g}:   \Psi(g) \Psi(f) \Rightarrow  \Psi(g \cdot f) 
 \end{equation}
satisfying the usual relation for composable morphisms $f, g$ and $h$:
\[ (\Psi(h) \ast \Psi_{f,g}) \Psi_{gf,h} = (\Psi_{g,h} \ast \Psi(f)) \Psi_{f, hg}.   \]

This definition generalizes readily to the case in which also $\mathcal{S}$ is a 2-multicategory, the only modification being that, on morphisms, we are given {\em functors} 
\[ \Hom_\mathcal{S}(S_1, \dots, S_n; T) \rightarrow  \Hom_{\mathcal{T}}(\Psi(S_1), \dots, \Psi(S_n); \Psi(T)) \] 
and the 2-morphisms (\ref{eqpseudo}) have to be functorial in $f$ and $g$. 
\end{PAR}

\begin{PAR}
Translated back to the language of fibrations we arrive at the following definition: see~\ref{DEF2CART}.

First note that the definition of coCartesian morphism (cf.\@ \ref{DEFCARTCOCART}) may be stated in the following way: A morphism
\[ \xi \in \Hom(X_1, \dots, X_n; Y) \] in $\mathcal{D}$ coCartesian w.r.t.\@ $p$, 
if for all $Y_1,\dots,Y_m, Z$ with $Y_i=Y$
the diagram of sets
{\footnotesize
\[ \xymatrix{
\Hom(Y_1, \dots, Y_m; Z) \ar[r]^-{\circ \xi} \ar[d] & \Hom(Y_1, \dots, Y_{i-1}, X_1, \dots, X_n, Y_{i+1}, \dots, Y_m; Z) \ar[d] \\
\Hom(p(Y_1), \dots, p(Y_m); p(Z)) \ar[r]^-{ \circ p(\xi)} & \Hom(p(Y_1), \dots, p(Y_{i-1}), p(X_1), \dots, p(X_n), p(Y_{i+1}), \dots, p(Y_m); p(Z))
} \]}is Cartesian.
\end{PAR}

\begin{DEF}\label{DEF2CART}
Let $p: \mathcal{D} \rightarrow \mathcal{S}$ be a strict functor of 2-multicategories. 
A 1-morphism
\[ \xi \in \Hom_f(\mathcal{E}_1, \dots, \mathcal{E}_n; \mathcal{F}) \] 
in $\mathcal{D}$ over $f \in \Hom(S_1, \dots, S_n; T)$
is called {\bf coCartesian} w.r.t.\@ $p$, 
 if for all $\mathcal{F}_1,\dots,\mathcal{F}_m, \mathcal{G}$ with $\mathcal{F}_i=\mathcal{F}$
 the diagram of categories
\[ \xymatrix{
\Hom(\mathcal{F}_1, \dots, \mathcal{F}_m; \mathcal{G}) \ar[r]^-{\circ \xi} \ar[d] & \Hom(\mathcal{F}_1, \dots, \mathcal{F}_{i-1}, \mathcal{E}_1, \dots, \mathcal{E}_n, \mathcal{F}_{i+1}, \dots, \mathcal{F}_m; \mathcal{G}) \ar[d] \\
\Hom(T_1, \dots, T_m; U) \ar[r]^-{ \circ p(\xi)} & \Hom(T_1, \dots, T_{i-1}, S_1, \dots, S_n, T_{i+1}, \dots, T_m; U)
} \]
is Cartesian (where we set $T_k:=p(\mathcal{F}_k)$ and $U:=p(\mathcal{G})$). 

The strict functor $p$ is called a {\bf 2-opfibered 1-opfibered multicategory (with 1-categorical fibers)} if for all $f \in \Hom(S_1, \dots, S_n; T)$ and objects $\mathcal{E}_1, \dots, \mathcal{E}_n$ with $p(\mathcal{E}_i)=S_i$ there is a coCartesian 1-morphism with domains $\mathcal{E}_1, \dots, \mathcal{E}_n$. Furthermore the functors 
\[ \Hom(\mathcal{E}_1, \dots, \mathcal{E}_n; \mathcal{F}) \rightarrow \Hom(p(\mathcal{E}_1), \dots, p(\mathcal{E}_n); p(\mathcal{F})) \]
 have to be Grothendieck opfibrations (with {\em discrete fibers}) and composition has to be a morphism of Grothendieck opfibrations.

The functor $p: \mathcal{D} \rightarrow \mathcal{S}$ is called a {\bf 2-fibered 1-opfibered multicategory (with 1-categorical fibers)} if for every $1 \le j \le n$ and for each
$g \in \Hom(S_1, \dots, S_n; T)$ in $\mathcal{S}$, and for each collection of objects $\mathcal{E}_i$ for $i \not= j$ with $p(\mathcal{E}_i) = S_i$, and for each $\mathcal{F}$ over $T$, there is some object $\mathcal{E}_j$ and some Cartesian 1-morphism w.r.t.\@ the $j$-th slot $\xi \in \Hom(\mathcal{E}_1, \dots, \mathcal{E}_n; \mathcal{F})$ with $p(\xi)=g$.
Furthermore the functors 
\[ \Hom(\mathcal{E}_1, \dots, \mathcal{E}_n; \mathcal{F}) \rightarrow \Hom(p(\mathcal{E}_1), \dots, p(\mathcal{E}_n); p(\mathcal{F})) \]
have to be Grothendieck opfibrations (with {\em discrete fibers}) and composition has to be a morphism of Grothendieck opfibrations.
\end{DEF}

There are several other, partly more general, definitions of an (op-)fibration with 2-categorical fibers which we will not need in this section. We will discuss them in a subsequent article~\cite{Hor15}. 

Note that for (op-)fibrations {\em with 1-categorical fibers} the composition is automatically a morphism of Grothendieck opfibrations. 

\begin{PAR}
An opfibration $p: \mathcal{D} \rightarrow \mathcal{S}$ of 2-multicategories {\em with 1-categorical fibers} is in particular (forgetting 2-morphisms) a usual opfibration.
The additional datum, which makes it into a 2-opfibration is the following: 
For each 2-morphism $\mu: f \Rightarrow g$ in $\mathcal{S}$ a map of sets (the 2-push-forward):
\[ p^*(\mu): \Hom_f(X_1, \dots, X_n; Y) \rightarrow \Hom_g(X_1, \dots, X_n; Y) \]
such that
\begin{eqnarray*}
 p^*(\id_f)(\beta) &=& \beta \\
 p^*(\mu) \circ p^*(\nu) &=& p^*(\mu \circ \nu)  
\end{eqnarray*}
(composition of 2-coCartesian morphisms are 2-coCartesian)
and
\begin{eqnarray*}
 p^*(p(\alpha) \ast \mu)(\alpha \circ \xi) &=& \alpha \circ (p^*(\mu)(\xi))  \\
 p^*(\mu \ast p(\alpha))(\xi \circ \alpha) &=& (p^*(\mu)(\xi)) \circ \alpha 
\end{eqnarray*}
(1-composition maps coCartesian morphisms to coCartesian morphisms).

The 2-morphisms between $\alpha$ and $\beta$ in $\mathcal{D}$ lying over $f$, resp.\@ $g$ in $\mathcal{S}$ can be reconstructed from the datum $p^*$ as 
\[ \Hom(\alpha, \beta) = \{\mu\in \Hom(f, g)\ |\ p^*(\mu)(\alpha) = \beta \}. \]
\end{PAR}

\begin{PAR}
With a pseudo-functor
\[ \Psi: \mathcal{S} \rightarrow \mathcal{MCAT}^{2-\op} \]
where $\mathcal{S}$ is any strict 2-multicategory, 
we associate the opfibration
\[  \mathcal{D}_\Psi \rightarrow \mathcal{S}. \]
The objects of $\mathcal{D}_\Psi$ are pairs
\[ (S, X \in \Psi(S)) \]
in which $S$ is an object of $\mathcal{S}$.
The 1-morphisms 
\[ \xymatrix{
(S_1, X_1) \ar@{-}[rd] \\
\ar@{}[r] \vdots & (f, \alpha) \ar[r] &  (T, Y)  \\
(S_n, X_n) \ar@{-}[ru] \\
}\]
are pairs of (multi-)morphisms
\[ f \in \Hom(S_1, \dots, S_n; T) \qquad \alpha: \Psi(f)(X_1, \dots, X_n) \rightarrow Y. \]
The 2-morphisms 
\[ (f, \alpha) \Rightarrow (f', \alpha') \]
are given by 2-morphisms $\mu: f \Rightarrow f'$ such that $\alpha \circ (\Psi(\mu)(X)) = \alpha'$.

The fiber\footnote{i.e.\@ the 2-category of those objects, morphisms, and 2-morphisms which $\Psi$ maps to $S$, $\id_S$, and $\id_{\id_S}$, respectively} of $\mathcal{D}_\Psi \rightarrow \mathcal{S}$ over $S$ is actually a 1-category, namely precisely the category $\Psi(S)$.
\end{PAR}

We have the following generalization of Lemma~\ref{LEMMAPROPMULTI}, 2.:
\begin{LEMMA}\label{LEMMAPROPMULTI2}
Given (op)fibered 2-multicategories $p: \mathcal{C} \rightarrow \mathcal{D}$ and $q: \mathcal{D} \rightarrow \mathcal{E}$ then the composition $q\circ p$ is an (op)fibered 2-multicategory, too.  A 1-morphism $\alpha$ is (co)Cartesian for $q\circ p$ if and only if $\alpha$ is (co)Cartesian for $p$ and $p(\alpha)$ is (co)Cartesian for $q$.
\end{LEMMA}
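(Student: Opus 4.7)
I will treat the opfibered case; the fibered case is obtained by reversing the direction of the 1-morphisms in the target and arguing dually.

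The plan is to first establish the characterization of coCartesian 1-morphisms, then use this characterization to transport both the existence of coCartesian lifts and the Grothendieck opfibration structure on the morphism categories from $p$ and $q$ separately to $q\circ p$. For the characterization, fix $\alpha \in \Hom_f(\mathcal{E}_1,\dots,\mathcal{E}_n;\mathcal{F})$ in $\mathcal{C}$. For auxiliary objects $\mathcal{F}_1,\dots,\mathcal{F}_m,\mathcal{G}$ (with $\mathcal{F}_i=\mathcal{F}$), the comparison square of categories for $q\circ p$ factors as the vertical pasting of the analogous squares for $p$ (top) and for $q$ (bottom), since $\Hom$-categories map strictly to their images under both functors. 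A vertical pasting of Cartesian squares is Cartesian, and, conversely, given that the fibers of the vertical functors are discrete (i.e.\@ they are Grothendieck opfibrations with discrete fibers), the bottom square is Cartesian as soon as the composite is and the top is, by a routine diagram chase. This gives both directions of the equivalence: $\alpha$ is coCartesian for $q\circ p$ iff $\alpha$ is coCartesian for $p$ and $p(\alpha)$ is coCartesian for $q$. (The forward direction uses in addition existence: given $\alpha$ coCartesian for $q\circ p$, choose a coCartesian $\alpha'$ for $p$ with the same source over $p(\alpha)$; factor $\alpha = \beta\circ\alpha'$ with $\beta$ over an identity; then $\beta$ must be an isomorphism by applying the universal property of $\alpha$, so $\alpha$ itself is coCartesian for $p$, and then pasting shows $p(\alpha)$ is coCartesian for $q$.)

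Existence of enough coCartesian 1-morphisms for $q\circ p$ is immediate from the characterization: given $h\in\Hom_{\mathcal{E}}(U_1,\dots,U_n;V)$ and objects $\mathcal{E}_i$ with $qp(\mathcal{E}_i)=U_i$, first choose a coCartesian lift $g\in\Hom_{\mathcal{D}}(p(\mathcal{E}_1),\dots,p(\mathcal{E}_n);T)$ of $h$ via the opfibration $q$, then choose a coCartesian lift $\alpha\in\Hom_{\mathcal{C}}(\mathcal{E}_1,\dots,\mathcal{E}_n;\mathcal{F})$ of $g$ via the opfibration $p$; by the characterization, $\alpha$ is coCartesian for $q\circ p$.

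It remains to verify that each functor
\[ \Hom_{\mathcal{C}}(\mathcal{E}_1,\dots,\mathcal{E}_n;\mathcal{F}) \longrightarrow \Hom_{\mathcal{E}}(qp(\mathcal{E}_1),\dots,qp(\mathcal{E}_n);qp(\mathcal{F})) \]
is a Grothendieck opfibration with discrete fibers, and that composition in $\mathcal{C}$ is a morphism of such opfibrations. For a 1-morphism $\alpha$ in $\mathcal{C}$ and a 2-morphism $\mu\colon qp(\alpha)\Rightarrow h'$ in $\mathcal{E}$, one builds a lift in two stages: use that $q$ is 2-opfibered to produce a coCartesian 2-lift $\widetilde\mu\colon p(\alpha)\Rightarrow g'$ of $\mu$ in $\mathcal{D}$, then use that $p$ is 2-opfibered to produce a coCartesian 2-lift $\widetilde{\widetilde\mu}\colon\alpha\Rightarrow\alpha'$ of $\widetilde\mu$ in $\mathcal{C}$. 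A short computation using the universal properties shows $\widetilde{\widetilde\mu}$ is coCartesian over $\mu$, and the discreteness of fibers for $p$ and $q$ implies the same for $q\circ p$. Compatibility of composition with the opfibration structure is inherited from $p$ and $q$ by the same pasting argument used for the 1-morphism characterization.

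The main obstacle I anticipate is the forward direction of the characterization of coCartesian 1-morphisms: the bookkeeping needed to extract, from the Cartesianness of the pasted square of morphism categories and the Cartesianness of the top square, the Cartesianness of the bottom square. The discreteness of the vertical fibers (which is built into the definition of a 2-opfibration with 1-categorical fibers) is what makes this formal, so I expect the argument to go through without technical pain, but verifying it carefully is where most of the work lies.
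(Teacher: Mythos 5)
The paper itself gives no proof of this lemma (it is listed among the appendix facts about 2-multicategories, like Lemma \ref{LEMMAPROPMULTI}, and treated as elementary), so I evaluate your sketch on its own terms. Your backward direction is sound: fixing auxiliary objects $\mathcal{F}_1,\dots,\mathcal{F}_m,\mathcal{G}$ in $\mathcal{C}$, the comparison square for $q\circ p$ pastes vertically from the square for $p$ and the square for $q$ tested against $p(\mathcal{F}_j), p(\mathcal{G})$, and a vertical pasting of Cartesian squares of categories is Cartesian. The existence of coCartesian lifts and the 2-opfibration structure on Hom-categories then follow by composing the two lifting steps as you describe.

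The forward direction of the characterization, however, has a genuine gap. Your factorization argument takes a $p$-coCartesian $\alpha'$ over $p(\alpha)$ with the same sources, writes $\alpha = \beta\circ\alpha'$ with $\beta$ over an identity in $\mathcal{D}$, and asserts $\beta$ is an isomorphism ``by applying the universal property of $\alpha$.'' From $\alpha$ being $(q\circ p)$-coCartesian you do get a unique $\gamma:\mathcal{F}\to\mathcal{F}'$ over the identity of $\mathcal{E}$ with $\gamma\alpha=\alpha'$, and hence $\beta\gamma=\id_{\mathcal{F}}$. But to close the other composite $\gamma\beta=\id_{\mathcal{F}'}$ you would need $\alpha'$ itself to be $(q\circ p)$-coCartesian, and it need not be: $\alpha'$ is $p$-coCartesian over $p(\alpha)$, but $p(\alpha)$ is not known at this stage to be $q$-coCartesian. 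So $\beta$ is only exhibited as a split epimorphism. Your alternative route via the pasting lemma is also suspect: the valid form of the pasting lemma for pullbacks says that if the bottom square is Cartesian then the top is Cartesian iff the outer is; the direction you invoke (outer Cartesian and top Cartesian imply bottom Cartesian) is not a theorem, with or without discreteness of the fibers, and moreover the bottom square only receives test objects in $p(\mathcal{C})$, not in all of $\mathcal{D}$, which is what being $q$-coCartesian requires.

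The standard repair is to compare $\alpha$ with a \emph{doubly} coCartesian lift rather than with $\alpha'$: choose a $q$-coCartesian $\tilde{g}$ in $\mathcal{D}$ over $qp(\alpha)$ from $(p(\mathcal{E}_1),\dots,p(\mathcal{E}_n))$, then a $p$-coCartesian $\tilde{\alpha}$ over $\tilde{g}$ from $(\mathcal{E}_1,\dots,\mathcal{E}_n)$. By your correct pasting direction, $\tilde{\alpha}$ is $(q\circ p)$-coCartesian; since $\alpha$ and $\tilde{\alpha}$ are both $(q\circ p)$-coCartesian over the same morphism in $\mathcal{E}$ with the same sources, the unique comparison $\varphi$ over an identity in $\mathcal{E}$ satisfying $\varphi\tilde{\alpha}=\alpha$ is an isomorphism. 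Then $\alpha=\varphi\tilde{\alpha}$ with $\tilde{\alpha}$ $p$-coCartesian and $\varphi$ iso shows $\alpha$ is $p$-coCartesian, and $p(\alpha)=p(\varphi)\tilde{g}$ with $\tilde{g}$ $q$-coCartesian and $p(\varphi)$ iso shows $p(\alpha)$ is $q$-coCartesian. I would swap your factorization step for this comparison.
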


\begin{BEISPIEL}\label{EXMULTICATS}
Let $\mathcal{S}$ be a usual category. Then $\mathcal{S}$ may be turned into a symmetric
multicategory by setting 
\[ \Hom(X_1, \dots, X_n; Y) := \Hom(X_1; Y) \times \cdots \times \Hom(X_n; Y). \]
If $\mathcal{S}$ has coproducts, then $\mathcal{S}$ (with this multi-category structure) is opfibered over $\{\cdot\}$.
Let $p: \mathcal{D} \rightarrow \mathcal{S}$ be an opfibered (usual) category. Any object $X$ induces a canonical functor of multicategories $x: \{\cdot\} \rightarrow \mathcal{S}$ with image $X$, hence the fibers of an opfibered multicategory $p: \mathcal{D} \rightarrow \mathcal{S}$ are monoidal and
the datum $p$ is equivalent to giving a monoidal structure on the fibers such that the push-forwards $f_\bullet$ are monoidal functors and such that the compatibility morphisms between them are morphisms of monoidal functors. This is called a covariant monoidal pseudo-functor in \cite[(3.6.7)]{LH09}.
\end{BEISPIEL}
\begin{BEISPIEL}\label{EXMULTICATSOP}
Let $\mathcal{S}$ be a usual category.  Then $\mathcal{S}^{\mathrm{op}}$ may be turned into a symmetric 
multicategory (or equivalently $\mathcal{S}$ into a symmetric opmulticategory) by setting 
\[ \Hom(X_1, \dots, X_n; Y) := \Hom(Y; X_1) \times \cdots \times \Hom(Y; X_n). \]
If $\mathcal{S}$ has products then $\mathcal{S}^{\mathrm{op}}$ (with this multi-category structure)  is opfibered over $\{\cdot\}$.
Let $p: \mathcal{D} \rightarrow \mathcal{S}^{\mathrm{op}}$ be an opfibered (usual) category. Then an opfibered multicategory structure on $p$, w.r.t.\@ this multicategory structure on $\mathcal{S}^{\op}$, is equivalent to a monoidal structure on the fibers such that pull-backs $f^*$ (along morphisms in $\mathcal{S}$) are monoidal functors and such that the compatibility morphisms between them are morphisms of monoidal functors. This is called a contravariant monoidal pseudo-functor in \cite[(3.6.7)]{LH09}.
\end{BEISPIEL}

\begin{DEF}\label{DEFMULCOR}
The point is that the notion of (op)fibered multicategory is {\em not restricted to} the situation of Examples \ref{EXMULTICATS} and \ref{EXMULTICATSOP}. 
Let $\mathcal{S}$ be a category with fiber products and define $\mathcal{S}^{\mathrm{cor}}$, denoted the {\bf symmetric 2-multicategory of correspondences in $\mathcal{S}$} to be the symmetric 2-multicategory having the same objects as $\mathcal{S}$, and where the category of
morphisms $\Hom(S_1, \dots, S_n; T)$ is the category of objects
\[ \xymatrix{ 
 &&&  \ar[llld]_{g_1} A \ar[ld]_{g_n} \ar[rd]^{f} &\\
 S_1 & \cdots & S_n & ; &  T   }\]
and where the 2-morphisms $(A, f, g_1, \dots, g_n) \Rightarrow (A', f', g_1', \dots, g_n')$ are {\em iso}morphisms $A \rightarrow A'$ compatible with $f, f'$ and $g_1, g_1', \dots, g_n, g_n'$.

Composition is given by:
 \[ \xymatrix{
&&&&& A \times_{Y_i} B  \ar[lld]_{\pr_1} \ar[rrd]^{\pr_2} \\
&&& A \ar[llld] \ar[ld] \ar[rrrd] &&&&  B \ar[llld] \ar[ld]  \ar[rd] \ar[rrrd] \\
X_1 & \cdots & X_n & ; & Y_1 & \cdots & Y_i & \cdots & Y_m & ; & Z
} \]
where strictly associative fiber products have been chosen in $\mathcal{S}$.
 
This 2-multicategory is representable
(i.e.\@ opfibered over $\{\cdot\}$), closed (i.e.\@ fibered over $\{\cdot\}$) and self-dual, with tensor product and internal hom {\em both} given by $\times$ and having as unit
the final object of $\mathcal{S}$. 
\end{DEF}

\begin{DEF}\label{DEF6FU}
Let $\mathcal{S}$ be a category with fiber products. A {\bf (symmetric) Grothendieck six-functor-formalism} on $\mathcal{S}$ is a bifibered (symmetric) 2-multicategory with 1-categorical fibers
\[ p: \mathcal{D} \rightarrow \mathcal{S}^{\mathrm{cor}}. \]
\end{DEF}
\begin{PAR}\label{CONCRETE6FU}
We have a morphism of opfibered (over $\{\cdot\}$) symmetric multicategories $\mathcal{S}^{\mathrm{op}} \rightarrow \mathcal{S}^{\mathrm{cor}}$ where $\mathcal{S}^{\mathrm{op}}$ is equipped with the symmetric multicategory structure as in \ref{EXMULTICATSOP}. However there is no reasonable morphism of opfibered multicategories $\mathcal{S} \rightarrow \mathcal{S}^{\mathrm{cor}}$. (There is no compatibility involving only `$\otimes$' and `$!$'.) From a Grothendieck six-functor-formalism we get  operations $g_*$, $g^*$ as the pull-back and the push-forward along the correspondence
\[ \xymatrix{ 
 & \ar[ld]_g X \ar@{=}[rd] &\\
 Y & ; & X   }\]
 We get $f^!$ and $f_!$ as the pull-back and the push-forward along the correspondence
\[ \xymatrix{ 
 & \ar@{=}[ld] X \ar[rd]^f &\\
 X & ; & Y   }\]
 
We get the monoidal product $A \otimes B$ for objects $A, B$ above $X$ as the target of any Cartesian morphism 
$\otimes$ over the correspondence
\[ \xi_X = 
 \left(\vcenter{\xymatrix{
& & X \ar@{=}[lld] \ar@{=}[ld] \ar@{=}[rd] &  \\
X & X & ; & X }}\right)
\]

Alternatively, we have
\[ A \otimes B = \Delta^* (A \boxtimes B) \]
where $\Delta^*$ is the push-forward along the correspondence
\[ \left(\vcenter{\xymatrix{
& X \ar[ld]_\Delta \ar@{=}[rd]^f & \\
X \times X & ; & X }}\right)
 \]
induced by the canonical $\xi_X \in \Hom(X,X; X)$, and where $\boxtimes$ is the absolute monoidal product which exists because by Lemma~\ref{LEMMAPROPMULTI2} the composition $\mathcal{D} \rightarrow \{\cdot\}$ is opfibered, too, i.e.\@ $\mathcal{D}$ is monoidal.
\end{PAR}

\begin{PAR}
It is easy to derive from the definition of bifibered multicategory over $\mathcal{S}^{\mathrm{cor}}$ that
the absolute monoidal product $A \boxtimes B$ can be reconstructed from the fiber-wise product as $\pr_1^*A  \otimes \pr_2^*B$ on $X \times Y$, whereas the
absolute $\mathbf{HOM}(A, B)$ is given by $\mathcal{HOM}(\pr_1^*A, \pr_2^! B)$ on $X \times Y$.
In particular $DA:=\mathbf{HOM}(A,1)$ is given by $\mathcal{HOM}(A, \pi^! 1)$ for $\pi: X \rightarrow \cdot$ being the final morphism.
\end{PAR}

\begin{LEMMA}\label{LEMMA6FU}
Given a Grothendieck six-functor-formalism on $\mathcal{S}$
\[ p: \mathcal{D} \rightarrow \mathcal{S}^{\mathrm{cor}} \]
for the six operations as extracted in \ref{CONCRETE6FU} there exist naturally the following compatibility isomophisms:
\begin{center}
\begin{tabular}{rlll}
& left adjoints & right adjoints \\
\hline
$(*,*)$ & $(fg)^* \iso g^* f^*$ & $(fg)_* \iso f_* g_*$ &\\
$(!,!)$ & $(fg)_! \iso f_! g_!$ & $(fg)^! \iso g^! f^!$ &\\ 
$(!,*)$ 
& $g^* f_! \iso F_! G^*$ & $G_* F^! \iso f^! g_*$ & \\
$(\otimes,*)$ & $f^*(- \otimes -) \iso f^*- \otimes f^* -$ & $f_* \mathcal{HOM}(f^*-, -) \iso \mathcal{HOM}(-, f_*-)$  & \\
$(\otimes,!)$ & $f_!(- \otimes f^* -) \iso  (f_! -) \otimes -$ & $f_* \mathcal{HOM}(-, f^!-) \iso \mathcal{HOM}(f_! -, -)$ & \\ 
& & $f^!\mathcal{HOM}(-, -) \iso \mathcal{HOM}(f^* -, f^!-)$ & \\
$(\otimes, \otimes)$ &  $(- \otimes -) \otimes - \iso - \otimes (- \otimes -)$ &  $\mathcal{HOM}(- \otimes -, -) \iso \mathcal{HOM}(-, \mathcal{HOM}(-, -))$ & 
\end{tabular}
\end{center}

Here $f, g, F, G$ are morphisms in $\mathcal{S}$ which, in the $(!,*)$-row, are related by the {\em Cartesian} diagram
\[ \xymatrix{ \cdot \ar[r]^G \ar[d]_F  & \cdot \ar[d]^f \\ \cdot \ar[r]_g & \cdot } \]
\end{LEMMA}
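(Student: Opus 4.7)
The plan is to derive every isomorphism in the table as a consequence of (i) the pseudo-functoriality built into any bifibered 2-multicategory (existence and coherent composition of (co)Cartesian lifts) together with (ii) the explicit description of composition in $\mathcal{S}^{\mathrm{cor}}$ via fibre products. The right-adjoint column will then come for free from the left-adjoint column by taking mates, because $f_*, f^!, \mathcal{HOM}$ are defined as right adjoints of $f^*, f_!, \otimes$ respectively, and $p$ is \emph{bi}fibered. So I reduce everything to identifying, on each side of a claimed isomorphism, a single correspondence (or $n$-ary correspondence) in $\mathcal{S}^{\mathrm{cor}}$ and checking the two correspondences are canonically $2$-isomorphic.

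First I would fix notation: write $\Delta_X: X \to X\times X$ for the diagonal and $\pi_X: X \to \cdot$ for the final map; denote a correspondence by a triple $(A; g_1,\dots,g_n; f)$. From the definitions in \ref{CONCRETE6FU} one has $g^* \leftrightarrow (X;g;\mathrm{id})$, $f_! \leftrightarrow (X;\mathrm{id};f)$, $\otimes \leftrightarrow \xi_X = (X;\mathrm{id},\mathrm{id};\mathrm{id})$ (2-ary). For the rows $(*,*)$ and $(!,!)$ I compute the two composites of the corresponding unary correspondences using the explicit fibre-product rule in Definition~\ref{DEFMULCOR}: both degenerate to $(X;gf;\mathrm{id})$ resp.\ $(X;\mathrm{id};fg)$, since $Y\times_Y X = X$. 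Pseudo-functoriality of $p$ (the natural 2-isomorphism $\Psi(g)\Psi(f)\Rightarrow\Psi(g\cdot f)$) then delivers $(fg)^*\cong g^*f^*$ and $(fg)_!\cong f_!g_!$. For $(!,*)$ the correspondences $(Y;\mathrm{id};f)$ composed with $(Y;g;\mathrm{id})$ and $(X';G;\mathrm{id})$ composed with $(X';\mathrm{id};F)$ are both equal (on the nose, once the chosen pullback square is strictly associative) to $(X';G;F)$, proving the base-change isomorphism.

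For the monoidal rows I use the same technique but with 2-ary morphisms. For $(\otimes,*)$, compose $f^*$ with $\xi_X$ at the target: the result is the 2-ary correspondence $(X'; f\mathrm{pr}_1, f\mathrm{pr}_2; f)$ viewed via $X'\xrightarrow{\Delta} X'\times X'$; composing $\xi_{X'}$ after $(f^*,f^*)$ yields $(X'; \mathrm{pr}_1 f_{\times}, \mathrm{pr}_2 f_{\times}; f)$ where $f_\times = f\times f$. The universal property of the fibre product $X' \cong X\times_{X\times X}(X'\times X')$ (via the Cartesian square $\Delta_{X'}$) identifies both correspondences, giving $f^*(-\otimes-)\cong f^*-\otimes f^*-$. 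For $(\otimes,!)$ (the projection formula) the two relevant correspondences over the composition of $(\mathrm{id},f)$-data with $\xi_X$ and $\xi_Y$ with $f_!$ are both shown, by a one-step fibre-product computation, to coincide with $(X;\mathrm{id}_X,f;f)$; that identity is precisely the projection formula $f_!(-\otimes f^*-)\cong f_!-\otimes-$. For $(\otimes,\otimes)$, associativity of $\otimes$ is the statement that the two 3-ary correspondences $\xi_X \circ_1 \xi_X$ and $\xi_X \circ_2 \xi_X$ agree; both equal the total-diagonal correspondence $(X;\mathrm{id},\mathrm{id},\mathrm{id};\mathrm{id})$ — this is exactly the symmetric/associative structure on the representable multicategory $\mathcal{S}^{\mathrm{cor}}$ noted right after Definition~\ref{DEFMULCOR}.

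The main obstacle is purely bookkeeping: keeping track of which slot is Cartesian in each $n$-ary adjunction, and verifying that the 2-isomorphisms of correspondences furnished by fibre-product universal properties are mapped by $p$ to the canonical comparison 2-morphisms, so that the induced isomorphisms between pushforwards/pullbacks are the natural ones expected. Once the left-adjoint row of the table is established, each right-adjoint identity in the same row is the mate under the adjunctions $(f^*,f_*)$, $(f_!,f^!)$, $(-\otimes A,\mathcal{HOM}(A,-))$ guaranteed by bifiberedness; standard mate calculus then yields in particular $\mathcal{HOM}(-\otimes-,-)\cong\mathcal{HOM}(-,\mathcal{HOM}(-,-))$ and $f^!\mathcal{HOM}(-,-)\cong\mathcal{HOM}(f^*-,f^!-)$ as the adjoints of $(\otimes,\otimes)$ and $(\otimes,!)$ respectively.
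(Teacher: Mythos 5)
Your overall plan matches the paper's proof — represent each operation by a correspondence in $\mathcal{S}^{\mathrm{cor}}$, compute the relevant compositions via the fibre-product rule, invoke that composition of (co)Cartesian lifts is (co)Cartesian, and pass to the right-adjoint column by mate calculus — and your computations for $(*,*)$, $(!,!)$, $(!,*)$, $(\otimes,!)$ and $(\otimes,\otimes)$ are fine. However, your step for $(\otimes,*)$ contains an error: the claimed isomorphism $X' \cong X \times_{X\times X}(X'\times X')$ ``via the Cartesian square $\Delta_{X'}$'' is false. That square
\[
\vcenter{\xymatrix{ X' \ar[r]^-{\Delta_{X'}} \ar[d]_{f} & X' \times X' \ar[d]^{f\times f} \\ X \ar[r]_-{\Delta_X} & X \times X }}
\]
is not Cartesian; its fibre product is $X' \times_X X'$, and the diagonal $X' \to X'\times_X X'$ is an isomorphism only when $f$ is a monomorphism. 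Relatedly, the triple $(X'; f\pr_1, f\pr_2; f)$ you write down is not a well-typed element of $\Hom_{\mathcal{S}^{\mathrm{cor}}}(X, X; X')$.

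The fix is to note that the composition rule of Definition~\ref{DEFMULCOR} builds $n$-ary composites \emph{one slot at a time}, so only binary fibre products over the hinge objects ever appear, and each of them degenerates. Writing $c = (X'; f; \id)$ for the $f^*$-correspondence, one has
\[
c \circ \xi_X = (X\times_X X';\ f, f;\ \id) = (X';\ f, f;\ \id),
\]
and, composing on the other side,
\[
\xi_{X'} \circ_1 c = (X'\times_{X'}X';\ f, \id;\ \id) = (X';\ f, \id;\ \id),\qquad
(\xi_{X'} \circ_1 c) \circ_2 c = (X';\ f, f;\ \id).
\]
Both composites coincide on the nose, with no appeal to any pullback against a diagonal; this is exactly the computation the paper carries out. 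Once you replace your $(\otimes,*)$ paragraph with this, the rest of your argument goes through as written.
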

\begin{BEM}
In the right column the corresponding adjoint natural transformations are listed. In each case the left hand side natural isomorphism determines the right hand side one and conversely. (In the $(\otimes,!)$-case there are 2 versions of the 
commutation between the right adjoints; in this case any of the three isomorphisms determines the other two.)
The $(!,*)$-isomorphism (between left adjoints)  is called {\bf base change}, the $(\otimes, !)$-isomorphism is called the {\bf projection formula}, and the $(*, \otimes)$-isomorphism is usually part of the definition of a {\bf monoidal functor}. The $(\otimes, \otimes)$-isomorphism is the associativity of the tensor product and part of the definition of a monoidal category. The $(*,*)$-isomorphism, and the\@ $(!,!)$-isomorphism express that the corresponding functors arrange as a pseudo-functor with values in categories.
\end{BEM}
\begin{proof}
The existence of all isomorphisms is a consequences of the fact that the composition of coCartesian morphisms is coCartesian. 
For example, the projection formula $(\otimes,!)$ is derived from the following composition in $\mathcal{S}^{\mathrm{cor}}$:
\[ \left(\vcenter{\xymatrix{
& & Y \ar@{=}[lld] \ar@{=}[ld] \ar@{=}[rd] &  \\
Y & Y & ; & Y }}\right)
\circ_1
\left(\vcenter{\xymatrix{
& X \ar@{=}[ld] \ar[rd]^f & \\
X & ; & Y }}\right)
\cong
\left(\vcenter{\xymatrix{
& & X \ar@{=}[lld] \ar[ld]^f \ar[rd]^f & \\
X & Y & ; & Y }}\right), \]
where $\circ_1$ means that we compose w.r.t.\@ the first slot.

The ``monoidality of $f^*$'' $(*,\otimes)$ is derived from the following composition in $\mathcal{S}^{\mathrm{cor}}$:
\[ 
\left(\vcenter{\xymatrix{
& X \ar[ld]^f \ar@{=}[rd] & \\
Y & ; & X }}\right)
\circ
\left(\vcenter{\xymatrix{
& & Y \ar@{=}[lld] \ar@{=}[ld] \ar@{=}[rd] &  \\
Y & Y & ; & Y }}\right)
\cong
\left(\vcenter{\xymatrix{
& & X \ar[lld]_f \ar[ld]^f \ar@{=}[rd] & \\
Y & Y & ; & X }}\right). \]
Base change $(!,*)$ is derived from:
\[
\left(\vcenter{\xymatrix{
& X \ar[ld]_g \ar@{=}[rd] & \\
A & ; & X }}\right)
\circ
\left(\vcenter{\xymatrix{
& Y \ar@{=}[ld] \ar[rd]^f & \\
Y & ; & A }}\right)
\cong
\left(\vcenter{\xymatrix{
& Y\times_A X \ar[ld]_F \ar[rd]^G & \\
Y & ; & X }}\right).
\]
\end{proof}

All compatibilities between these isomorphisms can be derived, too. Each of these compatibilities corresponds to an associativity relation in the fibered multicategory.
One can also axiomatize the properties of the morphism $f_! \rightarrow f_*$ that often accompanies a six-functor-formalism. 
Can one give a finite list of compatibility diagrams from which all the others would follow?

\begin{PAR}
The goal and motivation for this research is, as said in the introduction, to define (and to construct in reasonable contexts) a {\em derivator version} of a Grothendieck six-functor-formalism, i.e.\@ a
fibered multiderivator 
\[ \DD \rightarrow \SSS^{\mathrm{cor}} \]
where $\SSS^{\mathrm{cor}}$ is the pre-2-multiderivator associated with the 2-category $\mathcal{S}^{\mathrm{cor}}$. We will give the definition of a pre-2-multiderivator and of a fibered derivator over such in a subsequent article~\cite{Hor15}. 
\end{PAR}

\subsection{Localization of multicategories}

\begin{PROP} \label{PROPMULTILOC}
Let $\mathcal{D}$ be a (symmetric, braided) multicategory and let $\mathcal{W}$ be a subclass of 1-ary morphisms. Then there exists a  (symmetric, braided) multicategory $\mathcal{D}[\mathcal{W}^{-1}]$, which is not necessarily locally small, together with a functor $\iota: \mathcal{D} \rightarrow \mathcal{D}[\mathcal{W}^{-1}]$ of (symmetric, braided)  multicategories with the property that $\iota(w)$ is an isomorphism for all $w \in \mathcal{W}$ and which is universal w.r.t.\@ this property. 
\end{PROP}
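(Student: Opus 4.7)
The plan is to imitate the classical construction of the localization $\mathcal{C}[\mathcal{W}^{-1}]$ of an ordinary category, adapted to the combinatorics of multi-composition. First I would build a free (symmetric, braided) multicategory $\mathcal{F}$ whose objects are those of $\mathcal{D}$ and whose generating multimorphisms are (a)~all multimorphisms of $\mathcal{D}$, and (b)~a new $1$-ary morphism $\overline{w}: T \rightarrow S$ for every $w: S \rightarrow T$ in $\mathcal{W}$. The $n$-ary morphisms of $\mathcal{F}$ are then (isomorphism classes of) planar rooted trees with $n$ labeled leaves and, in the symmetric/braided case, an extra labeling of leaves by a permutation (resp.\@ braid), with internal vertices labeled by generating morphisms in a way compatible with the colors on edges; composition is grafting and the symmetric/braid action is the obvious one on leaf labels. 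This construction is the left adjoint to the forgetful functor from (symmetric, braided) multicategories to the corresponding multigraphs, so its existence is standard.

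Next I would define $\mathcal{D}[\mathcal{W}^{-1}]$ as the quotient of $\mathcal{F}$ by the smallest congruence $\sim$ (i.e.\@ equivalence relation on multimorphism classes compatible with composition in every slot and with the symmetric/braid action) that imposes:
\begin{itemize}
\item[(i)] $g \circ_i f \sim g \cdot_i f$ whenever $g, f$ are composable multimorphisms of $\mathcal{D}$, where $\cdot_i$ denotes the composition in $\mathcal{D}$;
\item[(ii)] $\id_X^{\mathcal{F}} \sim \id_X^{\mathcal{D}}$ for every object $X$;
\item[(iii)] $\overline{w} \circ w \sim \id_S$ and $w \circ \overline{w} \sim \id_T$ for every $w : S \rightarrow T$ in $\mathcal{W}$;
\item[(iv)] in the symmetric/braided case, the symmetric/braid action on the generators coming from $\mathcal{D}$ agrees with the one already present in $\mathcal{D}$.
\end{itemize}
The existence of a smallest such congruence is formal (intersect all congruences containing the listed pairs; the trivial congruence contains them, and the intersection of congruences is a congruence). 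The quotient $\mathcal{F}/{\sim}$ inherits a (symmetric, braided) multicategory structure and comes with a canonical functor $\iota: \mathcal{D} \rightarrow \mathcal{F}/{\sim} =: \mathcal{D}[\mathcal{W}^{-1}]$, which by (iii) sends every $w \in \mathcal{W}$ to an isomorphism with inverse the class of $\overline{w}$.

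For the universal property, let $G: \mathcal{D} \rightarrow \mathcal{E}$ be a functor of (symmetric, braided) multicategories such that $G(w)$ is invertible for every $w \in \mathcal{W}$. By the freeness of $\mathcal{F}$, $G$ extends uniquely to a functor $\widetilde{G}: \mathcal{F} \rightarrow \mathcal{E}$ sending $\overline{w}$ to $G(w)^{-1}$; the relations (i)--(iv) are visibly satisfied by $\widetilde{G}$, hence $\widetilde{G}$ factors uniquely through $\mathcal{F}/{\sim}$. This yields the required unique factorization $\mathcal{D}[\mathcal{W}^{-1}] \rightarrow \mathcal{E}$, and the $2$-categorical version (on natural transformations of multicategory functors) is obtained analogously by extending term-wise on generators.

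The main obstacle is purely bookkeeping: one has to check that the congruence $\sim$ is well-behaved with respect to grafting at every slot \emph{and} with respect to the symmetric/braid action, so that the quotient really is a (symmetric, braided) multicategory and not just a multigraph. This requires a careful but routine verification that the set of pairs $(\alpha,\beta)$ obtainable from (i)--(iv) by multi-composition in any slot, by re-labeling leaves by permutations/braids, and by the closure axioms of an equivalence relation, still forms an equivalence relation (and not merely a reflexive symmetric relation); the standard argument via the intersection-of-congruences formulation avoids this issue. As usual for such formal localizations, local smallness of $\mathcal{D}[\mathcal{W}^{-1}]$ need not hold, matching the statement of the proposition.
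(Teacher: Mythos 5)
Your proposal is correct, and it takes a genuinely different (though equivalent) route from the paper's. The paper constructs $\mathcal{D}[\mathcal{W}^{-1}]$ by directly specifying a set of representatives for morphisms: a morphism is taken to be a finite list $(f_1, w_1), k_1, (f_2, w_2), k_2, \dots, (f_n, w_n)$ of multimorphisms $f_i$ of $\mathcal{D}$ interleaved with $\mathcal{W}$-morphisms $w_i$ (to be inverted) together with slot indices $k_i$ telling where each $w_i^{-1}$ is plugged into $f_{i+1}$, and then one quotients by relations (commutative squares, making $w_i^{-1}$ a two-sided inverse). You instead first form the free (symmetric/braided) multicategory $\mathcal{F}$ on the multigraph consisting of $\mathcal{D}$'s multimorphisms and a formal inverse $\overline{w}$ for each $w \in \mathcal{W}$, then quotient by the smallest congruence enforcing your relations (i)--(iv). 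The two approaches buy different things: yours makes the universal property essentially automatic (freeness of $\mathcal{F}$ plus the universal property of the quotient), whereas the paper's gives a concrete zigzag-style normal form for morphisms at the cost of having to check directly that composition of lists and the stated relations mesh. Neither approach assumes any calculus of fractions, and both, as you note, may produce a multicategory that is not locally small. One minor remark: in the free construction, the trees are \emph{colored} by objects of $\mathcal{D}$ (edges carry object colors, and a vertex labeled by $g \in \Hom(A_1,\dots,A_n;B)$ must have incoming edges colored $A_1,\dots,A_n$ and outgoing edge colored $B$); your phrasing already indicates this with "compatible with the colors on edges," but it is worth making explicit that $\overline{w}$ is a new generating edge-label \emph{only} in the $1$-ary part, so no spurious higher-arity inverses appear.
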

\begin{proof}
This construction is completely analogous to the construction for usual categories. Morphisms $\Hom(X_1, \dots, X_n; Y)$ are formal compositions
of $i$-ary morphisms in $\mathcal{D}$ and formal inverses of morphisms in $\mathcal{W}$, for example:
\[\xymatrix{
X_1 \ar@{-}[rd] & \\
X_2 \ar@{-}[r] & f_1 \ar[r] & \cdot & \cdot \ar[l]_{w_1} \ar@{-}[ddr] \\
X_3 \ar@{-}[ru] & \\
X_4 \ar@{-}[rrrr] & & & & f_3 \ar[r] & \cdot &  \ar[l]_{w_3} Y  \\
X_5 \ar@{-}[r] & f_2 \ar[r] & \cdot & \cdot \ar[l]_{w_2} \ar@{-}[ur] \\
X_6 \ar@{-}[ru] & 
}\]
More precisely: 
Morphisms are defined to be the class of lists of $n_i$-ary morphisms $f_i \in \Hom(X_{i,1}, \dots, X_{i,n_i}; Y_i)$, morphisms $w_i: Y_ i' \rightarrow Y_i$ in $\mathcal{W}$ and integers $k_i$ as follows
\[ (f_{1}, w_1), k_1, (f_2, w_2), k_2, \dots, k_{n-1}, (f_n, w_n) \]
such that $Y_i' = X_{i+1,k_i}$, modulo relations coming from commutative squares and forcing the $(\id, w_i)$ to become the left and right inverse of $(w_i, \id)$.
\end{proof}

\newpage
\bibliographystyle{abbrvnat}
\bibliography{6fu}

\end{document}